\newcommand*{\bracketize}[1]{[#1]}
\setlist[itemize]{topsep=3pt,itemsep=3pt}
\definecolor{myRED}{HTML}{E6332A}
\definecolor{myGRAY}{HTML}{8E8E8E}
\definecolor{myMIDDGRAY}{HTML}{E5E5E5}
\definecolor{myMIDGRAY}{HTML}{E2E2E2}
\definecolor{myDARKGRAY}{HTML}{404647}
\definecolor{myLIGHTGRAY}{HTML}{F9F9F9}
\definecolor{myBLUE}{HTML}{03468F}
\definecolor{myPURPLE}{HTML}{8C54D0}
\definecolor{myGREEN}{HTML}{007355}
\definecolor{myYELLOW}{HTML}{FFD300}
\numberwithin{equation}{section}
\newtheorem{theorem}{Theorem}[section]
\newtheorem{lemma}[theorem]{Lemma}
\newtheorem{proposition}[theorem]{Proposition}
\newtheorem{corollary}[theorem]{Corollary}
\newtheorem{claim}[theorem]{Claim}
\newtheorem*{theorem*}{Theorem}
\newtheorem*{pushinglemma*}{Pushing Lemma}
\newtheorem*{lemmasep*}{Proposition 7.6}
\newtheorem*{definition*}{Definition}
\newtheorem{thmx}{Theorem}
\theoremstyle{definition}
\newtheorem{definition}[theorem]{Definition}
\newtheorem{remark}[theorem]{Remark}
\newcommand{\mycomment}[1]{}
\newcommand\sbullet[1][.75]{\mathbin{\vcenter{\hbox{\scalebox{#1}{$\bullet$}}}}}
\newcommand{\sspc}{\hspace*{0.04cm}}
\newcommand{\spc}{\hspace*{0.08cm}}
\newcommand{\pp}{{\sspc\prime}}
\renewcommand{\O}{O}
\newcommand{\OO}{\mathcal O}
\newcommand{\OOstart}{\mathcal O_{\mathrm{in}}}
\newcommand{\OOend}{\mathcal O_{\mathrm{out}}}
\newcommand{\R}{\mathbb R}
\newcommand{\N}{\mathbb N}
\newcommand{\Z}{\mathbb Z}
\newcommand{\D}{\mathcal D}
\newcommand{\F}{\mathcal F}
\newcommand{\W}{\mathcal W}
\newcommand{\thinsubsetneq}{\mathrel{\scalebox{1}[.85]{$\subsetneq$}}}
\newcommand{\thinsupsetneq}{\mathrel{\scalebox{1}[.85]{$\supsetneq$}}}
\newcommand{\orb}{\mathrm{Orb}(f)}
\newcommand{\orbphi}{\mathrm{Orb}(\phi)}
\newcommand{\fb}{\partial_L C_\O}
\newcommand{\Ltop}{\partial_L^{\textup{top}\,}}
\newcommand{\Lbot}{\partial_L^{\textup{bot}\,}}
\newcommand{\Rtop}{\partial_R^{\textup{top}\,}}
\newcommand{\Rbot}{\partial_R^{\textup{bot}\,}}
\newcommand{\bb}{\partial_R C_\O}
\newcommand{\fasym}{\stackrel{+}{\sim}}
\newcommand{\Dasym}{\stackrel{D}{\sim}}
\newcommand{\notfasym}{\not\stackrel{+}{\sim}}
\newcommand{\basym}{\stackrel{-}{\sim}}
\newcommand{\notbasym}{\not\stackrel{-}{\sim}}
\title[Refined methods in Foliated Brouwer Theory]{Refined methods in Foliated Brouwer Theory}
\author[N. Schuback]{Nelson Schuback}
\address{Sorbonne Université and Université Paris Cité, CNRS, IMJ-PRG, F-75005 Paris, France}
\email{\href{mailto:nelson.schuback@imj-prg.fr}{nelson.schuback@imj-prg.fr}}
\thanks{This project has received funding from the European Union’s Horizon 2020 research and innovation programme under the Marie Skłodowska-Curie grant agreement No 945332. \includegraphics*[scale = 0.03]{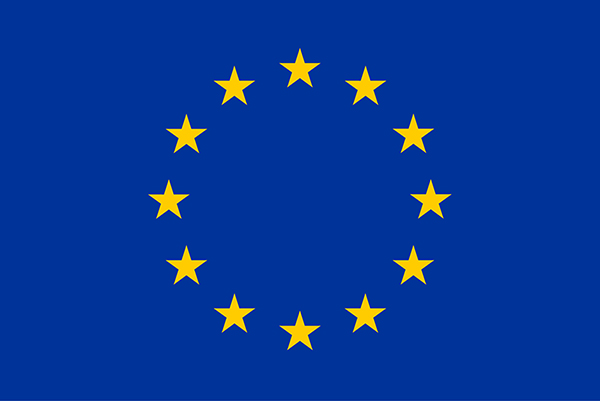}}
\begin{document}

\begin{abstract}
    A Brouwer homeomorphism is a fixed-point free, orientation-preserving homeomorphism of the plane. A foundational result of Le Calvez \cite{lec1} establishes that every such homeomorphism $f$ admits an oriented planar foliation $\F$ such that every point $x \in \R^2$ can be connected to its image $f(x)$ by a path positively transverse to $\F$. This provides a powerful framework for analyzing the dynamics of $f$ by studying how its orbits cross the leaves of $\mathcal{F}$.\break
    In this article, we refine this framework by identifying additional qualitative dynamical information about $f$ that is encoded in $\F$, which can be systematically recovered through the concept of proper transverse trajectories. Later, we investigate the possible combinatorial configurations of these proper trajectories for finite collections of orbits and characterize their simplest forms. As a key application, this refined framework is used in a forthcoming work to offer a new perspective on Homotopy Brouwer Theory, originally introduced by Handel \cite{handel99}.
\end{abstract}

\maketitle
    \bigskip
\quad {\bf Keywords:} Brouwer homeomorphism, transverse foliation, transverse trajectories

\bigskip
\quad {\bf MSC 2020:}  37E30 (Primary); 37C86 (Secondary)

\vspace*{0.42cm}

\tableofcontents

\setlength{\baselineskip}{1.25\baselineskip}
\newpage
\section{Introduction}

Let \(f:\mathbb{R}^2 \longrightarrow \mathbb{R}^2\) be an orientation-preserving and fixed-point free homeomorphism, often referred to as a \textit{Brouwer homeomorphism}. Such maps are named after L. E. J. Brouwer, which first studied them in the early twentieth century. In his seminal work \cite{Brouwer}, Brouwer proved the Brouwer Translation Theorem (BTT), which asserts that through every point of the plane passes a \textit{Brouwer line} $\lambda$ of $f$, that is, a proper embedding $\lambda: \R \longrightarrow \R^2$ such that
$$f(L(\lambda)) \subset L(\lambda)\quad \text{ and } \quad f^{-1}(R(\lambda)) \subset R(\lambda).$$
Recall that, according to the Jordan curve theorem, $\lambda$ admits a left side and a right side, denoted by \(L(\lambda)\) and \(R(\lambda)\), given by the connected components of \(\R^2\setminus \lambda\) lying to the left and right of \(\lambda\).

In particular, the BTT implies that the plane can be covered by unbounded simply connected open $f$-invariant subsets, on which \(f\) is conjugate to a translation. This means that every point is wandering under $f$ and, thus, the limit set of every orbit of $f$ is empty. In other words, every orbit of $f$ is the image of a proper embedding of $\Z$ into $\R^2$. 
This fundamental description of the dynamics of Brouwer homeomorphisms have served as basis for the development of Brouwer Theory in the twentieth century (see \cite{GUILLOU1994331} for a historic account). 

In the early 2000s, Le Calvez introduced a foliated version of the BTT \cite{lec1} (and its equivariant form in \cite{lec2}), thereby initiating the so-called Foliated Brouwer Theory, a powerful framework for studying Brouwer homeomorphisms.

\begin{theorem*}[Le Calvez \cite{lec1}]
There exists an oriented topological foliation $\F$ of the plane such that every leaf $\phi \in \F$ is a Brouwer line of $f$.
\end{theorem*}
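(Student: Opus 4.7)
The plan is to construct $\F$ by combinatorial approximation of the dynamics of $f$, using a cell-decomposition of $\R^2$ by small ``free'' cells and then foliating each cell in a manner coherent with the Brouwer-line condition. The Brouwer Translation Theorem already produces many Brouwer lines through each point, but the issue is that a priori these lines do not assemble into a foliation; the role of the combinatorial object will be to select and glue them consistently.

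First, I would fix a \emph{maximal free brick decomposition} $D$ of $\R^2$: a locally finite CW-decomposition whose $2$-cells are topological closed disks $b$ (bricks) satisfying $f(b)\cap b = \emptyset$, chosen to be maximal for the refinement order via Zorn's lemma. Free decompositions exist because $f$ is fixed-point free, so each point admits an arbitrarily small free open neighbourhood; maximality encodes the fact that no two adjacent bricks can be merged without violating freeness.

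Second, I would extract combinatorial dynamics from $D$ by considering the oriented graph $\Gamma$ on the set of bricks with an edge $b \to b'$ whenever $f(b)\cap b' \neq \emptyset$. The classical obstructions coming from the BTT---absence of periodic orbits, impossibility of cyclic chains of translations---force $\Gamma$ to be acyclic. Consequently, the edges of the $1$-skeleton $D^{(1)}$ inherit a coherent transverse orientation: across each edge one can identify a canonical ``upstream'' brick and a ``downstream'' brick. I would then build $\F$ by foliating each brick $b$ locally, in a way compatible with the induced boundary orientation (the arcs of $\partial b$ split into entering and exiting segments, and the disk admits an essentially unique oriented foliation realising this boundary behaviour), and then gluing the local models along the edges of $D^{(1)}$. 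The resulting oriented foliation $\F$ of $\R^2$ has the property that, across every brick it meets, every leaf $\phi$ is pushed strictly to its right by $f$.

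The main obstacle is the final step: verifying that each leaf is a \emph{proper} embedding $\phi:\R\to\R^2$---so that the Jordan curve theorem provides well-defined sides $L(\phi)$ and $R(\phi)$---and that the brick-wise pushing property globalises to $f(L(\phi))\subset L(\phi)$. Properness requires excluding leaves that accumulate on themselves or spiral indefinitely in a bounded region; I would rule these configurations out by combining local finiteness of $D$ with acyclicity of $\Gamma$, which prohibits closed transverse loops and more generally any recurrent combinatorial pattern compatible with the local gluing. Once properness is established, the inclusion $f(L(\phi))\subset L(\phi)$ is obtained by chaining the brick-wise pushing property along the forward orbit of an arbitrary point of $L(\phi)$, and the symmetric statement for $R(\phi)$ follows by the same argument applied to $f^{-1}$.
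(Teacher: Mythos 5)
This statement is quoted in the paper as an external result with a citation to Le~Calvez; the paper contains no proof of it, so your proposal can only be measured against the argument in the cited work. Your starting point is indeed the right one and matches Le~Calvez's actual strategy: one takes a maximal free brick decomposition, forms the relation $b\to b'$ whenever $f(b)\cap b'\neq\varnothing$, and proves that the generated relation is a partial order (acyclicity); this last point is not a formal consequence of the BTT but rests on Franks' lemma on free disk chains, and you should name it as such rather than wave at ``classical obstructions.''

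The genuine gap is in your construction and verification of the foliation. Foliating each brick compatibly with a transverse orientation of the $1$-skeleton and then ``chaining the brick-wise pushing property'' does not yield $f(L(\phi))\subset L(\phi)$: the freeness condition $f(b)\cap b=\varnothing$ only tells you that $f$ displaces each brick off itself, and says nothing about where $f$ sends a point of $L(\phi)$ relative to the whole leaf $\phi$, which traverses infinitely many bricks; a point far from $\phi$ could a priori be mapped across $\phi$ without violating any local condition. In Le~Calvez's proof the logic is reversed: from the partial order on bricks one first builds, for each brick, attractor-type and repeller-type unions of bricks ($\{b'\geq b\}$ and $\{b'\leq b\}$), shows that suitable boundary components of these sets are genuine Brouwer lines (this is where the global inclusion $f(L(\lambda))\subset L(\lambda)$ is actually established), and only then proves that the resulting family of Brouwer lines is a closed lamination whose complementary regions can be foliated coherently. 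Both of these steps --- producing global Brouwer lines from the combinatorics, and completing a lamination of Brouwer lines to a foliation of the plane --- are the substantive content of the theorem, and neither is supplied by your local gluing argument. Your properness discussion has the same character: acyclicity forbids closed transverse loops, but excluding a leaf that spirals onto a non-closed limit set requires the lamination structure, not just local finiteness of the decomposition.
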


Any foliation $\F$ of the plane by Brouwer lines of $f$ is called a \textit{transverse foliation} of $f$.
Although such a transverse foliation is not unique, its existence already yields a robust structure for analyzing the dynamics of $f$.
Indeed, one may connect any point \(x \in \mathbb{R}^2\) to its image $f(x)$ by a path \(\gamma_x : [0,1] \longrightarrow \mathbb{R}^2\) that is positively transverse to \(\F\), in the sense that \(\gamma_x\) intersects any leaf $\phi\in\F$ at most once, always passing from $R(\phi)$ to $L(\phi)$. This allows one to represent the orbit $\O = \O(f,x)$ of $x$ under $f$ by an embedded line $\Gamma_\O:\R^2\longrightarrow \R^2$ given by the concatenation
\[
\Gamma_\O := \prod_{n \in \mathbb{Z}} \gamma_{f^n(x)}.
\]
We refer to such $\Gamma_\O$ as a \textit{transverse trajectory} of the orbit $\O$, with respect to the foliation $\F$.
Since the choice of paths $\gamma_{f^n(x)}$ is not unique, the orbit $\O$ admits uncountably many transverse trajectories $\Gamma_\O$ with respect to the same foliation $\F$, many of which not being properly embedded.\break
Nevertheless, all such transverse trajectories $\Gamma_\O$ intersect exactly the same set of leaves of $\F$.
Although this approach loses part of the qualitative dynamical information of the orbit, it has proved remarkably effective (particularly in its equivariant form) for reproducing classical results in surface dynamics and for establishing new powerful criteria for detecting chaotic behavior in surface homeomorphisms isotopic to the identity, through the celebrated Forcing Theory.

There are some problems, however, that cannot be addressed using the current methods of Foliated Brouwer Theory. For instance, when tackling the study of Brouwer mapping classes in Homotopy Brouwer Theory (see \cite{handel99}, \cite{LEROUX_2017}, \cite{BAVARD_2017}), it is important to know whether the transverse trajectories $\{\Gamma_\O\}_{\O \in \OO}$ of a finite collection $\OO$ of orbits of $f$ can be chosen to be properly-embedded and with minimal pairwise intersections.

In this work, we refine the methods of Foliated Brouwer Theory to address such problems, centered around future applications to the study of Homotopy Brouwer Theory. 

\pagebreak




\newpage
 
\subsection*{Description of the results}
Let us consider $f:\R^2\longrightarrow\R^2$ to be a Brouwer homeomorphism, and \(\F\) to be a transverse foliation of \(f\). The set of all orbits of \(f\) is denoted by $\orb$.

To each orbit $\O \in \orb$, we associate the set of leaves $C_\O \subset \F$ crossed by $\O$, defined as
$$C_\O = \{\phi \in \F \mid \phi \cap \Gamma_\O \neq \varnothing\},$$ where $\Gamma_\O$ represent any transverse trajetory of $\O$. This set is independent of the choice of $\Gamma_\O$. 

By abuse of notation, one may treat $C_\O$ as a subset of $\R^2$ by identifying $C_\O = \bigcup_{\phi\in C_\O}\phi$.  In Section~\ref{sec:prelim_foliations}, we show that $C_\O\subset\R^2$ is homeomorphic to a plane and trivially foliated by $\F$. Moreover, the boundary $\partial C_\O$ is formed by a locally finite collection of leaves of $\F$. At last,  we define two distinguished subsets $\partial_L C_\O$ and $\partial_R C_\O$ of the boundary $\partial C_\O$, given by 
$$ \partial_L C_\O := \partial C_\O \cap \Bigl(\ \bigcap_{\sspc\phi \in C_\O\sspc} L(\phi)\Bigr) \quad \text{ and } \quad \partial_R C_\O := \partial C_\O \cap \biggl(\ \bigcap_{\sspc\phi \in C_\O\sspc} R(\phi)\biggr)$$
and a total order $\prec$ among the leaves in $\partial_L C_\O$, and similarly for $\partial_R C_\O$ (see Section~\ref{sec:order_limit_leaves}).

\begin{figure}[h!]
    \center
    \vspace*{-0.15cm}\begin{overpic}[width=8cm, height=3.7cm, tics=10]{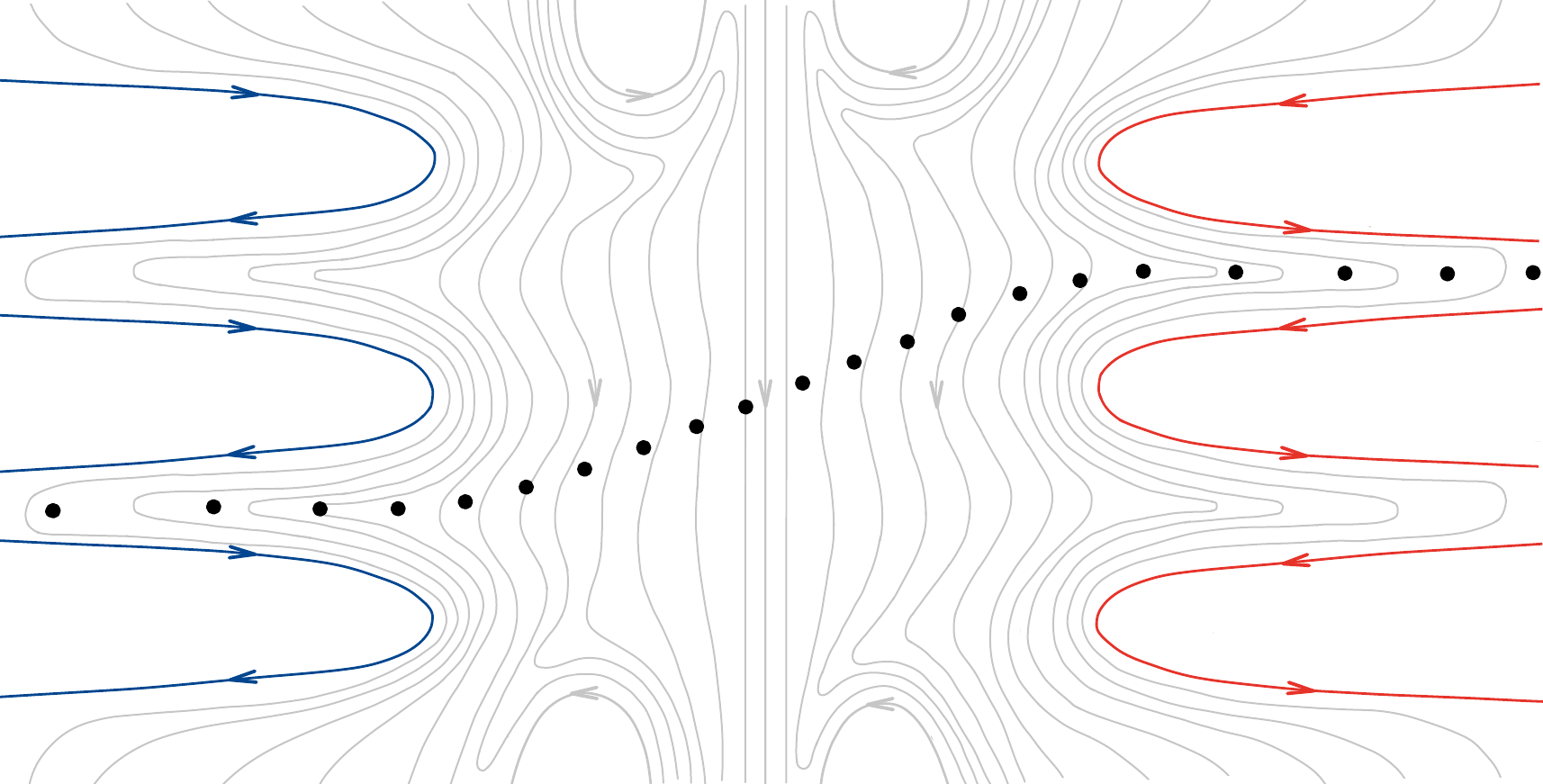}
        \put (54.4,15) {\colorbox{white}{$\rule{0cm}{0.3cm}\quad $}}
        \put (55,15.3) {{\color{myGRAY}\large$\displaystyle C_\O $}}
        \put (41.5,24) {\colorbox{white}{\color{myDARKGRAY}\large$\rule{0cm}{0.26cm}\  $}}
         \put (41.2,24) {{\color{black}\large$\displaystyle \O$}}
        \put (3,22) {\color{myBLUE}\large$\displaystyle \bb$}
        \put (85,22) {\color{myRED}\large$\displaystyle \partial_L C_\O$}
\end{overpic}
\end{figure}

 \vspace*{-0.2cm}
Let $\text{Cut}_\prec(\partial_L C_\O)$ denote the set of all cuts of the totally ordered set $(\partial_L C_\O,\prec)$, that is, the set of all pairs $(A,B)$ such that $A\sspc,\sspc B \subset \partial_L C_\O$ are disjoint subsets of $\partial_L C_\O$ satisfying
\begin{itemize}[leftmargin = 1.5cm]
    \item $A \cup B = \partial_L C_\O$,
    \item For any leaf $\phi \in A$ and any leaf $\phi' \in B$, it holds that $\phi \prec \phi'$.
\end{itemize}   
Similarly, let $\text{Cut}_\prec(\partial_R C_\O)$ denote the set of all cuts of the totally ordered set $(\partial_R C_\O,\prec)$.

\begin{definition}
    A proper transverse trajectory $\Gamma_\O$ of an orbit $\O \in \orb$ is a properly embedded path $\Gamma_\O:\R\longrightarrow \R^2$ positively transverse to $\F$ passing through every point of $\O$.
\end{definition}

The following result shows that every orbit admits a proper transverse trajectory and that any such trajectories carry more qualitative information about the orbit than non-proper trajectories.

\begin{thmx}\label{theorem:1A-intro}
    Every orbit $\O \in \orb$ is canonically associated to two cuts
    \begin{align*}
    (\sspc\partial_L^\text{\sspc top\sspc } \O\sspc,\spc\partial_L^\text{\sspc bot\sspc } \O\sspc) \in \textup{Cut}_\prec(\partial_L C_\O) \quad \ \text{ and } \ \quad 
    (\sspc \partial_R^\text{\sspc top\sspc } \O\sspc,\spc\partial_R^\text{\sspc bot\sspc } \O\sspc) \in \textup{Cut}_\prec(\partial_R C_\O).
\end{align*}
    In addition, the orbit $\O$ admits a proper transverse trajectory $\Gamma_\O$, and it must satisfy\begin{align*}
    &\Ltop \O = \partial_L C_\O \cap L(\Gamma_\O)\spc, \spc \quad \Rtop \O = \partial_L C_\O \cap L(\Gamma_\O), \\[0.1ex]  &\Lbot \O = \partial_R C_\O \cap R(\Gamma_\O)\spc, \quad \Rbot \O = \partial_R C_\O \cap R(\Gamma_\O).
\end{align*}

\vspace*{-0.25cm}
\begin{figure}[h!]
    \center
	\begin{overpic}[width=8cm, height=3.7cm, tics=10]{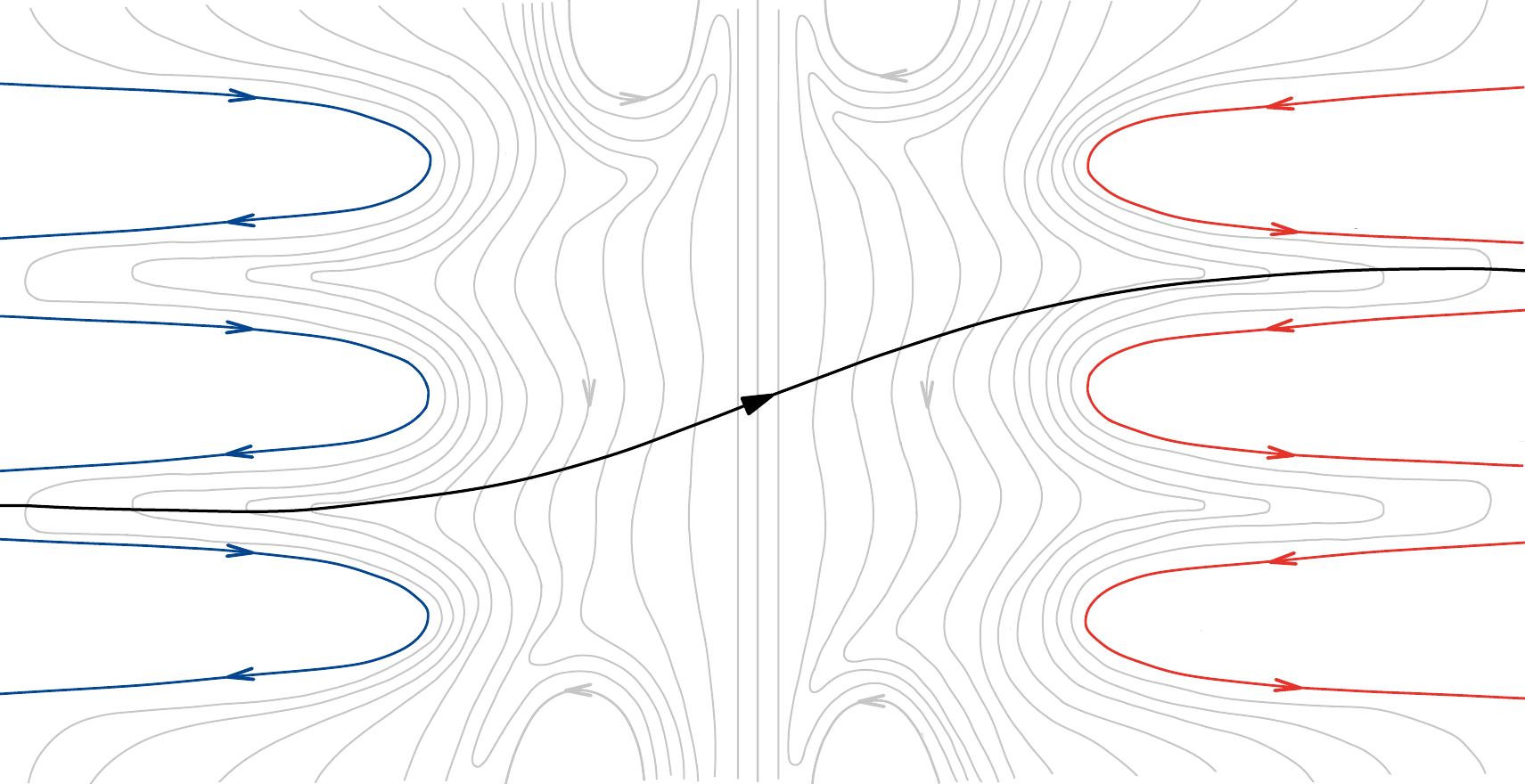}
        \put (54.4,15) {\colorbox{white}{$\rule{0cm}{0.3cm}\ \ \  $}}
        \put (55.5,15.3) {{\color{myGRAY}\large$\displaystyle C_\O $}}
        \put (41.5,25) {\colorbox{white}{\color{myDARKGRAY}\large$\rule{0cm}{0.3cm}\ \spc $}}
        \put (41.8,25) {{\color{black}\large$\displaystyle \Gamma_\O$}}
         \put (85,21.8) {\color{myRED}\large$\displaystyle \Lbot \O $}
        \put (85,35) {\color{myRED}\large$\displaystyle \Ltop \O $}
        \put (85,8) {\color{myRED}\large$\displaystyle \Lbot \O $}
        \put (3,21.8) {\color{myBLUE}\large$\displaystyle \Rtop \O $}
        \put (3,35.5) {\color{myBLUE}\large$\displaystyle \Rtop \O $}
        \put (3,8.2) {\color{myBLUE}\large$\displaystyle \Rbot \O $}
\end{overpic}
\end{figure}
\end{thmx}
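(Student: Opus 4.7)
The plan is to work inside the trivially foliated plane $C_\O$, via a homeomorphism $h\colon C_\O\to\R^2$ carrying $\F|_{C_\O}$ to the horizontal foliation of $\R^2$, oriented so that the $L$-side of each leaf lies above it. Writing $h(f^n(x))=(u_n,y_n)$, positive transversality forces $(y_n)_{n\in\Z}$ to be strictly increasing, and the Brouwer property of $f$ combined with the openness of $C_\O$ in the ambient $\R^2$ implies that $(u_n,y_n)$ escapes every compact set of both the ambient and the model planes as $|n|\to\infty$. To build a proper transverse trajectory, I would connect $(u_n,y_n)$ to $(u_{n+1},y_{n+1})$ by a strictly $y$-monotone embedded arc $\tilde\gamma_n$ contained in the slab $y_n\le y\le y_{n+1}$ of the model, set $\gamma_n:=h^{-1}\circ\tilde\gamma_n$, and form $\Gamma_\O:=\prod_{n\in\Z}\gamma_n$. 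The concatenation is automatically positively transverse and passes through every orbit point, and strict monotonicity of $\tilde\Gamma_\O$ in the model secures injectivity. Making $\Gamma_\O$ properly embedded in the \emph{ambient} $\R^2$ requires extra care: fixing a compact exhaustion $\{K_j\}$ of $\R^2$, one uses the local finiteness of $\partial C_\O$ and the properness of $\O$ to choose each $\gamma_n$ so that its ambient image avoids $K_{|n|}$ once $|n|$ is large.

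Once $\Gamma_\O$ is available, the Jordan curve theorem splits $\R^2$ into two topological half-planes $L(\Gamma_\O)$ and $R(\Gamma_\O)$, and I define
\[
\Ltop\O:=\partial_L C_\O\cap L(\Gamma_\O),\qquad \Lbot\O:=\partial_L C_\O\cap R(\Gamma_\O),
\]
with the analogous formulas for $\Rtop\O,\Rbot\O\subset\partial_R C_\O$. That these pairs form cuts of the total orders $(\partial_L C_\O,\prec)$ and $(\partial_R C_\O,\prec)$ reduces, via $h$, to the model statement that any proper $y$-monotone embedded curve in $\R^2$ splits its two asymptotic sides into arcs consistent with any order respecting horizontal position of leaves. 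For canonicity, I would observe that any two proper transverse trajectories $\Gamma_\O,\Gamma_\O'$ through $\O$ bound, between consecutive orbit points, bigons whose interiors lie entirely in $C_\O$ (since both cross only leaves of $C_\O$); as no boundary leaf meets $C_\O$, every such leaf sits on the same side of $\Gamma_\O$ as of $\Gamma_\O'$ in every region where the two curves agree up to a bigon. The remaining control at infinity can be handled by realizing a proper transverse isotopy from $\Gamma_\O$ to $\Gamma_\O'$ through the path-connected space of $y$-monotone embedded arcs in each slab of the model, along which the cut, being discrete, must be constant.

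I expect the main obstacle to be the construction of $\Gamma_\O$. Producing a concrete trajectory that is properly embedded in the \emph{ambient} $\R^2$, rather than only in $C_\O$, demands careful control of how slabs of the model embed back into the ambient plane: $h^{-1}$ can distort shapes wildly, so a bounded slab in the model may correspond to an ambient region that re-enters every compactum. The local finiteness of $\partial C_\O$ will be essential to confine the arcs $\gamma_n$ close to the orbit in the ambient plane and to prevent this recurrence; once that is achieved, the remaining geometric steps above should follow from standard Jordan-separation and path-connectedness arguments.
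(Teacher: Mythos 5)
Your outline reverses the paper's architecture (the paper first defines the cuts intrinsically via a stabilization property of the orbit, then builds a proper trajectory realizing them; you build the trajectory first and read the cuts off from $L(\Gamma_\O)$ and $R(\Gamma_\O)$), and that reversal would be acceptable if the trajectory construction went through. It does not, as written: the step you flag as the ``main obstacle'' is not a matter of careful routing, and the tools you invoke for it --- local finiteness of $\partial C_\O$ together with properness of $\O$ --- are provably insufficient. Here is a purely topological configuration with both properties for which no proper transverse trajectory exists. Let the left half-plane $\{x<0\}$ be a leaf domain foliated by the vertical lines $\{t\}\times\R$, $t<0$, with $\{0\}\times\R$ a boundary leaf, and take the points $p_n=(-1/n,(-1)^n n)$ on the increasing leaves $\{-1/n\}\times\R$. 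These points are properly embedded in the plane and the boundary of the leaf domain is a single leaf, yet every transverse arc from $p_n$ to $p_{n+1}$ lives in the slab $-1/n\le x\le -1/(n+1)$ and joins a point with $y=(-1)^n n$ to one with $y=(-1)^{n+1}(n+1)$, hence must cross the compact segment $[-1,0]\times\{0\}$; every concatenation therefore accumulates on that segment and is non-proper. So the existence of a proper trajectory cannot follow from the topology of $\O$ and $\F$ alone: one must use that $\O$ is an orbit of a Brouwer homeomorphism to which $\F$ is transverse.

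This is exactly the content of the paper's Claim 3.5: for each boundary leaf $\phi\in\partial_L C_\O$ with a small trivial-neighborhood $V_\phi$, the side of $f^n(x)$ on which the arc $\phi_n\cap V_\phi$ lies eventually stabilizes as $n\to\infty$. The proof is dynamical --- assuming infinitely many flips, one produces a locally finite family of arcs $\alpha_k$ joining $f^{n_k}(x)$ to $\phi$ whose images $f(\alpha_k)$ are all forced to cross a fixed compact arc $K$, contradicting that $f$ is a homeomorphism --- and it is precisely what rules out the alternation exhibited above and allows the connecting arcs $\gamma_n$ to be chosen outside a locally finite family of neighborhoods of $\partial C_\O$. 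Without this (or an equivalent) input from the dynamics, your construction of $\Gamma_\O$, and with it the definition of the cuts, does not get off the ground. A secondary, more repairable weakness: since you define $\Ltop\O$ via a chosen $\Gamma_\O$, canonicity needs a real argument; the proper-isotopy sketch is vague (path-connectedness of the space of proper transverse lines through $\O$ is not established), whereas once one has the stabilization claim the independence is immediate, because for any proper $\Gamma_\O$ one may take the trivial-neighborhoods $V_\phi$ disjoint from the closed set $\Gamma_\O$ and the side of each boundary leaf is then read off from the intrinsic, stabilized data.
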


\vspace*{-0.9cm}
\newpage

As a consequence of Theorem~\ref{theorem:1A-intro}, we obtain a finer description of the behavior of orbits in terms of the structure of the transverse foliation \(\F\), given by the set $C_\O$ and the associated cuts $(\sspc\partial_L^\text{\sspc top\sspc } \O\sspc,\spc\partial_L^\text{\sspc bot\sspc } \O\sspc)$ and $(\sspc \partial_R^\text{\sspc top\sspc } \O\sspc,\spc\partial_R^\text{\sspc bot\sspc } \O\sspc)$, or dually, by its proper transverse trajectories. This qualitative data is referred throughout the text as the \emph{$\F$-asymptotic behavior} of $\O$. 

Being able to describe the \(\F\)-asymptotic behavior refines the framework, for instance by allowing us to distinguish orbits of $f$ even when they cross the same leaves of \(\F\), something not achievable before. This is illustrated in the following example.

\begin{figure}[h!]
    \center
    \hspace*{-5cm}\begin{overpic}[width=7.3cm, height=3.6cm, tics=10]{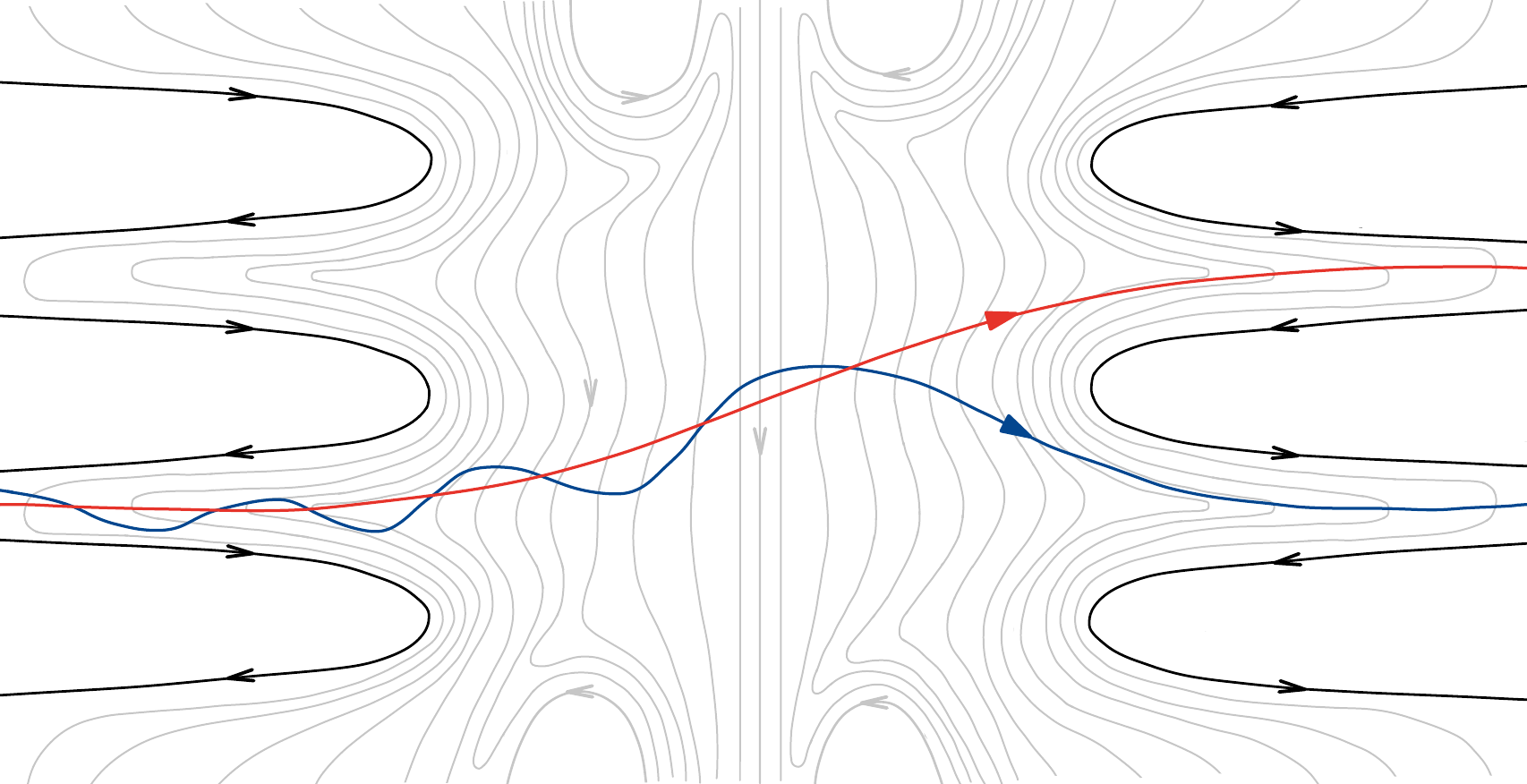}
        \put (58,15) {\colorbox{white}{$\rule{0cm}{0.3cm}\ \  $}}
        \put (58.3,15.3) {{\color{myBLUE}\large$\displaystyle \Gamma_{\O^\pp} $}}
        \put (57.6,32) {\colorbox{white}{\color{myDARKGRAY}\large$\rule{0cm}{0.3cm}\   \  $}}
        \put (58.3,31.6) {{\color{myRED}\large$\displaystyle \Gamma_\O$}}
        \put (129.6,42.5) {{\color{black}\normalsize$\displaystyle C_\O$}{\color{black}\normalsize$\displaystyle \ =\ $}{\color{black}\normalsize$\displaystyle C_{\O^\pp}$}}
        \put (123,33) {{\color{black}\normalsize$\displaystyle \Rtop \O$}{\color{black}\normalsize$\displaystyle \ =\ $}{\color{black}\normalsize$\displaystyle \Rtop {\O^\pp}$}}
        \put (123,23) {{\color{black}\normalsize$\displaystyle \Rbot \O$}{\color{black}\normalsize$\displaystyle \ =\ $}{\color{black}\normalsize$\displaystyle \Rbot {\O^\pp}$}}
        \put (123,13) {{\color{black}\normalsize$\displaystyle \Ltop \O$}{\color{black}\normalsize$\displaystyle \ \thinsubsetneq\ $}{\color{black}\normalsize$\displaystyle \Ltop {\O^\pp}$}}
        \put (123,3) {{\color{black}\normalsize$\displaystyle \Lbot \O$}{\color{black}\normalsize$\displaystyle \ \thinsupsetneq \ $}{\color{black}\normalsize$\displaystyle \Lbot {\O^\pp}$}}

\end{overpic}
\end{figure}

\vspace*{-0.3cm}
\begin{definition} 
    Two orbits $\O, \O^{\sspc \prime} \in \orb$ are called \emph{forward $\F$-asymptotic}, denoted $\O \fasym \O^\pp$, if there exists a leaf $\phi \in C_\O \cap C_{\O^\pp}$ such that $L(\phi) \cap C_\O = L(\phi) \cap C_{\O^\pp}$,
and, in addition, 
$$ \Ltop \O = \Ltop \O^\pp \quad \text{ and } \quad \Lbot \O = \Lbot \O^\pp.$$
    Similarly, we define the \emph{backward $\F$-asymptotic} relation, denoted $\O \basym \O^\pp$, by replacing $L$ for $R$.
\end{definition}

 
By comparing the different $\F$-asymptotic behaviors allowed by the underlying foliation \(\F\),  we obtain two relations $\lesssim_L$ and $\lesssim_R$ on the set $\orb$, which describes of how the orbits are arranged with respect to each other. For a definition of these relations, we refer to Section~\ref{sec:preorders}. Most importantly, for each $\phi \in \F$, these relations restrict to total preorders on the subset 
$$\orbphi := \{\O \in \orb \mid \phi \in C_\O\}.$$
Moreover, any two orbits $\O,\O^\pp \in \orb$ satisfy the following equivalences:
\begin{align*}
         (\O \lesssim_L \O^{\sspc\prime} \ \text{ and }\  \O^{\sspc\prime} \lesssim_L \O) \iff \O\fasym \O^{\sspc\prime} , \\
         (\O \lesssim_R \O^{\sspc\prime} \ \text{ and }\  \O^{\sspc\prime} \lesssim_R \O) \iff \O\basym \O^{\sspc\prime}.
\end{align*}

\begin{thmx}\label{theorem:1B-intro}
    Let $\phi \in \F$, and let $\OO\subset \orbphi$ be a finite collection of orbits crossing $\phi$.  Assume that \(\F\) is generic for \(\OO\), i.e., no leaf in $\F$ intersects more than one orbit in \(\OO\). Then, there exists a family $\bigl\{\Gamma_{\O}\bigr\}_{\O \in \OO}$ of proper transverse trajectories of the orbits in $\OO$ that satisfy
    \mycomment{-0.12cm}
    $$\Gamma_{\O} \cap \Gamma_{\O^\pp} \cap \overline{\rule{0pt}{3.6mm}L(\phi)} = \varnothing, \quad \forall \sspc \O,\O^\pp \in \OO,\  \O \neq \O^\pp.$$

    \mycomment{-0.22cm}
    \noindent Moreover, the order at which the intersection points $p_\O \in \Gamma_{\O} \cap \phi$ appear along the leaf $\phi$ is compatible with the relation $\lesssim_L$, in the sense that $p_\O < p_{\O^\pp}$ along the leaf $\phi$ only if $\O \lesssim_L \O^\pp$. 
\end{thmx}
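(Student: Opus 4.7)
The plan is to bootstrap from Theorem~\ref{theorem:1A-intro}, which for every orbit $\O \in \OO$ produces a proper transverse trajectory $\Gamma_\O$ whose asymptotic data on $\partial_L C_\O$ is recorded by the cut $(\Ltop\O,\Lbot\O)$. Since a positively transverse path crosses any leaf at most once and $\phi \in C_\O$, such a $\Gamma_\O$ meets $\phi$ at a single point $p_\O$, and is thereby split into a forward ray $\Gamma_\O^L \subset \overline{L(\phi)}$ and a backward ray in $\overline{R(\phi)}$. Only the forward rays are constrained by the statement, so I focus on them.

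First, enumerate $\OO = \{\O_1, \dots, \O_n\}$ as a linear extension of the total preorder $\lesssim_L$ restricted to $\OO \subset \orbphi$, breaking ties within each $\fasym$-equivalence class arbitrarily, and pick distinct target points $q_1 < \dots < q_n$ along $\phi$. For each $i$, I modify the trajectory of $\O_i$ given by Theorem~\ref{theorem:1A-intro} so that it meets $\phi$ precisely at $q_i$: the segment joining the two consecutive orbit points of $\O_i$ that straddle $\phi$ can be replaced by any positively transverse path with the same endpoints crossing the same set of leaves, since any two such paths are homotopic through positively transverse paths. The genericity assumption ensures that the leaves crossed by this local surgery belong to $C_{\O_i}$ alone, so the modification does not interfere with the trajectories of the other orbits.

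The core of the argument is to construct the forward rays $\Gamma_{\O_1}^L, \dots, \Gamma_{\O_n}^L$ as pairwise disjoint proper arcs in $\overline{L(\phi)}$, each satisfying $\Gamma_{\O_i}^L \cap \phi = \{q_i\}$. I propose a downward induction on $i$, maintaining as an invariant that the rays already built are pairwise disjoint and remain vertically ordered far from $\phi$ in the order dictated by their cuts. At step $i$, let $U_i$ be the closure of the component of $\overline{L(\phi)} \setminus (\Gamma_{\O_{i+1}}^L \cup \dots \cup \Gamma_{\O_n}^L)$ that meets $\phi$ in the interval ending at $q_{i+1}$. The hypothesis $\O_i \lesssim_L \O_{i+1}$, combined with the compatibility between $\lesssim_L$ and the cuts, implies that $(\Ltop\O_i, \Lbot\O_i)$ sits weakly below $(\Ltop\O_{i+1}, \Lbot\O_{i+1})$ on the common part of $\partial_L C_{\O_i}$ and $\partial_L C_{\O_{i+1}}$, so there is enough room inside $U_i$ to realize a proper transverse ray from $q_i$ to infinity with the correct asymptotic behavior and passing through every orbit point of $\O_i$ in $L(\phi)$. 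Such a ray can first be drawn inside the trivially foliated plane $C_{\O_i}$, using its product structure, and then pushed into $U_i$ by a homotopy through positively transverse rays.

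The principal obstacle is the case $\O_i \fasym \O_{i+1}$, where the two asymptotic cuts coincide and the new ray must share its asymptotic behavior with its predecessor while remaining strictly disjoint from it and threading orbit points that may be interlaced on the shared tail of $C_{\O_i}$ and $C_{\O_{i+1}}$. The resolution is to work inside this common eventual tail, which inherits a trivial product structure from $\F$, and to realize both rays as graphs of strictly nested continuous functions over the common leaf-coordinate; the finiteness of $\OO$ guarantees that only finitely many orbit points of either orbit have to be threaded on any compact leaf-interval, so the nesting can be arranged while still hitting the correct points in the correct order. By construction, the resulting ordering of the $p_{\O_i} = q_i$ along $\phi$ is a linear extension of $\lesssim_L$, yielding the final compatibility clause.
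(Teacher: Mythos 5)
Your overall architecture --- order the orbits along $\phi$ by a linear extension of $\lesssim_L$, build only the forward rays in $\overline{L(\phi)}$, and treat $\fasym$-classes by nesting inside a common trivially foliated tail --- matches the spirit of the paper's proof, which splits into Lemmas \ref{lemma:common_half_traj}, \ref{lemma:shared_disjoint} and \ref{lemma:blow-up}. But the step carrying all the weight, ``there is enough room inside $U_i$ to realize a proper transverse ray \dots with the correct asymptotic behavior and passing through every orbit point of $\O_i$ in $L(\phi)$,'' is asserted rather than proved, and it is exactly the hard part. Two concrete problems. First, your downward induction builds the rays one at a time, but a ray $\Gamma^L_{\O_{i+1}}$ constructed at an earlier stage can separate some orbit points of $\O_i$ from the region $U_i$: on a common leaf $\psi\in C_{\O_i}\cap C_{\O_{i+1}}$ the point of $\O_i$ may lie on the wrong side of the point $\Gamma^L_{\O_{i+1}}\cap\psi$ along $\psi$, and then no transverse ray through that orbit point can stay in $U_i$. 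Preventing this forces you to control, at every stage, the position of each already-constructed ray relative to all remaining orbit points on every common leaf --- which is why the paper first builds a \emph{single shared} half-line per $\fasym$-class containing all the orbit points of the whole class (Lemma \ref{lemma:common_half_traj}), makes these disjoint across classes (Lemma \ref{lemma:shared_disjoint}), and only then blows each one up inside a small product neighborhood (Lemma \ref{lemma:blow-up}). Your nested-graphs device is essentially this blow-up, but you invoke it only for consecutive asymptotic orbits rather than as the organizing principle.

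Second, your invariant that the rays ``remain vertically ordered far from $\phi$'' presupposes a common product structure at infinity, which fails as soon as orbits exit the common leaf domain through distinct (possibly non-separable) leaves of its left boundary. This is the case $\OOend(D)\neq\varnothing$ in the proof of Lemma \ref{lemma:shared_disjoint}, where the construction must route each exiting trajectory through the correct leaf of $\partial_L D$ and continue into the adjacent maximal leaf domains; the compatibility of this routing with $\lesssim_L$ is precisely the content of conditions (L1)--(L4) and is what makes the ordering claim true. Without an argument of this type, or the maximal-leaf-domain decomposition supporting it, the assertion that $\lesssim_L$-monotone endpoints can always be joined to infinity by pairwise disjoint proper transverse rays does not follow from the cut inequalities alone.
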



\vspace*{-0.4cm}

\begin{figure}[h!]
    \center\begin{overpic}[width=7.2cm, height=3.6cm, tics=10]{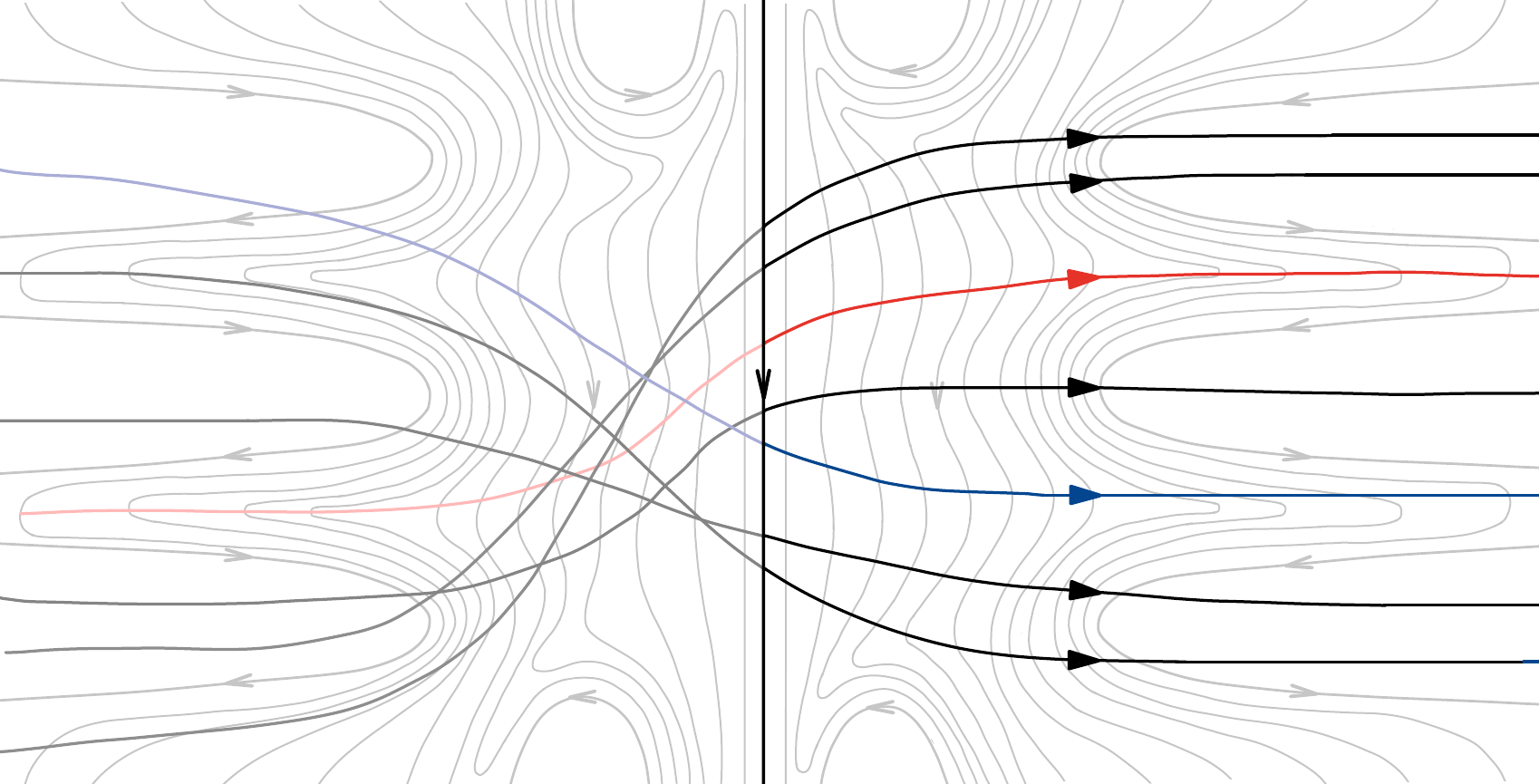}
        \put (44.5,-6) {\colorbox{white}{\color{black}\large$\displaystyle\  \phi \spc$}}
         \put (103.5,15.5) {{\color{myBLUE}\large$\displaystyle \Gamma_{\O^\pp} $}}
         \put (103.5,30) {{\color{myRED}\large$\displaystyle \Gamma_\O$}}
         \put (-25,20) {{\color{myRED}\large$\displaystyle \O \ $}{\color{black}\large$\displaystyle \lesssim_L$}{\color{myBLUE}\large $\spc\displaystyle \O^\pp$}}
\end{overpic}
\end{figure}

\vspace*{0.1cm}
A similar result holds if we replace $\overline{\rule{0pt}{3.6mm}L(\phi)}$ and $\lesssim_L$ for their right analogues $\overline{\rule{0pt}{3.6mm}R(\phi)}$ and $\lesssim_R$. 

\newpage

Finally, we introduce the notion of \emph{weak $\F$-transverse intersection} between two orbits, which generalizes the notion of $\F$-transverse intersection defined by Le Calvez and Tal in \cite{LCTal2018}. Roughly speaking, two orbits $\O, \O^\pp \in \orb$ have a weak $\F$-transverse intersection if they are ordered by $\lesssim_L$ and $\lesssim_R$ exclusively in opposite ways.

\begin{definition}\label{def:weak_transverse_intersection}
    Two orbits $\O, \O^{\sspc \prime} \in \orb$ are said to have a \textit{weak $\F$-transverse intersection} if they satisfy $\O\not\fasym \O^\pp$ and $\O\not\basym \O^\pp$, and, in addition, they are oppositely ordered by $\lesssim_L$ and $\lesssim_R\sspc$, meaning that either $\O \lesssim_L \O^{\sspc \prime}$ and $\O^{\sspc \prime} \lesssim_R \O$, or $\O^{\sspc \prime} \lesssim_L \O$ and $\O \lesssim_R \O^{\sspc \prime}$.
\end{definition}

In the figure below, we compare the classic and weak notions of $\F$-transverse intersection.

\vspace*{0.1cm}
\begin{figure}[h!]
    \center
    \vspace*{-0.1cm}
    \begin{overpic}[width=6.6cm, height=3.7cm, tics=10]{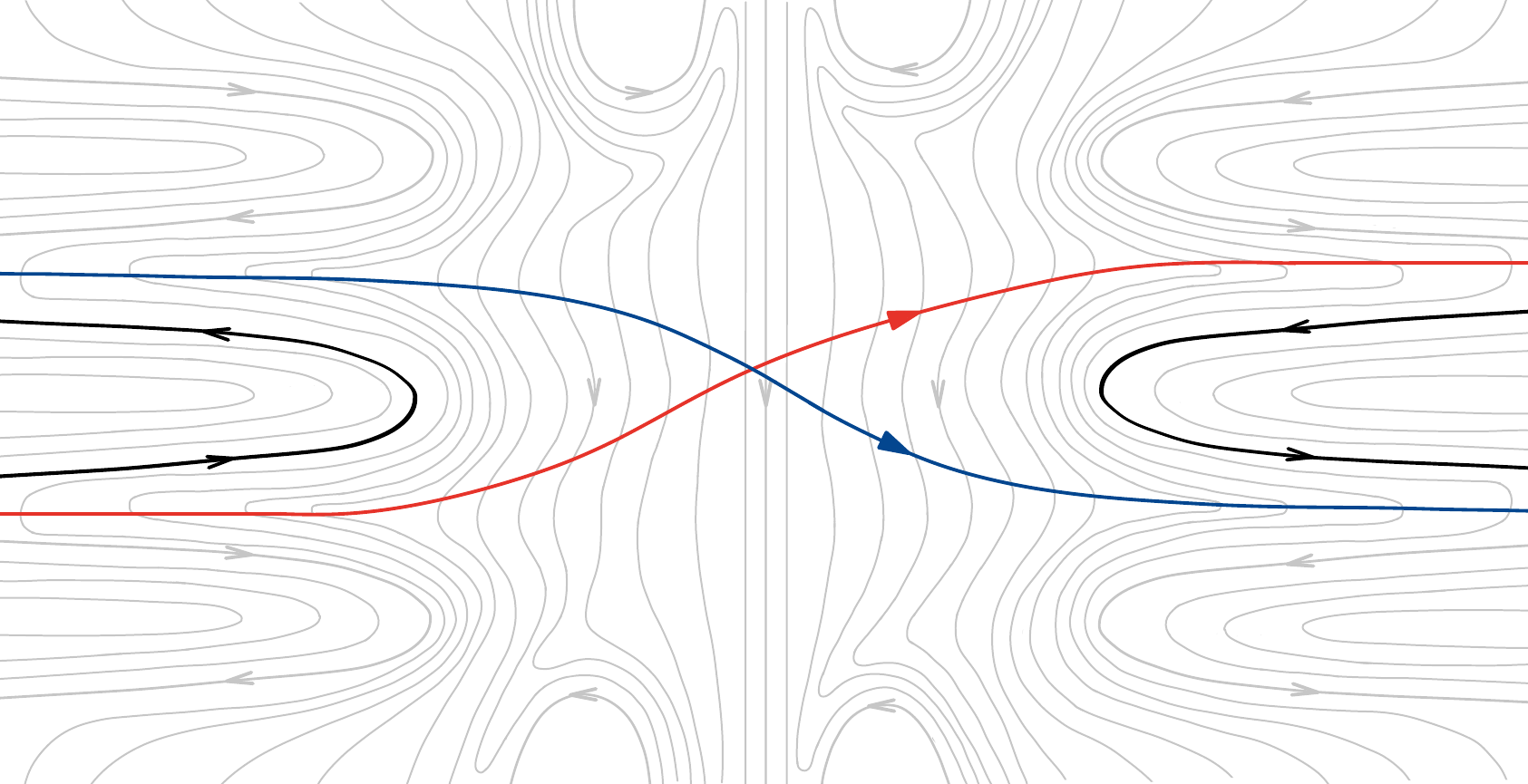}
        \put (7.5,-10) {\large weak $\F$-transverse intersection}
\end{overpic}
\hspace{1cm}
\begin{overpic}[width=6.6cm, height=3.7cm, tics=10]{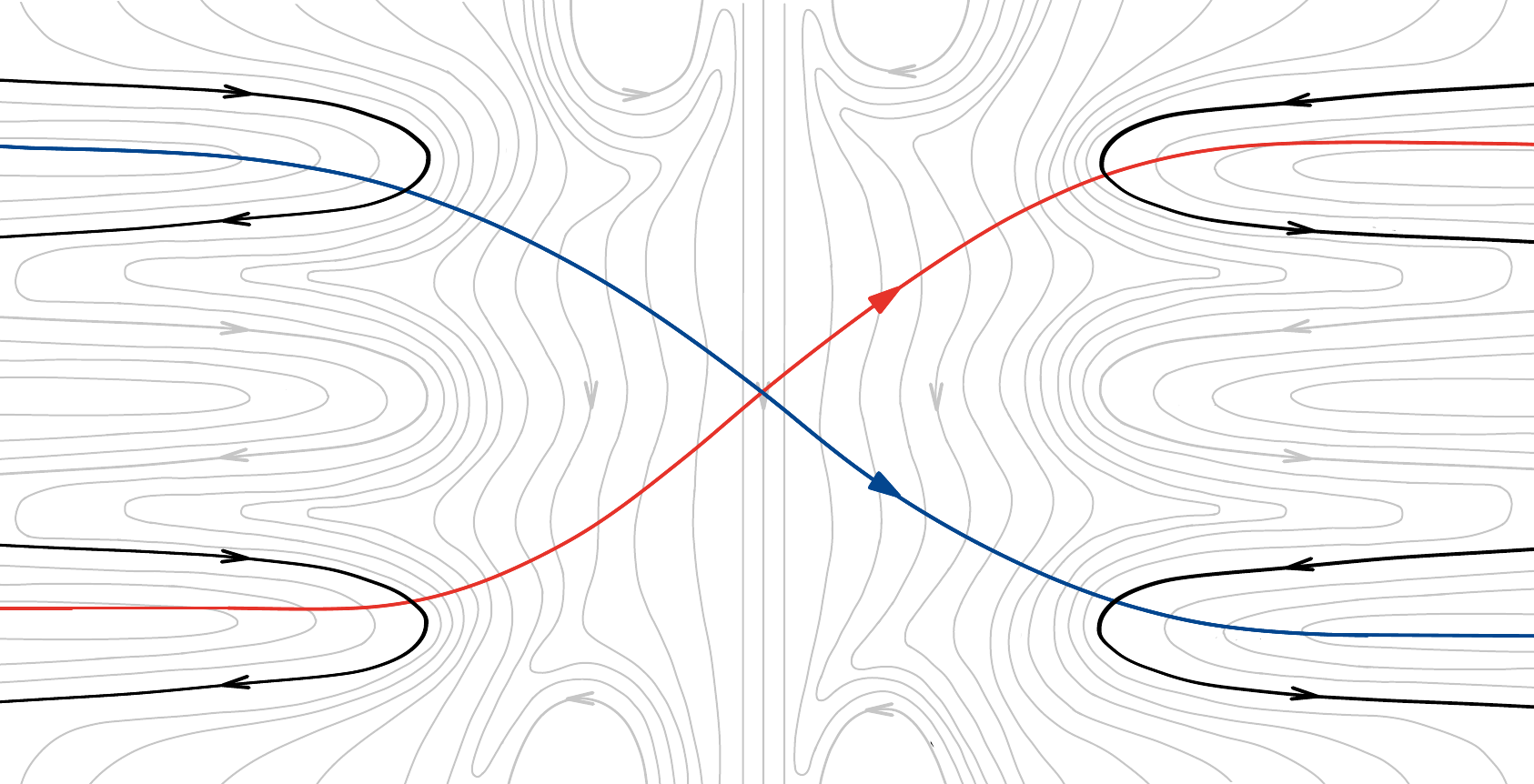}
        \put (15,-10) {\large $\F$-transverse intersection}
\end{overpic}
\end{figure}
\vspace*{0.65cm}
Finally, we conclude by presenting a structural result, which is a key step towards future applications of Foliated Brouwer Theory to the study of Brouwer mapping classes. 

Before stating Theorem \ref{thm:1C-alternative}, we need to introduce some therminology.
Denote the open unit disk by $\mathbb D^2 = \{(x,y) \in \R^2 \mid x^2 + y^2 < 1\}$. 
A foliation of $\mathbb D^2$ is said to be \emph{nicely extended to the boundary} if each leaf of the foliation can be extended to a simple arc in the closed disk $\overline{\mathbb D^2}$\break with distinct endpoints on the boundary $\partial \mathbb D^2$. For any foliation $\F$ of the plane, and any homeomorphism $\varphi:\R^2 \longrightarrow \mathbb D^2$, we denote by $\varphi(\F)$ the foliation of $\mathbb D^2$ formed by the images of the leaves of $\F$ under $\varphi$. 
Finally, a \emph{diagram} is a finite collection of line segments in $\mathbb D^2$ such that with pairwise distinct endpoints on the boundary $\partial \mathbb D^2$. 


\begin{thmx}\label{thm:1C-alternative}
    Let $\OO\subset \orb$ be a finite collection of orbits, and assume \(\F\) is generic for $\OO$. Then,
    there exists a family $\{\Gamma_\O\}_{\O \in \OO}$ of proper transverse trajectories for $\OO$, and a homeomorphism $\varphi:\R^2 \longrightarrow \mathbb D^2$ that satisfy the following properties:
    \begin{itemize}[leftmargin=1.3cm]
        \item[\textup{\textbf{(i)}\ \sspc }]  The foliation $\varphi(\F)$ is nicely extended to the boundary.
        \item[\textup{\textbf{(ii)}}\spc ]  The family $\{\varphi(\Gamma_\O)\}_{\O \in \OO}$ is a diagram in $\mathbb D^2$.
        \item[\textup{\textbf{(iii)}}]  Two distinct segments $\varphi(\Gamma_\O)$ and $\varphi(\Gamma_{\O^\pp})$ intersect if, and only if, the orbits $\O$ and $\O^\pp$ have a weak $\F$-transverse intersection.
    \end{itemize}
\end{thmx}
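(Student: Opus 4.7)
The plan is to work in three stages: first, produce proper transverse trajectories $\{\Gamma_\O\}_{\O\in\OO}$ with controlled local intersections; second, build a homeomorphism $\varphi:\R^2\to\mathbb D^2$ that extends $\F$ nicely to the boundary while realizing the cuts prescribed by Theorem~\ref{theorem:1A-intro} as endpoints of $\varphi(\Gamma_\O)$; finally, straighten the resulting arcs into line segments and verify the linking characterization in (iii).

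For the first stage, since $\OO$ is finite and $\F$ is generic for $\OO$, only finitely many leaves of $\F$ are crossed by more than one orbit in $\OO$. I would iteratively apply Theorem~\ref{theorem:1B-intro} at each such leaf to build a family $\{\Gamma_\O\}_{\O\in\OO}$ of proper transverse trajectories whose pairwise intersections are finite, transverse, and compatible with the orderings $\lesssim_L$ and $\lesssim_R$ at every shared leaf. Orbits related by $\fasym$ or $\basym$ can thus be arranged so that their trajectories become disjoint past any fixed leaf on the side where they agree; Theorem~\ref{theorem:1A-intro} enters here by identifying the common cut data shared by asymptotic trajectories, so that the refinements remain compatible as we move from leaf to leaf.

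For the second stage, I would construct $\varphi$ by modifying an arbitrary homeomorphism $\R^2 \to \mathbb D^2$ in the spirit of Section~\ref{sec:prelim_foliations}, so that each leaf $\phi\in\F$ accumulates on exactly two points of $\partial\mathbb D^2$ and the map from leaves to unordered pairs of boundary points encodes the orders on all the sets $\partial_L C_\O$ and $\partial_R C_\O$ arising from orbits in $\OO$. With this done, each $\varphi(\Gamma_\O)$ becomes a properly embedded arc in $\mathbb D^2$ with two well-defined endpoints on $\partial\mathbb D^2$, determined by the cuts $(\Ltop\O,\Lbot\O)$ and $(\Rtop\O,\Rbot\O)$. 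A further boundary-supported isotopy can then spread the $2|\OO|$ endpoints so that they are pairwise distinct, and the cyclic order in which they appear on $\partial\mathbb D^2$ records the positions of the cuts with respect to $\lesssim_L$ and $\lesssim_R$.

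For the third stage, claim (iii) reduces to the classical linking criterion: two arcs in a disk with pairwise distinct endpoints on the boundary intersect if and only if their endpoints are linked on $\partial\mathbb D^2$. By construction, the endpoints of $\varphi(\Gamma_\O)$ and $\varphi(\Gamma_{\O'})$ are linked precisely when the cuts of $\O$ and $\O'$ occupy opposite positions with respect to $\lesssim_L$ and $\lesssim_R$, that is, when $\O$ and $\O'$ have a weak $\F$-transverse intersection. In the non-linked case, the local disjointness obtained in the first stage lets me push the two trajectories apart entirely, so they are disjoint in $\R^2$. A final ambient homeomorphism of $\overline{\mathbb D^2}$ fixing $\partial\mathbb D^2$ straightens the finite collection of arcs into line segments, yielding the required diagram. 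The main obstacle is the second stage: constructing the boundary extension so that the endpoints of each $\varphi(\Gamma_\O)$ are well-defined and correctly ordered on $\partial\mathbb D^2$. A naive choice of $\varphi$ need not give a definite limit on $\partial\mathbb D^2$ for every leaf, and the challenge is to align the combinatorial cut data of Theorem~\ref{theorem:1A-intro} with a genuine homeomorphic extension; the finiteness of $\OO$, together with the locally finite structure of the boundaries $\partial C_\O$ established in Section~\ref{sec:prelim_foliations}, is what makes this feasible.
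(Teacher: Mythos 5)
Your outline identifies the right ingredients (Theorem~\ref{prop:pairwise_disj_traj} at shared leaves, a disk compactification, a linking argument), but there is a genuine gap at the heart of stage~1, and it propagates to stage~3. First, the premise that ``only finitely many leaves of $\F$ are crossed by more than one orbit in $\OO$'' is false: genericity only prevents a leaf from containing points of two orbits, while the set of leaves crossed by both $\O$ and $\O^\pp$ is the leaf domain $C_\O\cap C_{\O^\pp}$, which is infinite whenever nonempty; what is finite is the set of critical leaves $\W_\OO$ and of maximal leaf domains $\D_\OO$. More seriously, ``iteratively apply Theorem~\ref{theorem:1B-intro} at each such leaf\ldots compatible with the orderings $\lesssim_L$ and $\lesssim_R$ at every shared leaf'' cannot be carried out as stated, because for a pair with a weak $\F$-transverse intersection the two orders are \emph{incompatible} by definition: Theorem~\ref{prop:pairwise_disj_traj} applied on the left of a leaf orders the crossing points by $\lesssim_L$, its right analogue orders them by $\lesssim_R$, and these demands conflict. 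The entire difficulty of the theorem is to reconcile them globally so that each conflicting pair crosses \emph{exactly once} and each non-conflicting pair not at all. The paper resolves this with machinery you do not have: the decomposition into maximal leaf domains and the directed forest $(\D_\OO,\W_\OO)$, a ``standard'' total order $\leq_\phi$ used at the entrance of each domain and an ``adaptive'' total order $\leq_D$ used at its exit (Section~\ref{sec:refinements_definitions}), designed precisely so that the straight chords joining the two orderings inside a trivialized square realize one crossing per weakly intersecting pair, and so that the crossing is deferred to a later domain $D\leadsto D^\pp$ whenever both orbits continue into $D^\pp$. Without such a deferral rule, a naive leaf-by-leaf construction can force the same pair to cross in several domains, violating the injectivity needed for a diagram of line segments.

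The gap resurfaces in stage~3: the linking criterion you invoke (``two arcs in a disk intersect iff their endpoints are linked'') holds for chords, not for arbitrary embedded arcs, and an ambient homeomorphism of $\overline{\mathbb D^2}$ can straighten a family of arcs into chords only if the family already has minimal (at most one, transverse) pairwise intersections --- which is exactly the conclusion of the paper's Theorem~\ref{thm:main_Part1}, not something you can assume before straightening. Likewise, ``pushing apart'' non-linked pairs works for one pair at a time (this is essentially Corollary~\ref{cor:alternative_def_weak_F}), but you give no argument that it can be done simultaneously for all pairs without reintroducing crossings elsewhere. Finally, for the compactification itself the paper does not attempt your direct construction of a boundary extension encoding the cut data; it invokes the ideal-boundary compactification associated with a finite family of pairwise transverse foliations containing $\F$ and the trajectories (Remark~\ref{remark:proof_main_Part1}). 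Your stage~2 correctly flags this as the obstacle but leaves it unresolved, and requiring \emph{every} leaf of $\F$ to converge to two distinct boundary points is a nontrivial statement that needs a proof or a citation.
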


\vspace*{0cm}
\begin{figure}[h!]
    \center\begin{overpic}[width=4.8cm, height=4.8cm, tics=10]{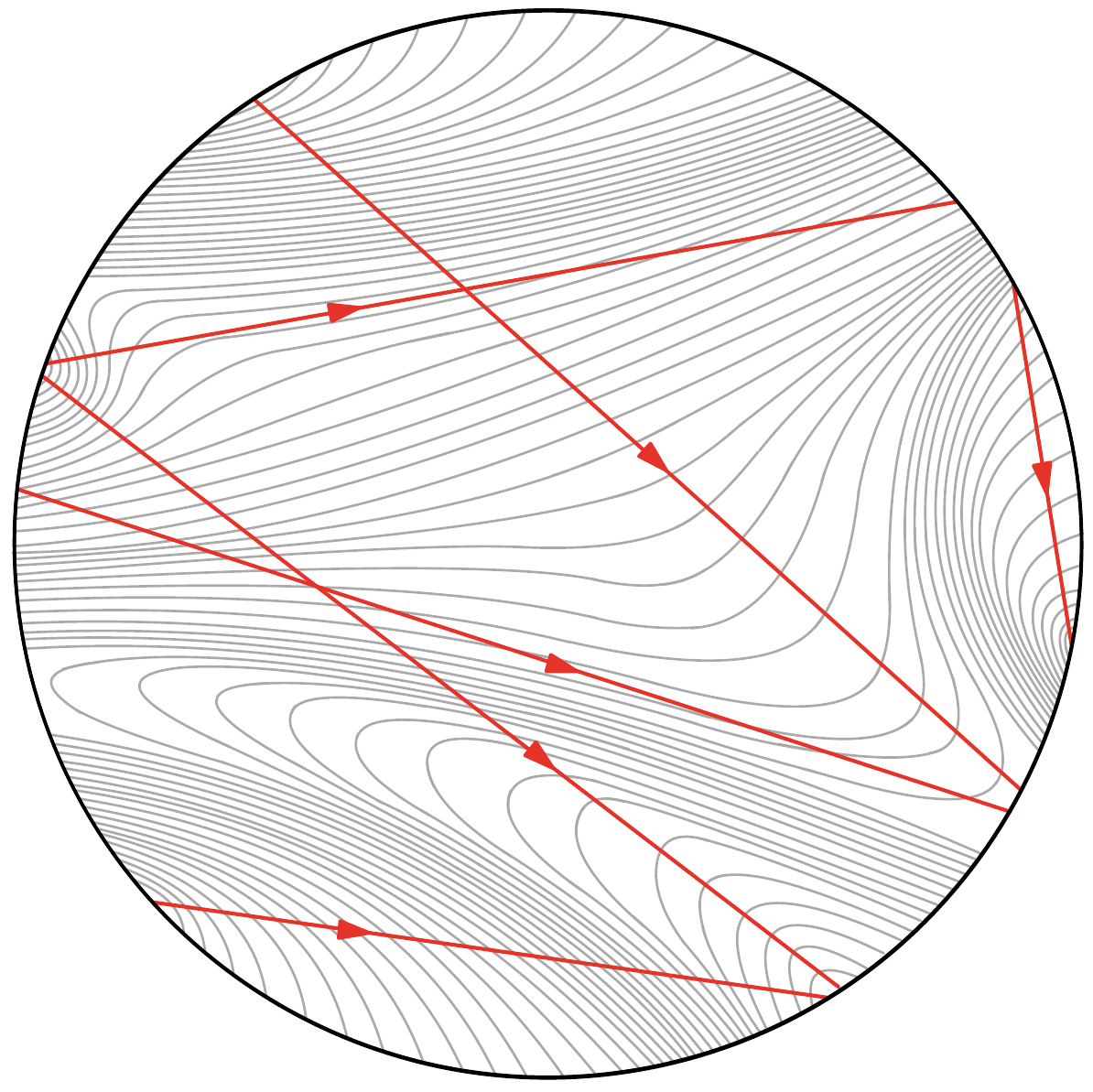}
         \put (103.5,30) {{\color{myRED}\large$\displaystyle \{\varphi(\Gamma_\O)\}_{\O \in \OO}$}}
\end{overpic}
\end{figure}


\newpage
\section{Preliminaries}

\subsection{Planar topology} 
Let $\R^2$ be the Euclidean plane, equipped with the topology induced by the Euclidean metric $d$. An open ball of radius $r>0$ centered at $x\in \R^2$ is denoted by $B(x,r)$. 

A \textit{path} $\gamma: I\longrightarrow \R^2$ is a continuous map defined on an interval $I\subset \R$. 
A path $\gamma$ is said to be a reparametrization of another path $\gamma':I'\longrightarrow \R^2$ if there exists a strictly increasing map $\varphi:\R\longrightarrow\R$ that satisfies $\varphi(I) = I'$ and $\gamma= \gamma' \circ \varphi\vert_I$. A \textit{loop} $\gamma:\mathbb {S}^1\longrightarrow \R^2$ is a continuous map defined on the unit circle $\mathbb{S}^1$. A path or a loop is said to be \textit{simple} if it is injective. 
By abuse of notation, if there is no ambiguity, we may consider paths and loops as subset of the plane by identifying them with their images.

A \textit{topological line} is a path $\ell: \R \longrightarrow \R^2$ which is injective and proper. Observe that any topological line admits a natural orientation induced by the positive orientation on $\R$. Similarly, we define a \textit{half-line} as a path $\ell^+: [\spc0,\infty) \longrightarrow \R^2$ that is injective and proper. Often enough, it is convenient to consider half-lines defined on the interval $(-\infty,0\spc]$ as well.

In the sphere compactification of the plane $\mathbb{S}^2 = \R^2 \sqcup \{\infty\}$, every topological line $\ell$ can be extended to a simple loop on $\mathbb S^2$ by adding the point at infinity. According to the Jordan curve theorem \cite{jordan1887cours}, the complement of any simple loop on the sphere has exactly two connected components. By the Schoenflies theorem (see \cite{schoenflies} for an elementary proof), each of these connected components is homeomorphic to $\R^2$. In our context, this means that every topological line $\ell$ separates the plane into two sets, each of which is homeomorphic to the plane. These sets are called the left and right sides of $\ell$, denoted \( L(\ell) \) and \( R(\ell) \), according to the orientation of $\ell$. 


Now, we present a fundamental generalization of the Schoenflies theorem, which provides a criterion for extending homeomorphisms defined on subsets of the sphere to the entire sphere.  This theorem is particularly useful in the context of low-dimensional topology because it allows one to handle ``sufficiently nice'' subsets of the sphere with more flexibility.

Here, by \textit{arc} we mean a simple path $\gamma: [\sspc0,1\sspc] \longrightarrow \mathbb S^2$. A \textit{Y-set} is a subset of $\mathbb S^2$ which is the union of three arcs intersecting at a single point that is an endpoint of each arc.  Note that the orientation of $\mathbb S^2$ induces a cyclic order on the arcs constituting an Y-set in a neighborhood of the intersection point. 

\begin{theorem*}[Homma-Schoenflies, \cite{homma1953extension}]\label{thm:homma_schoenflies}
    Let $X\subset \mathbb S^2$ be a compact, connected and locally-connected set. Then, an injective and continuous map $h:X\longrightarrow \mathbb S^2$ can be extended to an orientation-preserving homeomorphism of $\mathbb S^2$ if, and only if, the map $h$ preserves the cyclic order of any Y-set contained in $X$.
\end{theorem*}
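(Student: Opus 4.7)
The plan is to split the equivalence into its easy and hard directions. The forward direction is immediate: if $\tilde h$ is an orientation-preserving homeomorphism of $\mathbb S^2$ extending $h$, then it restricts to an orientation-preserving homeomorphism on any disk neighborhood of the center of a Y-set, and such a restriction preserves the cyclic order of any three arcs meeting at the common endpoint. The real content is the converse.

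For the converse, my strategy is to decompose $\mathbb S^2$ using the complementary components of $X$, extend $h$ one component at a time, and then glue along $X$ using the Y-set hypothesis. Since $X$ is a compact locally connected continuum, the open set $\mathbb S^2 \setminus X$ decomposes into at most countably many simply connected components $\{U_i\}_{i \in I}$. By Torhorst's theorem each boundary $\partial U_i$ is itself a locally connected continuum, so by Carathéodory's prime-end theory there is a continuous surjection $\psi_i : \overline{\mathbb D^2} \longrightarrow \overline{U_i}$ which restricts to a homeomorphism on the open disk. The analogous decomposition applies to $Y := h(X)$, giving components $\{V_j\}_{j \in J}$ with parametrizations $\psi'_j$. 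The next step is to construct a bijection $\sigma : I \longrightarrow J$ with $h(\partial U_i) = \partial V_{\sigma(i)}$ and, for each $i$, to extend $h|_{\partial U_i}$ to a homeomorphism $\overline{U_i} \longrightarrow \overline{V_{\sigma(i)}}$; by the Alexander trick applied in the disk models, this reduces to showing that the induced circle map $(\psi'_{\sigma(i)})^{-1} \circ h \circ \psi_i|_{\mathbb S^1}$ is a well-defined orientation-preserving homeomorphism of $\mathbb S^1$.

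The main obstacle, and exactly where the Y-set hypothesis enters, is that a single point $x \in \partial U_i$ may correspond to several prime ends of $U_i$ whenever $X$ fails to be locally a simple arc at $x$, so the map above is not manifestly even well defined on $\mathbb S^1$. I would resolve this by analyzing the finitely many arcs of $X$ accumulating at such a point $x$: any three of them form a Y-set centered at $x$, and the ambient cyclic order of these arcs controls precisely the cyclic arrangement of the prime ends of $U_i$ clustering at $x$ (and likewise for $V_{\sigma(i)}$ at $h(x)$). The cyclic-order hypothesis on $h$ therefore forces the prime-end identifications on the $V$-side to mirror those on the $U$-side, which makes the induced circle map a bona fide orientation-preserving homeomorphism and unlocks the componentwise extension. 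Once all these extensions are built, they glue together with $h$ on $X$ into a candidate $\tilde h : \mathbb S^2 \longrightarrow \mathbb S^2$; continuity at points of $X$ follows from local connectedness of $X$, which forces the diameters of the components $U_i$ accumulating at a given $x$ to shrink uniformly, and $\tilde h$ is then a continuous bijection between compact Hausdorff spaces, hence the desired orientation-preserving homeomorphism.
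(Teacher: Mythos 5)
First, a calibration: the paper does not prove this statement at all --- it is imported as a black box from Homma's article \cite{homma1953extension}, with \cite[Appendix]{leroux2006pointsfixes} cited for an elementary treatment --- so there is no in-text argument to compare yours against. Judged on its own terms, your plan follows the standard route for such extension theorems: simple connectivity of the complementary components of a connected compact set, Torhorst's theorem to make each $\partial U_i$ a locally connected continuum, Carath\'eodory's theorem to get the closed-disk parametrizations, the Alexander trick on each component, and a gluing whose continuity rests on the fact that only finitely many complementary components of a locally connected continuum have diameter exceeding a given $\epsilon$. The easy direction, the continuity of the glued map, and surjectivity (which you can get for free from invariance of domain once you have a continuous injection of $\mathbb S^2$ into itself) are all fine, and you correctly locate the Y-set hypothesis inside the prime-end combinatorics.

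Two steps are genuinely missing. The first is the bijection $\sigma$: you assert that components can be matched so that $h(\partial U_i)=\partial V_{\sigma(i)}$, but this is where much of the work lies and it is exactly what the Y-set condition must be shown to enforce. For instance, if $X$ consists of two vertices joined by four nested arcs $e_1,\dots,e_4$, the subcircle $e_1\cup e_3$ separates the other two arcs and bounds no complementary face; a map of $X$ permuting the arcs so that a face boundary $e_1\cup e_2$ lands on $e_1\cup e_3$ violates the cyclic-order condition and admits no extension, and nothing in your sketch detects this or produces $\sigma$ in general. The second gap is the phrase ``the finitely many arcs of $X$ accumulating at $x$'': a locally connected continuum can have infinitely many branches at a single point (attach to the origin segments of length $1/n$ in varying directions), and near $x$ it need not be a union of arcs at all ($X$ may have interior, or be a Sierpi\'nski-type set), so the fiber of the Carath\'eodory map over $x$ cannot in general be enumerated against finitely many local arcs, and the reduction of the well-definedness of the circle map to finitely many Y-set comparisons does not go through as stated. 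Finally, the orientation choices made independently on the (possibly infinitely many) components where no Y-set constrains them must be shown to be globally coherent. All of this is repairable --- the theorem is true --- but these points are the substance of the proof rather than afterthoughts, so as it stands the proposal is an outline with the hard steps left open.
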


Applying the Homma-Schoenflies theorem is not necessarily straightforward, as it requires checking the preservation of the cyclic order of Y-sets. However, there are many classic examples of applications of this theorem (see \cite[Appendix]{leroux2006pointsfixes}). Along this preliminary section, we will guide the reader through the process of applying the Homma-Schoenflies theorem as we introduce the new concepts and tools.

A family $\{A_i\}_{i \in I}$ of subsets of $\R^2$ is called \textit{locally-finite} if, for every compact set  $K\subset\R^2$, we have that $\{i \in I \mid A_i \cap K \neq \varnothing\}$ is a finite set.

\begin{lemma}\label{lemma:patt}
    Let $\{X_i\}_{i \in I}$ be a family of locally-finite and pairwise disjoint closed subsets of $\R^2$. Then, there exists a locally-finite family $\{U_i\}_{i\in I}$ of pairwise disjoint open sets satisfying $X_i \subset U_i\spc$.
\end{lemma}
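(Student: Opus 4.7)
The idea is to take $U_i$ to be an open $r$-neighborhood of $X_i$, where the radius $r(x)$ at a point $x \in X_i$ is chosen small enough to guarantee both pairwise disjointness and local finiteness. The subtle point is that the naive choice $r(x) = \tfrac{1}{3}d(x, \bigcup_{j\neq i} X_j)$ ensures disjointness but may make the $U_i$'s grow without bound when $X_i$ is isolated far from the other sets, destroying local finiteness. A uniform cap on $r(x)$ fixes this.

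\textbf{Construction.} For each $i \in I$ set $F_i := \bigcup_{j \neq i} X_j$. Since $\{X_j\}_{j \neq i}$ is locally finite and each $X_j$ is closed, the union $F_i$ is closed. Since the $X_j$'s are pairwise disjoint, $X_i \cap F_i = \varnothing$, so for every $x \in X_i$ the distance $d(x, F_i)$ is strictly positive. Define
\[
r(x) \;:=\; \tfrac{1}{3}\min\bigl(d(x, F_i),\spc 1\bigr) > 0, \qquad U_i \;:=\; \bigcup_{x \in X_i} B\bigl(x, r(x)\bigr).
\]
Then $U_i$ is open and contains $X_i$.

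\textbf{Disjointness.} Suppose $y \in U_i \cap U_j$ with $i \neq j$, so $y \in B(x, r(x)) \cap B(x', r(x'))$ with $x \in X_i$ and $x' \in X_j$; assume without loss of generality $r(x) \geq r(x')$. Then $d(x, x') \leq 2 r(x)$. If $r(x) = \tfrac{1}{3}d(x, F_i)$, then since $x' \in F_i$ we get $d(x, F_i) \leq d(x,x') \leq \tfrac{2}{3}d(x,F_i)$, contradicting $d(x,F_i) > 0$. Otherwise $r(x) = 1/3$, so $d(x, x') \leq 2/3 < 1 \leq d(x, F_i) \leq d(x, x')$, again absurd. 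Hence $U_i \cap U_j = \varnothing$.

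\textbf{Local finiteness.} By construction, $U_i$ is contained in the closed $1/3$-neighborhood $N_{1/3}(X_i) = \{y \in \R^2 \mid d(y, X_i) \leq 1/3\}$. For any compact $K \subset \R^2$, the set $K' := \{y \in \R^2 \mid d(y, K) \leq 1/3\}$ is again compact. If $U_i \cap K \neq \varnothing$, then some point of $X_i$ lies within distance $1/3$ of $K$, i.e. $X_i \cap K' \neq \varnothing$. By local finiteness of $\{X_i\}$, only finitely many indices $i$ satisfy this, so only finitely many $U_i$ meet $K$. The main obstacle — and the reason for the cap at $1$ in the definition of $r(x)$ — is precisely this last step, since the uncapped version of the argument gives disjointness effortlessly but allows $U_i$ to spill arbitrarily far.
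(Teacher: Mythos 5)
Your proof is correct and follows essentially the same strategy as the paper's: both build $U_i$ as a metric neighborhood of $X_i$ whose width is controlled by the distance to $\bigcup_{j\neq i}X_j$ and capped by a uniform constant, with disjointness coming from a distance comparison and local finiteness from the cap. The only cosmetic difference is that you realize $U_i$ as a union of balls $B(x,r(x))$ of variable radius, whereas the paper takes the sublevel set $\{z \mid d(z,X_i)<\min(d(z,Y_i),1)\}$; the two arguments are interchangeable.
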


\begin{proof}
    For each $i \in I$, consider the set $Y_i:=\bigcup_{j \in I} X_j \setminus X_i$. First, we show that $Y_i$ is closed. For a given $i \in I$, let $(x_n)_{n\in \N}$ be a sequence of points in $Y_i$ that converges to a point $x\in \R^2$. Observe that, since the family $\{X_j\}_{j \in I}$ is locally-finite, the set $\{j \in I \mid d(x, X_j) < r\}$ is finite for any $r>0$. Consequently, there exists an index $j_0 \in I\setminus \{i\}$ and a subsequence $(x_{n_k})_{k\in \N}$ that satisfies $x_{n_k} \in X_{j_0}$ for every $k\in \N$. Since $X_{j_0}$ is a closed set, we have that $x \in X_{j_0}$. Since $X_{j_0}$ is disjoint from $X_i$, 
    we have that $x \not\in X_i$. This allows us to conclude that $x \in Y_i$, which proves that the set $Y_i$ is closed.
    
    We can now define, for each $i \in I$, the set 
$$ U_i := \left\{z \in \R^2 \mid d(z, X_i)<\min\bigl(\sspc d(z, Y_i)\sspc, \sspc 1\sspc\bigr)\right\}.$$
Since $X_i$ and $Y_i$ are disjoint sets, we observe that
$$ d(z, X_i) = 0 < \min\bigl(\sspc d(z, Y_i)\sspc, \sspc 1\sspc\bigr), \quad \forall z \in X_i\sspc.$$
This proves that $X_i\sspc \subset U_i$ for every $i \in I$. Next, we remark that
$$ d(z, X_i) <\min\bigl(\sspc d(z, Y_i)\sspc, \sspc 1\sspc\bigr) \leq d(z, X_j), \quad \forall j\neq i,\ \forall  z \in U_i.$$
This implies that $U_i$ is disjoint from $U_j$ for every $j \in I\setminus \{i\}$. Finally, note that the function $$g_i:z \longmapsto d(z, X_i) - \min\bigl(\sspc d(z, Y_i)\sspc, \sspc 1\sspc\bigr)$$ is continuous. Thus, by writing $U_i = g_i^{-1}((-\infty,0))$, we conclude that $U_i$ is an open set.

To conclude the proof, we need to show that the family $\{U_i\}_{i \in I}$ is locally-finite. For that, we consider for each $i \in I$ the closed ball $$B(X_i\sspc,1):=\{z \in \R^2 \mid d(z, X_i) \leq 1\}.$$ Since the family $\{X_i\}_{i \in I}$ is locally-finite, the family of closed balls $\{B(X_i\sspc,1)\}_{i \in I}$ is also locally-finite. From the definition of the set $U_i$, we can observe that 
$$ U_i\subset B(X_i\sspc,1), \quad \forall i \in I.$$
Therefore, we conclude that the family of open sets $\{U_i\}_{i \in I}$ also is locally-finite.
\end{proof}

We provide a first example of application of the Homma-Schoenflies theorem. 

\vspace{0.3cm}

\begin{itemize}[leftmargin=0.6cm]
    \item Let $\{\ell_{n}\}_{n\in \mathbb Z}$ be a locally-finite family of pairwise disjoint topological lines on $\R^2$ such that 
    $$ L(\ell_{n_1})\subset L(\ell_{n_2}) \subset L(\ell_{n_3}),\quad \forall n_1<n_2<n_3\in \Z.$$
     In this context, the union of the lines in the family $\{\ell_n\}_{n \in \Z}$ and the point at infinity forms a compact, connected, and locally-connected subset of $\mathbb S^2$. It is worth noting that this would not be the case if the family $\{\ell_n\}_{n \in \Z}$ 
was not locally-finite.
By applying the Homma-Schoenflies theorem, we can obtain an orientation-preserving homeomorphism on the plane that maps each line $\ell_n$ to the vertical line $\{n\}\times \R$ oriented downwards. 
\end{itemize}

\mycomment{0.4cm}
\noindent For more details on the proof of this application, we refer the reader to \cite[Appendix B]{LEROUX_2017}.

\mycomment{0.2cm}

\subsection{Topological planar foliations}\label{sec:prelim_foliations}

{\ A} \textit{topological planar foliation} \(\mathcal{F}\) is a partition of \(\mathbb{R}^2\) into disjoint topological lines,  called \textit{leaves}, that admits a local parallel structure, i.e., for any \(x \in \mathbb{R}^2\), there exists a closed neighborhood \(U\subset \R^2 \) of \(x\) and a homeomorphism \(\varphi: U \to [0,1] \times [0,1]\) that satisfies
\begin{itemize}
    \item[$\sbullet$] For every leaf \(\phi \in \F\), each connected component of the set $\phi \sspc\cap\sspc U$ is of the form \[\varphi^{-1}(\{t\} \times [0,1]), \quad \text{ for some }t \in [0,1]. \]
\end{itemize}
\noindent In the particular case of foliations of $\R^2$, the condition above can be exchanged for the condition that the entire set $\phi \sspc\cap\sspc U$ is of the form \(\varphi^{-1}(\{t\} \times [0,1])\) for some \(t \in [0,1] \).

A \textit{cross-section} of \(\F\) over a leaf $\phi \in \F$ is a compact arc \(\sigma \subset \mathbb{R}^2\) that intersects $\phi$ and is transverse to \(\F\), meaning that the arc $\sigma$ intersects each leaf of \(\F\) at most once. 

Let $\phi \in \F$ be a leaf, and let $\{\sigma_k\}_{k \in \mathbb \Z^*}$ be a locally-finite collection of pairwise disjoint compact cross-sections of $\F$ over the leaf $\phi$. Assume that, for every $k >0$ it holds that:
\begin{itemize}
    \item[$\sbullet$] The cross-sections $\sigma_{-k}$ and $\sigma_{k}$ intersect the same leaves in $\F$;
    \item[$\sbullet$] The sub-arc $u_k\subset\phi$ joining $\sigma_{-k}$ and $\sigma_{k}$ satisfies $u_k \subset u_{k+1}$;
    \item[$\sbullet$] The only leaf of $\F$ intersecting all cross-sections in $\{\sigma_k\}_{k \in \mathbb \N^*}$ is the leaf $\phi$.
    \item[$\sbullet$] Every leaf of $\F$ that intersects $\sigma_{k+1}$ also intersects $\sigma_k$. 
\end{itemize}
\mycomment{0.3cm}
The closed set $V\subset \R^2$, obtained as the union of all compact sub-arcs of leaves in $\F$ joining the cross-sections $\sigma_{-k}$ and $\sigma_{k}$, for some $k>0$, is called a \textit{trivial-neighborhood} of the leaf $\phi$. Observe that, for any open neighborhood $U\subset \R^2$ of $\phi$, we can consider sufficiently small cross-sections $\{\sigma_k\}_{k \in \mathbb \Z^*}$ so that the neighborhood $V$ is contained in $U$.

It is worth noting that, trivial-neighborhoods of leaves can be taken to be arbitrarily small.  More precisely, for any  leaf $\phi \in \F$, and any open neighborhood $U\subset \R^2$ of $\phi$, there exists  a trivial-neighborhood $V$ of $\phi$ such that $V\subset U$. Such a trivial-neighborhood $V$ can be obtained by taking the cross-sections in the collection $\{\sigma_k\}_{k \in \mathbb \Z^*}$ to be sufficiently small.

\vspace{0.2cm}

By the Homma-Schoenflies theorem, for any trivial-neighborhood $V$ of a leaf $\phi \in \F$,  we can find a homeomorphism $\varphi: \R^2\longrightarrow \R^2$ that satisfies the following properties:
\begin{itemize}
    \item[$\sbullet$] The leaf $\phi$ is mapped by $\varphi$ into the vertical line $\{0\}\times \R$;
    \item[$\sbullet$] The set $V$ is mapped by $\varphi$ into the set $\bigcup_{n \in \N^*} [\sspc-\frac{1}{n}, \frac{1}{n}\sspc]\times [\sspc -n,n\sspc]$.
    \item[$\sbullet$] For each leaf $\phi' \in \F$, the set $\varphi(\phi'\cap V)$ is a vertical line $\{t\}\times \R$, for some $t\in \R$.
\end{itemize}

As a direct application of Lemma \ref{lemma:patt}, we have the following lemma.

\begin{lemma}\label{lemma:trivial_neighborhoods}
    {\ Let} $\{\phi_i\}_{i \in I}$ be a locally-finite family of pairwise disjoint leaves of \(\F\).  Then, for any closed set $X \subset \R^2$ that is disjoint from all leaves in $\{\phi_i\}_{i \in I}$, there exists a locally-finite family of pairwise disjoint closed sets $\{V_i\}_{i \in I}$ such that, for each $i \in I$,  the set $\sspc V_i\sspc $ is a trivial-neighborhood of the leaf $\sspc \phi_i\sspc $ that is disjoint from $X$.
\end{lemma}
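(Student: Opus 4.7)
The plan is to reduce the statement directly to Lemma~\ref{lemma:patt} by treating the leaves $\phi_i$ and the obstacle set $X$ together as a single locally-finite collection of closed sets, and then to shrink the resulting open neighborhoods to trivial-neighborhoods using the shrinking property of trivial-neighborhoods stated just above the lemma.

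More precisely, I would first observe that each leaf $\phi_i$ is the image of a proper embedding $\R \hookrightarrow \R^2$, hence closed in $\R^2$. Consider the enlarged family $\{\phi_i\}_{i \in I} \cup \{X\}$. Since $X$ is closed and disjoint from every $\phi_i$, this remains a family of pairwise disjoint closed subsets; since adding a single set to a locally-finite family preserves local-finiteness, the enlarged family is also locally-finite. Applying Lemma~\ref{lemma:patt} to this enlarged family produces a locally-finite collection of pairwise disjoint open sets $\{U_i\}_{i \in I} \cup \{U_X\}$ with $\phi_i \subset U_i$ for every $i \in I$ and $X \subset U_X$. In particular, each $U_i$ is disjoint from $X$.

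Next, for each $i \in I$, I would use the fact (established just before the statement) that every open neighborhood of a leaf contains a trivial-neighborhood: choose a trivial-neighborhood $V_i$ of $\phi_i$ with $V_i \subset U_i$. Since the $U_i$ are pairwise disjoint, so are the $V_i$; since $V_i \subset U_i$ and $U_i \cap X = \varnothing$, we have $V_i \cap X = \varnothing$; and since $\{U_i\}_{i \in I}$ is locally-finite and each $V_i \subset U_i$, the family $\{V_i\}_{i \in I}$ is locally-finite as well. This yields the desired family.

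I do not expect any serious obstacle here: the main point is simply to be careful that the set $X$ is incorporated into the input of Lemma~\ref{lemma:patt} so that the output neighborhoods automatically avoid it, and then to use the pre-established shrinking property to replace open neighborhoods by trivial-neighborhoods. The only mildly subtle verification is that the enlarged family remains locally-finite, which follows at once because adding a single closed set changes the intersection count with any compact $K \subset \R^2$ by at most one.
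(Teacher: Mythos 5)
Your proposal is correct and follows essentially the same route as the paper's own proof: enlarge the family to $\{\phi_i\}_{i\in I}\cup\{X\}$, apply Lemma~\ref{lemma:patt}, and then shrink the resulting open neighborhoods to trivial-neighborhoods, inheriting disjointness from $X$, pairwise disjointness, and local-finiteness. No gaps.
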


\begin{proof}
    Observe that $\{\phi_i\}_{i \in I} \cup \{X\}$ forms a locally-finite family of pairwise disjoint closed subsets of $\R^2$. By applying Lemma \ref{lemma:patt}, we can find a locally-finite family of pairwise disjoint open sets $\{U_i\}_{i \in I}$ such that, for each $i \in I$, the set $U_i$ is a neighborhood of the leaf $\phi_i$ and is disjoint from $X$. Since trivial-neighborhoods of leaves can be taken to be arbitrarily small, for each $i \in I$, we can consider a trivial-neighborhood $V_i$ of the leaf $\phi_i$ such that $V_i \subset U_i$. Since $\{U_i\}_{i \in I}$ is locally-finite, so is the family $\{V_i\}_{i \in I}$. This concludes the proof.
\end{proof}



We can endow any foliation \(\F\) with the \textit{quotient topology}, which corresponds to the finest topology on $\F$ for which the quotient map $\pi_{_\F}:\R^2\longrightarrow \F$ that sends each point to the leaf passing through it is continuous. In this topology, the open sets of the foliation \(\F\) are those whose preimages under $\pi_{_\F}$ are open in \(\R^2\). Throughout this text, we will always consider the quotient topology on foliations, unless otherwise stated.

A subset $A\subset \R^2$ is called \textit{saturated} (with respect to $\F$) if it contains all leaves of \(\F\) that intersect it. The restriction of a foliation \(\F\) to a saturated subset $A\subset \R^2$ is denoted by $\F\vert_A$.  Note that, in the case of saturated sets we have $\F\vert_A = \pi_{_\F}(A)$. 
By a slight abuse of notation, we often treat saturated sets $A\subset \R^2$ as subsets of the foliation $\F$ by identifying $A = \F\vert_A$. 
This is the abuse that allows us to denote saturated sets as subsets $A\subset \F$, or to refer to leaves as elements $\phi \in A$.
This convention introduces no topological ambiguity when referring to an open saturated set or to its boundary. Indeed, from the definition of the quotient topology,  a saturated set $A\subset \R^2$ is open if, and only if, the restriction $\F\vert_A$ is an open subset of \(\F\). Additionally, the boundary $\partial A$ of a saturated set $A$ may be understood equivalently as the boundary of $A$ inside $\mathbb{R}^2$, or as the boundary of \(\F|_A\) inside \(\F\), with no risk of confusion.

This next lemma states that the connectedness is also preserved through this notation.

\begin{lemma}\label{lemma:consistent_abuse_notation_piF}
    A saturated set \(A\subset \R^2\) is connected if, and only if$\sspc$, $\sspc \F\vert_A$ is connected.
\end{lemma}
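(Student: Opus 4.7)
The plan is to prove the two implications separately. The forward direction is immediate: if $A$ is connected, then $\F\vert_A = \pi_{_\F}(A)$ is the continuous image of $A$ under the restriction of the quotient map $\pi_{_\F}$, and hence is connected.

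For the converse I argue by contrapositive. Suppose $A$ admits a disconnection $A = U_1 \sqcup U_2$ into nonempty subsets that are open in $A$. The first step is to observe that each $U_i$ is automatically saturated: every leaf $\phi \subset A$ is connected, and $\phi = (\phi \cap U_1) \sqcup (\phi \cap U_2)$ is a decomposition into two subsets open in $\phi$, so one of them must be empty. Setting $W_i := \pi_{_\F}(U_i)$, the sets $W_1, W_2$ are disjoint, nonempty, and partition $\F\vert_A$, so the task reduces to showing that each $W_i$ is open in $\F\vert_A$.

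Unwinding the quotient and subspace topologies, $W_i$ is open in $\F\vert_A$ precisely when there exists an open saturated set $\tilde V_i \subset \R^2$ with $\tilde V_i \cap A = U_i$. Since $U_i$ is open in $A$, I may choose some open $V_i \subset \R^2$ with $V_i \cap A = U_i$, and then define $\tilde V_i$ to be the saturation of $V_i$, that is, the union of all leaves of $\F$ meeting $V_i$. The equality $\tilde V_i \cap A = U_i$ follows immediately from the saturatedness of $U_1$ and $U_2$: any leaf of $\F\vert_A$ meeting $V_i$ intersects $V_i \cap A = U_i$ and hence is entirely contained in $U_i$.

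The main obstacle is to verify that $\tilde V_i$ is open in $\R^2$, which is where the local structure of the foliation enters. Given $y \in \tilde V_i$, lying on a leaf $\phi$ that meets $V_i$ at a point $z$, I will choose a trivial neighborhood $T$ of $\phi$ from the machinery developed in Section~\ref{sec:prelim_foliations} and apply its Homma-Schoenflies normalization. Inside $T$ the leaves become a continuously parametrized family of vertical lines, with $\phi$ as the central one; since $V_i$ is open and contains $z$, a small product neighborhood of $z$ inside $T$ is contained in $V_i$, which forces every leaf whose parameter is sufficiently close to that of $\phi$ to meet $V_i$. A matching product neighborhood of $y$ inside $T$ then consists of points lying on such leaves, and therefore sits in $\tilde V_i$. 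This yields openness of $\tilde V_i$, producing the disconnection $\F\vert_A = W_1 \sqcup W_2$ and contradicting connectedness of $\F\vert_A$, which finishes the proof.
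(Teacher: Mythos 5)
Your proof is correct and follows essentially the same route as the paper: the forward implication by continuity of $\pi_{_\F}$, and the converse by observing that connectedness of leaves forces the two pieces of a disconnection of $A$ to be saturated, then pushing the disconnection down to $\F\vert_A$. The one difference is that where the paper simply invokes "since $\pi_{_\F}$ is a quotient map" to conclude that the images are open in $\F\vert_A$, you explicitly supply the missing ingredient — that the saturation of an open set is open, verified via the local product structure — which is a welcome clarification rather than a deviation.
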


    \begin{proof} {\ We} first prove ($\implies$). Let us assume that \(A\subset \R^2\) is a connected saturated set.  Since $\pi_{_\F}$ is continuous, the image $\pi_{_\F}(A)$ is connected. 

    {\ ($\impliedby$)} Let us assume that \(A\subset \R^2\) is a saturated set such that $\F\vert_A$ is connected.  Suppose by contradiction that \(A\) is not connected. Therefore, there exists two non-empty open subsets $U,V \subset \R^2$ such that $A\subset U \cup V$, and such that $U \cap A$ is disjoint from $V\cap A$.
    Since each leaf $\phi \in A$ is connected, then either $\phi \cap U$ or $\phi \cap V$ is empty. Since $A$ is saturated and covered by the union $U\cup V$, we have that each leaf $\phi \in A$ is contained in either $U$ or $V$. 
    In other words, $U \cap A$ and $V \cap A$ are disjoint saturated sets that cover $A$. Since $\pi_{_\F}$ is a quotient map, $\F\vert_{U\cup A}=\pi_{_\F}(U \cap A)$ and $\F\vert_{V\cup A}=\pi_{_\F}(V \cap A)$ are open subsets of $\F\vert_A$ that are disjoint and cover $\F\vert_A$. This contradicts the assumption that $\F\vert_A$ is connected.
    \end{proof}

\mycomment{-0.21cm}
Equipped with the quotient topology, the foliation $\F$ becomes a simply-connected and non-Hausdorff 1-dimensional manifold, often called the \textit{leaf space} of \(\F\). As a matter of fact, the classical result due to Haefliger and Reeb \cite{haefligerreeb1957} proves that every simply-connected, non-Hausdorff 1-manifold can be realized as the leaf space of a planar foliation. 
Note that, any simply-connected (non-Hausdorff) manifold is orientable. In the 1-dimensional context, this corresponds to the existence of an atlas of charts mapping the local orientation at each point of the manifold to the positive orientation of \(\R\). By choosing an orientation, we fix a local notion of left and right sides around each point. Since by removing a point we separate any simply-connected non-Hausdorff 1-manifold into two connected components, this local notion of left and right sides is extended globally to each of these connected components.

In the context of the leaf space, this means that for each leaf $\phi \in \F$, the set $\F\setminus\{\phi\}$ has exactly two connected components, one labeled as the left side $L(\phi)$ and the other as the right side \(R(\phi)\). Each leaf of $\mathcal{F}$ thereby inherits an orientation, and this assignment is locally coherent, since it is determined by the local orientation at each point of the leaf space.  In terms of local trivializing maps of the foliation $\F$, this means that one can choose each trivializing map $\varphi: U \to [0,1] \times [0,1]$ so that the orientation of every leaf $\phi \in \F$ is mapped to the positive direction of the second coordinate on the set $\varphi(U \cap \phi) = \{t\} \times [0,1]$. This is, in fact, the classical definition of an oriented foliation. This is, in fact, the classical definition of an oriented foliation. We choose this approach so that the reader to see why the every planar foliation admits an orientation.

 Once the foliation $\F$ is equipped with an orientation, it inherits a naturally defined partial order, where $\phi\leq \phi'$ if and only if $L(\phi') \subset L(\phi)$. We remark that, pairs of non-separable leaves $\phi,\phi'\in \F$ must satisfy either $R(\phi) \cap R(\phi') = \varnothing$ or $L(\phi) \cap L(\phi') = \varnothing$ and, consequently, they cannot be comparable under this partial order relation. Recall that, an interval of the foliation $\F$ with respect to this partial order consists in a subset of $\F$ that contains all leaves between any two of its leaves. Observe that leaf intervals are not necessarily homeomorphic to an interval of the real line $\R$, as the leaf space of $\F$ is not necessarily Hausdorff.

A saturated set $A\subset \R^2$ is said to be \textit{non-separating} if for every leaf $\phi \in A$, one of the connected components of $\R^2\setminus\phi$ contains all leaves in $A\setminus \{\phi\}$. 
 
 \begin{lemma}\label{lemma:connected}
    Let \(A\subset \R^2\) be an open, connected saturated set. Then, the following holds:
    \begin{itemize}[leftmargin=1.3cm]
        \item[\textup{\textbf{(i)}}] The restriction $\F\vert_A$ is an interval of the foliation $\F$.
        \item[\textup{\textbf{(ii)}}] The boundary $\partial A$ is a locally-finite and non-separating collection of leaves in \(\F\).
        \item[\textup{\textbf{(iii)}}] The set $A\subset\R^2$ is homeomorphic to a plane.
    \end{itemize} 
\end{lemma}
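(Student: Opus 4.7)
The plan is to treat the three claims in order, exploiting in each case the interplay between the connectedness of $A\subset\R^2$, saturation, and the fact that each leaf is a topological line separating $\R^2$. For (i), I want to check that whenever $\phi_1,\phi_2\in A$ satisfy $\phi_1\leq\phi\leq\phi_2$ (i.e.\ $L(\phi_2)\subset L(\phi)\subset L(\phi_1)$), the intermediate leaf $\phi$ also belongs to $A$. The decisive observation is that $\phi$ separates $\phi_1$ from $\phi_2$ in $\R^2$; since $A$ is open and connected, hence path-connected, any path in $A$ from a point of $\phi_1$ to a point of $\phi_2$ must cross $\phi$, and saturation of $A$ then forces $\phi\subset A$. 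The same style of argument will be used again below.

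For the local-finiteness part of (ii), I localize the reasoning above. Given $x\in\R^2$, pick a local trivialization $\varphi:U\to[0,1]^2$ at $x$. Using the strong local triviality satisfied by planar foliations (each leaf meets $U$ in a single vertical segment), the leaves through $U$ are parameterized bijectively by $t\in[0,1]$, and the set $E_A:=\{t\in[0,1]:\phi_t\subset A\}$ is open. Applying the separating-path argument of (i) to this parameterization shows that $E_A$ must be an interval of $[0,1]$, hence has at most two boundary points, so at most two leaves of $\partial A$ meet $U$. Covering any compact subset of $\R^2$ by finitely many such charts yields local finiteness. For the non-separating claim, fix any $\phi\in\partial A$: since $A$ is connected and disjoint from $\phi$, it lies entirely in one side of $\phi$, say $L(\phi)$; then $\partial A\subset\overline{A}\subset L(\phi)\cup\phi$, so every boundary leaf other than $\phi$ lies in $L(\phi)$.

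For (iii), the plan is to show that $\mathbb S^2\setminus A$ is connected, which implies that $A$ is simply connected and, being a connected simply-connected open subset of $\R^2$, is therefore homeomorphic to $\R^2$. The key point is that $\R^2\setminus A$ is saturated (because $A$ is saturated and open), so every point of $\R^2\setminus A$ lies on a leaf entirely contained in $\R^2\setminus A$. Since each such leaf is a topological line whose closure in $\mathbb S^2$ is a Jordan curve passing through $\infty$, every point of $\mathbb S^2\setminus A$ is path-connected to $\infty$ along its leaf, giving the required connectedness.

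The main obstacle I anticipate is the local-finiteness step in (ii), where it is essential to invoke the strong local triviality for planar foliations (each leaf meets a chart in a single segment): without this, $E_A$ would only be locally an interval and the bound of two boundary leaves per chart could fail. All remaining steps then reduce to straightforward applications of saturation together with the Jordan-type separation properties of topological lines.
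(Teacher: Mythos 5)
Your proposal is correct. For items (i) and (ii) it follows essentially the same route as the paper: the paper proves (i) by observing that an intermediate leaf $\phi''\notin\F\vert_A$ would split $A$ into the nonempty open pieces $A\cap L(\phi'')$ and $A\cap R(\phi'')$, which is just the contrapositive of your path-crossing argument; and for (ii) the paper likewise notes that a trivializing chart can meet at most two leaves of $\partial A$ (your set $E_A$ being an interval makes this count explicit and is a welcome clarification of the paper's one-line justification) and that $A$, being connected and disjoint from any $\phi\in\partial A$, lies on one side of it. The genuine difference is in (iii): the paper writes $\R^2\setminus A$ as $\bigcup_{\phi\in\partial A}K(\phi)$, where $K(\phi)$ is the closed side of $\phi$ not containing $A$, and uses the non-separating property from (ii) to see that this union together with $\infty$ is compact and connected in $\mathbb S^2$; you instead use that $\R^2\setminus A$ is saturated and that every leaf compactifies to a Jordan curve through $\infty$, so the complement in $\mathbb S^2$ is path-connected to $\infty$ leafwise. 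Your variant is cleaner in that it makes (iii) independent of (ii) and sidesteps the (slightly delicate) verification that the $K(\phi)$ actually exhaust $\R^2\setminus A$; the paper's version has the advantage of exhibiting the complement explicitly as a union of closed half-planes, a description it reuses later. Both then conclude identically via the standard fact that a connected open subset of $\mathbb S^2$ with connected complement is simply connected, together with the Riemann mapping theorem.
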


\begin{proof}
    \textit{Proof of $(i)$:} Suppose by contradiction that there exist $\phi,\phi' \in \F\vert_A$ such that $\phi< \phi'$, and $\phi'' \in \F\setminus \F\vert_A$ such that $\phi < \phi'' < \phi'$. In this case, the sets $L(\phi'')$ and $R(\phi'')$ are disjoint open sets whose union covers $A$ and such that $L(\phi'') \cap A$ and $R(\phi'') \cap A$ are both non-empty. This contradicts the assumption that $A$ is connected.

    \textit{Proof of $(ii)$:} Since $A\subset \R^2$ is open and connected, for any leaf $\phi \in \partial A$, the set $A$ must be contained in one of the two connected components of $\R^2\setminus \phi$. Therefore, each of the remaining leaves in $\partial A\setminus\{\phi\}$ is contained in the same connected component of $\R^2\setminus \phi$ that contains $A$. This implies that the set $\partial A$ is non-separating.

    Next, observe that any open set $U\subset \R^2$ that locally trivializes the foliation $\F$ can at most intersect two leaves in $\partial A$. This is because, for any three leaves of $\F$ intersecting $U$, we get that one of them must separate the other two. Let $K\subset \R^2$ be a compact set, and let $\{U_i\}_{0<i<k}$ be a finite cover of $K$ by open sets that locally trivialize $\F$. Since each set in the cover $\{U_i\}_{0<i<k}$ intersects at most two leaves in $\partial A$, the set $K$ intersects at most finitely many leaves in $\partial A$. This implies that the collection of leaves in $\partial A$ is locally-finite.

    \textit{Proof of $(iii)$:} Knowing that the boundary $\partial A$ is non-separating, we can write $A$ as the complement of the set $\bigcup_{\phi \in \partial A} K(\phi)$, where $K(\phi)$ is defined as the closure of the connected component of $\R^2\setminus \phi$ that does not contain $A$.  Observe that, by adding the point at infinity, the set $\bigcup_{\phi \in \partial A} K(\phi)\cup \{\infty\}$ becomes a compact and connected subset of $\mathbb S^2$. 
    Since $A$ is the complement of a compact and connected subset of $\mathbb S^2$, we conclude that it is simply-connected.
    According to the Riemann mapping theorem, being a simply-connected open subset of $\R^2$, the set $A$ must be homeomorphic to the plane. This completes the proof of the lemma.
\end{proof}
 
A saturated set $A\subset \R^2$ is said to be \textit{trivially foliated} if the restriction $\F\vert_A$ is totally ordered by the natural order on the foliation $\F$.

\begin{definition}\label{def:leaf_domain}
    A \textit{leaf domain} is a saturated set that is open, connected  and trivially foliated. 
\end{definition}

Following Lemma \ref{lemma:consistent_abuse_notation_piF} and Lemma \ref{lemma:connected}, we observe that non-empty leaf domains of $\F$ correspond exactly to the subsets of $\F$ that are homeomorphic to $\R$. Moreover, the boundary of a leaf domain is formed by a locally-finite and non-separating collection of leaves in \(\F\).

The following lemma is elementary, but we choose to include it for transparency.

\begin{lemma}\label{lemma:intersection_of_domains}
    A finite intersection of leaf domains, also is a leaf domain.
\end{lemma}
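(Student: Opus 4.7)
The plan is to induct on the number of leaf domains, reducing to the case of two. So the core task would be: given two leaf domains $A_1, A_2$, show that $A := A_1 \cap A_2$ is a leaf domain. Saturation, openness, and the trivially-foliated property all pass to intersections essentially by inspection --- the first two by general topology, and the third because $\F\vert_A \subset \F\vert_{A_1}$ inherits the total order. What remains, and what the proof really hinges on, is connectedness of $A$ (assuming $A \neq \varnothing$).

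To handle connectedness, I would argue by contradiction, using Lemma~\ref{lemma:consistent_abuse_notation_piF} to move freely between $\R^2$ and the leaf space. Suppose $A = U \sqcup V$ with $U,V$ non-empty, disjoint, and open; since each leaf is connected, both $U$ and $V$ are saturated. Pick leaves $\phi_1 \subset U$ and $\phi_2 \subset V$. Since $\F\vert_{A_1}$ is totally ordered, $\phi_1$ and $\phi_2$ are comparable, so assume $\phi_1 < \phi_2$ in the natural order on $\F$.

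Next, I would invoke the interval property, Lemma~\ref{lemma:connected}(i). Applied to $A_2$, it gives that every leaf $\phi \in \F$ with $\phi_1 \leq \phi \leq \phi_2$ lies in $\F\vert_{A_2}$; the same argument for $A_1$ shows such a $\phi$ lies in $\F\vert_{A_1}$. Hence the closed leaf-interval
\[
    I := \{\phi \in \F : \phi_1 \leq \phi \leq \phi_2\}
\]
is entirely contained in $\F\vert_A$. Viewed inside $\F\vert_{A_1} \cong \R$ (the characterization of leaf domains noted right after Definition~\ref{def:leaf_domain}), $I$ is a closed order-interval, hence connected in $\F$. By Lemma~\ref{lemma:consistent_abuse_notation_piF} the union $\bigcup_{\phi \in I} \phi \subset \R^2$ is then a connected subset of $A$ meeting both $U$ (through $\phi_1$) and $V$ (through $\phi_2$), contradicting $U \cap V = \varnothing$.

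The hard part will be making honest use of the identification $\F\vert_{A_1} \cong \R$ as \emph{ordered} spaces, so that abstract order-theoretic intervals really correspond to connected saturated subsets. This is a consequence of the local compatibility between the order coming from the orientation of $\F$ and the order topology read off from local trivializations, combined with the Haefliger--Reeb description of the leaf space; everything needed has already been set up in the preamble to Lemma~\ref{lemma:connected}, so this step is more bookkeeping than genuine difficulty.
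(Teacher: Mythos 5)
Your proposal is correct, but it takes a genuinely different route from the paper. The paper's entire proof is one sentence: it invokes the fact that in a simply-connected non-Hausdorff $1$-manifold, the intersection of two subsets homeomorphic to $\R$ is either empty or again homeomorphic to $\R$, using the identification (noted after Definition~\ref{def:leaf_domain}) of non-empty leaf domains with the subsets of $\F$ homeomorphic to $\R$. You instead give a ground-level argument: reduce to two domains, observe that openness, saturation and total-orderedness pass to intersections trivially, and establish connectedness by combining the interval property of Lemma~\ref{lemma:connected}(i) applied to \emph{both} $A_1$ and $A_2$ with the order structure of $\F\vert_{A_1}\cong\R$ and the transfer principle of Lemma~\ref{lemma:consistent_abuse_notation_piF}. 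The trade-off is clear: the paper's argument is shorter but rests on an unproved (though true --- a disconnected intersection of two embedded lines would produce a nontrivial loop in the leaf space) general fact about non-Hausdorff $1$-manifolds, whereas yours is longer but entirely grounded in lemmas already established in the text. The one step you should make explicit, as you yourself flag, is that the homeomorphism $\F\vert_{A_1}\cong\R$ can be taken order-preserving, so that the order-convex set $I$ really is a connected interval; once that is in place (and it does follow from the local trivializations and the Haefliger--Reeb setup), the contradiction with $A=U\sqcup V$ is airtight.
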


\begin{proof}
    This follows from the fact that, in a simply-connected non-Hausdorff 1-manifold, the intersection of two subsets homeomorphic to $\R$ is either empty or homeomorphic to $\R$.
\end{proof}

The boundary of a leaf domain $A\subset \F$ admits two important subsets, defined as
$$ \partial_R A := \partial A \cap \biggl(\ \bigcap_{\spc\phi \in A\spc} R(\phi)\biggr) \quad \ \ \text{ and }\ \  \quad \partial_L A := \partial A \cap \biggl(\ \bigcap_{\spc\phi \in A\spc} L(\phi)\biggr).$$ 

\noindent These sets are disjoint by definition and their union may not cover the entire boundary $\partial A$, as illustrated in the figure below. Moreover, the set $A$ lies on the left side of each leaf in $\partial_R A$ and on the right side of every leaf in $\partial_L A$. 

\begin{figure}[h!]
    \center
    \mycomment{0.2cm}\begin{overpic}[width=8.3cm, height=3.8cm, tics=10]{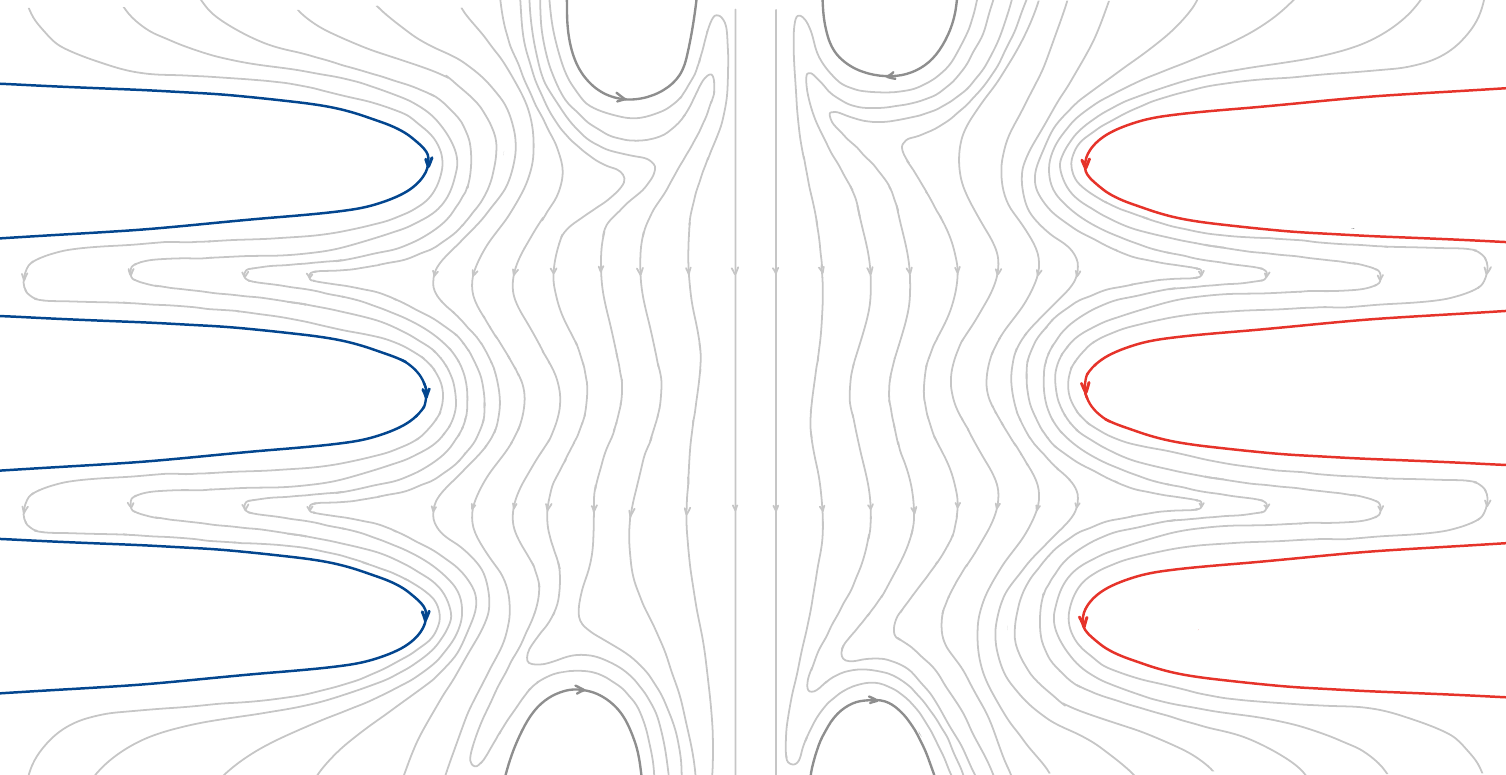}
        \put (47,21) {\colorbox{white}{\color{myDARKGRAY}\large$\displaystyle A$}}
        \put (6,21) {\color{myBLUE}\large$\displaystyle \partial_R A$}
        \put (84,21) {\color{myRED}\large$\displaystyle \partial_L A$}
\end{overpic}
\end{figure}

The sets $\partial_R A$ and $\partial_L A$ are each totally ordered by a relation $\prec\sspc$, defined as follows:
\begin{itemize}[leftmargin=1.3cm]
    \item First, fix a leaf of reference $\phi_* \in A$. For any two leaves $\phi, \phi' \in \partial_L A$ (resp. $\partial_R A$),  we define $\phi\prec \phi'$ if and only if there exist two disjoint paths $\gamma,\gamma':[0,1]\longrightarrow \R^2$ with endpoints $\gamma(1) \in \phi\spc$, $\gamma'(1) \in \phi'\sspc$ and  $\gamma(0), \gamma'(0) \in  \phi_*\spc$, ordered $\gamma(0)< \gamma'(0) $ according to the orientation of $\phi_*$. 
\end{itemize}

\mycomment{0.2cm}
\noindent We remark that $\prec\sspc$ is well-defined and independent of the choice of reference leaf $\phi_*\in A$. 
Later, in Section \ref{sec:order_limit_leaves}, we provide a more formal alternative definition of this order relation.

A path $\gamma:I\longrightarrow \R^2$ defined on an interval $I \subset \R$ is said to be \emph{positively transverse} to an oriented foliation $\F$ if the map $\pi_{_\F}\circ\gamma:I\longrightarrow \F$ is strictly increasing with respect to the natural order on $\F$. In other words, a positively transverse path intersects each leaf of $\F$ at most once and it does so crossing each leaf from right to left.

Each positively transverse path $\gamma$ defines a saturated set $C_\gamma\subset \R^2$ which is the union of all leaves in $\F$ that are intersected by $\gamma$. From the definition of positively transverse paths, it follows that the set $C_\gamma$ (seen as a subset of the foliation $\F$) is homeomorphic to an interval of the real line $\R$. In particular, this proves the following elementary lemma.

\begin{lemma}
\label{lemma:positively_transverse_path}
    Let $I\subset \R$ be an open interval, and let $\gamma:I\longrightarrow \R^2$ be path positively transverse to the foliation $\F$. Then, the saturated set $C_\gamma$ is a leaf domain of $\F$.
\end{lemma}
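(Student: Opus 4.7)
The plan is to verify each of the three defining conditions of a leaf domain---openness, connectedness, and trivial foliation---in turn. Trivial foliation is essentially built into the hypothesis: since $\gamma$ is positively transverse, the composition $\pi_{_\F}\circ\gamma : I \to \F$ is strictly increasing with respect to the natural partial order on $\F$, so any two elements of its image are comparable; since this image is precisely $\F\vert_{C_\gamma}$ by the very definition of $C_\gamma$, the restriction is totally ordered. Connectedness then follows immediately from continuity: $I$ is connected and $\pi_{_\F}\circ\gamma$ is continuous, so $\F\vert_{C_\gamma}=(\pi_{_\F}\circ\gamma)(I)$ is a connected subset of $\F$, and Lemma \ref{lemma:consistent_abuse_notation_piF} transports this to the connectedness of $C_\gamma$ as a subset of $\R^2$.

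The substantive step, where I expect the only real work, is openness. Since $\pi_{_\F}$ is a quotient map and $C_\gamma = \pi_{_\F}^{-1}(\F\vert_{C_\gamma})$, it suffices to prove that $\F\vert_{C_\gamma}$ is open in $\F$. I would argue point-wise using the local $1$-manifold structure of the leaf space. Fix $t_0\in I$, choose a trivializing chart $\varphi:U\to[\sspc 0,1\sspc]\times[\sspc 0,1\sspc]$ of $\F$ around $\gamma(t_0)$, and restrict to the interior of the chart: this yields an open neighborhood $W\subset \F$ of $\pi_{_\F}(\gamma(t_0))$ together with a homeomorphism $W\cong(0,1)$ assigning to each leaf its first-coordinate value in the chart. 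For a sufficiently small open interval $J\subset I$ around $t_0$ with $\gamma(J)$ contained in the interior of the chart, the restriction $\pi_{_\F}\circ\gamma\vert_J$ reads, under this identification, as a continuous strictly monotonic map between open intervals of $\R$---strict monotonicity being precisely the positively transverse condition expressed locally. Such a map is automatically a homeomorphism onto its image, and in particular open; hence $(\pi_{_\F}\circ\gamma)(J)$ is open in $W$, and therefore in $\F$. Varying $t_0$ over $I$ covers $\F\vert_{C_\gamma}=(\pi_{_\F}\circ\gamma)(I)$ by open subsets of $\F$, establishing the desired openness.

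The main subtlety is that the leaf space $\F$ is non-Hausdorff, but this causes no real difficulty: openness in $\F$ is a local property, and each local trivialization of the foliation produces a genuine Hausdorff open chart in $\F$, so the local-to-global step in the openness argument goes through without issue.
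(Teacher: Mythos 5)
Your proof is correct and follows the same route the paper takes (the paper simply asserts, as an elementary consequence of transversality, that $\F\vert_{C_\gamma}=(\pi_{_\F}\circ\gamma)(I)$ is homeomorphic to an open interval of $\R$, which gives all three properties of a leaf domain at once). Your pointwise openness argument via trivializing charts is exactly the detail the paper leaves implicit, relying on the fact, recorded in Section~\ref{sec:prelim_foliations}, that the leaf space is a (non-Hausdorff) $1$-manifold, so a continuous, locally strictly monotone map from an open interval is open onto its image.
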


Conversely, for any leaf domain $A\subset \F$, and any pair of points $x,y \in A$ that do not lie on the same leaf, there exists a positively transverse path $\gamma:I\longrightarrow \R^2$ that satisfies either $\gamma(0)=x$ and $\gamma(1)=y$, or $\gamma(0)=y$ and $\gamma(1)=x$. This can be seen by considering a homeomorphism mapping $A$ into $\R^2$ foliated by parallel lines oriented in the same direction.

We conclude this section with two examples of applications of the Homma–Schoenflies theorem that illustrate how it can be used to describe the foliation near transverse paths. 

\mycomment{0.15cm}
\noindent \textbf{(1) } For any path $\gamma:[0,1]\longrightarrow \R^2$ positively transverse to a foliation $\F$, there exists an orientation-preserving homeomorphism $\varphi:\R^2\longrightarrow \R^2$ that satisfies the following properties:
\begin{itemize}
    \item[$\sbullet$] The path $\gamma$ is mapped by $\varphi$ into the horizontal arc $[0,1]\times\{0\}$.
    \item[$\sbullet$] For $i\in \{0,1\}$, the leaf $\phi_{\gamma(i)}\in \F$ passing through $\gamma(i)$ is mapped by $\varphi$ into the vertical line $\{i\}\times \R$ oriented downwards;
    \item[$\sbullet$] The exists a closed set $V\subset \R^2$ containing $\gamma$ that is mapped by $\varphi$ into the  square $[0,1]\times [\sspc -1,1\sspc]$ such that, for any leaf $\phi\in \F$ intersecting $V$, the set $\phi\cap V$ is mapped by $\varphi$ into a vertical arc of the form $\{t\}\times [\sspc -1,1\sspc]$, for some $t\in [0,1]$.
\end{itemize}

\mycomment{0.35cm}
\noindent \textbf{(2) }  For any topological line $\Gamma:\R\longrightarrow \R^2$ positively transverse to $\F$, there exists an orientation-preserving homeomorphism $\varphi:\R^2\longrightarrow \R^2$ that satisfies the following properties:
\begin{itemize}
    \item[$\sbullet$] The topological line $\Gamma$ is mapped by $\varphi$ into the horizontal line $\R\times\{0\}$;
    \item[$\sbullet$] The exists a closed set $V\subset \R^2$ containing $\Gamma$ that is mapped by $\varphi$ into the horizontal strip $\R\times [\sspc -1,1\sspc]$ and such that, for any leaf $\phi\in \F$ intersecting $V$, the set $\phi\cap V$ is mapped by $\varphi$ into a vertical arc of the form $\{t\}\times [\sspc -1,1\sspc]$ oriented downwards.
\end{itemize}

\subsection{Foliated Brouwer Theory}\label{sec:foliated_brouwer}
Let \( f:\mathbb{R}^2 \longrightarrow \mathbb{R}^2 \) be a \textit{Brouwer homeomorphism}, that is, an orientation-preserving and fixed-point free homeomorphism of the plane.

The orbit of a point \( x \in \mathbb{R}^2 \) under \( f \) is defined as the set
$$ \mathcal{O}(f, x) = \{f^n(x) \mid n \in \mathbb{Z}\} \subset \mathbb{R}^2.$$
Sometimes, it may be useful to denote the set of orbits of \( f \) by $\orb$. As proved in \cite{Brouwer}, every orbit of $f$ is wandering and, thus, it has an empty limit set. This means that every orbit of $f$ is the image of a proper embedding of $\mathbb{Z}$ into the plane.

A \textit{Brouwer line} for \( f \) is an oriented topological line \( \lambda\) on \( \mathbb{R}^2 \) that satisfies
$$ f(L(\lambda)) \subset L(\lambda) \quad \text{ and } \quad f^{-1}(R(\lambda)) \subset R(\lambda),$$
where \( L(\lambda) \) and \( R(\lambda) \) denote the left and right sides of \( \lambda \). The classical \textit{Brouwer translation theorem} \cite{Brouwer} asserts that through every point of the plane passes a Brouwer line for \( f \).

\begin{theorem*}[Le Calvez, \cite{lec1}] There exists an oriented topological foliation \(\F\) of the plane such that every leaf $\phi \in \F$ is a Brouwer line of $f$.
\end{theorem*}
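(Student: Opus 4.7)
The plan is to construct the foliation $\F$ through a combinatorial-topological procedure that upgrades the Brouwer Translation Theorem (BTT). First, I would use the BTT together with a compactness argument to obtain a \emph{Brouwer brick decomposition} of the plane: a locally finite family $\{B_i\}_{i\in I}$ of closed topological discs with pairwise disjoint interiors covering $\R^2$, meeting along edges that form a locally finite $1$-complex, and such that every brick is \emph{free} under $f$, i.e.~$f(B_i) \cap B_i = \varnothing$. One obtains this by refining a fixed triangulation of $\R^2$ finely enough that each simplex sits inside a translation chart produced by the BTT, and then regrouping simplices into slightly larger free closed discs.

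Next, I would exploit the dynamics of $f$ to orient the edges of the dual graph of this decomposition. Since each brick is free, for each pair of adjacent bricks $B_i$ and $B_j$ the relative position of $f(B_i)$ with respect to $B_j$ (together with that of $f^{-1}(B_i)$) canonically decides whether $f$ ``pushes across'' the common edge from $B_i$ to $B_j$ or in the opposite direction. The key combinatorial lemma would be that, around every vertex of the decomposition, these oriented edges do not form a directed cycle --- this is precisely where the free brick property is essential --- so that the oriented dual complex admits a coherent flow structure.

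With the combinatorial orientation in hand, I would realize it as an actual oriented foliation of $\R^2$. Using the local parallel structure furnished by the Homma--Schoenflies theorem (as in the applications already recalled in Section~\ref{sec:prelim_foliations}), I would insert, inside each brick, a family of pairwise disjoint simple arcs entering and exiting through the edges prescribed by the orientation, and matching along every common edge of adjacent bricks. Local finiteness of the decomposition and a gluing argument on an exhausting sequence of compact sets would promote this local data to a global partition of $\R^2$ into properly embedded oriented topological lines carrying a local product structure, i.e.~an oriented planar foliation $\F$.

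Finally, one must verify that every leaf $\phi \in \F$ is a Brouwer line of $f$. By construction, whenever a brick $B_i \subset R(\phi)$ is separated by $\phi$ from an adjacent brick $B_j \subset L(\phi)$, the orientation rule forces $f(B_i) \subset L(\phi)$; propagating this brick by brick across $R(\phi)$ would yield $f(L(\phi)) \subset L(\phi)$, with the symmetric inclusion $f^{-1}(R(\phi)) \subset R(\phi)$ following analogously. The hardest step is the passage from a combinatorial orientation to a genuine foliation: one needs the arcs chosen in adjacent bricks to fit together so as to produce properly embedded topological lines rather than merely abstract leaf equivalence classes, and this is where the no-cycle condition at each vertex and the local finiteness of the decomposition intervene decisively. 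This delicate matching procedure is the technical heart of Le Calvez's original argument in~\cite{lec1}.
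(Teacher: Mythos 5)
This statement is not proved in the paper at all: it is Le Calvez's theorem, quoted verbatim from \cite{lec1} and used as a black box throughout. So there is no in-paper argument to compare yours against; what you have written is a sketch of Le Calvez's original strategy (free brick decompositions, an orientation of the dual structure, realization as a foliation), and it has to be judged on its own terms.

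As such a sketch it contains genuine gaps. First, for an arbitrary free brick decomposition there is no canonical way to orient the edge between two adjacent bricks $B_i$ and $B_j$: nothing forces $f(B_i)$ to meet $B_j$ or $f(B_j)$ to meet $B_i$, so ``the relative position of $f(B_i)$ with respect to $B_j$'' need not decide anything. Le Calvez works with a \emph{maximal} free decomposition precisely so that the image of every brick meets a neighbouring brick, and the orientation lives on the order relation generated by $f(B)\cap B'\neq\varnothing$ rather than on individual edges. Second, your ``key combinatorial lemma'' is misstated: the obstruction is not a directed cycle around a vertex but a directed cycle in the global relation among bricks, and ruling it out is not a consequence of each brick being free --- it requires Franks' lemma (a periodic chain of pairwise disjoint free discs forces a fixed point), which you never invoke. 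Freeness of the individual bricks alone does not prevent cycles. Third, the passage from the ordered decomposition to an honest foliation by properly embedded Brouwer lines is exactly the part you defer (``the technical heart''), yet your final verification that every leaf is a Brouwer line presupposes that each leaf coherently separates bricks whose images are pushed to one side --- which is what that deferred construction must establish. In short, the outline points in the right direction but the three load-bearing steps (maximality, Franks' lemma, and the realization of the order as a foliation) are either missing or replaced by assertions that are false for a general free decomposition.
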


Any such foliation of the plane by Brouwer lines of \( f \) is called a \textit{transverse foliation} of \( f \). It is worth noting that a Brouwer homeomorphism admits several transverse foliations, which  may exhibit significantly different structures and non-homeomorphic leaf spaces, for instance.

Let us fix a transverse foliation $\F$ of a Brouwer homeomorphism \( f \). In this contex, one can show that every point \( x \in \mathbb{R}^2 \) may be connected to its image \( f(x) \) by a path $\gamma_x:[0,1]\longrightarrow \R^2$ that is positively transverse to the foliation $\F$, in the sense that $\gamma_x$ intersects each leaf of $\phi \in \F$ at most once, always from right $R(\phi)$ to left $L(\phi)$. By concatenating the paths $\gamma_{f^n(x)}$ for all integers $n\in \Z$, we obtain a positively transverse path 
$$ \Gamma := \prod_{n\in \Z} \gamma_{f^n(x)},$$
that connects the entire orbit $\mathcal{O}(f,x)$, called a \textit{transverse trajectory} of $\O(f,x)$ (with respect to the foliation $\F$). Since the transverse foliation $\F$ is usually fixed, we often omit its mention when referring to transverse trajectories.
It is worth noting that an orbit $\O(f,x)$ admits uncountably many transverse trajectories, which depend on the choice of paths $\{\gamma_{f^n(x)}\}_{n \in \Z}$.

Indenpendently of the choice of transverse paths $\{\gamma_{f^n(x)}\}_{n \in \Z}$, the leaves that are intersected by the transverse trajectory $\Gamma_\O$ of $\O=\O(f,x)$ are determined by the orbit itself.  In fact, the set $C_\O \subset \F$ of leaves \textit{crossed} by the orbit $\O$ can be equivalently defined as
$$ C_\O = \{\phi \in \F \mid \phi \cap \Gamma_\O \neq \varnothing\} = \{\phi \in \F \mid \O \text{ intersects } R(\phi) \text{ and } L(\phi)\}.$$

We remark that, if the choice of paths $\{\gamma_{f^n(x)}\}_{n \in \Z}$ is locally-finite on the plane, then the transverse trajectory $\Gamma_\O$ is a topological line, and it is called a \textit{proper transverse trajectory} of the orbit $\O$. The converse is also true.

\textbf{ Generic transverse foliations:}\
Let $\OO=\{\O_1,...,\O_r\}$ be a finite collection of orbits of $f$. Throughout this text, we may treat $\OO$ as a subset of $\R^2$ by identifying it with $\OO = \bigcup_{i=1}^{\sspc r} \O_i$. A transverse foliation $\F$ of \( f \) is said to be \textit{generic} for the collection $\OO$ if each leaf $\phi \in \F$ intersects at most one orbit in $\OO$. This condition is indeed generic, as we explain below.

Assume that a leaf $\phi \in \F$ intersects two orbits in $\OO$, at points $x,x^\pp\in \phi$. Let $B$ be a closed ball centered at $x$ that is sufficiently small so that $B \cap \OO = \{x\}$ and $f(B) \cap B = \varnothing.$ Observe that, we can perturbe $\F$ inside the ball $B$ in such a way that $x$ no longer lies on the leaf passing through $x^\pp$. Since $B$ satisfies $f(B) \cap B = \varnothing$, the leaves of $\F$ continue to be Brouwer lines of \( f \) after the perturbation. Now, since $\OO$ is a finite collection of orbits of $f$, and each orbit of $f$ is a closed discrete subset of $\R^2$, we have that for any compact set $K\subset \R^2$, the set $K\cap \OO$ is finite. Therefore, by repeating the perturbation process described above on a locally-finite family of disjoint balls covering $\OO$, we can obtain a transverse foliation $\F'$ of \( f \) that is generic for $\OO$.

\addtocontents{toc}{\protect\setcounter{tocdepth}{2}}

\section{Asymptotic behavior of orbits}\label{chap:asymptotic behavior}

Since $f$ is a Brouwer homeomorphism, every orbit of $f$ goes to infinity, in the sense that, for any $x \in \R^2$ the set $\O=\bigcup_{n \in \Z} f^n(x)$ is the image of a proper embedding of $\Z$ into $\R^2$.  Without an additional structure on the system, we cannot precisely describe the way that the orbits of $f$ escape to infinity. Recently, the most efficient way to describe the asymptotic behavior of orbits of $f$ has been through transverse foliations and transverse trajectories.

As explained in Section \ref{sec:foliated_brouwer}, once we consider a planar foliation $\F$ transverse to $f$, we can describe each orbit $\O$ of $f$ in terms of a transverse trajectory $\Gamma_\O$. The information carried by an arbitrary transverse trajectory $\Gamma_\O$ is equivalent to the information carried by the set
$$C_\O := \left\{\spc\phi \in \F \mid \spc \phi\sspc \cap \sspc\Gamma_\O \neq \varnothing\spc \right\}.$$
The set $C_\O$ is independent of the choice of transverse trajectory $\Gamma_\O$, and it corresponds to the set of leaves $\phi \in \F$ such that both sides $R(\phi)$ and $L(\phi)$ are intersected by the orbit $\O$. Until now, this has been the information used to describe the asymptotic behavior of orbits of Brouwer homeomorphisms in the framework of Foliated Brouwer Theory. 


\vspace*{0.22cm}
\textbf{Limitation of the framework:}\ 

Let $x \in \R^2$ be a point in the plane, and let $\O$ be the orbit of $x$ under of $f$. For each $n\in \Z$, let $\phi_n \in \F$ be the leaf of the foliation $\F$ containing the point $f^n(x)$. Note that $(\phi_n)_{n\in\Z}$ is a bi-infinite sequence of leaves in $C_\O$ that is increasing with respect to the natural order of the foliation $\F$, meaning, $L(\phi_{n+1})\subset L(\phi_n)$ for all $n\in \Z$. 
Although $\O$ must escape to infinity, the sequence  $(\phi_n)_{n\in\Z}$ may accumulate at certain leaves of $\F$. Since $(\phi_n)_{n\in\Z}$ is increasing, there are can two types of leaves accumulated by the sequence $(\phi_n)_{n\in\Z}$. Either a leaf $\phi_L\in \F$, which is a limit leaf of the forward sequence $(\phi_n)_{n\geq0}$, or a leaf $\phi_R \in\F$, which is a limit leaf of the backward sequence $(\phi_{n})_{n\leq0}$. It is worth noting, since $\F$ is not necessarily Hausdorff, each of these sequences may admit multiple limit leaves, as illustrated in the figure below.

\begin{figure}[h!]
    \center
    \vspace*{-0.1cm}\begin{overpic}[width=8.5cm, height=3.5cm, tics=10]{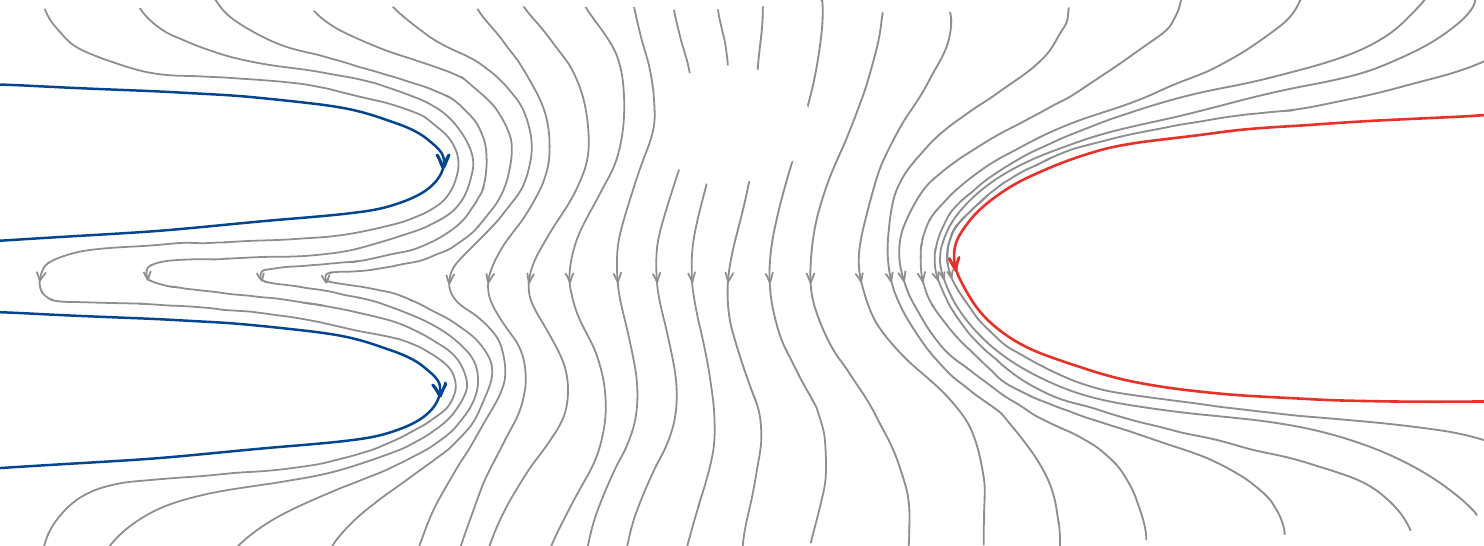}
        \put (68,20) {\color{myRED}\large$\displaystyle \phi_L $}
        \put (19.5,27.5) {\color{myBLUE}\large$\displaystyle \phi_R $}
        \put (19.5,10.4) {\color{myBLUE}\large$\displaystyle \phi_R $}
        
        \put (40,30) {\colorbox{white}{\color{myDARKGRAY}$\rule{0cm}{0.33cm} \quad \quad \quad \ $}}
        \put (41,30.8) {\large\color{myDARKGRAY}$\displaystyle (\phi_n)_{n \in \Z} $}
\end{overpic}
\end{figure}

\newpage

\vspace*{-0.1cm}
A priori, each point \(f^n(x) \in \O\) can be located anywhere along its underlying leaf $\phi_n$. Thus, if the sequence $(\phi_n)_{n\geq0}$ converges to a leaf $\phi_L$, the orbit $\O$ could possibly display a weird asymptotic behavior with respect to $\phi_L$, such as the one illustrated in the figure below.  This weird behavior can be characterized by the following property:
\begin{equation*}
    \text{``Every transverse trajectory } \Gamma_\O \text{ associated to } \O \text{ accumulates at the leaf } \phi_L\sspc\text{''}.
\end{equation*}
Such weird behavior impedes the existence of proper transverse trajectories for the orbit $\O$.

\vspace*{0.1cm}
\begin{figure}[h!]
    \center\begin{overpic}[width=8.5cm,height=3.5cm, tics=10]{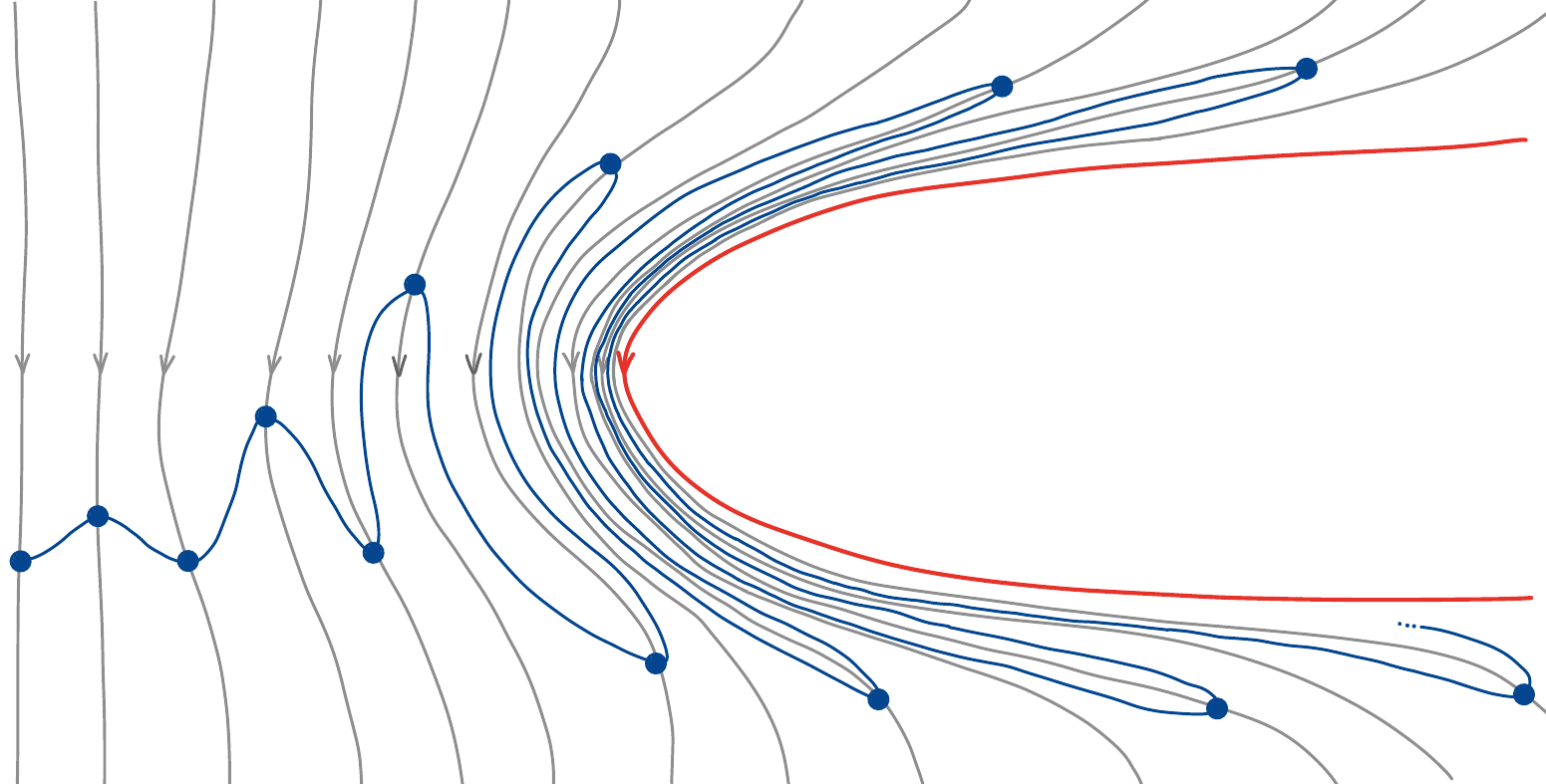}
        \put (47,20) {\color{myRED}\large$\displaystyle \phi_L $}
        \put (7,6.5) {\color{myBLUE}\large$\displaystyle \O $}
        \put (36,37) {\colorbox{white}{\color{myBLUE}\large$\displaystyle \Gamma_\O$}}
\end{overpic}
\end{figure}

\vspace*{-0.1cm}
In this section, we overcome this limitation by proving that every orbit of $f$ indeed admits proper transverse trajectories (see Theorem \ref{thmx:proper_trajectories_restate}). Moreover, we show that a proper transverse trajectory $\Gamma_\O$ associated to an orbit $\O$ encodes additional information about the asymptotic behavior of $\O$ that is not captured by $C_\O$ alone, but rather by $C_\O$ together with four distinguished subsets of its boundary $\partial C_\O$, namely $\Ltop \O\sspc$, $\Lbot \O\sspc$, $\Rtop \O\sspc$ and $\Rbot \O\sspc$.

\begin{figure}[h!]
    \center
    \vspace*{0.2cm}\begin{overpic}[width=8.5cm, height=3.5cm,tics=10]{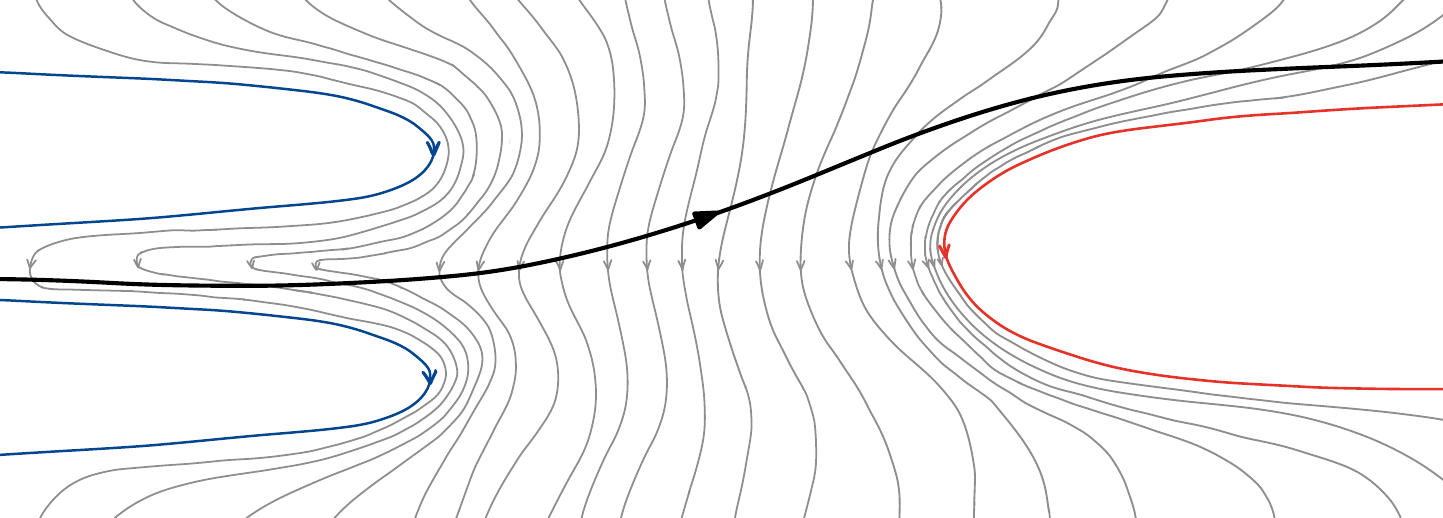}
        \put (72,20) {\color{myRED}\large$\displaystyle \Lbot \O $}
        \put (13,27) {\color{myBLUE}\large$\displaystyle \Rtop \O $}
        \put (13,9.4) {\color{myBLUE}\large$\displaystyle \Rbot \O $}
        \put (45.6,27.8) {\colorbox{white}{\color{myDARKGRAY}\large$\displaystyle \rule{0cm}{0.3cm} \   $}}
        \put (45,28) {{\color{myDARKGRAY}\large$\displaystyle \Gamma_\O $}}
\end{overpic}
\end{figure}

\subsection{Crossed leaves and limit leaves}\label{sec:crossed_leaves_and_limit_leaves}

Let $\orb$ be the set of orbits of $f$. For each orbit $\O \in \orb$, we define the set
$$ C_\O:= C(\F, \O\sspc) = \left\{\spc\phi \in \F \mid \spc \phi\sspc \cap \sspc\Gamma_\O \neq \varnothing\spc \right\},$$
where $\Gamma_\O$ represents any$\sspc$\footnote{All transverse trajectories $\Gamma_\O$ associated with an orbit $\O\in \orb$ intersect the same leaves in $\F$.} transverse trajectory of the orbit $\O$. 
Following the abuse of notation introduced in Section \ref{sec:prelim_foliations}, we can see the set $C_\O$ as a saturated subset of $\R^2$, formed by the union of all leaves contained in $C_\O$. 
According to Lemma \ref{lemma:positively_transverse_path} and Lemma \ref{lemma:connected}, the saturated set $C_\O \subset \R^2$ is a leaf domain of $\F$ and, consequently, it is homeomorphic to a trivially foliated plane, and its boundary $\partial C_\O$ is a locally-finite and non-separating collection of leaves in $\F$.
The set $C_\O$ is often referred to as the \textit{leaf domain} of the orbit $\O$, and by abuse of notation, we can treat $C_\O$ as a subset of the foliation $\F$ formed by the leaves contained in $C_\O$.
Throughout this work, a leaf of $\F$ is said to be \textit{crossed} by the orbit $\O$ if it belongs to the set $C_\O$. 

Since the set $C_\O$ is a leaf domain, its boundary $\partial C_\O$ admits two distinguished subsets, namely the \textit{left-limit} and \textit{right-limit} leaves of $C_\O$, which are defined as follows:
$$ \partial_L C_\O := \partial C_\O \cap \biggl(\ \bigcap_{\sspc\phi \in C_\O\sspc} L(\phi)\biggr) \quad \text{ and } \quad \partial_R C_\O := \partial C_\O \cap \biggl(\ \bigcap_{\sspc\phi \in C_\O\sspc} R(\phi)\biggr).$$

\vspace*{0.1cm}
\noindent Note that, by fixing a basepoint $x \in O$, and considering the bi-infinite sequence $(\phi_n)_{n\in\Z}$ formed by the leaves $\phi_n \in \F$ passing through the points $f^n(x)$, we observe that the forward sequence $(\phi_n)_{n\geq0}$ converges simultaneously to all leaves in $\partial_L C_\O$ (with respect to the quotient topology on $\F$), while the backward sequence $(\phi_n)_{n\leq0}$ converges to all leaves in $\partial_R C_\O$. This can be deduced from the fact that, for any leaf $\phi \in C_\O$, there exists $n \geq 0$ such that $$L(\phi_{-n}) \subset L(\phi) \subset L(\phi_n).$$
Observe that the sets $\partial_L C_\O$ and $\partial_R C_\O$ are disjoint by definition, and their union may not cover the entire boundary set $\partial C_\O$, as illustrated in the figure below.

\begin{figure}[h!]
    \center
    \vspace*{0cm}\begin{overpic}[width=8cm, height=3.8
        cm, tics=10]{Image1orbit.pdf}
        \put (54.4,15) {\colorbox{white}{$\rule{0cm}{0.3cm}\quad $}}
        \put (55,15.3) {{\color{myGRAY}\large$\displaystyle C_\O $}}
        \put (41.5,24) {\colorbox{white}{\color{myDARKGRAY}\large$\rule{0cm}{0.26cm}\  $}}
         \put (41.2,24) {{\color{black}\large$\displaystyle \O$}}
        \put (3,22) {\color{myBLUE}\large$\displaystyle \bb$}
        \put (85,22) {\color{myRED}\large$\displaystyle \partial_L C_\O$}
\end{overpic}
\end{figure}

\vspace*{-0.4cm}

\begin{remark} For any pair of orbits $\O,\O^\pp\in \orb$, the following properties hold:
    \begin{itemize}[leftmargin =1.2cm]
        \item[\textbf{(i)}] \textbf{Common leaf $\ $vs. Separated:} The orbits $\O$ and $\O^\pp$ respect a natural dichotomy: Either their leaf domains $C_\O$ and $C_{\O^\pp}$ intersect, in which case every leaf $\phi \in C_\O \cap C_{\O^\pp}$ is called a \textit{common leaf} of $\O$ and $\O'$. Or the sets $C_\O$ and $C_{\O^\pp}$ are disjoint, in which case we say that the orbits $\O$ and $\O^\pp$ are \textit{separated}, because there exists a leaf $\phi \in \F$  such that $\O$ and $\O^\pp$ are contained in different components of $\R^2\setminus \phi$.
        For example, the leaf $\phi \in \partial C_\O$ that bounds the connected component of $\R^2\setminus C_\O$ which contains $C_{\O^\pp}$.

        \vspace*{0.25cm}
        \begin{figure}[h!]
    \center 
    \hspace*{0.3cm}\begin{overpic}[width=3.6cm, height=2cm, tics=10]{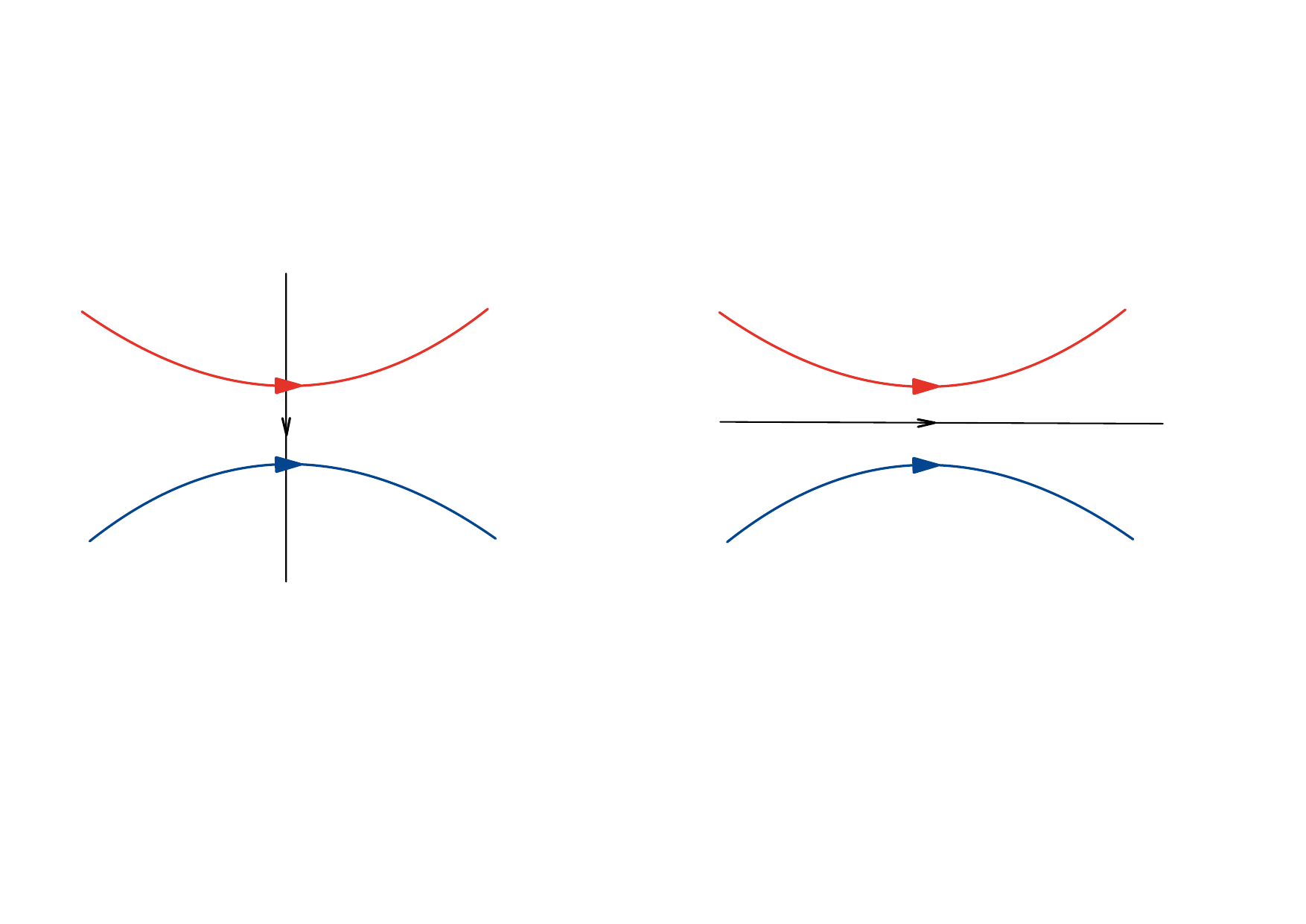}
        \put (37,55) {{\color{black}\large$\displaystyle \phi$}}
        \put (105,52) {{\color{myRED}\large$\displaystyle \Gamma_\O$}}
        \put (105,3) {{\color{myBLUE}\large$\displaystyle \Gamma_{\O^\pp}$}}
        \put (26,-14) {{\color{black}\large Common leaf}}
\end{overpic}
\hspace*{3.7cm}\begin{overpic}[width=3.6cm, height=2cm, tics=10]{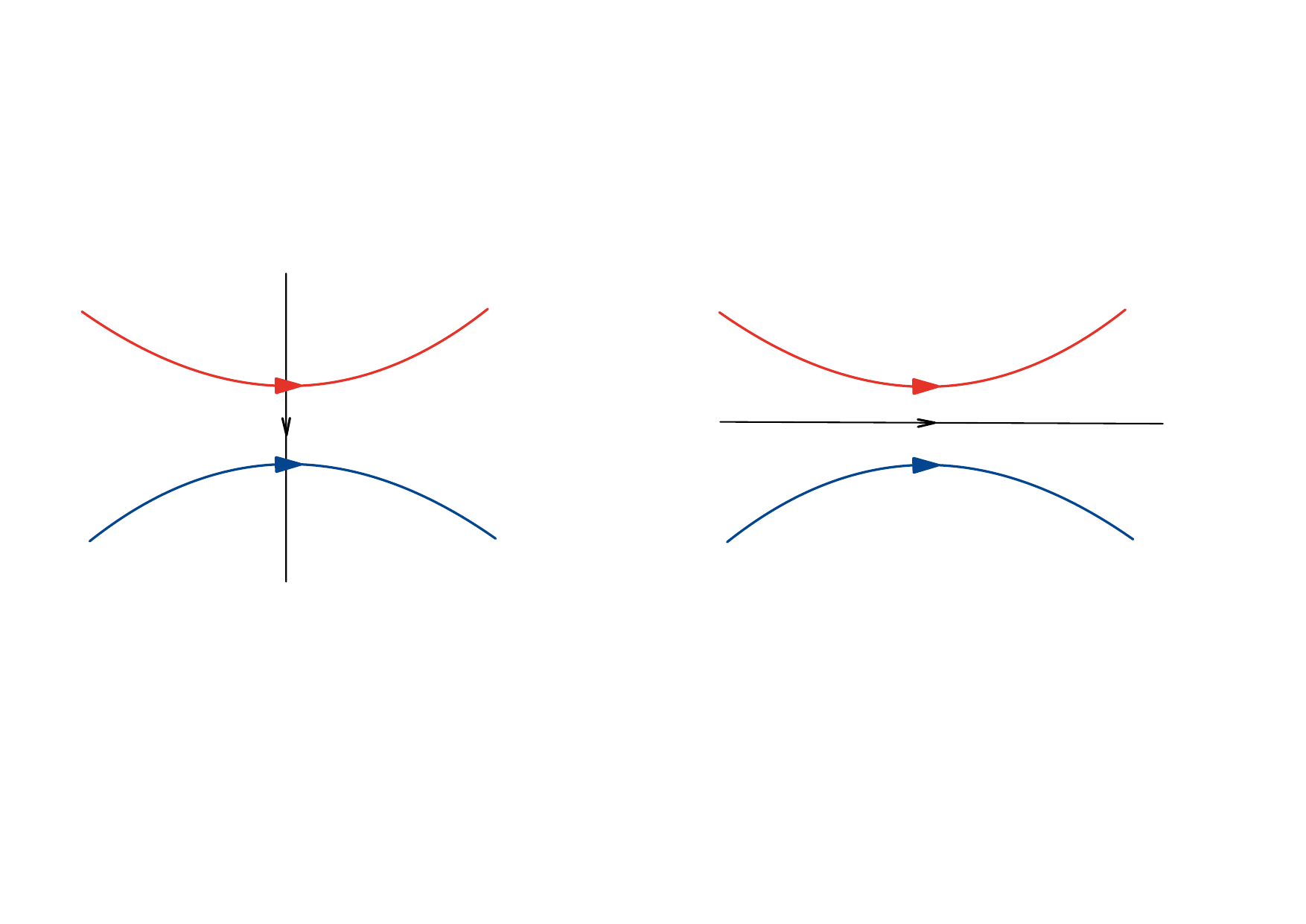}
    \put (-12.5,29) {{\color{black}\large$\displaystyle \phi$}}
    \put (105,52) {{\color{myRED}\large$\displaystyle \Gamma_\O$}}
        \put (105,3) {{\color{myBLUE}\large$\displaystyle \Gamma_{\O^\pp}$}}
    \put (33,-14) {{\color{black}\large Separated}}
\end{overpic}
\end{figure}

        \vspace*{0.4cm}
    
     \item[\textbf{(ii)}] If the orbits $\O$ and $\O^{\sspc\prime}$ satisfy $\partial_L C_\O \cap \partial_R C_{\O^{\sspc\prime}} \neq \varnothing$, then they must also satisfy
    $$ C_\O \cap C_{\O^{\sspc\prime}} = \varnothing \quad \text{ and } \quad \partial_L C_\O \cap \partial_R C_{\O^{\sspc\prime}} = \{\phi\}.$$

    \item[\textbf{(iii)}] If two orbits $\O$ and $\O^{\sspc\prime}$ satisfy $\partial_L C_\O \cap \partial_L C_{\O^{\sspc\prime}} \neq \varnothing$, then they also satisfy
    $$ C_\O \cap C_{\O^{\sspc\prime}} \neq \varnothing \quad \text{ and } \quad \partial_L C_\O = \partial_L C_{\O^{\sspc\prime}}.$$

    \begin{figure}[h!]
    \center
    \vspace*{-0.1cm}
    \begin{overpic}[width=6.6cm, height=3.9cm, tics=10]{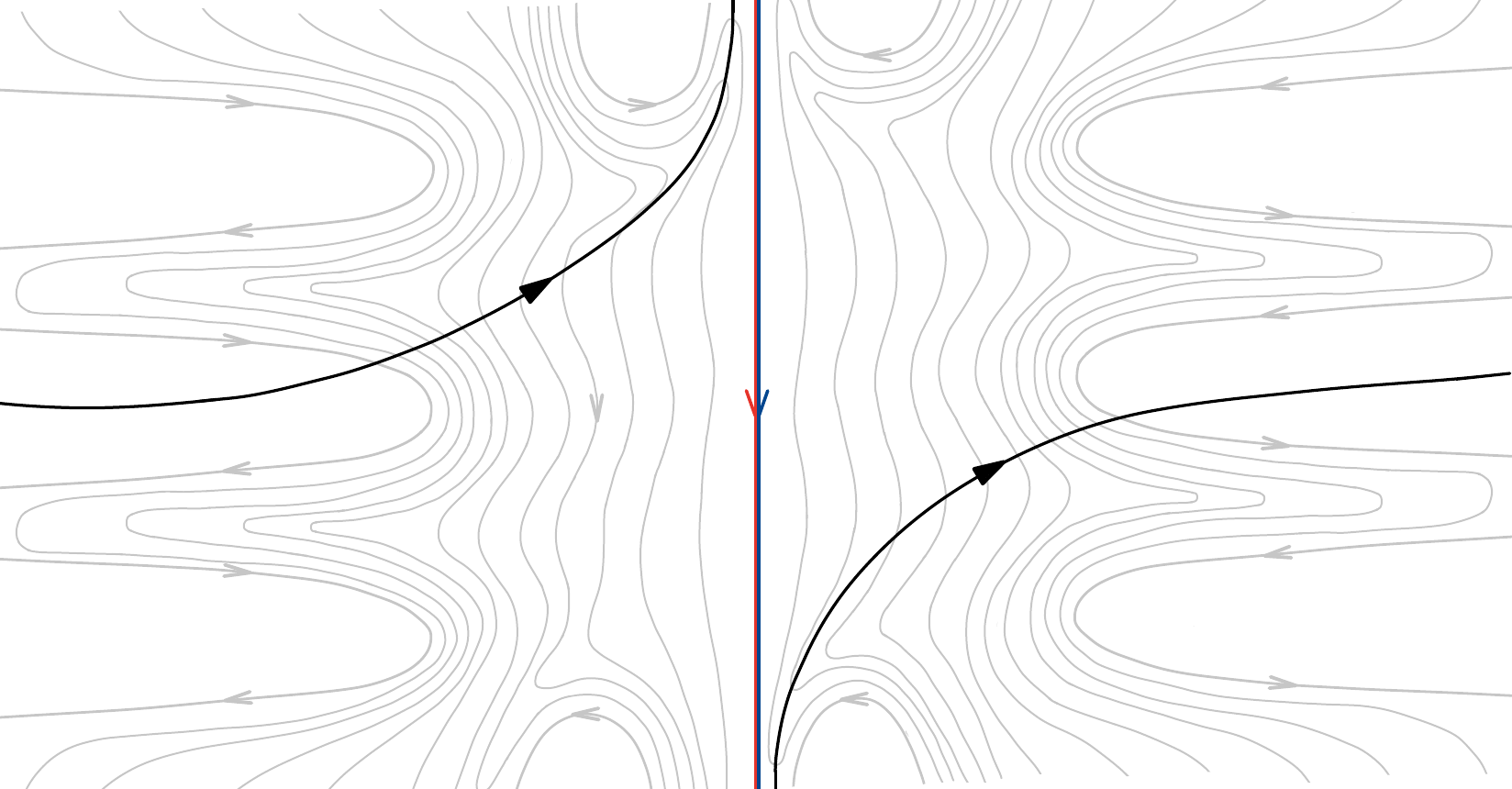}
        \put (52,27) {\colorbox{white}{$\rule{0cm}{0.3cm}\quad\ \ \ \spc $}}
        \put (33,27) {\colorbox{white}{$\rule{0cm}{0.3cm}\quad\ \ \  $}}
         \put (33,27.5) {{\color{myRED}$\displaystyle \fb$}}
         \put (52,27.2) {{\color{myBLUE}$\displaystyle \partial_R C_{\O'}$}}
        \put (-9,27) {\color{black}\large$\displaystyle \Gamma_\O$}
        \put (103,28.5) {\color{black}\large$\displaystyle \Gamma_{\O^\pp}$}
\end{overpic}
\hspace{1cm}
\begin{overpic}[width=6.6cm, height=3.9cm, tics=10]{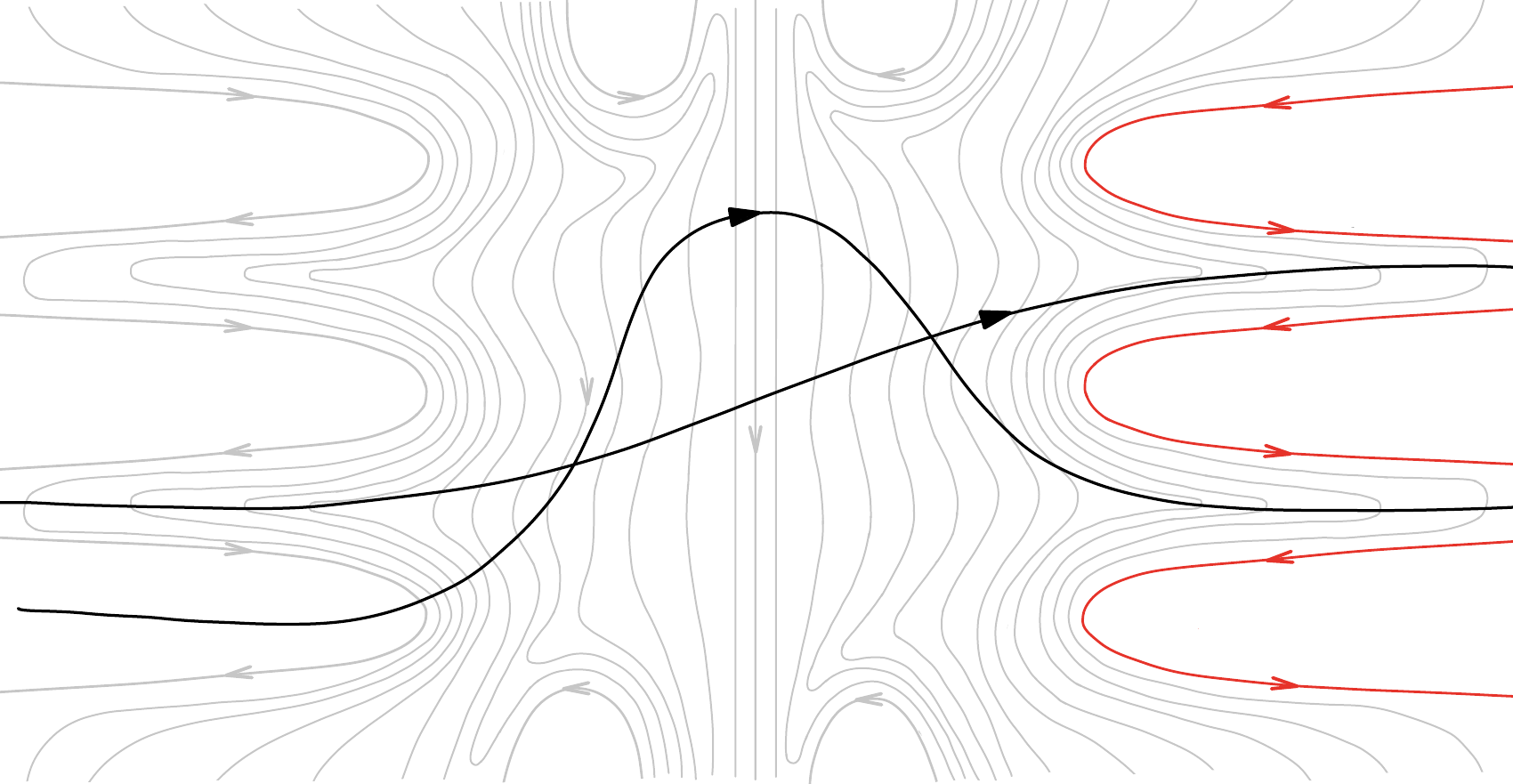}
        \put (52.9,23) {\colorbox{white}{$\rule{0cm}{0.3cm}\ \ \spc $}}
        \put (52.4,23) {{\color{black}\large$\displaystyle \Gamma_{\O^\pp} $}}
        \put (35,41.2) {\colorbox{white}{$\rule{0cm}{0.26cm}\ \spc \spc$}}
        \put (35,41) {{\color{black}\large$\displaystyle \Gamma_\O$}}
        
        \put (85,27.5) {\color{myRED}$\displaystyle \partial_L C_\O = \partial_L C_{\O^\pp}$}
\end{overpic}
\end{figure}

    \end{itemize}

\end{remark}

\subsection*{Definition of order $\prec$ on the sets $\fb$ and $\bb$.}\label{sec:order_limit_leaves}

Fix $\O \in \orb$ and a basepoint $x \in \O$. Since $\partial C_\O$ is a locally-finite collection of leaves disjoint from the closed set $\sspc\phi_0 \sspc\cup\sspc \O\sspc$, we can apply Lemma \ref{lemma:trivial_neighborhoods} to affirm that there exists a locally-finite family $\{V_\phi\}_{\phi \in \partial C_\O}$ of pairwise disjoint closed sets such that, for each $\phi \in \partial C_\O$, the set $V_\phi$ is a trivial-neighborhood of $\phi$ disjoint from $\sspc\phi_0 \cup \O\spc$.

For each $n\in\Z$, let $\phi_n \in \F$ be the leaf passing through the point $f^n(x)$, and define 
\mycomment{-0.12cm}
$$ J_n := \left\{\spc\phi \in \partial_L C_\O \cup \partial_R C_\O \mid V_\phi \cap \phi_n \neq \varnothing\spc\right\}.$$

\mycomment{-0.23cm}
\noindent
Observe that, by construction, $J_n \subset \partial_L C_\O$ when $n>0$, and $J_n \subset \partial_R C_\O$ when $n<0$.

\begin{figure}[h!]
    \center
    \vspace*{0.33cm}\begin{overpic}[width=6cm, height=5cm, tics=10]{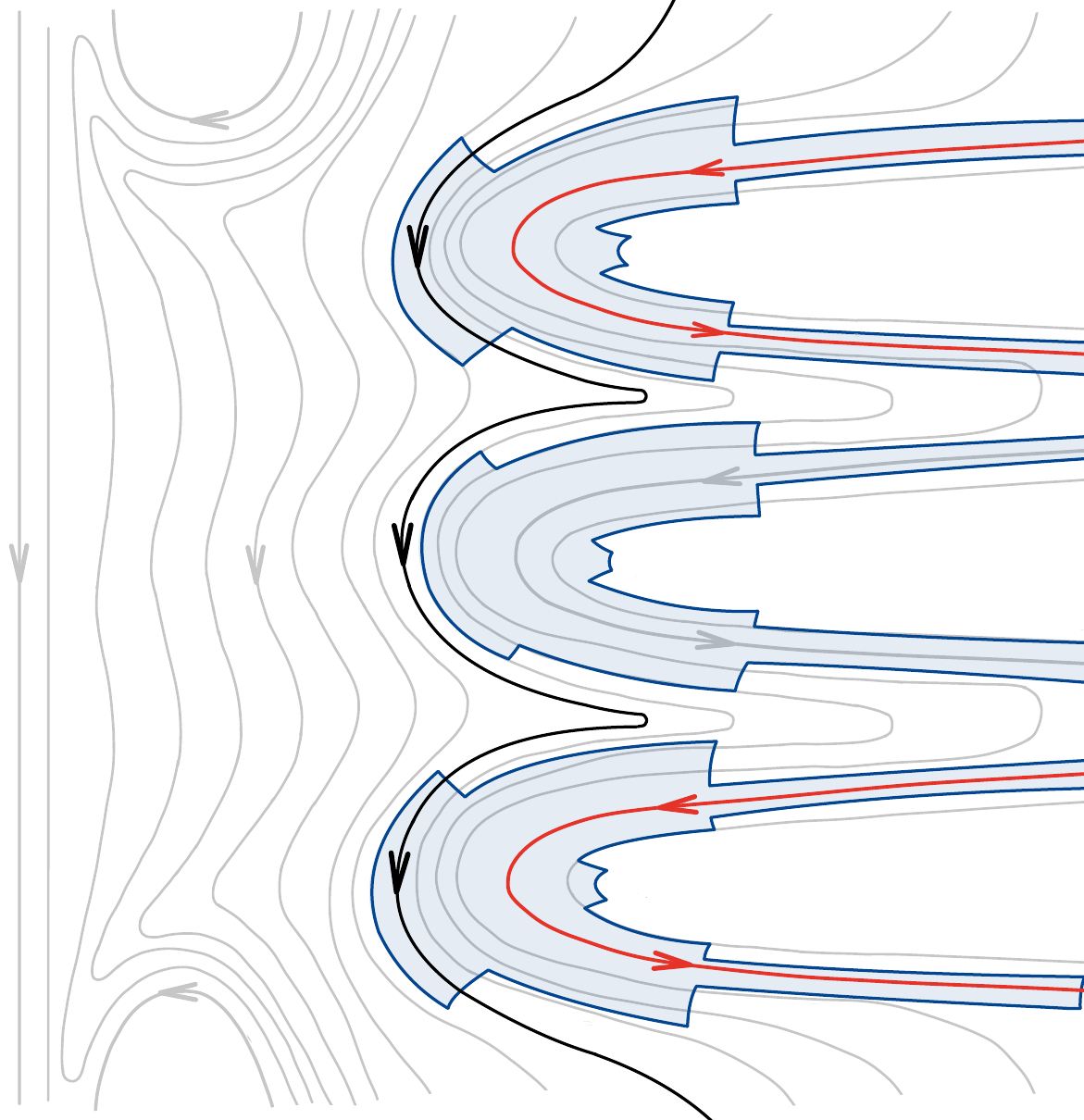}
        \put (59,87) {{\color{black}\large$\displaystyle \phi_n$}}
         \put (75,63) {{\color{myRED}\large$\displaystyle \phi \in J_n$}}
        \put (26,64.2) {\colorbox{white}{\color{myBLUE}\large$\displaystyle \rule{0cm}{0.3cm}\ \  $}}
        \put (27,64.9) {{\color{myBLUE}\large$\displaystyle V_\phi$}}
         \put (75,39.5) {{\color{myGRAY}\large$\displaystyle \phi'' \not\in J_n$}}
         \put (26,41.5) {\colorbox{white}{\color{myBLUE}\large$\displaystyle \rule{0cm}{0.3cm}\ \ \spc $}}
        \put (26,42) {{\color{myBLUE}\large$\displaystyle V_{\phi''}$}}
         \put (75,15.5) {{\color{myRED}\large$\displaystyle \phi' \in J_n$}}
         \put (24.5,15.2) {\colorbox{white}{\color{myBLUE}\large$\displaystyle \rule{0cm}{0.3cm}\ \  $}}
        \put (24.5,16) {{\color{myBLUE}\large$\displaystyle V_{\phi'}$}}
\end{overpic}
\end{figure}

\vspace*{-0.5cm}
\begin{lemma}
\label{lemma:Jn}
    The sequence of sets $(J_n)_{n\geq0}$ forms a filtration of $\partial_L C_\O$, in the sense that
    \begin{itemize}[leftmargin = 1.2cm]
        \item[\textup{\textbf{(i)}}] $J_0 = \varnothing$,
        \item[\textup{\textbf{(ii)}}]  $J_n \subset J_{n+1}\sspc$, for any $n\geq0$,
        \item[\textup{\textbf{(iii)}}] $\partial_L C_\O = \bigcup_{n\geq0} J^{n}$.
    \end{itemize}
    \mycomment{0.2cm}
    Similarly, the sequence of sets $(J_n)_{n\leq0}$ forms a filtration of the set $\partial_R C_\O$.
\end{lemma}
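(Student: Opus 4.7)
The plan is to verify the three filtration properties for $(J_n)_{n \geq 0}$ in turn; the analogous statement for $(J_n)_{n \leq 0}$ filtering $\partial_R C_\O$ will follow by the same argument applied to $f^{-1}$ together with the leafwise-reversed foliation, an operation that exchanges $\partial_L C_\O$ with $\partial_R C_\O$ and forward with backward iterates. Property (i) is immediate: by construction each $V_\phi$ is disjoint from $\phi_0 \cup \O$, so $\phi_0 \cap V_\phi = \varnothing$ for every $\phi \in \partial C_\O$, whence $J_0 = \varnothing$.

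For (iii), fix $\phi \in \partial_L C_\O$ and let $\sigma^\phi$ be the largest cross-section of the trivial-neighborhood system defining $V_\phi$. The defining properties of the $\sigma_k$'s imply that a leaf $\psi \in \F$ meets $V_\phi$ if and only if $\psi \cap \sigma^\phi \neq \varnothing$. The saturation of the open arc $(\sigma^\phi)^\circ$ inside $\R^2$ is an open, connected, saturated, trivially foliated subset -- a leaf domain -- that is also an open neighborhood $W_\phi$ of $\phi$ in the leaf space $\F$. Since $\phi_n \to \phi$ in the quotient topology of $\F$ (as recalled in Section~\ref{sec:crossed_leaves_and_limit_leaves}), for all $n$ large enough $\phi_n \in W_\phi$, so $\phi_n \cap \sigma^\phi \neq \varnothing$ and $\phi \in J_n$. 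This shows $\partial_L C_\O \subseteq \bigcup_{n \geq 0} J_n$; the reverse inclusion is built into the definition of $J_n$ for $n > 0$.

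The main content is (ii). Fix $\phi \in J_n$ with $n \geq 1$. Both $W_\phi$ and $C_\O$ are leaf domains, so by Lemma~\ref{lemma:intersection_of_domains} the intersection $W_\phi \cap C_\O$ is again a leaf domain; it is non-empty because $\phi_n \in W_\phi \cap C_\O$ (the topological line $\phi_n$ meets $V_\phi$ across its bounding cross-sections, so it also meets the interior cross-section $(\sigma^\phi)^\circ$). As a leaf domain inside the totally ordered leaf domain $C_\O$, the set $W_\phi \cap C_\O$ is an open connected sub-interval of $C_\O$ for the natural partial order of $\F$. It is moreover cofinal in $C_\O$ toward $\phi$: because $\phi_m \to \phi$, every $\phi_m$ with $m$ large enough lies in $W_\phi \cap C_\O$, so leaves of $C_\O$ arbitrarily close in $\F$ to $\phi$ lie in $W_\phi \cap C_\O$. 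A connected open sub-interval of $C_\O \cong \R$ that is cofinal toward $\phi$ must be of the form $\{\psi \in C_\O : \psi > \psi_*\}$ for some $\psi_* \in C_\O \cup \{-\infty\}$. Since $\phi_n$ lies in this sub-interval and $\phi_n \leq \phi_{n+1} \in C_\O$ in the partial order, one concludes $\phi_{n+1} \in W_\phi \cap C_\O$, hence $\phi_{n+1} \cap V_\phi \neq \varnothing$ and $\phi \in J_{n+1}$.

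The main obstacle in this plan is the cofinal sub-interval step: a priori, because $\F$ is non-Hausdorff, the partial-order interval between $\phi_n$ and $\phi$ could contain leaves escaping $W_\phi$ through a branch of $\F$ parallel to $C_\O$. This possibility is ruled out precisely because $\phi_{n+1}$ belongs to $C_\O$ itself, and $W_\phi \cap C_\O$ is order-convex within $C_\O$ -- a consequence of Lemmas~\ref{lemma:connected}(i) and~\ref{lemma:intersection_of_domains}.
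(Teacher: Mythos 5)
Your proof is correct and follows essentially the same route as the paper's, which disposes of items (i)--(iii) with one-line justifications (for (ii) the paper only remarks that, since $(\phi_n)_{n\ge 0}$ is increasing, $\phi_n\cap V_\phi\neq\varnothing$ forces $\phi_{n+1}\cap V_\phi\neq\varnothing$); your order-convexity argument for $W_\phi\cap C_\O$ is precisely the justification the paper leaves implicit, and it correctly isolates why the non-Hausdorffness of the leaf space causes no trouble. The only step you leave unjustified is that the saturation of a cross-section is a leaf domain; this is harmless, since a transverse arc is positively transverse after possibly reversing its orientation (its transverse coordinate is injective, hence monotone, in each trivializing chart), so Lemma~\ref{lemma:positively_transverse_path} applies.
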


\mycomment{-0.3cm}
\begin{proof}
    We prove the lemma for the sequence $(J_n)_{n\geq0}$, the proof for $(J_n)_{n\leq0}$ is analogous.

Since the leaf $\phi_0$ is disjoint from each set in the family $\{V_\phi\}_{\phi \in \partial_L C_\O}$, it holds that $J_0 = \varnothing$. 
Next, as the sequence of leaves $(\phi_n)_{n\geq0}$ is increasing in $\F$, if some leaf $\phi_n$ intersects $V_\phi$ for a given integer $n \geq 0$, then the leaf $\phi_{n+1}$ also intersects $V_\phi$. In other words, it holds that
\begin{equation*}\label{eq:inc}
    J_n \subset J_{n+1}, \quad \forall n\geq0.
\end{equation*}
Finally, note that for each leaf $\phi \in C_\O$, there exists some $n>0$ such that $L(\phi_n) \subset L(\phi)$. Therefore, for each leaf $\phi \in \partial_L C_\O$, there exists the smallest positive integer $N_\phi>0$ satisfying 
$$\phi_{N_\phi} \cap V_\phi \neq \varnothing.$$
This implies that $\partial_L C_\O = \bigcup_{n\geq0} J^{n}$, and thus we conclude the proof of the lemma.
\end{proof}

\mycomment{-0.9cm}
First, note that, for each $n \in \Z$ and $\phi \in J_n$, the set $\alpha_n(\phi):=\phi_n \cap V_\phi$ is a non-empty compact sub-arc of the leaf $\phi_n$. This gives us, for each $n \in \Z$, a family $A_n:=\{\alpha_n(\phi)\}_{\phi \in J_n}$ of pairwise disjoint sub-arcs of the leaf $\phi_n$. Next, note that the orientation of $\phi_n$ induces a natural total order on the set $A_n$, which is defined as follows:
For any two arcs $\alpha,\alpha'\in A_n$, we write \(\alpha \prec_n \alpha'\) if, along the oriented leaf \(\phi_n\), the arc \(\alpha'\) is positioned after \(\alpha\). Through the bijection $\phi\longmapsto\alpha_n(\phi)\spc$ we import this order to  $J_n\sspc$. Given two leaves $\phi,\phi'\in J_n$, we say that
\begin{equation*}
    \text{$\phi \prec_n \phi'\sspc $  \quad if \quad  $\alpha_n(\phi) \prec_n \alpha_n(\phi')$.}
\end{equation*}

Observe that $\prec_n$ and $\prec_{n+1}$ are compatible with the inclusion $J_n \subset J_{n+1}$ for any $n\geq 0$. This means that, if two leaves $\phi,\phi'\in J_n$ satisfy $\phi \prec_n \phi'$, then they also satisfy $\phi \prec_{n+1} \phi'$. 
This ensures that the set
$ \partial_L C_\O = \bigcup_{n>0} J_{n} $
inherits a total order $\prec$ that extends the sets in the filtration $(J_n)_{n\geq0}$, in the sense that,  for any $\phi,\phi'\in \partial_L C_\O$, we define the relation
\begin{equation*}
    \phi \prec \phi'\quad \text{ if there exists } n>0 \text{ such that } \phi \prec_n \phi'.  
\end{equation*}

\begin{figure}[h!]
    \center
    \vspace*{0.12cm}\begin{overpic}[width=6cm, height=5cm, tics=10]{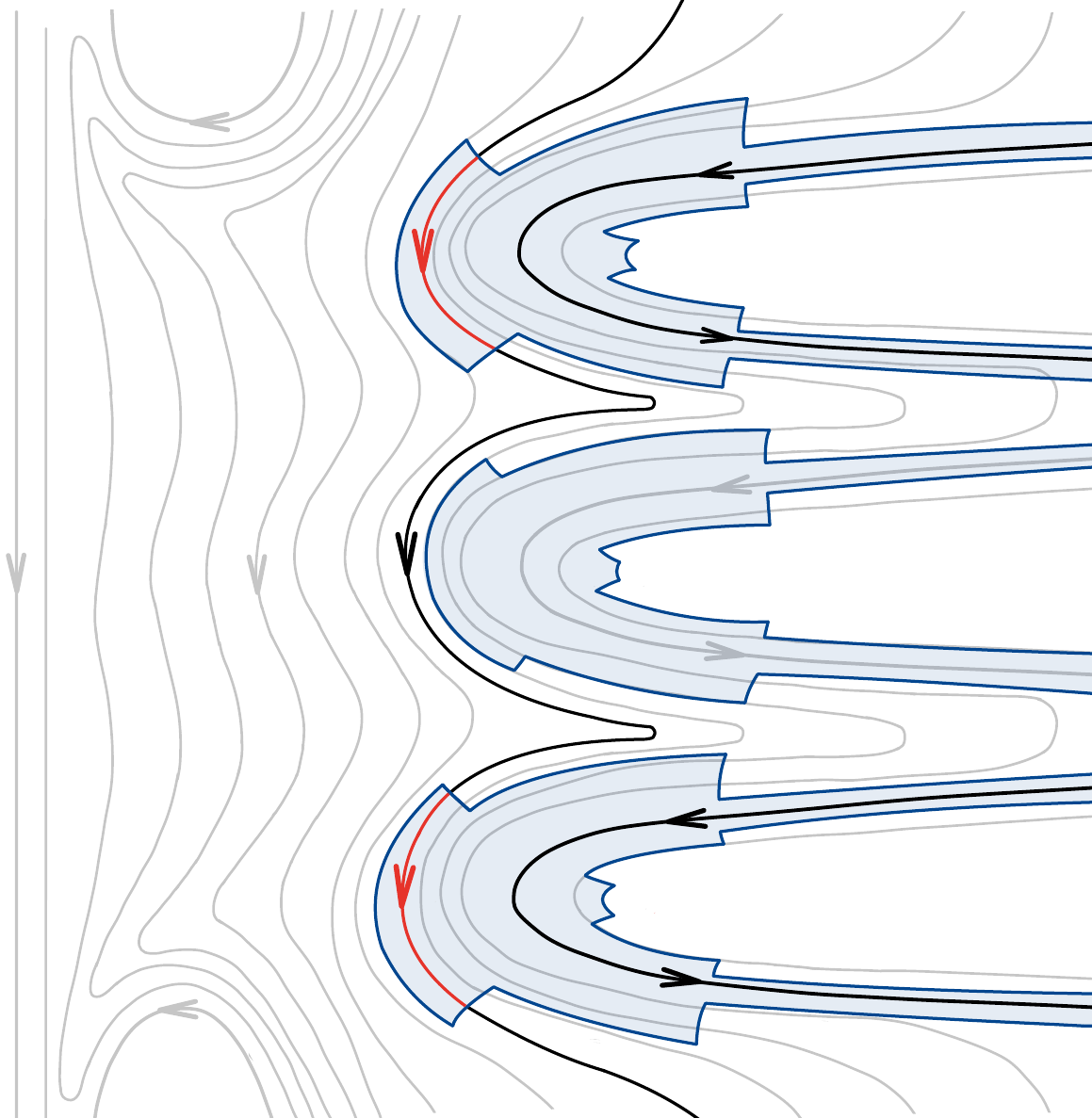}
        \put (57,86) {\colorbox{white}{\color{myBLUE}\large$\displaystyle \rule{0cm}{0.34cm}\ \ \ $}}
        \put (59,86) {{\color{black}\large$\displaystyle \phi_n$}}
         \put (75,62.5) {{\color{black}\large$\displaystyle \phi \in J_n$}}
        \put (18,63) {\colorbox{white}{\color{myBLUE}\large$\displaystyle \rule{0cm}{0.33cm}\quad \ \ \ $}}
        \put (18,63.5) {{\color{myRED}\large$\displaystyle \alpha_n(\phi)$}}
         \put (-56,39.8) {{\color{black}\large$\displaystyle \alpha_n(\phi) \prec_n \alpha_n(\phi')$}}
         \put (13,15.2) {\colorbox{white}{\color{myBLUE}\large$\displaystyle \rule{0cm}{0.34cm}\quad \ \ \ \ \spc $}}
        \put (14,15.5) {{\color{myRED}\large$\displaystyle \alpha_n(\phi')$}}
         \put (75,14) {{\color{black}\large$\displaystyle \phi' \in J_n$}}
\end{overpic}
\end{figure}

We remark that the set $\partial_R C_\O$ inherits an analogous total order $\prec$ from the filtration $(J_n)_{n\leq0}$.

\mycomment{-0.1cm}
\begin{remark}
    There are two important properties of the order $\prec$ that we want to highlight:
    \begin{itemize}[leftmargin = 1cm]
    \item[\textbf{(i)}] The definition of $\prec$ does not change if the trivial-neighborhoods in the family $\{V_\phi\}_{\phi \in \partial C_\O}$ are replaced by smaller ones. This ensures that our construction is independent of the specific choice of trivial-neighborhoods, as long as they are taken small enough.

    \item[\textbf{(ii)}] By taking sufficiently small cross-sections $\sigma_\phi\subset V_\phi$ over each leaf $\phi \in \partial_L C_\O$, and then extending them until they reach the leaf $\phi_0$, we can obtain a family $\{\gamma_\phi\}_{\phi \in \partial_L C_\O}$ of pairwise disjoint paths $\gamma_\phi:[0,1]\longrightarrow\R^2$ positively transverse to the foliation $\F$ that joins the points $\gamma_\phi(0) \in \phi_0$ and $\gamma_\phi(1) \in \phi\sspc$. This allows us to express the order $\prec$ in terms of the order of the endpoints $\{\gamma_\phi(0)\}_{\phi \in \partial_L C_\O}$ along the leaf $\phi_0$, meaning that
    $$ \phi \prec \phi' \iff \gamma_\phi(0) < \gamma_{\phi'}(0) \text{ according to the orientation of } \phi_0.$$
    \end{itemize} 
    \end{remark}

\subsection{Orbit induced cuts}\label{sec:convergence_cuts}

Let $\text{Cut}_\prec(\partial_L C_\O)$ be the set of cuts of $(\partial_L C_\O,\prec)$, that is, the set of all pairs $(A,B)$ such that $A\sspc,\sspc B \subset \partial_L C_\O$ are disjoint subsets of $\partial_L C_\O$ satisfying
\begin{itemize}[leftmargin = 1.5cm]
    \item $A \cup B = \partial_L C_\O$,
    \item For any leaf $\phi \in A$ and any leaf $\phi' \in B$, it holds that $\phi \prec \phi'$.
\end{itemize}   
\mycomment{0.2cm}
Similarly, let $\text{Cut}_\prec(\partial_R C_\O)$ denote the set of all cuts of the totally ordered set $(\partial_R C_\O,\prec)$.


\begin{proposition}\label{prop:orbit_cuts}
    Every orbit $\O \in \orb$ admits two canonically defined cuts
    \begin{align*}
    (\sspc\partial_L^\text{\sspc top\sspc } \O\sspc,\spc\partial_L^\text{\sspc bot\sspc } \O\sspc) \in \textup{Cut}_\prec(\partial_L C_\O) \quad \ \text{ and } \ \quad 
    (\sspc \partial_R^\text{\sspc top\sspc } \O\sspc,\spc\partial_R^\text{\sspc bot\sspc } \O\sspc) \in \textup{Cut}_\prec(\partial_R C_\O).
\end{align*}
\begin{figure}[h!]
    \center
    \vspace*{-0.4cm}\begin{overpic}[width=9cm, height=4.2cm, tics=10]{Image1orbit.pdf}
        \put (54.4,15) {\colorbox{white}{$\rule{0cm}{0.3cm}\quad $}}
        \put (55,15.3) {{\color{myGRAY}\large$\displaystyle C_\O $}}
        \put (43.5,25) {\colorbox{white}{\color{myDARKGRAY}\large$\rule{0cm}{0.26cm}\  $}}
         \put (43.5,25) {{\color{black}\large$\displaystyle \O$}}
        \put (84.5,21.7) {\color{myRED}\large$\displaystyle \Lbot \O $}
        \put (84.5,35.4) {\color{myRED}\large$\displaystyle \Ltop \O $}
        \put (84.5,8) {\color{myRED}\large$\displaystyle \Lbot \O $}
        \put (4,21.5) {\color{myBLUE}\large$\displaystyle \Rtop \O $}
        \put (4,35.6) {\color{myBLUE}\large$\displaystyle \Rtop \O $}
        \put (4,8) {\color{myBLUE}\large$\displaystyle \Rbot \O $}
\end{overpic}
\end{figure}
\end{proposition}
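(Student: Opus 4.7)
My plan is to build the cut $(\Ltop \O, \Lbot \O)$ level-by-level along the filtration $(J_n)_{n\geq 0}$ of $\partial_L C_\O$ introduced just before the statement, and glue these level-wise partitions into a single cut of $(\partial_L C_\O, \prec)$; the right cut is obtained symmetrically from $(J_n)_{n\leq 0}$. Fix a basepoint $x \in \O$ and the family $\{V_\phi\}_{\phi \in \partial C_\O}$ of trivial-neighborhoods from Section~3.2. For each $n \geq 0$ and each $\phi \in J_n$, the compact arc $\alpha_n(\phi) = \phi_n \cap V_\phi$ is disjoint from $f^n(x)$ (since $V_\phi \cap \O = \varnothing$), hence lies strictly before or strictly after $f^n(x)$ along the oriented leaf $\phi_n$. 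I set
\[
J_n^{\textup{top}} = \{\phi \in J_n \mid \alpha_n(\phi) \text{ precedes } f^n(x) \text{ along } \phi_n\}, \qquad J_n^{\textup{bot}} = J_n \setminus J_n^{\textup{top}}.
\]
The definition of $\prec_n$ immediately gives that $J_n^{\textup{top}}$ is a $\prec_n$-initial segment of $J_n$.

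The heart of the argument is the compatibility $J_n^{\textup{top}} = J_n \cap J_{n+1}^{\textup{top}}$. Fix $\phi \in J_n^{\textup{top}}$ and assume $\phi_n < \phi_{n+1}$ (otherwise $C_\O = \varnothing$ and the statement is vacuous). By Remark~(i) and a similar robustness argument, shrinking $V_\phi$ alters neither $\prec$ nor the partition $(J_m^{\textup{top}}, J_m^{\textup{bot}})$, so I can arrange that a positively transverse arc $\gamma \subset C_\O$ from $f^n(x)$ to $f^{n+1}(x)$ exists with $\gamma \cap V_\phi = \varnothing$ (such $\gamma$ is found in the open leaf subdomain $C_\O \setminus V_\phi$, which contains both endpoints). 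The union $\phi_n \cup \phi_{n+1} \cup \gamma$ decomposes $\R^2$ into four open components: $R(\phi_n)$, $L(\phi_{n+1})$, and the two components $M_+, M_-$ of the middle region $M := L(\phi_n) \cap R(\phi_{n+1})$ cut by $\gamma$. Here $M_+$ is bordered by $\gamma$ together with the rays of $\phi_n$ and $\phi_{n+1}$ that precede $f^n(x)$ and $f^{n+1}(x)$ in their orientations, while $M_-$ is bordered by the two rays that follow them.

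The set $V_\phi \cap M$ is connected — in the trivial-foliation model of $V_\phi$, it is the sub-strip of leaves strictly between $\phi_n$ and $\phi_{n+1}$ that enter $V_\phi$ — and is disjoint from $\gamma$ by construction, hence lies entirely in $M_+$ or in $M_-$. Since the boundary arc $\partial(V_\phi \cap M) \cap \phi_n = \alpha_n(\phi)$ lies in the ray preceding $f^n(x)$ by hypothesis, which borders $M_+$, I conclude $V_\phi \cap M \subset M_+$. Then $\alpha_{n+1}(\phi) = \partial(V_\phi \cap M) \cap \phi_{n+1}$ must lie on the ray of $\phi_{n+1}$ bordering $M_+$, namely the one preceding $f^{n+1}(x)$, so $\phi \in J_{n+1}^{\textup{top}}$. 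The symmetric statement for $J^{\textup{bot}}$ follows, giving the compatibility.

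Setting $\Ltop \O := \bigcup_{n\geq 0} J_n^{\textup{top}}$ and $\Lbot \O := \bigcup_{n\geq 0} J_n^{\textup{bot}}$ now defines a partition of $\partial_L C_\O$ whose first component is a $\prec$-initial segment (combine Step~1 with the compatibility), hence an element of $\textup{Cut}_\prec(\partial_L C_\O)$; the right cut is produced symmetrically. Canonicity is clear: shifting the basepoint $x \to f^k(x)$ merely shifts the filtration by $k$ without changing the union, and shrinking the trivial-neighborhoods preserves the partition at each level, so the cut depends only on $\O$ and $\F$. The main obstacle is the compatibility step: verifying both the topological connectedness of $V_\phi \cap M$ and its correct localization in $M_+$ versus $M_-$, while simultaneously choosing $\gamma$ disjoint from $V_\phi$ without losing the conditions $\phi_n \cap V_\phi, \phi_{n+1} \cap V_\phi \neq \varnothing$ that make $\alpha_n(\phi)$ and $\alpha_{n+1}(\phi)$ defined in the first place.
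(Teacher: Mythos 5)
Your level-wise partition of $J_n$ by the position of $\alpha_n(\phi)$ relative to $f^n(x)$ is exactly the paper's starting point, but the compatibility claim $J_n^{\textup{top}} = J_n \cap J_{n+1}^{\textup{top}}$ on which your whole construction rests is false in general, and the step where you justify it is circular. A positively transverse path $\gamma$ from $f^n(x)$ to $f^{n+1}(x)$ crosses precisely the leaves in the interval between $\phi_n$ and $\phi_{n+1}$, and (by the structure of trivial-neighborhoods) every such leaf meets $V_\phi$ once $\phi_n$ and $\phi_{n+1}$ both do; so $V_\phi$ separates the strip $\overline{L(\phi_n)\cap R(\phi_{n+1})}$ into the two regions containing, respectively, the points lying before and after the orbit points along their leaves. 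Consequently a transverse $\gamma$ disjoint from $V_\phi$ exists \emph{if and only if} $f^n(x)$ and $f^{n+1}(x)$ sit on the same side of $V_\phi$ --- which is precisely the non-flipping you are trying to prove. Shrinking $V_\phi$ does not help: as long as $\phi_{n}\cap V_\phi\neq\varnothing$ the side of $\alpha_n(\phi)$ is unchanged (it cannot jump over $f^n(x)$ since $V_\phi\cap\O=\varnothing$), and the separation phenomenon persists. The orbit genuinely can switch sides of $V_\phi$ finitely many times; if it could not, the paper would not need the integer $M_\phi$ of its Claim~\ref{claim:convergence}, nor the redefinition $V'_\phi := V_\phi\cap\overline{L(\phi^{M_\phi})}$ later in the proof of Theorem~\ref{thmx:proper_trajectories_restate}. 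With flips allowed, your sets $\bigcup_n J_n^{\textup{top}}$ and $\bigcup_n J_n^{\textup{bot}}$ need not even be disjoint.

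What is actually true, and what the paper proves, is only \emph{eventual} stabilization: for each $\phi\in\fb$ there is $M_\phi$ beyond which the side no longer changes, and $\Ltop\O$, $\Lbot\O$ are defined by the eventual side. This cannot be obtained by the purely topological bookkeeping in your argument --- your proof never uses that $f$ is a Brouwer homeomorphism beyond $V_\phi\cap\O=\varnothing$, which is a sign something is missing, since the statement is genuinely dynamical. The paper's Claim~\ref{claim:convergence} rules out infinitely many flips by contradiction: infinitely many flips produce a locally finite family of arcs $\alpha_k$ joining $z_k=f^{n_k}(x)$ to $\phi$ whose images $f(\alpha_k)$ must all cross a fixed compact arc $K$, contradicting that a homeomorphism preserves local finiteness; properness of the orbit (no accumulation points) is what makes the family $\{\alpha_k\}$ locally finite in the first place. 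You need to replace your compatibility step with an argument of this kind.
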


\vspace*{-0.9cm}
\newpage

\mycomment{-0.5cm}
It is worth remarking the sets $\partial_L^\text{\sspc top\sspc } \O$, $\partial_L^\text{\sspc bot\sspc } \O$, $\partial_R^\text{\sspc top\sspc } \O$ and $\partial_R^\text{\sspc bot\sspc } \O$ depend not only on the orbit $\O$, but also on the foliation $\F$. However, we often omit the dependence on $\F$ in the notation, as it is usually clear from the context.

The leaf domain $C_\O$ and the cuts $(\sspc\partial_L^\text{\sspc top\sspc } \O\sspc,\sspc\partial_L^\text{\sspc bot\sspc } \O\sspc)$ and $(\sspc\partial_R^\text{\sspc top\sspc } \O\sspc,\sspc\partial_R^\text{\sspc bot\sspc } \O\sspc)$ describe the asymptotic behavior of the orbit $\O$ with respect to the foliation $\F$. The qualitative data associated with these objects is strictly greater than the one given by the set $C_\O$ alone. Throughout this work, we refer to this qualitative data as the $\F$\textit{-asymptotic behavior} of $\O$.

Later, in Section \ref{sec:proper_transverse_trajectories}, we show that the cuts $(\sspc\partial_L^\text{\sspc top\sspc } \O\sspc,\sspc\partial_L^\text{\sspc bot\sspc } \O\sspc)$ and $(\sspc\partial_R^\text{\sspc top\sspc } \O\sspc,\sspc\partial_R^\text{\sspc bot\sspc } \O\spc)$ can be interpreted by proper transverse trajectories associated with the orbit $\O$, which are transverse trajectories that are topological lines on the plane. These proper transverse trajectories completely describe the $\F$-asymptotic behavior of the orbit $\O$, differently from an arbitrary (non-proper) transverse trajectory, which only encodes the information carried by $C_\O$.

\begin{proof}
    We begin by considering an orbit $\O \in \orb$, a basepoint $x \in \O$, a family  $\{V_\phi\}_{\phi \in \partial C_\O}$ of trivial-neighborhoods, the leaves $(\phi_n)_{n\in\Z}$, and the sets $(J_n)_{n\in\Z}$ as defined in Section \ref{sec:order_limit_leaves}.

Observe that, by cutting each leaf $\phi_n$ at the point $f^n(x)$, we obtain two halflines
\mycomment{-0.12cm}
\begin{align*}
    \phi^+_n:\sspc[\spc\sspc0,\infty\sspc)\spc\longrightarrow\R^2\sspc \quad \text{ and } \quad
    \phi^-_n:(-\infty,0\sspc]\longrightarrow\R^2,
\end{align*}

\mycomment{-0.23cm}
\noindent
with endpoints $\phi^+_n(0) = \phi^-_n(0) = f^n(x)\spc$ and satisfying $\spc\phi^+_n\sspc,\spc \phi^-_n \subset \phi_n$.
Recall that each set in the family $\{V_\phi\}_{\phi \in \partial C_\O}$ is disjoint from  the orbit $\O$. Consequently, for each  \(n \in \Z\) and \(\phi \in J^n\), the arc \(\alpha_n(\phi) := \phi_n \cap V_\phi\) must lie entirely within one of the two half-lines, either \(\phi_n^+\) or \(\phi_n^-\). This allows us to define, for any $n \in \Z$, a cut $(J_n^-,J_n^+) \in \text{Cut}_\prec(J_n)$ given by the sets
\mycomment{-0.12cm}
\begin{align*}
    J_n^-:=\{\phi \in J_n \mid \alpha^\phi_n \subset \phi_n^-\}\quad \text{ and } \quad
    J_n^+:=\{\phi \in J_n \mid \alpha^\phi_n \subset \phi_n^+\}.
\end{align*}

\begin{figure}[h!]
    \center
    \mycomment{0cm}\begin{overpic}[width=6cm, height=5cm, tics=10]{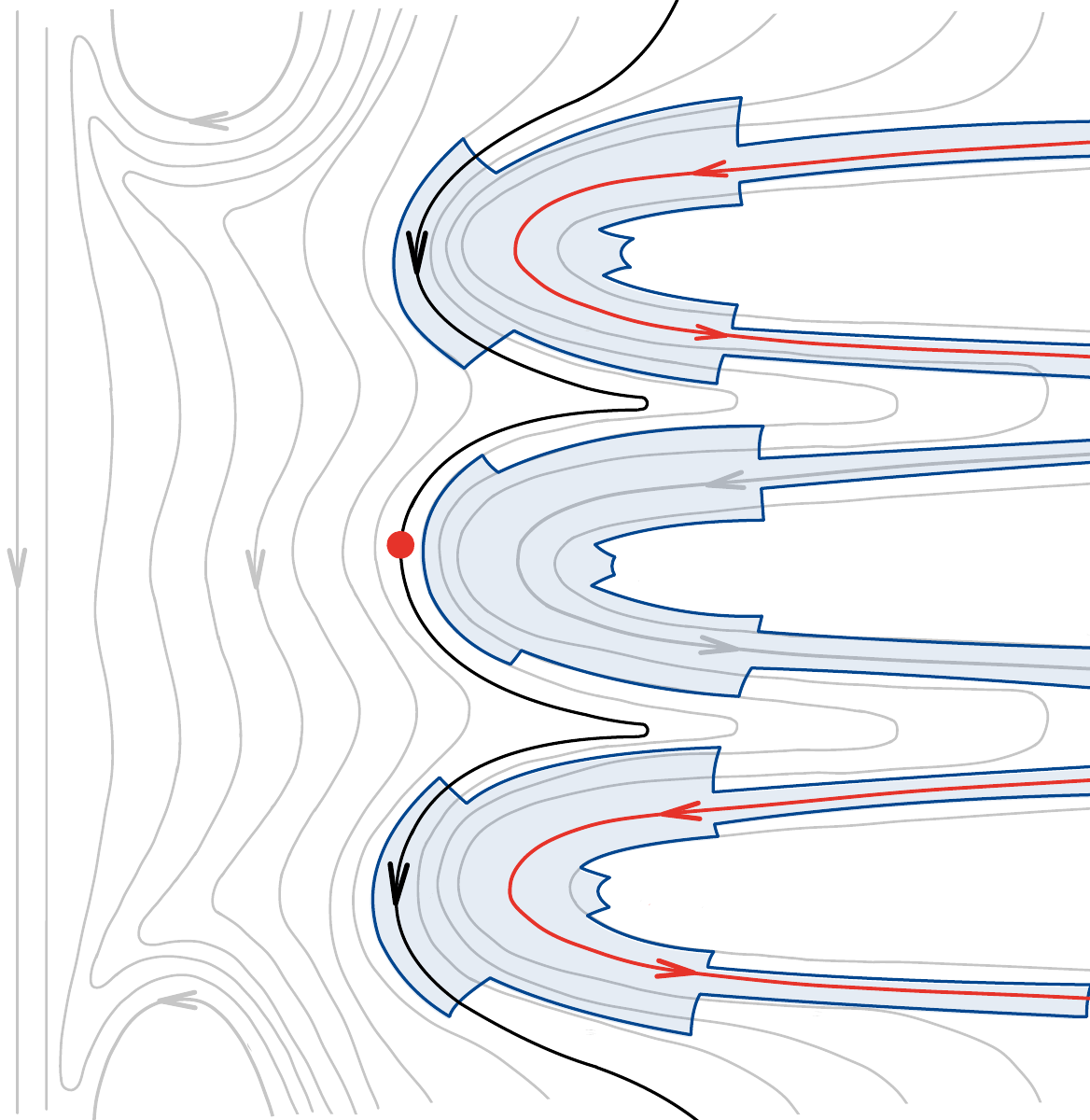}
        \put (58,86.5) {{\color{black}\large$\displaystyle \phi_n$}}
         \put (75,63.3) {{\color{myRED}\large$\displaystyle \phi \in J_n^-$}}
        \put (25,63.5) {\colorbox{white}{\color{myBLUE}\large$\displaystyle \rule{0cm}{0.33cm}\quad $}}
        \put (26,64) {{\color{myBLUE}\large$\displaystyle V_\phi$}}
        \put (15,39) {\colorbox{white}{\color{myBLUE}\large$\displaystyle \rule{0cm}{0.4cm}\quad \quad \sspc $}}
         \put (16,40) {{\color{myRED}\large$\displaystyle f^n(x)$}}
         \put (21.5,15.2) {\colorbox{white}{\color{myBLUE}\large$\displaystyle \rule{0cm}{0.36cm}\quad $}}
        \put (22.5,16) {{\color{myBLUE}\large$\displaystyle V_{\phi'}$}}
         \put (75,15) {{\color{myRED}\large$\displaystyle \phi' \in J_n^+$}}
\end{overpic}
\end{figure}


\mycomment{-0.2cm}
\begin{claim}\label{claim:convergence}
    For each leaf $\phi \in \fb$, there exists $M_\phi>0$ which is the smallest positive integer such that, for any $n \geq M_\phi$, the following two equivalences hold
    \mycomment{-0.12cm}
    \begin{align*}
        \phi \in J_n^+ \iff \phi \in J_{n+1}^+, \\
        \phi \in J_n^- \iff \phi \in J_{n+1}^-.
    \end{align*}

    \mycomment{-0.23cm}
    \noindent Similarly, for each leaf $\phi \in \bb$, there exists $M_\phi<0$ satisfying an analogous property.
\end{claim}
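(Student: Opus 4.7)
The plan is to work inside the trivial-neighborhood $V_\phi$ via the Homma-Schoenflies trivialization of Section \ref{sec:prelim_foliations}, translate the conditions $\phi \in J_n^{\pm}$ into a geometric statement about the location of $f^n(x)$ along $\phi_n$, and then prove stabilization by a persistence argument; I will only treat $\phi \in \fb$ and the forward sequence, the backward case being entirely symmetric. Concretely, I identify $V_\phi$ with $\bigcup_{k \geq 1}[\spc -1/k,\spc 1/k\spc]\times[\spc -k,\spc k\spc]$, with $\phi$ corresponding to $\{0\}\times\R$ and leaves oriented in the $-y$ direction. Since $\phi\in\fb$ forces $C_\O\subset R(\phi)$, for each $n$ large enough that $\phi\in J_n$ (guaranteed by Lemma \ref{lemma:Jn}), the leaf $\phi_n$ meets $V_\phi$ in a vertical segment $\{t_n\}\times[\spc -N_n,\spc N_n\spc]$ with $N_n = \lfloor 1/|t_n|\rfloor$, and the convergence $\phi_n\to\phi$ in the leaf space (discussed in Section \ref{sec:crossed_leaves_and_limit_leaves}) forces $|t_n|\to 0^+$ and thus $N_n\to\infty$ as $n\to\infty$. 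Let $p_n^- := (t_n, N_n)$ and $p_n^+ := (t_n, -N_n)$ denote the two endpoints of $\alpha_n(\phi)$, ordered so that $p_n^- \prec p_n^+$ along $\phi_n$.

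Because $V_\phi \cap \O = \varnothing$, the point $f^n(x)$ lies on $\phi_n$ outside $V_\phi$, and hence on exactly one of the two half-lines of $\phi_n\setminus\alpha_n(\phi)$: either the \emph{entry half-line} preceding $p_n^-$ in orientation, or the \emph{exit half-line} succeeding $p_n^+$. Unpacking the definitions yields the equivalences
\[
\phi \in J_n^+ \iff f^n(x)\text{ lies on the entry half-line of }\phi_n, \qquad \phi \in J_n^- \iff f^n(x)\text{ lies on the exit half-line.}
\]
The claim thus reduces to showing that the entry/exit side of $f^n(x)$ along $\phi_n$ stabilizes for $n\geq M_\phi$, which I would establish by contradiction: suppose a \emph{switch}, that is $\phi\in J_n^+\cap J_{n+1}^-$ or its reverse, occurs for infinitely many $n$. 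For each such switch, the positively transverse path $\gamma_{f^n(x)}$ must connect a point on the entry half-line of $\phi_n$ to a point on the exit half-line of $\phi_{n+1}$, while by positive transversality only crossing leaves whose natural order lies strictly between $\phi_n$ and $\phi_{n+1}$. For large $n$, these intermediate leaves are squeezed into a shrinking neighborhood of $\phi$ in the trivialization, whereas the path must traverse from the ``above'' region of $V_\phi$ to the ``below'' region, spanning a portion of $\partial V_\phi$ of unbounded extent as $N_n + N_{n+1}\to\infty$.

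The main obstacle, which forms the crux of the proof, will be turning this geometric tension into a rigorous contradiction. The intended strategy is to fix a large $K > 0$, consider the compact cross-sections $\sigma_{-K},\sigma_K\subset V_\phi$ whose intersections $\phi_n\cap\sigma_{\mp K}$ converge in $\R^2$ to the fixed points $\phi\cap\sigma_{\mp K}$, and exploit the ``staircase'' shape of $\partial V_\phi$ visible in the trivialization. Concretely, each switch at index $n$ should force $\gamma_{f^n(x)}$ to cross a leaf on $\partial V_\phi$ whose natural order lies outside the interval bounded by $\phi_n$ and $\phi_{n+1}$, contradicting positive transversality. Once infinitely many switches are ruled out, taking $M_\phi$ to be one more than the largest switch index yields the claim.
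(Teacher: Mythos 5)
There is a genuine gap, and it sits exactly at the step you flag as the crux. Your reduction of $\phi \in J_n^{\pm}$ to the entry/exit position of $f^n(x)$ along $\phi_n$ is fine and matches the paper's setup (the paper phrases it via the two components $U^{\pm}$ of $L(\phi_R)\cap R(\phi)\setminus V_\phi$). But the contradiction you then aim for — that a switch forces $\gamma_{f^n(x)}$ to cross a leaf outside the interval bounded by $\phi_n$ and $\phi_{n+1}$ — is false. In the trivialization, the region $\overline{L(\phi_n)\cap R(\phi_{n+1})}$ is a trivially foliated strip, and a positively transverse path from \emph{any} point of $\phi_n$ to \emph{any} point of $\phi_{n+1}$ exists: it is just a graph monotone in the transverse coordinate, and nothing prevents it from passing straight through $V_\phi$ (entering and leaving through the cross-sections $\sigma_{\pm k}$, which are transverse arcs, not leaves). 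So a single switch is perfectly realizable, and in fact infinitely many switches are realizable by positively transverse paths — this is precisely the ``weird asymptotic behavior'' the paper illustrates before Section 3.1, where every transverse trajectory accumulates on $\phi_L$. Consequently, no argument using only the foliation, the trivial neighborhood $V_\phi$, and positive transversality of the connecting paths can prove the claim: the three hypotheses you actually invoke (the $\phi_n$ increase to $\phi$, consecutive points are joined by transverse paths, the orbit avoids $V_\phi$) are consistent with the conclusion failing.

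The missing ingredient is the dynamics of $f$ itself, which your proposal never uses. The paper's proof assumes infinitely many switches at indices $n_k$, so that $z_k=f^{n_k}(x)$ lies on one side of $V_\phi$ while $f(z_k)$ lies on the other. Because $f$ is a Brouwer homeomorphism, the orbit has no accumulation point, so $z_k\to\infty$ inside a horizontally bounded region; the arcs $\alpha_k$ joining $z_k$ to $\phi$ therefore form a locally finite family. Normalizing so that $f$ carries $\phi$ to $f(\phi)$ nicely, each image arc $f(\alpha_k)$ must connect the lower to the upper half-plane and hence meet a fixed compact arc $K$, contradicting the local finiteness of $\{f(\alpha_k)\}_k$, which is guaranteed because $f$ is a homeomorphism. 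To repair your proof you would need to import this (or an equivalent) use of the wandering property of $\O$ together with the fact that $f$ preserves local finiteness; the staircase geometry of $\partial V_\phi$ alone will not yield the contradiction.
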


\mycomment{-0.2cm}
\begin{proof}[Proof of Claim \ref{claim:convergence}]
    Fix a leaf $\phi \in \partial_L C_\O$ and denote by $\phi_R \in C_\O$ the right-most leaf that intersects the trivial-neighborhood $V_\phi$. The open set $L(\phi_R) \cap R(\phi)\setminus V_\phi$ has exactly two connected components, which we name $U^+$ and $U^-$ according to the following equivalences:
    \mycomment{-0.12cm}
    \begin{align*}
        \phi\in J_n^+ \iff f^n(x) \in U^+,\\
        \phi\in J_n^- \iff f^n(x) \in U^-.
    \end{align*}

    \mycomment{-0.2cm}
    \indent To simplify the argument, we can profit from Homma-Schoenflies theorem to suppose,  up to conjugacy, that the topological lines $\phi_R$, $\phi$ and $f(\phi)$ are verticals on the plane oriented downwards, the set $U^-$ lies on the upper half-plane, the set $U^+$ lies on the lower half-plane,  and that $f$ maps the points of $\phi$ into $f(\phi)$ as a horizontal translation. 
    This setting is illustrated in the figure below. We represent $V_\phi$ disjoint from $f(\phi)$ because $V_\phi$ is considered small enough, however our proof does not depend on this property, and it is possible that $V_\phi$ intersects $f(\phi)$.

    \begin{figure}[h!]
        \center\begin{overpic}[height=4.8cm, width=4.5cm,tics=10]{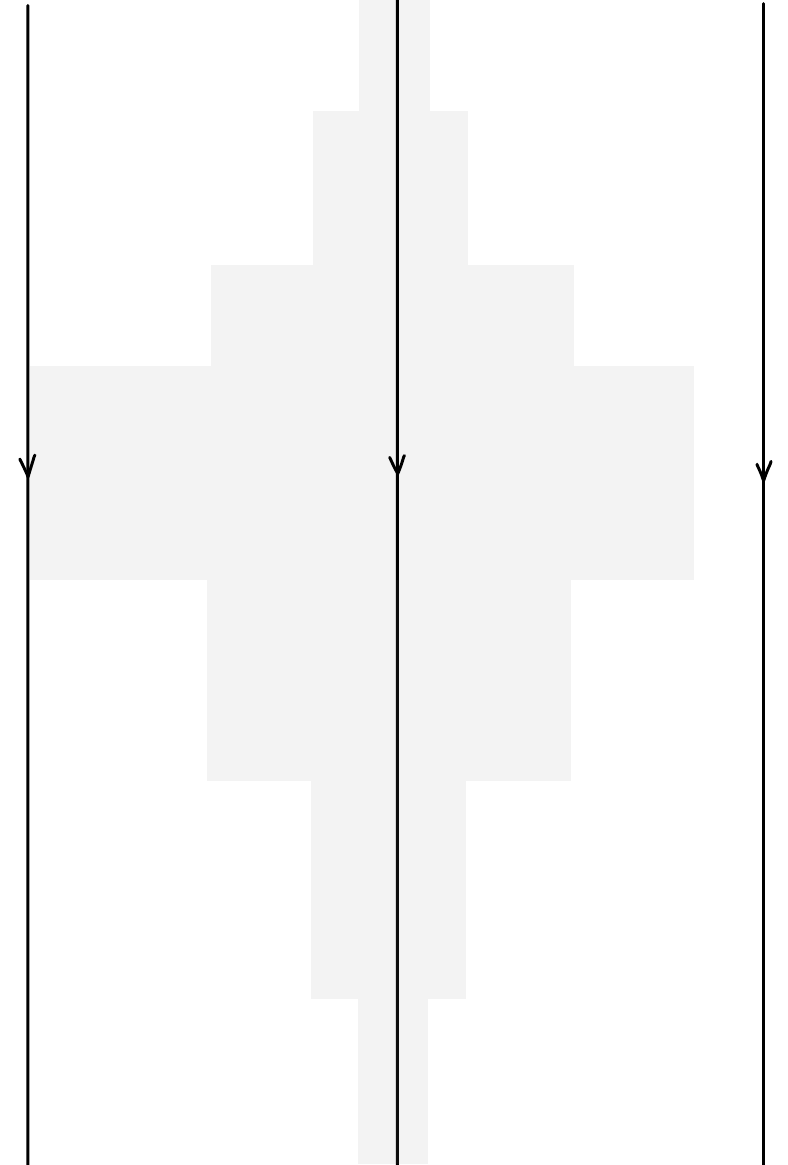}
            \put (54,58) {{\color{black}\large$\displaystyle \phi$}}
            \put (25,58) {{\color{myDARKGRAY}\large$\displaystyle V_\phi$}}
            \put (-14,58) {{\color{black}\large$\displaystyle \phi_R$}}
            \put (99,58) {{\color{black}\large$\displaystyle f(\phi)$}}
            \put (6,88) {{\color{black}\;\large$\displaystyle U^- $}}
            \put (6,10) {{\color{black}\;\large$\displaystyle U^+$}}
    \end{overpic}
    \end{figure}

    \mycomment{-0.4cm}
 Now, we suppose by contradiction that claim of the lemma does not hold. More precisely, we suppose that there exists an increasing sequence of integers $(n_k)_{k \in \N}$ satisfying
    \begin{equation*}
    \phi \in J_{n_k}^- \bigcap J_{n_k + 1}^+.
    \end{equation*}
    According to the equivalence that defines $U^-$ and $U^+$, for every $k \in \N$, it holds that
    $$ z_k:=f^{n_k}(x) \in U^+ \quad \text{ and } \quad f(z_k) \in U^-.$$

    \mycomment{-0.1cm}
    \noindent Since $f$ is a Brouwer homeomorphism, $(z_k)_{k\in\N}$ has no accumulation points on the plane.  As the set $U^+$ is bounded in the horizontal coordinate and contained in the lower half-plane,  the sequence $(z_k)_{k\in\N}$ is unbounded from below on the vertical coordinate. Up to reconsidering a subsequence, we may suppose that $(z_k)_{k\in\N}$ is strictly decreasing on the vertical coordinate. Consequently, the family $\{\alpha_k\}_{k \in \N}$ of horizontal arcs $\alpha_k$ that join $z_k$ to a point $y_k \in \phi$ is,  by construction, locally-finite. This setting is illustrated in the figure below.

    \mycomment{0.3cm}
    \begin{figure}[h!]
        \center\begin{overpic}[height=4.8cm,width=4.5cm, tics=10]{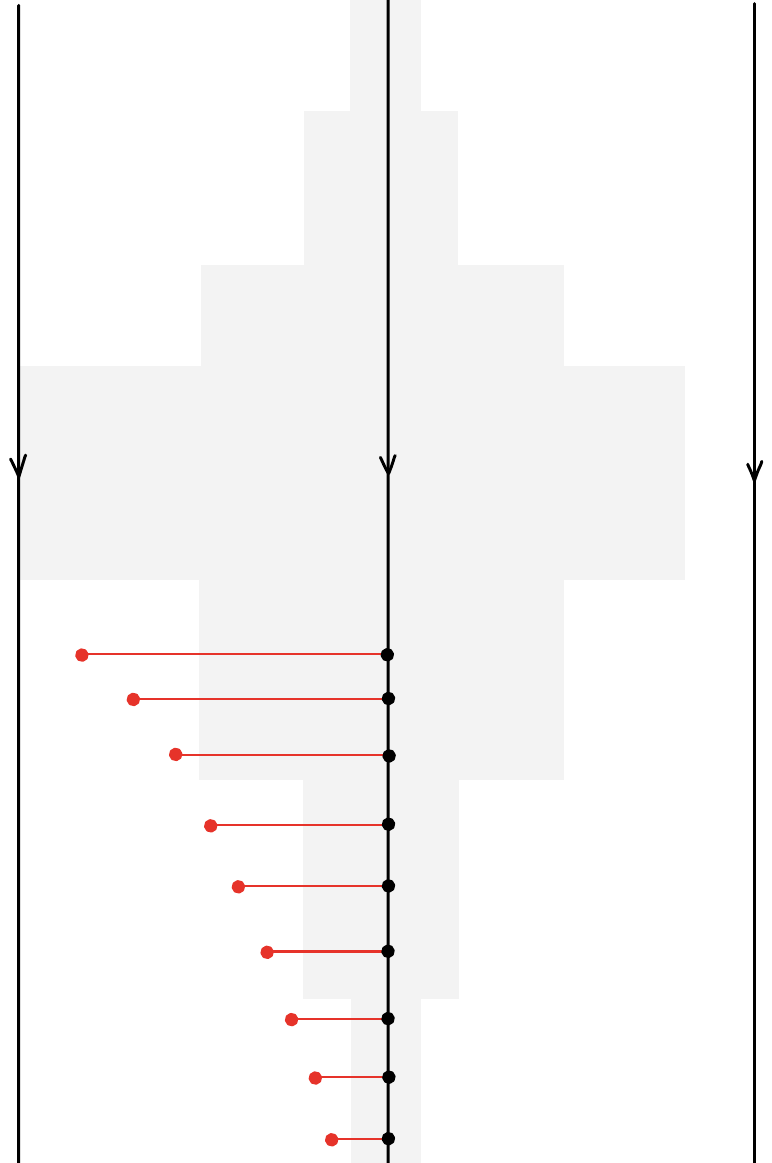}
            \put (53,58) {{\color{black}\large$\displaystyle \phi$}}
            \put (24,48) {{\color{myRED}\large$\displaystyle (\alpha_k)_{k}$}}
            \put (-14,58) {{\color{black}\large$\displaystyle \phi_R$}}
            \put (97,58) {{\color{black}\large$\displaystyle f(\phi)$}}
            \put (6,21) {{\color{myRED}\;\large$\displaystyle {(z_k)}_{k}$}}
            \put (52,4) {{\color{black}\;\large$\displaystyle {(y_k)}_{k}$}}
    \end{overpic}
    \end{figure}
    
    \mycomment{-0.1cm}
    Let $K\subset \R^2$ be the compact arc of zero vertical coordinate connecting $\phi_R$ and $f(\phi)$.  From the hypothesis given by the Homma-Schoenflies theorem, for any $k \in \N$, we have that the point $y_k \in \phi $ and its image $f(y_k) \in f(\phi)$ have exactly the same horizontal coordinate. Consequently, the sequence $(f(y_k))_{k \in \N}$ lies on the lower half-plane, while $(f(z_k))_{k \in \N}$ lies on the upper half-plane. Since the arcs in the family $\{\alpha_k\}_{k\in \N}$ are contained in the set $\overline{\rule{0cm}{0.34cm}L(\phi_R) \cap R(\phi)}$, their images $\{f(\alpha_k)\}_{k \in \N}$ are contained in $\overline{L(\phi_R) \cap R(f(\phi))}$. Note that, since $f(\alpha_k)$ joins the point $f(y_k)$ to $f(z_k)$, which are respectively in the lower and upper half-planes, each arc in the family $\{f(\alpha_k)\}_{k \in \N}$ must intersect the compact set $K$. In other words, $\{f(\alpha_k)\}_{k \in \N}$ is not a locally-finite family of arcs. This setting is illustrated in the figure below.

    \begin{figure}[h!]
        \center
        \mycomment{0.4cm}\begin{overpic}[height=4.8cm,width=4.5cm, tics=10]{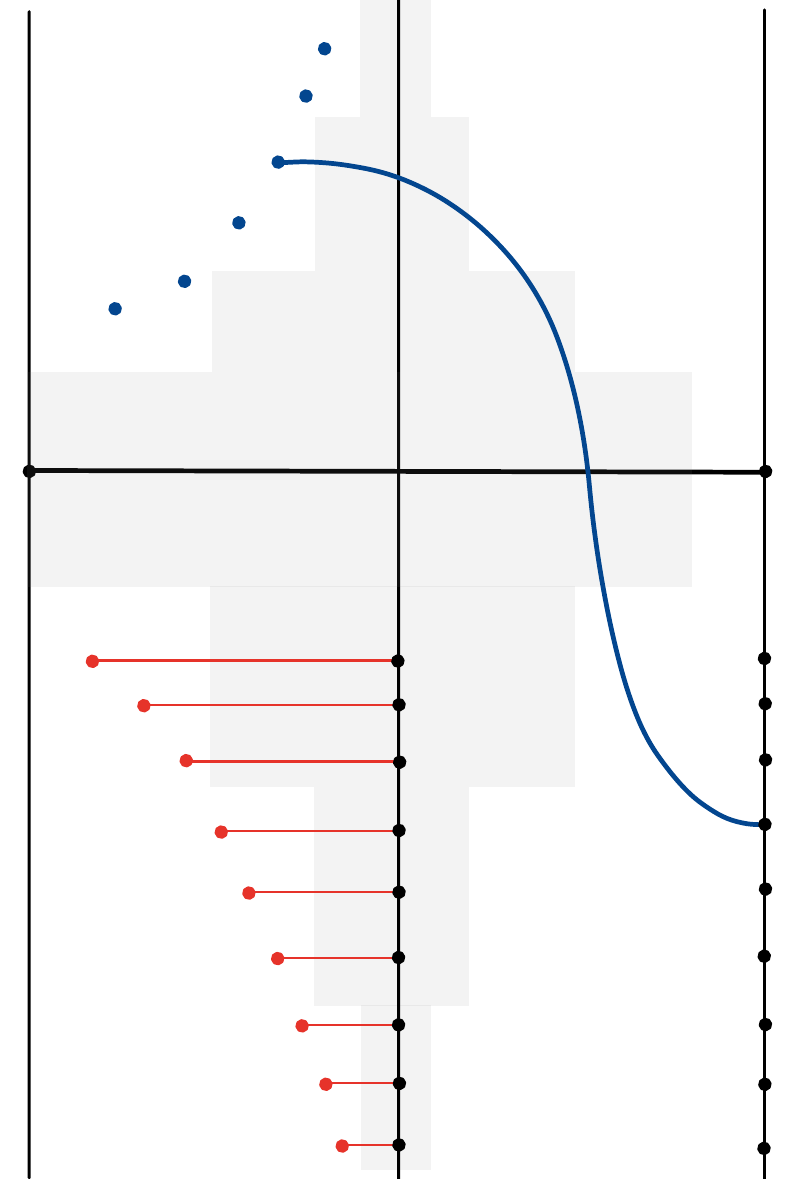}
            \put (63.5,85) {{\color{myBLUE}\large$\displaystyle f(\alpha_k)$}}
            \put (30,63) {{\color{black}\large$\displaystyle K$}}
            \put (-14,58) {{\color{black}\large$\displaystyle \phi_R$}}
            \put (100,58) {{\color{black}\large$\displaystyle f(\phi)$}}
            \put (5,100) {{\color{myBLUE}\;\large$\displaystyle {(f(z_k))}_{k}$}}
            \put (97,4) {{\color{black}\;\large$\displaystyle {(f(y_k))}_{k}$}}
    \end{overpic}
    \end{figure}
    \mycomment{-0.05cm}
    However, since $f$ is a homeomorphism and $\{\alpha_k\}_{k \in \N}$ is locally-finite, $\{f(\alpha_k)\}_{k \in \N}$ should be, in fact, locally-finite. 
    This gives us a contradiction and, thus, concludes the proof.
    \end{proof}

    \mycomment{-0.1cm}
    Due to Claim \ref{claim:convergence}, we can now define the following sets
    \begin{align*}
    \partial_L^\text{\sspc top\sspc } \O  := \{\phi \in \fb \mid \exists M>0,\spc \forall n>M,\spc \phi \in J_n^+ \},\\
    \partial_L^\text{\sspc bot\sspc } \O:=\{\phi \in \fb \mid \exists M>0,\spc \forall n>M,\spc \phi \in J_n^- \}.
\end{align*}
By construction, we have that $(\sspc\partial_L^\text{\sspc top\sspc } \O\sspc,\spc\partial_L^\text{\sspc bot\sspc } \O\sspc) \in \textup{Cut}_\prec(\partial_L C_\O)$. We can define the other two sets $\partial_R^\text{\sspc top\sspc } \O$ and $\partial_R^\text{\sspc bot\sspc } \O$ in a similar way, thus concluding the proof of the proposition.
\end{proof}

\subsection{Proper transverse trajectories}\label{sec:proper_transverse_trajectories}

A proper transverse trajectory \(\Gamma_\O\) of \(\O \in \orb\) is an oriented topological line, positively transverse to the foliation \(\F\), whose image contains  \(\O\).


\addtocounter{thmx}{-3}
\begin{thmx}\label{thmx:proper_trajectories_restate}
Every orbit $\O \in \orb$ admits a proper transverse trajectory $\Gamma_\O$. Moreover, any proper transverse trajectory $\Gamma_\O$ associated to the orbit $\O$ satisfies
\begin{align*}
    \Ltop \O = \partial_L C_\O \cap L(\Gamma_\O)\spc, \quad \Rtop \O = \partial_L C_\O \cap L(\Gamma_\O), \\[0.1ex]  \Lbot \O = \partial_R C_\O \cap R(\Gamma_\O)\spc, \quad \Rbot \O = \partial_R C_\O \cap R(\Gamma_\O).
\end{align*}
\begin{figure}[h!]
    \center
    \vspace*{-0.5cm}\begin{overpic}[width=8.4cm, height=4.1cm, tics=10]{Image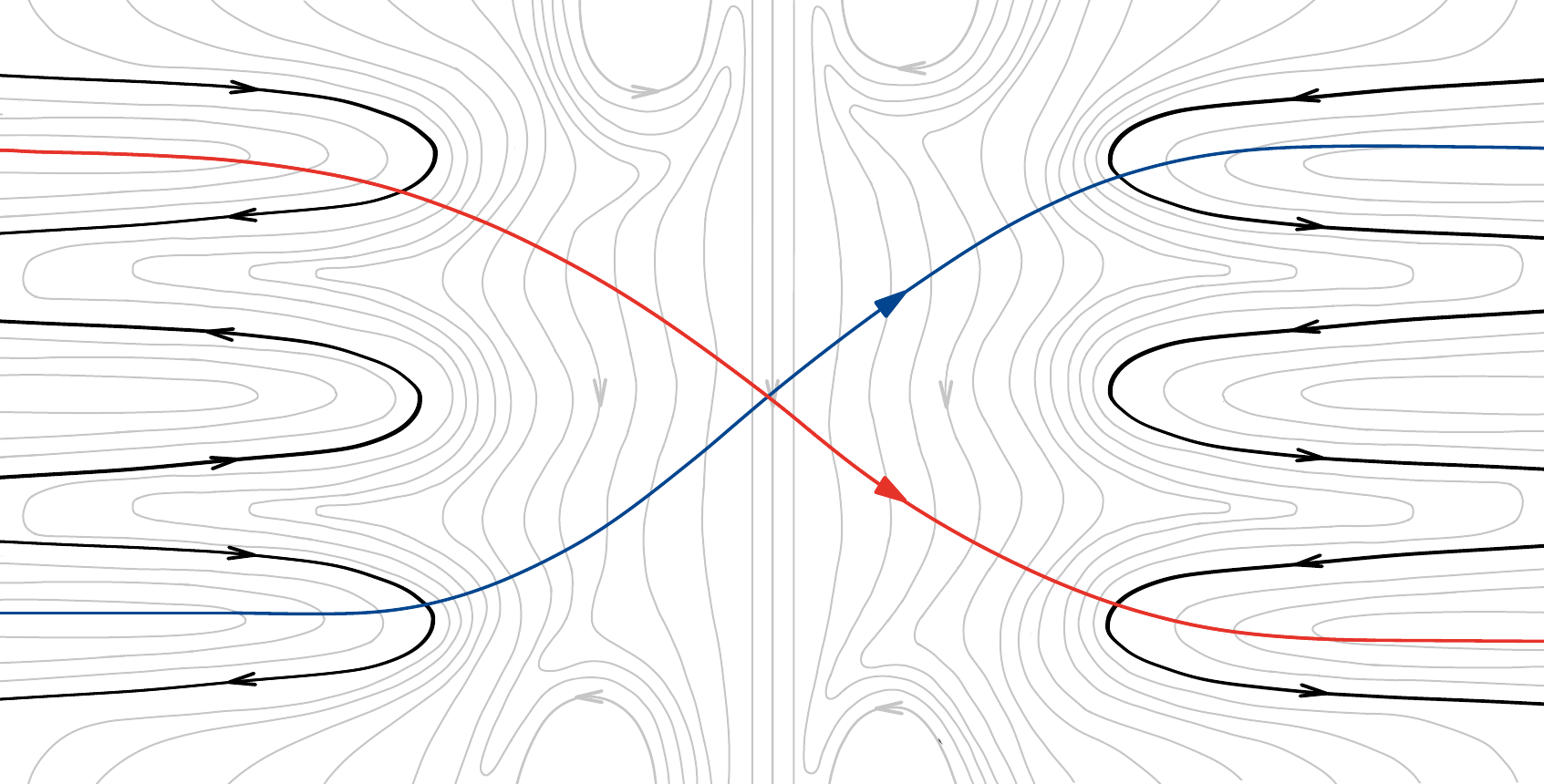}
        \put (54.4,15) {\colorbox{white}{$\rule{0cm}{0.3cm}\ \ \  $}}
        \put (55.5,15.3) {{\color{myGRAY}\large$\displaystyle C_\O $}}
        \put (41.5,26) {\colorbox{white}{\color{myDARKGRAY}\large$\rule{0cm}{0.3cm}\ \spc $}}
        \put (42,26) {{\color{black}\large$\displaystyle \Gamma_\O$}}
         \put (85,22.5) {\color{myRED}\large$\displaystyle \Lbot \O $}
        \put (85,37) {\color{myRED}\large$\displaystyle \Ltop \O $}
        \put (85,8) {\color{myRED}\large$\displaystyle \Lbot \O $}
        \put (3,22.5) {\color{myBLUE}\large$\displaystyle \Rtop \O $}
        \put (3,37) {\color{myBLUE}\large$\displaystyle \Rtop \O $}
        \put (3,8.2) {\color{myBLUE}\large$\displaystyle \Rbot \O $}
\end{overpic}
\end{figure}
   
\end{thmx}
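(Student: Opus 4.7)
The plan is to build the proper transverse trajectory inside the leaf domain $C_\O$ via a trivialization, using the splittings $(J_n^+, J_n^-)$ from the proof of Proposition \ref{prop:orbit_cuts} to route consecutive pieces consistently, and then read off the cut identification from this routing. Fix a basepoint $x \in \O$ together with the trivial-neighborhoods $\{V_\phi\}_{\phi \in \partial C_\O}$ from Section \ref{sec:order_limit_leaves}. Since $C_\O$ is a leaf domain, Lemma \ref{lemma:connected}~(iii) and the trivialization discussion of Section \ref{sec:prelim_foliations} supply a homeomorphism $h: C_\O \longrightarrow \R^2$ sending each leaf of $\F\vert_{C_\O}$ to a vertical line; composing with a leaf-preserving self-homeomorphism of $\R^2$, I may further arrange $h(\phi_n) = \{n\}\times\R$ and $h(f^n(x)) = (n, s_n)$. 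Producing a positively transverse $\gamma_n$ from $f^n(x)$ to $f^{n+1}(x)$ is then equivalent, via $h$, to choosing the graph of a continuous function $u_n : [n, n+1] \to \R$ with $u_n(n) = s_n$ and $u_n(n+1) = s_{n+1}$. I would choose $u_n$ whose graph passes every $V_\phi$ with $\phi \in J_n^+$ on the side where $h(\phi_n^+)$ lies near $t=n$, passes every $V_\phi$ with $\phi \in J_n^-$ on the opposite side, the analogous conditions holding near $t = n+1$ with respect to $J_{n+1}^\pm$, and avoids every $V_\phi$ with $\phi \notin J_n \cup J_{n+1}$. Only finitely many pairwise disjoint trivial-neighborhoods meet the strip $[n,n+1]\times\R$, so the existence of such a $u_n$ is elementary, or alternatively a direct application of the Homma--Schoenflies theorem in the trivialization.

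Set $\Gamma_\O := \prod_n \gamma_n$. Properness of $\Gamma_\O$ in the original $\R^2$ reduces to local finiteness of $\{\gamma_n\}_{n\in\Z}$: for any compact $K\subset\R^2$, only finitely many $V_\phi$ meet $K$ by local finiteness of $\{V_\phi\}_\phi$, and by Claim \ref{claim:convergence} each such $\phi$ belongs to $J_n \cup J_{n+1}$ for only finitely many $n$; the avoidance clause of the routing then forces only finitely many $\gamma_n$ to intersect $K$. For the cut identification, recall that $\Gamma_\O$ crosses each $\phi_n$ at $f^n(x)$ positively, so with the standard orientation convention the half-leaf $\phi_n^+$ lies locally in $L(\Gamma_\O)$ near $f^n(x)$. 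Given $\phi \in \Ltop \O$, Claim \ref{claim:convergence} gives $V_\phi \cap \phi_n \subset \phi_n^+$ for all $n \geq M_\phi$; by connectedness of $V_\phi$ this forces $V_\phi \subset L(\Gamma_\O)$, and hence $\phi \subset L(\Gamma_\O)$. Conversely, if $\phi \in \partial_L C_\O \cap L(\Gamma_\O)$, one may shrink $V_\phi$ to lie entirely in $L(\Gamma_\O)$, and then $V_\phi \cap \phi_n$ must lie on the $\phi_n^+$ side for all large $n$, giving $\phi \in \Ltop \O$. The three remaining identities follow by the symmetric argument, the $\partial_R$ statements coming from running the construction as $n\to-\infty$.

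The main obstacle is the global properness of $\Gamma_\O$: naively choosing each $\gamma_n$ inside the trivialization produces a topological line proper only in $C_\O$, and its $h^{-1}$-image may accumulate on boundary leaves of $\partial C_\O$. The stabilization of the splittings $(J_n^+, J_n^-)$ as $n \to \pm\infty$ from Claim \ref{claim:convergence} is precisely what makes it possible to choose the routings of successive strips consistently and to force locally finite avoidance of distant trivial-neighborhoods; the same consistent routing is what rigidly determines the position of each $V_\phi$ relative to $\Gamma_\O$, thereby encoding the cuts $(\Ltop \O, \Lbot \O)$ and $(\Rtop \O, \Rbot \O)$ in the statement.
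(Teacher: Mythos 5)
Your construction follows the paper's strategy—join $f^n(x)$ to $f^{n+1}(x)$ by transverse arcs routed around the trivial-neighborhoods according to the splittings $(J_n^-,J_n^+)$, then invoke Claim \ref{claim:convergence}—but two steps fail as written. The first concerns the feasibility of the routing. For a leaf $\phi\in J_n\cap J_{n+1}$ you impose one side condition near $t=n$ (from $J_n^{\pm}$) and another near $t=n+1$ (from $J_{n+1}^{\pm}$). Claim \ref{claim:convergence} guarantees these agree only for $n\geq M_\phi$; in the range $N_\phi\leq n<M_\phi$ the splitting may flip, the two conditions become incompatible, and no graph $u_n$ satisfying your prescription exists (any transverse arc from $f^n(x)$ to $f^{n+1}(x)$ is then forced through $V_\phi$). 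The paper removes this obstruction by shrinking each $V_\phi$ to $V_\phi\cap\overline{L(\phi_{M_\phi})}$ as in (\ref{eq:rename}), which forces $N_\phi=M_\phi$ and also keeps $V_\phi$ out of every strip $D_m$ with $m<M_\phi$; some such fix is needed. Relatedly, ``only finitely many trivial-neighborhoods meet the strip'' is unjustified: the strip is not compact, so local finiteness of $\{V_\phi\}_\phi$ does not bound how many $V_\phi$ enter its interior, and the simultaneous realizability of all the side conditions by a single graph still requires an argument (the paper's $\alpha_n^{\min}/\alpha_n^{\max}$ step).

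The second and more serious gap is in the properness argument, which is the heart of the theorem. Since the sets $J_n$ are nested increasing with $\bigcup_{n\geq 0}J_n=\partial_L C_\O$ (Lemma \ref{lemma:Jn}), a leaf $\phi$ belongs to $J_n$ for \emph{every} $n\geq N_\phi$; your premise that each $\phi$ lies in $J_n\cup J_{n+1}$ for only finitely many $n$ is the opposite of the truth. Moreover, even granting that every $\gamma_n$ avoids every $V_\phi$, this does not yet bound $\#\{n\mid \gamma_n\cap K\neq\varnothing\}$: a compact $K$ could a priori meet infinitely many strips $D_n$ at points lying in no $V_\phi$. The missing ingredient is that, inside a compact set, the strips $D_n$ can accumulate only on the leaves of $\partial_L C_\O\cup\partial_R C_\O$, each of which is contained in the interior of its own $V_\phi$; hence $K\setminus\bigcup_\phi \mathrm{int}(V_\phi)$ meets only finitely many $D_n$, and the avoidance clause handles the remainder. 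This is precisely the paper's covering argument with $W$ and $K^*$, and your proof needs an equivalent of it. By contrast, your identification of the cuts with $\partial_L C_\O\cap L(\Gamma_\O)$ and $\partial_R C_\O\cap R(\Gamma_\O)$—including re-choosing the $V_\phi$ disjoint from an arbitrary proper $\Gamma_\O$ via Lemma \ref{lemma:trivial_neighborhoods}—is correct and matches the paper.
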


\vspace*{-0.6cm}
\begin{proof} We begin by considering an orbit $\O \in \orb$, a basepoint $x \in \O$, a family  $\{V_\phi\}_{\phi \in \partial C_\O}$ of trivial-neighborhoods, the leaves $(\phi_n)_{n\in\Z}$, and the sets $(J_n)_{n\in\Z}$ as defined in Section \ref{sec:order_limit_leaves}.
Recall that in the proof of Lemma \ref{lemma:Jn} and Proposition \ref{prop:orbit_cuts} we defined, for each leaf $\phi \in \fb$, two integers $N_\phi>0$ and $M_\phi>0$, which are the smallest positive integers satisfying
\begin{align*}
    \phi_{n} \cap V_\phi \neq \varnothing, \quad \forall n\geq N_\phi, \quad \quad \  \\
    \phi \in J_n^+ \iff \phi \in J_{n+1}^+, \quad \forall n\geq M_\phi.
\end{align*}
Observe that, for $n \geq M_\phi$, the following two scenarios \textbf{cannot occur}:
$$ \text{ (a) $\alpha^\phi_n \subset \phi_n^+$ and $\alpha^\phi_{n+1} \subset \phi_{n+1}^-$ } ;\quad \quad   \text{ (b) $\alpha^\phi_n \subset \phi_n^-$ and $\alpha^\phi_{n+1} \subset \phi_{n+1}^+$. } $$
\noindent This means that, for each $n\geq M_\phi$, there exists a path $\gamma_{n,\phi}:[0,1]\longrightarrow\R^2$ positively transverse to the foliation $\F$ that joins $f^n(x)$ to $f^{n+1}(x)$ and is disjoint from the neighborhood $V_\phi$. 

\vspace*{0.5cm}
\begin{figure}[h!]
    \center
    \hspace*{-0.3cm}\begin{overpic}[width=5cm, height=4.4cm, tics=10]{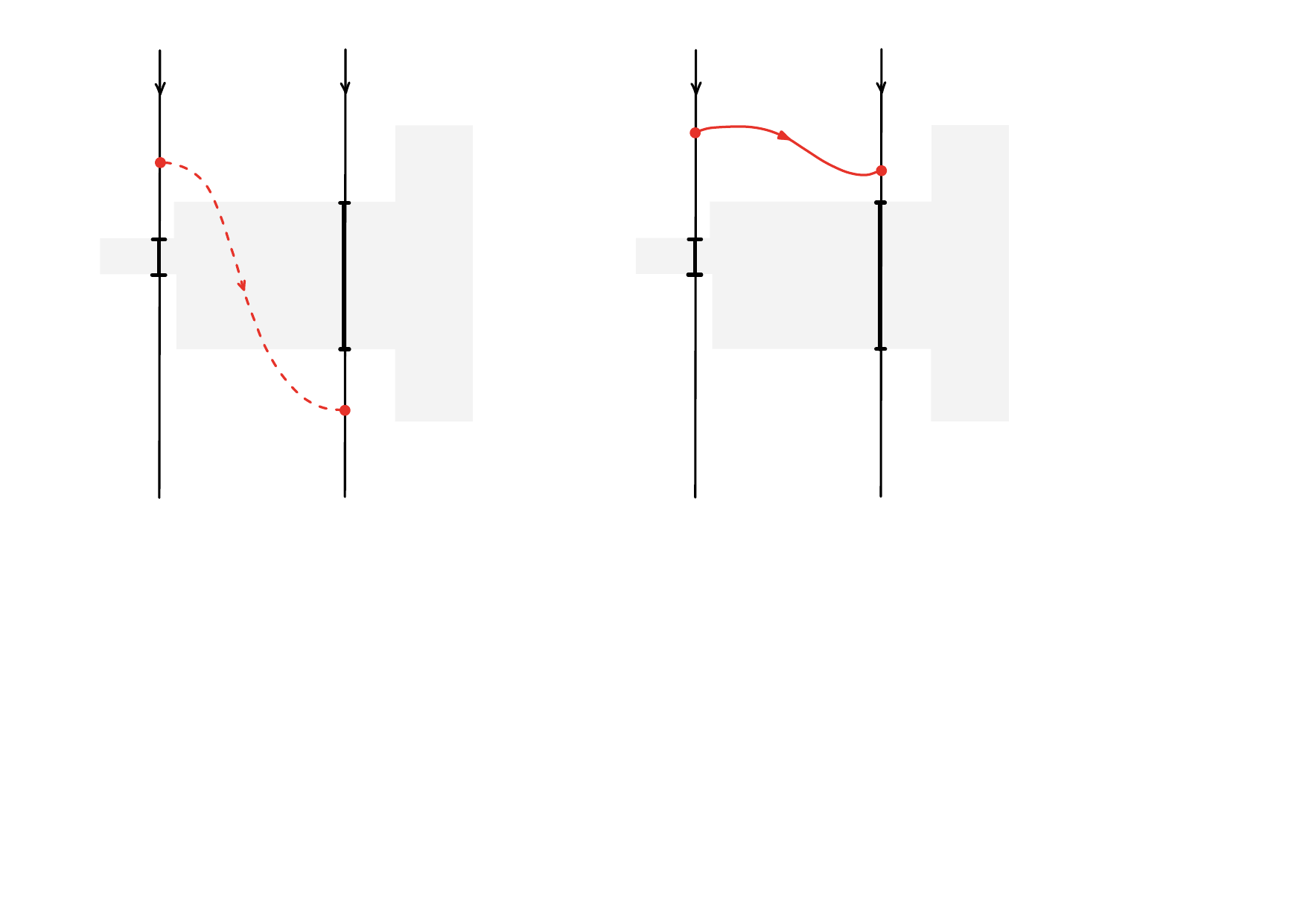}
        \put (25,42) {\color{myRED}\large$\displaystyle \gamma_n $}
        \put (67.5,15) {\colorbox{white}{\color{myRED}\large$\displaystyle f^{n+1}(x) $}}
        \put (67.5,68) {\colorbox{white}{\color{myRED}\large$\displaystyle\rule{0cm}{0.7cm}\quad \quad \quad \quad  $}}
        \put (-12,69) {\color{myRED}\large$\displaystyle f^n(x) $}
        \put (16,-14) {\color{black}\large$\displaystyle \phi \in J_n^- \cap J_{n+1}^+$}
        \put (-22,100) {\color{black}\large\quad \ \  (example of what cannot occur)}
        \put (47,47) {\color{myDARKGRAY}\large$\displaystyle V_\phi $}
        \put (2,87) {\color{myDARKGRAY}$\displaystyle \phi_n $}
        \put (70,87) {\color{myDARKGRAY}$\displaystyle \phi_{n+1} $}
\end{overpic}
\hspace*{2.5cm}\begin{overpic}[width=5cm, height=4.4cm, tics=10]{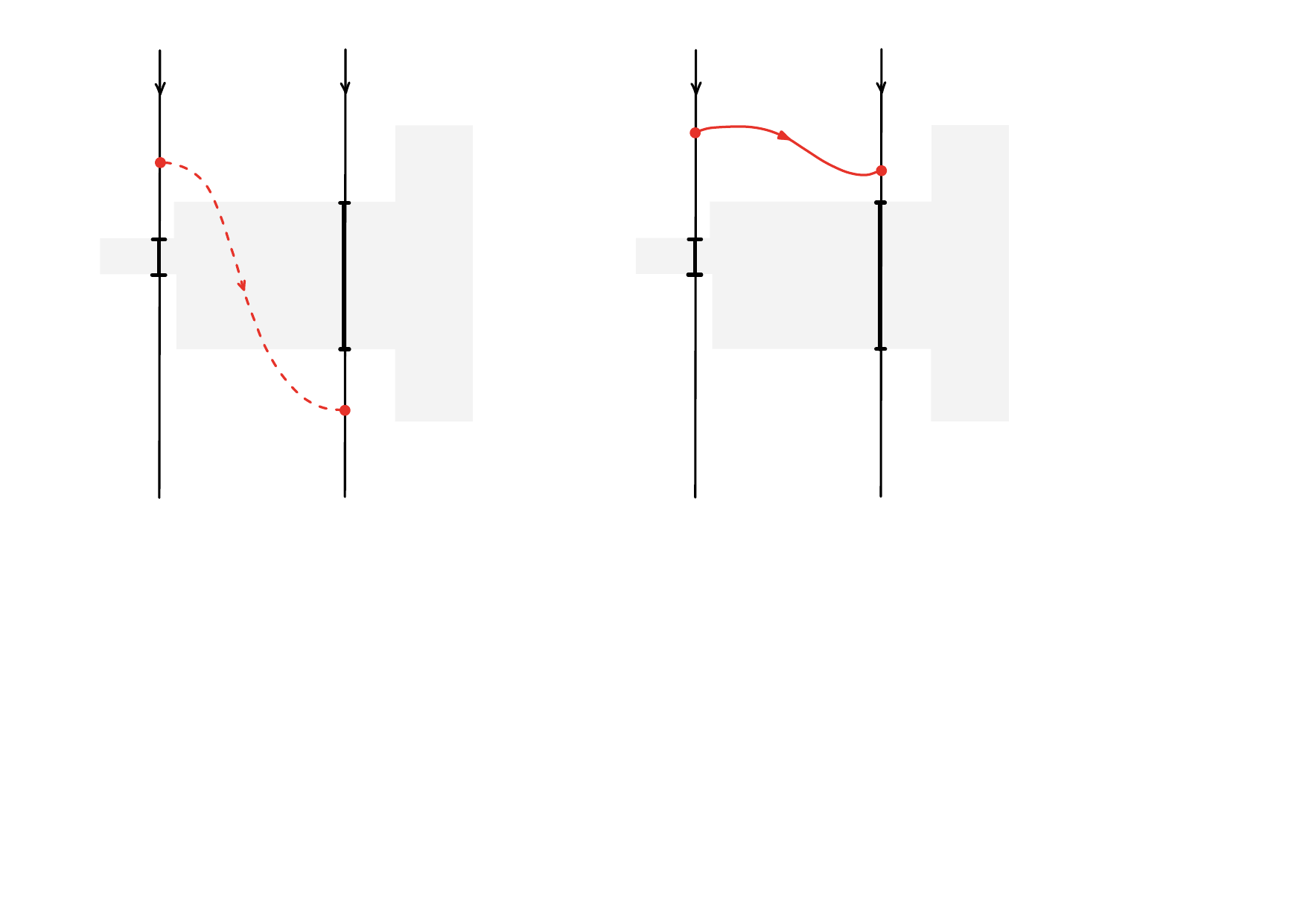}
    \put (38,79) {\color{myRED}\large$\displaystyle \gamma_n $}
        \put (67.5,69) {\colorbox{white}{\color{myRED}\large$\displaystyle \rule{0cm}{0.7cm}f^{n+1}(x) $}}
        \put (-11,70) {\color{myRED}\large$\displaystyle f^n(x) $}
        \put (16,-14) {\color{black}\large$\displaystyle \phi \in J_n^- \cap J_{n+1}^-$}
        \put (67.5,15) {\colorbox{white}{\color{myRED}\large$\displaystyle\rule{0cm}{0.5cm}\quad \quad \quad \quad $}}
        \put (-13,100) {\color{black}\large \quad \ \ (example of what occurs)}
        \put (43,43) {\color{myDARKGRAY}\large$\displaystyle V_\phi $}
        \put (2,87) {\color{myDARKGRAY}$\displaystyle \phi_n $}
        \put (70,87) {\color{myDARKGRAY}$\displaystyle \phi_{n+1} $}
\end{overpic}
\end{figure}

\vspace*{0.6cm}
The first step of the proof is to show that, by modifying the trivial-neighborhoods in the family $\{V_\phi\}_{\phi \in \partial C_\O}$, we can ensure that $N_\phi = M_\phi$ for all $\phi \in \partial C_\O$. 
Indeed, to show this, one should simply observe that each leaf $\phi \in \partial_L C_\O$ also admits the trivial-neighborhood 
\begin{equation}\label{eq:rename}
    V'_\phi:= V_\phi \cap \overline{\rule{0cm}{0.38cm}L(\phi^{M_\phi})}.
\end{equation}
Thus, by redefining the original $V_\phi$ to be instead the newly defined trivial-neighborhood $V'_\phi$, we automatically get the property that $N_\phi = M_\phi$. Therefore, we conclude that after this redefinition, we know that for each $n\geq 0$ and $\phi\in \partial_L C_\O$, the points $f^n(x)$ and $f^{n+1}(x)$ can be connected by a path $\gamma_{n,\phi}$ positively transverse to $\F$ that is disjoint from $V_\phi$.

Now, the second step of the proof is show that, for each $n\geq 0$, we can always construct the paths $\gamma_{n,\phi}$ so that they are the same among all leaves $\phi \in \fb$.

For every $n \geq0$, we define a partition of the set $A^n=\{\alpha_n^\phi\}_{\phi \in J_n}$ given by
\begin{align*}
    A_n^+ : = \{\alpha^\phi_n \in A^n \mid \phi \in J_n^+\} \quad \text{ and } \quad
    A_n^- : = \{\alpha^\phi_n \in A^n \mid \phi \in J_n^-\}.
\end{align*}
 The set $A^n$ is ordered according to the orientation of the leaf $\phi_n$. Note that any element in $A_n^+$ is greater than any element in $A_n^-$. Now, since the family $\{V_\phi\}_{\phi \in \partial_L C_\O}$ is locally finite, any compact sub-arc of $\phi_n$ intersects finitely many arcs of $A^n$. Consequently, if $A_n^+\neq \varnothing$, there exists a minimal element $\alpha_n^\text{min}$ in $A_n^+$. 
Similarly, if $ A_n^-\neq \varnothing$, then there exists a maximal element $\alpha_n^\text{max}$ in $A_n^-$. In the case where $A_n^+ = \varnothing$ or $A_n^- = \varnothing$, such arcs are considered empty. 
Note that, the connected component of $\phi_n\setminus(\alpha_n^\text{max} \cup \alpha_n^\text{min})$ that contains $f^n(x)$ intersects no trivial-neighborhood in $\{V_\phi\}_{\phi \in J^n}$. For each $n \geq 0$, let $V_n^\text{max}$ and $V_n^\text{min}$  be the trivial-neighborhoods in the family $\{V_\phi\}_{\phi \in J^n}$ that contains the arc $\alpha_n^\text{max}$ and $\alpha_n^\text{min}$, respectively. 

For each $n\geq 0$, we define the set
$ D_n := \overline{\rule{0cm}{0.4cm}L(\phi_n) \cap R(\phi_{n+1})}.$
After the redefinition of the trivial-neighborhoods as $V'_\phi:= V_\phi \cap \overline{\rule{0cm}{0.38cm}L(\phi^{M_\phi})}$ in (\ref{eq:rename}), only two scenarios are possible:
\begin{itemize}
    \item[] (1)\  Either $V_n^\text{max} = V_{n+1}^\text{max}$;
    \item[] (2)\  Or the intersection $V_{n+1}^\text{max} \cap D_n$ is contained in $\phi_{n+1}$.
\end{itemize}
\mycomment{0.3cm}
These scenarios are respectively illustrated in the figures below. Note that, if the trivial-neighborhood $V_{n+1}^\text{max}$ is distinct from $V_n^\text{max}$, then it must stop at the leaf $\phi_{n+1}$. This is also a consequence of the redefinition in (\ref{eq:rename}), as it implies that $V_\phi \subset \overline{\rule{0cm}{0.38cm}L(\phi^{M_\phi})}$.

\newpage
\begin{figure}[h!]
    \center 
    \mycomment{0.2cm}\begin{overpic}[width=4.3cm, height=4.1cm, tics=10]{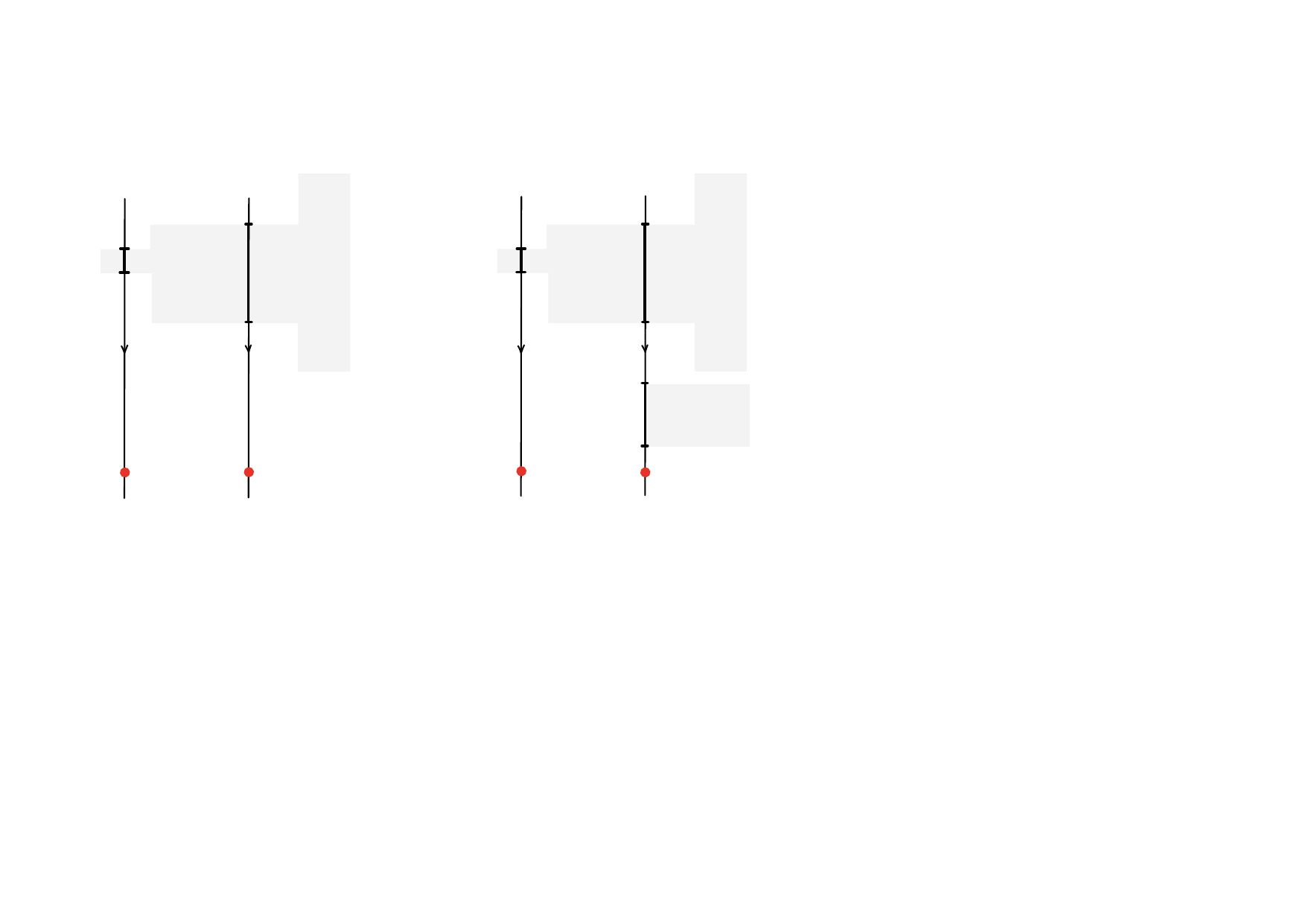}
        \put (26,6) {\color{myRED}$\displaystyle f^{n+1}(x) $}
        \put (-16,6) {\color{myRED}$\displaystyle f^{n}(x) $}
        \put (-22,75) {\color{black}$\displaystyle (1)$:}
        \put (28,69) {\color{myDARKGRAY}\large$\displaystyle V_n^\text{max} $}
        \put (67,69) {\color{myDARKGRAY}\large$\displaystyle V_{n+1}^\text{max} $}
        \put (-4,42) {\color{myDARKGRAY}$\displaystyle \phi_n $}
        \put (38,42) {\color{myDARKGRAY}$\displaystyle \phi_{n+1} $}
\end{overpic}
\hspace*{2.4cm}\begin{overpic}[width=4.3cm,height=4.1cm, tics=10]{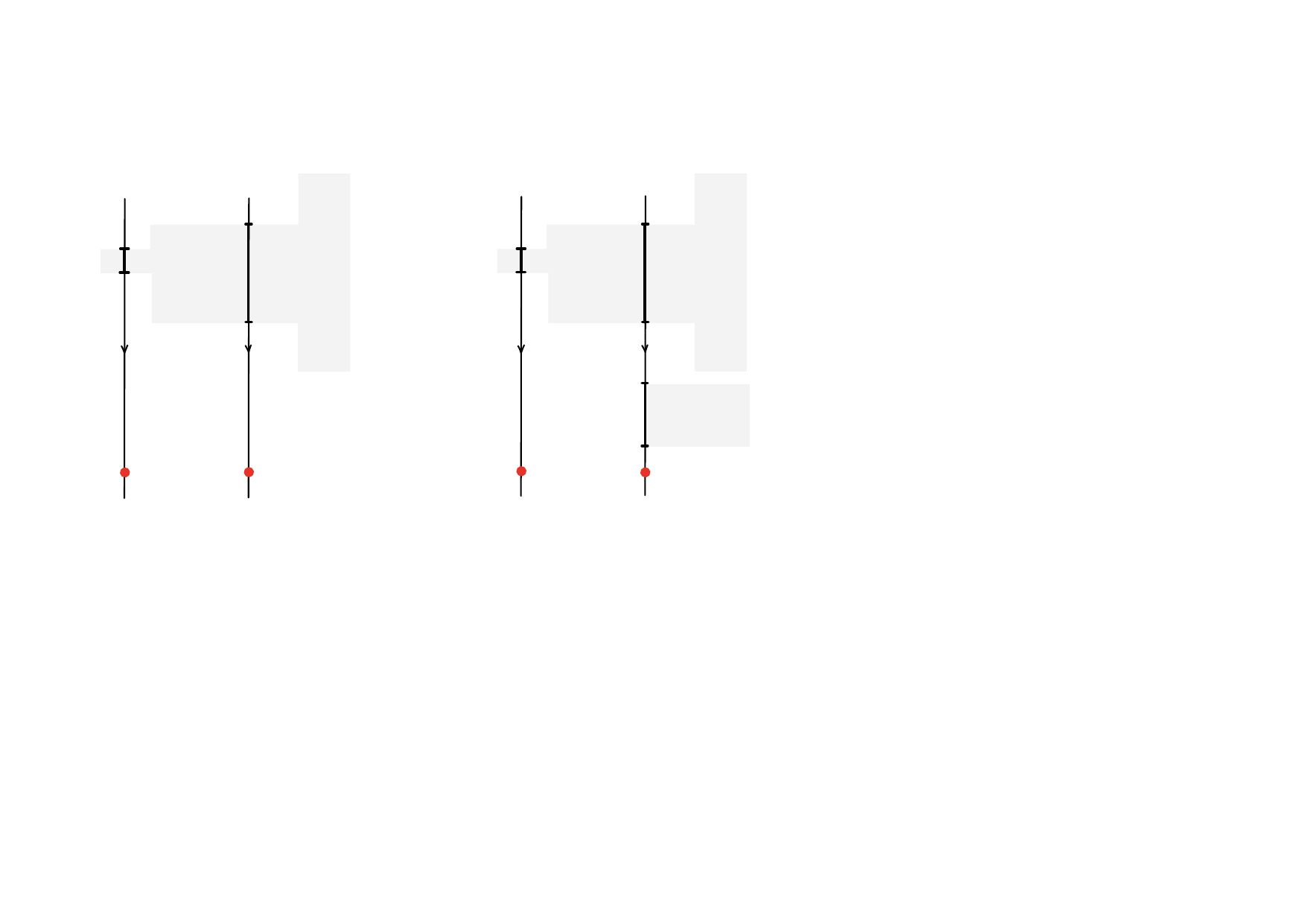}
    \put (26,6) {\color{myRED}$\displaystyle f^{n+1}(x) $}
        \put (-16,6) {\color{myRED}$\displaystyle f^{n}(x) $}
        \put (-22,75) {\color{black}$\displaystyle (2)$:}
        \put (28,69) {\color{myDARKGRAY}\large$\displaystyle V_n^\text{max} $}
        \put (69,21.5) {\color{myDARKGRAY}\large$\displaystyle V_{n+1}^\text{max} $}
        \put (-4,42) {\color{myDARKGRAY}$\displaystyle \phi_n $}
        \put (38,42) {\color{myDARKGRAY}$\displaystyle \phi_{n+1} $}
\end{overpic}
\end{figure}

\noindent We have completely analogous alternatives for the sets $V_n^\text{min}$, for all $n \geq 0$.

Either way, for each $n\geq 0$, we have that the points $f^n(x)$ and $f^{n+1}(x)$ are contained in the same connected component, namely $D^*_n$, of the set $$D_n\setminus(V_n^\text{max} \cup V_n^\text{min} ).$$ This further implies that, for each $n \geq 0$, the points $f^n(x)$ and $f^{n+1}(x)$ can be connected via a path $\gamma_n:[0,1]\longrightarrow\R^2$ that is positively transverse to $\F$ and contained in $D^*_n$. Therefore, since $\gamma_n \subset D^*_n$, it follows 
that $\gamma_n$ is disjoint from every set in the family $\{V_\phi\}_{\phi \in \fb}$.

\mycomment{-0.83cm}
\begin{figure}[h!]
    \center 
    \vspace*{0.2cm}\begin{overpic}[width=4.4cm, height=5.2cm, tics=10]{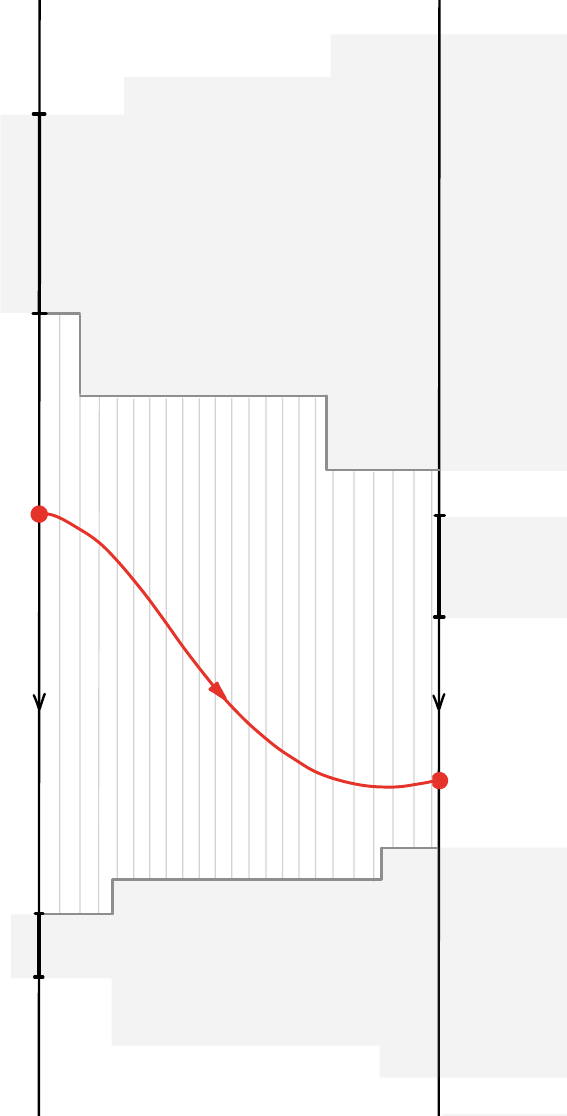}
        \put (68,28.3) {\color{myRED}$\displaystyle f^{n+1}(x) $}
        \put (-14,53) {\color{myRED}$\displaystyle f^{n}(x) $}
        \put (28,77) {\color{myDARKGRAY}\large$\displaystyle V_n^\text{max} $}
        \put (37.5,45) {\color{black}\large$\displaystyle D_n^* $}
        \put (28,12.2) {\color{myDARKGRAY}\large$\displaystyle V_n^\text{min} $}
        \put (67.5,47.1) {\color{myDARKGRAY}$\displaystyle V_{n+1}^\text{max} $}
        \put (24,33) {\color{myRED}\large$\displaystyle \gamma_n $}
        \put (0,105) {\color{myDARKGRAY}$\displaystyle \phi_n $}
        \put (56,105) {\color{myDARKGRAY}$\displaystyle \phi_{n+1} $}
\end{overpic}
\end{figure}

\mycomment{-0.1cm}
By repeating the same construction with respect to the set $\partial_R C_\O$, we end up with a family of positively transverse paths
$\{\gamma_n\}_{n\in \Z}\sspc$, where each $\gamma_n$ connects $ f^n(x)$ to its image $ f^{n+1}(x)$, and is disjoint from $V_{\phi}$, for any leaf $\phi \in \fb \sspc\cup\sspc \bb$.

Finally, the last step is to prove that the family $\{\gamma_n\}_{n\in \Z}\sspc$ is locally finite. For that,
define $$W:=\bigcup_{n \geq 0} \overline{\rule{0cm}{0.4cm}L(\phi_{-n}) \cap R(\phi_{n})}, $$ and observe that every point $z \in W$ has an open neighborhood $U_z\subset \R^2$ that intersects finitely many sets in the family $\{D_n\}_{n \in \Z}$. In particular, each neighborhood $U_z$ intersects finitely many paths in the family $\{\gamma_n\}_{n\in \Z}$.
Let $K \subset \R^2$ be a compact set, and denote $$K^*:=K \setminus \bigcup_{\phi \in  \fb \sspc\cup\sspc \bb} \text{int}{(V_\phi)}.$$
Note that $K^*$ is a compact set contained in $W$ satisfying $\gamma_n \cap K = \gamma_n \cap K^*$, for all $n \in \Z$. Consider a cover of $K^*$ of the form $\{U_z\}_{z \in K^*}$. By taking a finite sub-cover, we conclude that $K^*$ intersects at most finitely many transverse paths in $\{\gamma_n\}_{n\in \Z}$. Hence, $K$ intersects finitely many paths in $\{\gamma_n\}_{n\in \Z}$. This shows that the family $\{\gamma_n\}_{n\in \Z}$ is locally-finite and, hence, $$\Gamma_\O := \prod_{n \in \Z} \gamma_n$$ consists in a proper transverse trajectory associated to the orbit $\O$.

\newpage
The fact that $\Gamma_\O$ satisfies $\Ltop \O = \partial_L C_\O \cap L(\Gamma_\O)$ and $\Lbot \O = \partial_L C_\O \cap R(\Gamma_\O)$ follows directly from the construction. The analogous properties for $\Rtop \O$ and $\Rbot \O$, holds as well. To see that these equalities hold for any proper transverse trajectory $\Gamma_\O$,  it suffices to note that they hold automatically if the trivial-neighborhoods in the family $\{V_\phi\}_{\phi \in \partial C_\O}$ are  disjoint from $\Gamma_\O$. Since in the proper case $\Gamma_\O\subset \R^2$ is a closed set disjoint from $\partial C_\O$, we can apply Lemma \ref{lemma:trivial_neighborhoods}, which ensures that such a family $\{V_\phi\}_{\phi \in \partial C_\O}$ exists. This concludes the proof of Theorem \ref{thmx:proper_trajectories_restate}.
\end{proof}

\subsection{Equivalence classes of $\F$-asymptotic orbits}\label{sec:asymptotic_orbits}


We can relate orbits exhibiting similar $\F$-asymptotic behavior through two equivalence relations on the set $\orb$, denoted $\fasym$ and $\basym$.

\begin{definition} 
    Two orbits $\O, \O^{\sspc \prime} \in \orb$ are called \emph{forward $\F$-asymptotic}, denoted $\O \fasym \O^\pp$, if there exists a leaf $\phi \in C_\O \cap C_{\O^\pp}$ such that $L(\phi) \cap C_\O = L(\phi) \cap C_{\O^\pp}$,
and, in addition, 
$$ \Ltop \O = \Ltop \O^\pp \quad \text{ and } \quad \Lbot \O = \Lbot \O^\pp.$$
    Similarly, we define the \emph{backward $\F$-asymptotic} relation, denoted $\O \basym \O^\pp$, by replacing $L$ for $R$.
\end{definition}





Evidently, $\fasym$ and $\basym$ are equivalence relations on $\orb$. 


\mycomment{0.3cm}
\begin{lemma}\label{lemma:equivalence}
    For any $ \O,\O^{\sspc \prime} \in \orb$, the following statements are equivalent:
    \begin{itemize}[leftmargin=1.5cm]
        \item[\textup{\textbf{(i)}}] The orbits $\O$ and $\O^{\sspc\prime}$  are forward $\F$-asymptotic, i.e. $\O \fasym \O^{\sspc \prime}$.
        \item[\textup{\textbf{(ii)}}] There exist proper transverse trajectories $\spc\Gamma_\O$ and $\spc\Gamma_{\O^{\sspc \prime}}$ associated to $\O$ and $\O^{\sspc \prime}$, respectively, that satisfy
    \(
    \spc \Gamma_\O\left(\spc[\spc t, \infty)\spc\right) \cap \Gamma_{\O^{\sspc \prime}}\left(\spc[\spc t, \infty)\spc\right) \neq \varnothing\sspc, \sspc
    \)
    for every $t \in \R.$
    \end{itemize}
    \mycomment{0.3cm}
    We can obtain a similar result for the relation $\sspc\basym\sspc$ by replacing $\sspc[\spc t, \infty)$ with $(\spc-\infty, t\spc ]$.
\end{lemma}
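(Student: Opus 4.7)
The plan is to prove the two implications separately. The direction $(i) \Rightarrow (ii)$ constructs two proper transverse trajectories sharing an unbounded forward-increasing sequence of common points, while $(ii) \Rightarrow (i)$ recovers the asymptotic equalities of crossings and cuts from the existence of such shared points.

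For $(ii) \Rightarrow (i)$, I would first extract from the hypothesis a sequence of intersection points $\{p_k\}_{k \in \N} \subset \Gamma_\O \cap \Gamma_{\O^\pp}$ with parameters $t_k$ on $\Gamma_\O$ and $t'_k$ on $\Gamma_{\O^\pp}$ tending to $+\infty$. Each $p_k$ lies on a leaf $\psi_k \in C_\O \cap C_{\O^\pp}$; passing to a subsequence, I may assume $(t_k)$ and $(t'_k)$ are strictly increasing, which forces $(\psi_k)$ to be strictly increasing in the natural order of $\F$ by positive transversality. Fix $k_0$ and set $\phi := \psi_{k_0}$. For any $\chi \in L(\phi) \cap C_\O$, the trajectory $\Gamma_\O$ crosses $\chi$ at some parameter $s$; taking $k$ large enough so $t_k > s$ gives $\psi_k > \chi$ in the natural order. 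From $\psi_{k_0} < \chi < \psi_k$ one deduces $p_{k_0} \in \psi_{k_0} \subset R(\chi)$ and $p_k \in \psi_k \subset L(\chi)$, so that $\chi$ separates these two points of $\Gamma_{\O^\pp}$, which is a topological line; hence $\Gamma_{\O^\pp}$ crosses $\chi$ and $\chi \in C_{\O^\pp}$. Symmetry yields $L(\phi) \cap C_\O = L(\phi) \cap C_{\O^\pp}$, and in particular $\partial_L C_\O = \partial_L C_{\O^\pp}$. The cut equalities $\Ltop \O = \Ltop \O^\pp$ and $\Lbot \O = \Lbot \O^\pp$ then reduce, via the characterization $\Ltop \O = \partial_L C_\O \cap L(\Gamma_\O)$ from Theorem~\ref{thmx:proper_trajectories_restate}, to the assertion that every $\phi' \in \partial_L C_\O = \partial_L C_{\O^\pp}$ lies on the same side of $\Gamma_\O$ and $\Gamma_{\O^\pp}$. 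This I would verify by contradiction: if $\phi' \subset L(\Gamma_\O)$ but $\phi' \subset R(\Gamma_{\O^\pp})$, then in a sufficiently small trivial neighborhood $V_{\phi'}$ the two forward tails approach $\phi'$ from opposite sides relative to each other, hence are disjoint in $V_{\phi'}$, contradicting that intersection points $p_k$ can be chosen to lie in $V_{\phi'}$.

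For $(i) \Rightarrow (ii)$, assume $\O \fasym \O^\pp$ and fix a common leaf $\phi \in C_\O \cap C_{\O^\pp}$ with $L(\phi) \cap C_\O = L(\phi) \cap C_{\O^\pp}$, together with basepoints $x \in \O$, $x' \in \O^\pp$ whose forward orbits lie in $L(\phi)$. I would choose a strictly increasing cofinal sequence $\{\psi_k\}_{k \in \N}$ of leaves in $C_\O \cap C_{\O^\pp}$ past $\phi$, and on each $\psi_k$ a target point $q_k$. For each $k$, let $n_k$ (resp.\ $n'_k$) be the unique integer such that $\psi_k$ lies strictly between the leaves $\phi_{n_k}$ and $\phi_{n_k+1}$ crossed by the orbit $\O$ (resp.\ between $\phi'_{n'_k}$ and $\phi'_{n'_k+1}$ on the $\O^\pp$ side). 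The construction in the proof of Theorem~\ref{thmx:proper_trajectories_restate} joins $f^n(x)$ to $f^{n+1}(x)$ by a transverse path $\gamma_n$ living in a region $D_n^*$ disjoint from the trivial neighborhoods of the boundary leaves; I would modify this construction so that the transverse path $\gamma_{n_k}$ of $\Gamma_\O$ is routed through $q_k$, and analogously on the $\Gamma_{\O^\pp}$ side. This is feasible because $q_k$ lies in the shared leaf domain and, within each $D_n^*$, one has the freedom to route a positively transverse arc from $f^n(x)$ to $f^{n+1}(x)$ through any prescribed interior point on any crossed leaf. The resulting trajectories $\Gamma_\O$ and $\Gamma_{\O^\pp}$ are proper by the locally finite argument of Theorem~\ref{thmx:proper_trajectories_restate} and share every $q_k$ at parameters tending to $+\infty$.

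I expect the main obstacle to be the local topological argument near a boundary leaf in the $(ii) \Rightarrow (i)$ direction: the non-Hausdorff structure of $\F$ requires care in arranging that the intersection points $p_k$ themselves, and not only their underlying leaves, enter a prescribed trivial neighborhood $V_{\phi'}$ of $\phi'$. I would handle this by selecting $p_k$ among shared points whose underlying leaves $\psi_k$ lie in the cut set $J_{n}^+$ or $J_{n}^-$ associated to $\phi'$ (in the sense of Claim~\ref{claim:convergence} from the proof of Proposition~\ref{prop:orbit_cuts}), and then shrinking the trivial neighborhoods via Lemma~\ref{lemma:trivial_neighborhoods} so that the opposite-sided accumulation of the two forward tails inside $V_{\phi'}$ visibly prevents any shared points near $\phi'$.
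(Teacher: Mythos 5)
Your direction $(i)\Rightarrow(ii)$ is essentially the paper's argument: one reruns the construction of Theorem~\ref{thmx:proper_trajectories_restate} with a single family of trivial-neighborhoods for $\partial_L C_\O=\partial_L C_{\O^\pp}$ and routes the two trajectories through common points in the shared leaf domain; the one thing you leave implicit is that it is precisely the hypothesis $\Ltop\O=\Ltop\O^\pp$, $\Lbot\O=\Lbot\O^\pp$ that guarantees the orbit points of $\O$ and $\O^\pp$ between $\phi_n$ and $\phi_{n+1}$ lie in the \emph{same} complementary component of the trivial-neighborhoods, so that a common routing point exists. The first half of your $(ii)\Rightarrow(i)$ (the separation argument showing that any $\chi\in L(\phi)\cap C_\O$ separates two intersection points lying on $\Gamma_{\O^\pp}$, hence $\chi\in C_{\O^\pp}$) is correct and in fact more explicit than the paper on this point.

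The genuine gap is in your final step of $(ii)\Rightarrow(i)$. Your contradiction rests on "intersection points $p_k$ can be chosen to lie in $V_{\phi'}$", and this simply does not follow from $(ii)$: by Lemma~\ref{lemma:trivial_neighborhoods} one can choose $V_{\phi'}$ disjoint from the closed set $\Gamma_\O\cup\Gamma_{\O^\pp}$, so \emph{no} point of either trajectory, let alone an intersection point, need ever enter $V_{\phi'}$. The forward tails can intersect at arbitrarily large parameters while escaping to infinity outside the (narrowing) trivial-neighborhood of $\phi'$ — this is exactly the non-Hausdorff phenomenon you flag, and your proposed repair ("opposite-sided accumulation ... prevents any shared points near $\phi'$") still argues about points near $\phi'$, which is the wrong locus. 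The correct argument is global, not local: take $V_{\phi'}$ disjoint from both trajectories; if $\phi'$ lay in $L(\Gamma_\O)\cap R(\Gamma_{\O^\pp})$, then for large $n$ the arc $\phi_n\cap V_{\phi'}$ would sit in the subarc of $\phi_n$ strictly between $p_n=\Gamma_\O\cap\phi_n$ and $p_n'=\Gamma_{\O^\pp}\cap\phi_n$; the forward tails issued from $p_n$ and $p_n'$ meet by hypothesis, and together with that subarc they bound a bounded Jordan domain whose closure the set $V_{\phi'}$ can only meet along $\phi_n\cap V_{\phi'}$, forcing the component of $V_{\phi'}\setminus\phi_n$ containing $\phi'$ — hence the properly embedded line $\phi'$ itself — into a bounded region, a contradiction. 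This trapping argument is what the paper's computation of the cuts $J_n^{\pm}(\O)=J_n^{\pm}(\O^\pp)$ amounts to; with it in place your reduction via $\Ltop\O=\partial_L C_\O\cap L(\Gamma_\O)$ goes through.
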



\begin{figure}[h!]
    \center
    \vspace*{-0.1cm}\begin{overpic}[width=0.55\textwidth,height=3.5cm, tics=10]{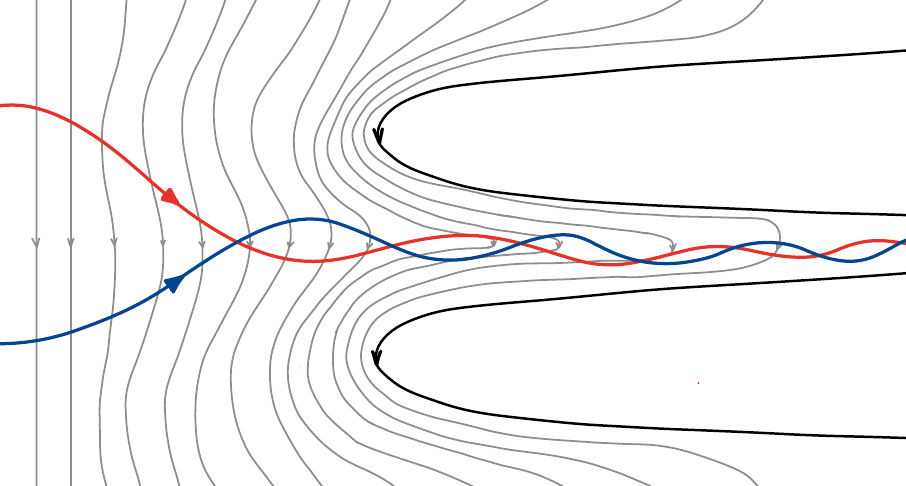}
        \put (-8.5,31) {{\color{myRED}\large$\displaystyle \Gamma_{\O^{\sspc\prime}} $}}
        \put (-7.5,10) {{\color{myBLUE}\large$\displaystyle \Gamma_\O $}}
        \put (57,27.3) {{\color{black}$\displaystyle \Ltop C_\O = \Ltop C_{\O^{\sspc\prime}} $}}
        \put (56,9) {{\color{black}$\displaystyle \Lbot C_\O = \Lbot C_{\O^{\sspc\prime}} $}}
\end{overpic}
\end{figure}

Lemma \ref{lemma:equivalence} admits a dual version which is slightly stronger, presented in Lemma \ref{lemma:dual_equivalence}.

\begin{lemma}\label{lemma:dual_equivalence}
    For any pair of orbits $ \O,\O^{\sspc \prime} \in \orb$ satisfying $C_\O \sspc\cap\sspc C_{\O^\pp}\neq \varnothing$, we have that the following statements are equivalent:
    \begin{itemize}[leftmargin=1.5cm]
        \item[\textup{\textbf{(i)}}] The orbits $\O$ and $\O^{\sspc\prime}$ are not forward $\F$-asymptotic, i.e. $\O \not\fasym \O^{\sspc \prime}$.
        \item[\textup{\textbf{(ii)}}] For any pair of proper transverse trajectories $\spc\Gamma_\O$ and $\spc\Gamma_{\O^{\sspc \prime}}$ associated to $\O$ and $\O^{\sspc \prime}$, respectively, there exists a leaf $\phi_L \in C_\O \cap C_{\O^\pp}$ such that $\spc\Gamma_\O \cap \Gamma_{\O^{\sspc \prime}}  \cap \overline{\rule{0cm}{0.34cm}L(\phi_L)} \spc= \varnothing$.
    \end{itemize}
    \mycomment{0.2cm}
    We obtain a similar result for non-backward $\F$-asymptotic orbits by replacing each $L$ with $R$.
\end{lemma}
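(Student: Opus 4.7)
The plan is to prove both directions as contrapositives, in each case passing through Lemma~\ref{lemma:equivalence}.

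For $(\mathrm{ii})\Rightarrow(\mathrm{i})$, I assume $\O\fasym\O^\pp$ and apply Lemma~\ref{lemma:equivalence} to fix a pair of proper trajectories $\Gamma_\O,\Gamma_{\O^\pp}$ whose forward tails meet beyond every parameter $t\in\R$. For any candidate $\phi_L\in C_\O\cap C_{\O^\pp}$, positive transversality guarantees that each of $\Gamma_\O$ and $\Gamma_{\O^\pp}$ crosses $\phi_L$ exactly once and stays in $\overline{L(\phi_L)}$ thereafter. Choosing $t$ larger than both crossing parameters then forces a point of $\Gamma_\O([t,\infty))\cap\Gamma_{\O^\pp}([t,\infty))$ to lie inside $\overline{L(\phi_L)}$, so this pair witnesses the failure of (ii).

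For $(\mathrm{i})\Rightarrow(\mathrm{ii})$, I reason contrapositively. Suppose there is a pair $\Gamma_\O,\Gamma_{\O^\pp}$ of proper trajectories such that $\Gamma_\O\cap\Gamma_{\O^\pp}\cap\overline{L(\phi_L)}\neq\varnothing$ for every $\phi_L\in D:=C_\O\cap C_{\O^\pp}$; the aim is to show $\O\fasym\O^\pp$, which by Lemma~\ref{lemma:equivalence} reduces to showing that these same trajectories satisfy $\Gamma_\O([t,\infty))\cap\Gamma_{\O^\pp}([t,\infty))\neq\varnothing$ for every $t\in\R$. By Lemma~\ref{lemma:intersection_of_domains}, $D$ is a leaf domain, hence totally ordered, and each trajectory crosses every leaf of $D$ exactly once. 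Letting $\hat{\mathcal I}\subset D$ denote the set of leaves containing an intersection point of $\Gamma_\O$ and $\Gamma_{\O^\pp}$, the hypothesis rephrases as the cofinality of $\hat{\mathcal I}$ in $D$ from above: for every $\phi\in D$ there is $\psi\in\hat{\mathcal I}$ with $\psi\geq\phi$.

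The main step, which I expect to be the principal obstacle, is to show that neither trajectory exits $D$ at a finite parameter. Writing $\phi_{t,\O}\in\F$ for the leaf through $\Gamma_\O(t)$ and $\beta_\O:=\sup\{t:\phi_{t,\O}\in D\}$, I need $\beta_\O=+\infty$; the case of $\Gamma_{\O^\pp}$ is symmetric. If instead $\beta_\O<\infty$, continuity of $t\mapsto\phi_{t,\O}$ forces the exit leaf $\psi_0:=\phi_{\beta_\O,\O}$ to lie in $\partial_L C_{\O^\pp}\cap C_\O$; in particular $\psi_0\notin C_{\O^\pp}$. Cofinality applied to the sequence $\phi_{\beta_\O-1/n,\O}\in D$ produces $\psi_n\in\hat{\mathcal I}$ whose $\Gamma_\O$-crossing times $t_n$ satisfy $\beta_\O-1/n\leq t_n<\beta_\O$, so the intersection points $z_n=\Gamma_\O(t_n)$ converge in $\R^2$ to $\Gamma_\O(\beta_\O)\in\psi_0$. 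Since $z_n\in\Gamma_{\O^\pp}$ and the image of $\Gamma_{\O^\pp}$ is closed in $\R^2$ by properness, the limit $\Gamma_\O(\beta_\O)$ would lie on $\Gamma_{\O^\pp}$, placing the leaf $\psi_0$ in $C_{\O^\pp}$ and contradicting $\psi_0\in\partial_L C_{\O^\pp}$.

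Once $\beta_\O=\beta_{\O^\pp}=+\infty$ is established, the conclusion is immediate. For any $t\in\R$, each of $\phi_{t,\O}$ and $\phi_{t,\O^\pp}$ either lies in $D$ or is backward of $D$; when both lie in $D$, total order makes them comparable, and cofinality yields $\psi\in\hat{\mathcal I}$ greater than or equal to both (the cases where one or both leaves are backward of $D$ being even easier, since every element of $\hat{\mathcal I}\subset D$ automatically exceeds them). The associated intersection point then sits in $\Gamma_\O([t,\infty))\cap\Gamma_{\O^\pp}([t,\infty))$, and Lemma~\ref{lemma:equivalence} delivers $\O\fasym\O^\pp$, completing the contrapositive.
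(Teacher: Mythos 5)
Your proof is correct and takes essentially the same route as the paper: both directions funnel through Lemma~\ref{lemma:equivalence}, and the substantive content in each case is the closedness, coming from properness, of the intersection locus of the two trajectories. You merely organize the hard direction contrapositively — where the paper extracts a maximal intersection parameter $t_{\max}$ along $\Gamma_\O$ and picks $\phi_L$ just beyond the corresponding leaf, you show that cofinal intersections confine both trajectories to $C_\O\cap C_{\O^\pp}$ via a limit-point argument — and your one imprecision (asserting that the exit leaf $\psi_0$ lies in $\partial_L C_{\O^\pp}$, which can fail if $C_{\O^\pp}$ extends past $C_\O\cap C_{\O^\pp}$ onto a non-separable branch) is harmless, since the only fact you actually use is $\psi_0\notin C_{\O^\pp}$.
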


\mycomment{0.2cm}

\begin{figure}[h!]
    \center 
    \hspace*{-0.3cm}\begin{overpic}[width=7cm, height=3.7cm, tics=10]{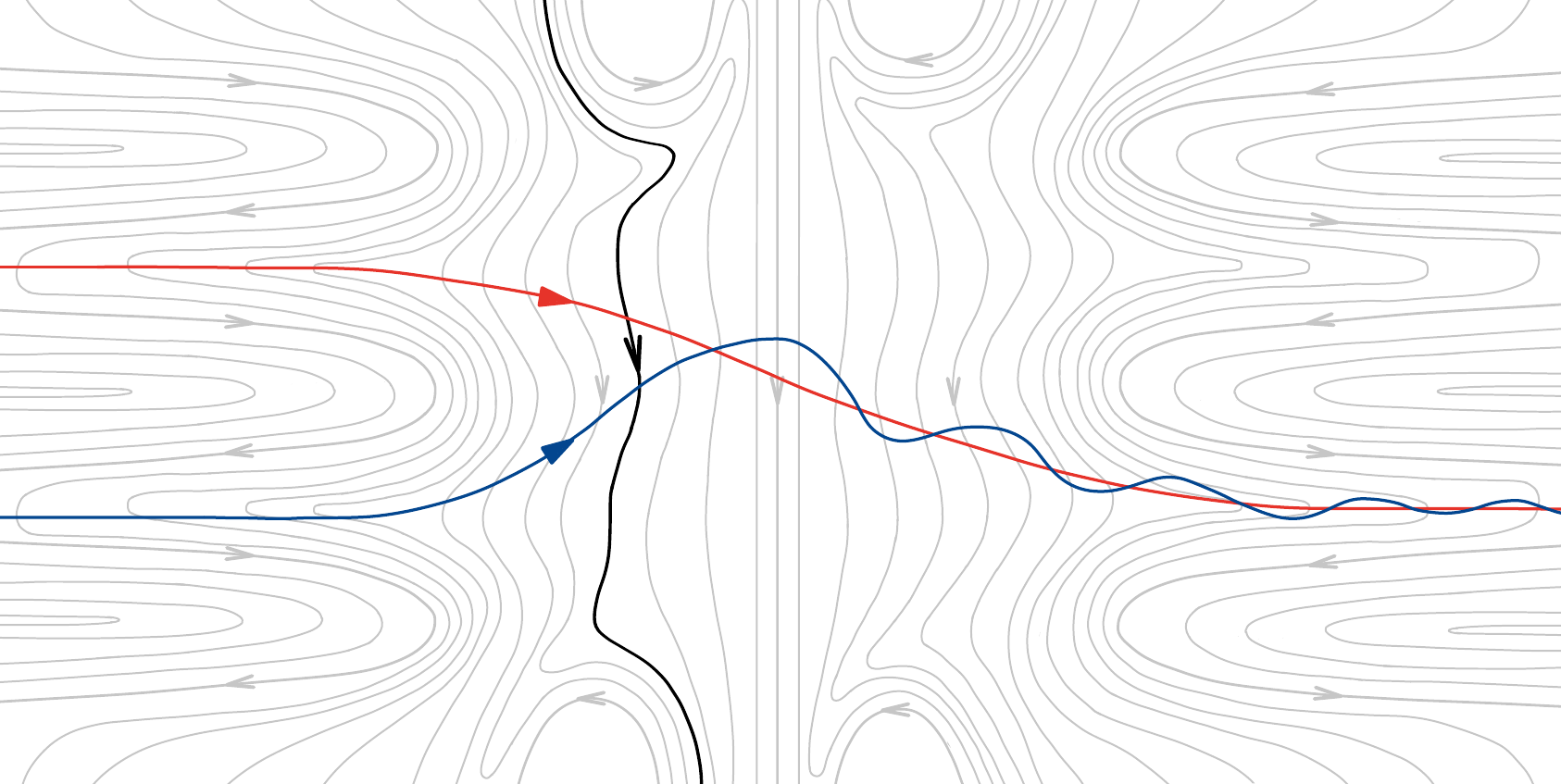}
        \put (32,56) {{\color{black}\large$\displaystyle \phi_R$}}
        \put (39,-10) {{\color{black}\large$\displaystyle \O \notbasym \O'$}}
        \put (-10,35) {{\color{myRED}\large$\displaystyle \Gamma_\O$}}
        \put (-10,17) {{\color{myBLUE}\large$\displaystyle \Gamma_{\O^\pp}$}}
\end{overpic}
\hspace*{1cm}\begin{overpic}[width=7cm, height=3.7cm, tics=10]{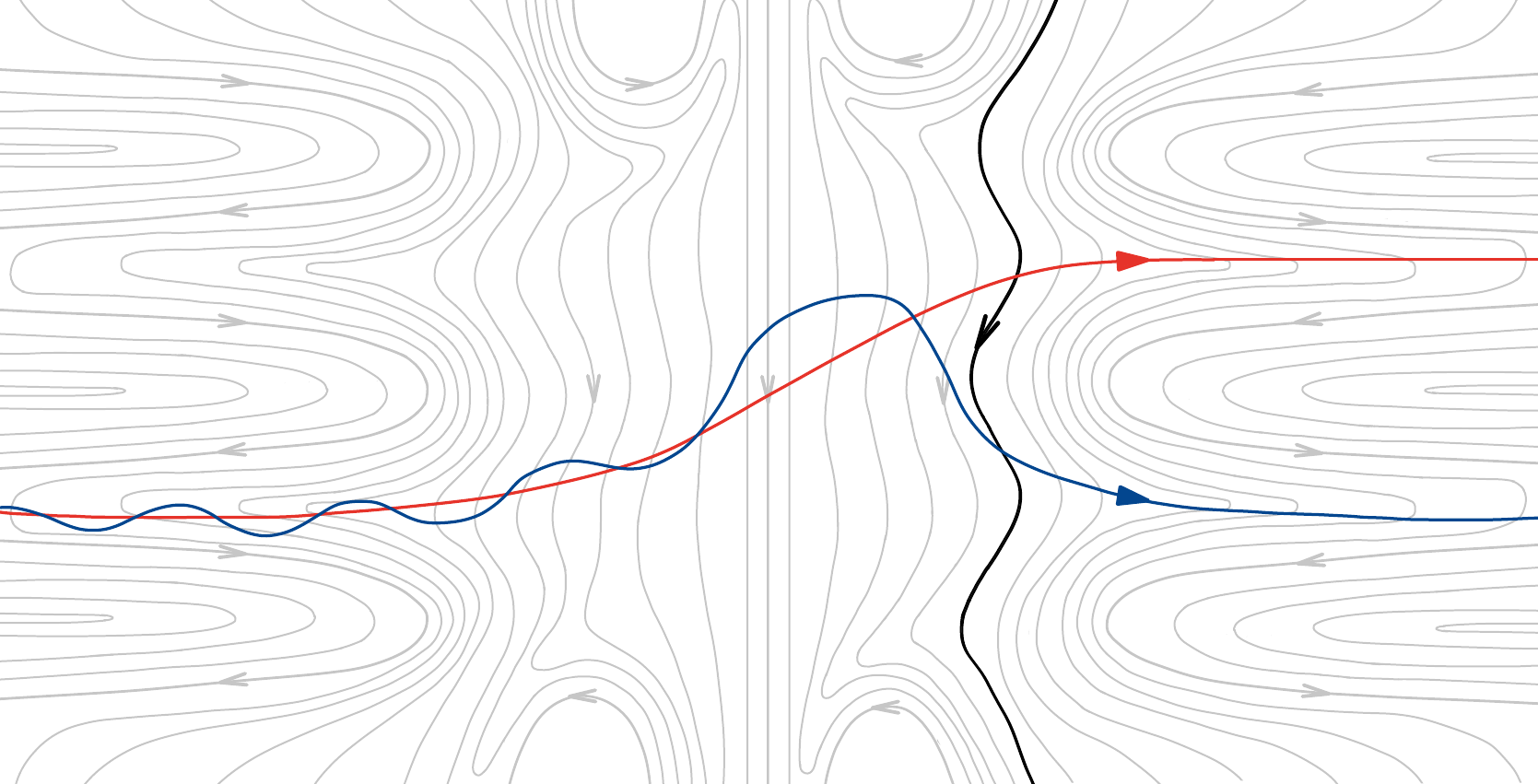}
    \put (65,56) {{\color{black}\large$\displaystyle \phi_L$}}
    \put (39,-10) {{\color{black}\large$\displaystyle \O \notfasym \O'$}}
    \put (104,34) {{\color{myRED}\large$\displaystyle \Gamma_\O$}}
        \put (104,16) {{\color{myBLUE}\large$\displaystyle \Gamma_{\O^\pp}$}}
\end{overpic}
\end{figure}
\newpage

\mycomment{0.2cm}
\begin{remark}
   It is important to remark that the result of Lemma \ref{lemma:equivalence} and Lemma \ref{lemma:dual_equivalence} are no longer true if we consider non-proper transverse trajectories. Consider the following example:

    \mycomment{0.2cm}
    \indent We suppose that $\O,\O^{\sspc \prime} \in \orb$ are two orbits admitting a leaf $\phi \in \Ltop \O \cap \Lbot {\O^{\sspc \prime}}$. These orbits are clearly not forward $\F$-asymptotic. However,  
    as illustrated below, there still exist non-proper transverse trajectories $\Gamma_\O$ and $\Gamma_{\O^{\sspc \prime}}$ associated to $\O$ and $\O^{\sspc\prime}$, respectively,  that satisfy the property \(\spc
    \Gamma_\O\left(\spc[\spc t, \infty)\spc\right) \cap \Gamma_{\O^{\sspc \prime}}\left(\spc[\spc t, \infty)\spc\right) \neq \varnothing\sspc, \sspc
    \) for any $t \in \R$.

    \begin{figure}[h!]
        \center
        \mycomment{0.4cm}\begin{overpic}[width=0.55\textwidth,height=3.7cm, tics=10]{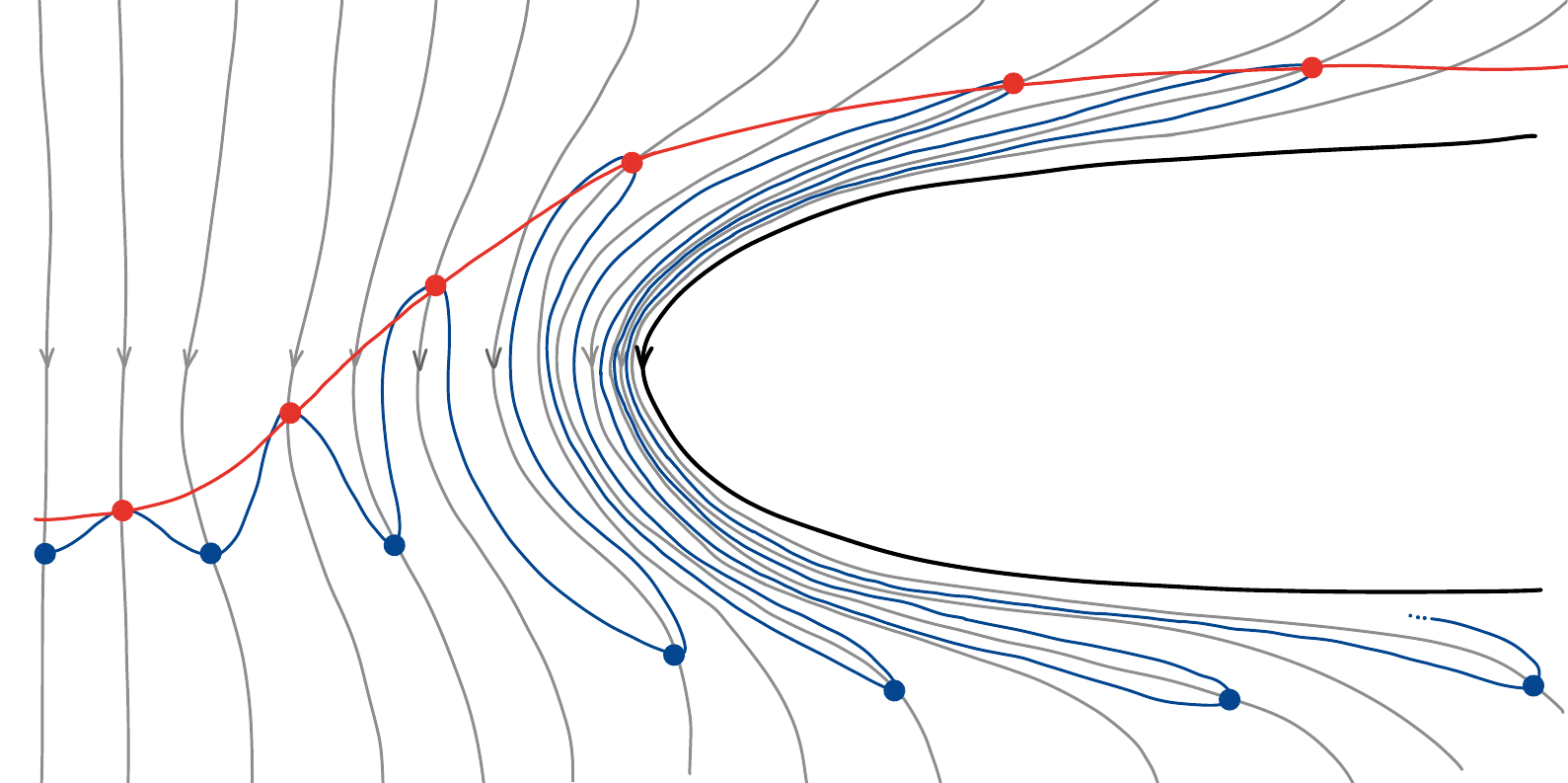}
            \put (52,20) {\color{black}\large$\displaystyle  \Ltop \O \cap \Lbot {\O^{\sspc \prime}}$}
            \put (35,38) {\colorbox{white}{\color{myRED}\large$\displaystyle \Gamma_{\O^{\sspc\prime}}$}}
            \put (33,2) {\colorbox{white}{\color{myBLUE}\large$\displaystyle \Gamma_\O$}}
    \end{overpic}
    \end{figure}

\end{remark}

\mycomment{-0.7cm}
\begin{proof}[Proof of Lemma \ref{lemma:equivalence}]
    $(i) \implies (ii)$: From $(i)$, we have $C_\O \cap C_{\O^{\sspc \prime}}\neq\varnothing$ and $\partial_L C_\O = \partial_L C_{\O^{\sspc \prime}}$.
    Let $\phi_* \in C_\O \cap C_{\O^{\sspc \prime}}$ and let $(\phi_n)_{n\geq0}$ be the increasing sequence of leaves in $\F$ formed by the leaves intersecting the set $(\sspc\O\cup \O^{\sspc \prime}\sspc)\sspc\cap\sspc \overline{\rule{0cm}{0.34cm}L(\phi_*)}$. Note that some leaves of $(\phi_n)_{n\geq0}$ may intersect simultaneously both orbits $\O$ and $\O^{\sspc \prime}$. Next, according to Lemma \ref{lemma:trivial_neighborhoods}, there exists a family of sufficiently small trivial-neighborhoods $\{V_\phi\}_{\phi \in \partial_L C_\O}$ of the leaves in $\partial_L C_\O$ that is pairwise disjoint and disjoint from the closed set $\phi_0 \cup \O \cup \O^{\sspc \prime}$.
    
    Similarly to Section \ref{sec:order_limit_leaves}, we have a filtration $(J_n)_{n\geq 0}$ of the set $\partial_L C_\O$ by sets of the form $J_n=\{\phi \in \partial_L C_\O \mid V_\phi \cap \phi_n\}$.
    Moreover, for each $n\geq 0$ such
    that $\O \cap \phi_n \neq \varnothing$, we have a cut
    $$(J_n^-(\O), J_n^+(\O)) \in \text{Cut}_\prec(J_n).$$ 
    On the other hand, for the values of $n\geq 0$ such that $\O \cap \phi_n = \varnothing$, we define $m(n)>n$ as the smallest integer such that $\O \cap \phi_{m(n)} \neq \varnothing$, and we set 
    \begin{equation*}
        \textup{$J_n^-(\O) := J_n \cap J_{m(n)}^-(\O) \quad$ and $\quad
   J_n^+(\O):= J_n \cap J_{m(n)}^+(\O).$}
    \end{equation*}
    Performing the same adaptation for the cuts induced by the orbit $\O^{\sspc \prime}$, we end up with two cuts $(J_n^-(\O), J_n^+(\O))$ and $(J_n^-(\O^{\sspc \prime}), J_n^+(\O^{\sspc \prime}))$ of the ordered set $(J_n,\prec)$, for every $n\geq0$.

   Since $\O\fasym\O^{\sspc \prime}$, for each leaf $\phi \in \partial_L C_\O$, there exists $M_\phi>0$ such that
    \begin{align*}
        \phi \in \partial_L^\text{\sspc top\sspc } \O \implies \phi\in J_n^-(\O) \cap J_n^-(\O^{\sspc \prime}), \quad \forall n\geq M_\phi,\\
        \phi \in \partial_L^\text{\sspc bot\sspc } \O \implies \phi\in J_n^+(\O) \cap J_n^+(\O^{\sspc \prime}), \quad \forall n\geq M_\phi. 
    \end{align*}
    The same redefinition of the family $\{V_\phi\}_{\phi \in \partial_L C_\O}$ presented in (\ref{eq:rename}), we can ensure that each leaf $\phi \in \fb$ satisfies $\phi \not\in J_n$ for all $n<M_\phi$ and, consequently, for any $n\geq 0$ we have
    \begin{align*}
        \partial_L^\text{\sspc top\sspc } \O  = J_n^- (\O) = J_n^- (\O^{\sspc \prime}) \quad \text{ and } \quad
        \partial_L^\text{\sspc bot\sspc } \O  = J_n^+ (\O) = J_n^+ (\O^{\sspc \prime}).
    \end{align*}
    \indent For each $n\geq 0$, let $p_n$ and $p_n'$ be the points where the leaf $\phi_n$ intersects $\O$ and $\O^{\sspc \prime}$, respectively. In the case where $\phi_n$ intersects only one of these orbits, we consider $p_n = p_n'$. 
    Following arguments similar those in the proof of
    Theorem \ref{thmx:proper_trajectories_restate}, we obtain for each $n\geq0$ two positively transverse paths $\gamma_n, \gamma_n':[0,1]\longrightarrow \R^2$ that are disjoint from all sets in the family $\{V_\phi\}_{\phi \in \fb}$, and such that $\gamma^n_\O(0) = p_n$, $\gamma^n_{\O^{\sspc \prime}}(0) = p_n'$, $\gamma^n_\O(1) = p_{n+1}$ and $\gamma^n_{\O^{\sspc \prime}}(1) = p_{n+1}'$. 
    Observe that $p_n$, $p_n'$, $p_{n+1}$ and $p_{n+1}'$ belong to the same connected component of the set 
    $$ \overline{L(\phi_n) \cap R(\phi_{n+1})} \ \big\backslash \  \bigcup_{\phi \in \partial_L C_\O} \textup{int}(V_\phi).$$
    Therefore, for any pair of open sets $U_n, U_{n+1}\subset \R^2$ satisfying $\phi_n \subset U_n$ and $\phi_{n+1} \subset U_{n+1}$, the same arguments of Section \ref{sec:proper_transverse_trajectories} allows us to construct $\gamma_n$ and $\gamma_n'$ in such a way that they coincide outside the neighborhoods $U_n$ and $U_{n+1}$. Moreover, in the case where $p_n = p_n'$ and $p_{n+1} = p_{n+1}'$, we can assume that $\gamma_n = \gamma_n'$.  In all cases, we have  $\gamma_n \cap \gamma_n' \neq \varnothing$ for all $n\geq0$.
    The argument of Section \ref{sec:proper_transverse_trajectories} proves that the families $\{\gamma_n\}_{n\geq0}$ and $\{\gamma_n'\}_{n\geq0}$ are both locally-finite. Thus, they can be concatenated, and then extended, into proper transverse trajectories $\Gamma$ and $\Gamma'$ associated respectively to the orbits $\O$ and $\O^{\sspc \prime}$. These proper transverse trajectories satisfy item (ii).

\begin{remark}\label{rmk:adapt}
    The argument above can be adapted to any finite subset $\OO\subset \orb$ satisfying
    $$ \O \fasym \O^{\sspc \prime}\sspc, \quad \forall \O,\O^{\sspc\prime}\in \OO.$$
    If each leaf of $\F$ intersects at most one point orbit in $\OO$, this adaptation proves Lemma \ref{lemma:common_half_traj}.
\end{remark}

    $(ii) \implies (i):$ Since $\Gamma$ and $\Gamma'$ intersect each other, there exists a leaf $\phi_* \in C_\O \cap C_{\O^{\sspc \prime}}$.  
    Let $(\phi_n)_{n\geq0}$ be the increasing sequence of leaves in $\F$ formed by the leaves meeting the set $(\O\cup \O^{\sspc \prime})\cap \overline{L(\phi_*)}$.
    Since $\Gamma$ and $\Gamma'$ are proper lines, the set  $\Gamma\cup \Gamma' \subset \R^2$ is closed.  Thus, by applying Lemma \ref{lemma:trivial_neighborhoods}, we know there exists a family $\{V_\phi\}_{\phi \in \partial_L C_\O}$ of pairwise disjoint trivial-neighborhoods of $\partial_L C_\O$ that satisfies $V_\phi \cap (\phi_0 \cup \Gamma \cup \Gamma') = \varnothing$, for any $\phi \in \partial_L C_\O$.

    
    For each $n\geq 0$, we consider $J_n=\{\phi \in \partial_L C_\O \mid V_\phi \cap \phi_n\}$. As in $(i) \implies (ii)$, consider the sets $J_n^- (\O)$, $J_n^+ (\O)$, $J_n^-(\O^{\sspc \prime})$ and $J_n^+(\O^{\sspc \prime})$. Note that, as each $V_\phi$ is disjoint from $\Gamma \sspc \cup \sspc \Gamma'$, hypothesis $(ii)$ implies that, for any $\forall n\geq0$, we have $J_n^+ (\O) = J_n^+ (\O^{\sspc \prime}) $ and $J_n^- (\O) = J_n^- (\O^{\sspc \prime})$.
    This shows that the orbits $\O$ and $\O^{\sspc \prime}$ induce the same cut $(\sspc\partial_L^\text{\sspc top\sspc } \O,\spc \partial_L^\text{\sspc bot\sspc }\O\sspc)$ on $\fb$ and, consequently, we conclude that $\O$ and $\O^{\sspc \prime}$ are forward $\F$-asymptotic.
\end{proof}

\begin{proof}[Proof of Lemma \ref{lemma:dual_equivalence}] 

    \textit{$(i) \implies (ii):$} Suppose that $\O \not\fasym \O^{\sspc \prime}$. 
    According to Lemma \ref{lemma:equivalence}, for any pair of proper transverse trajectories $\Gamma_\O$ and $\Gamma_{\O^{\sspc \prime}}$ of $\O$ and $\O^{\sspc \prime}$, there exists $M>0$ such that
    $$ \Gamma_\O\left(\spc[\spc M, \infty)\spc\right) \cap \Gamma_{\O^{\sspc \prime}}\left(\spc[\spc M, \infty)\spc\right) = \varnothing.$$
    Since $\Gamma_\O$ and $\Gamma_{\O^{\sspc \prime}}$ are topological lines, the intersection $\Gamma_\O \cap \Gamma_{\O^{\sspc \prime}}$ is a closed subset of $\R^2$. Therefore, the set 
    $ {\Gamma_\O}^{-1}(\spc\Gamma_\O \cap \Gamma_{\O^{\sspc \prime}}\sspc)$
    is a closed subset of $\R$ bounded from above by $M$ and, consequently, it admits a maximal element $t_\text{max}$.

    Note that the leaf $\phi_\text{max}\in \F$ passing through the point $\Gamma_\O(t_\text{max})$ belongs to $C_\O \cap C_{\O^{\sspc \prime}}$. Since $C_\O \cap C_{\O^{\sspc \prime}}$ is an open set, there exists a leaf $\phi_L\in \F$ satisfying $L(\phi_L) \subset L(\phi_\text{max})$ and sufficiently close to $\phi_\text{max}$ so that it also belongs to $C_\O \cap C_{\O^{\sspc \prime}}$. This leaf proves $(i) \implies (ii)$.

    \mycomment{0.3cm}

    \textit{$(ii) \implies (i):$} This implication follows trivially by Lemma \ref{lemma:equivalence}.
\end{proof}

\section{Sided preorders of orbits}\label{chap:weak_transverse_intersections}

In Section \ref{chap:asymptotic behavior}, we showed that the notion of proper transverse trajectories provides an effective framework for describing the asymptotic behavior of the orbits in \(\orb\) relative to the transverse foliation \(\F\). In this section, we develop the tools necessary to compare orbits crossing a common leaf while exhibiting different $\F$-asymptotic behaviors.

\mycomment{0.1cm}
We begin this section by exploring the following illustrative setting: 

\mycomment{0.2cm}
Let \(\O, \O^{\sspc\prime} \in \orb\) be two orbits crossing a common leaf, meaning, \(C_\O \cap C_{\O^{\sspc\prime}} \neq \varnothing\).  
Suppose that \(\O\) and \(\O^{\sspc\prime}\) are neither forward nor backward \(\F\)-asymptotic.
Thus, according to Lemma \ref{lemma:dual_equivalence}, for any pair of proper transverse trajectories $\Gamma_\O$ and $\Gamma_{\O^{\sspc\prime}}$ associated with the orbits \(\O\) and \(\O^{\sspc\prime}\), there exists a pair of leaves $\phi_R, \phi_R \in C_\O \cap C_{\O^{\sspc\prime}}$ satisfying $\phi_R < \phi_L$ such that
$$\spc\Gamma_\O \cap \Gamma_{\O^{\sspc \prime}}  \cap \overline{\rule{0cm}{0.34cm}L(\phi_L)} \spc= \varnothing \quad \text{ and } \quad\spc\Gamma_\O \cap \Gamma_{\O^{\sspc \prime}}  \cap \overline{\rule{0cm}{0.34cm}R(\phi_R)} \spc= \varnothing.$$

\newpage
\mycomment{0.2cm}
In this setting, two possible configurations arise:
\begin{itemize}
    \item[\text{(a)}] The points $p_L(\O) \in \Gamma_\O \cap \phi_L$ and $p_L(\O^{\sspc\prime}) \in \Gamma_{\O^{\sspc\prime}} \cap \phi_L$ are ordered along the leaf $\phi_L$ in the same order as the points $p_R(\O) \in \Gamma_\O \cap \phi_R$ and $p_R(\O^{\sspc\prime}) \in \Gamma_{\O^{\sspc\prime}} \cap \phi_R$ along the leaf $\phi_R$.
    
    \item[\text{(b)}] The points $p_L(\O) \in \Gamma_\O \cap \phi_L$ and $p_L(\O^{\sspc\prime}) \in \Gamma_{\O^{\sspc\prime}} \cap \phi_L$ are ordered along the leaf $\phi_L$ in the opposite order as the points $p_R(\O) \in \Gamma_\O \cap \phi_R$ and $p_R(\O^{\sspc\prime}) \in \Gamma_{\O^{\sspc\prime}} \cap \phi_R$ along the leaf $\phi_R$.
\end{itemize}

\mycomment{0.45cm}
\begin{figure}[h!]
    \center 
    \hspace*{0.6cm}\begin{overpic}[width=6cm, height=3cm, tics=10]{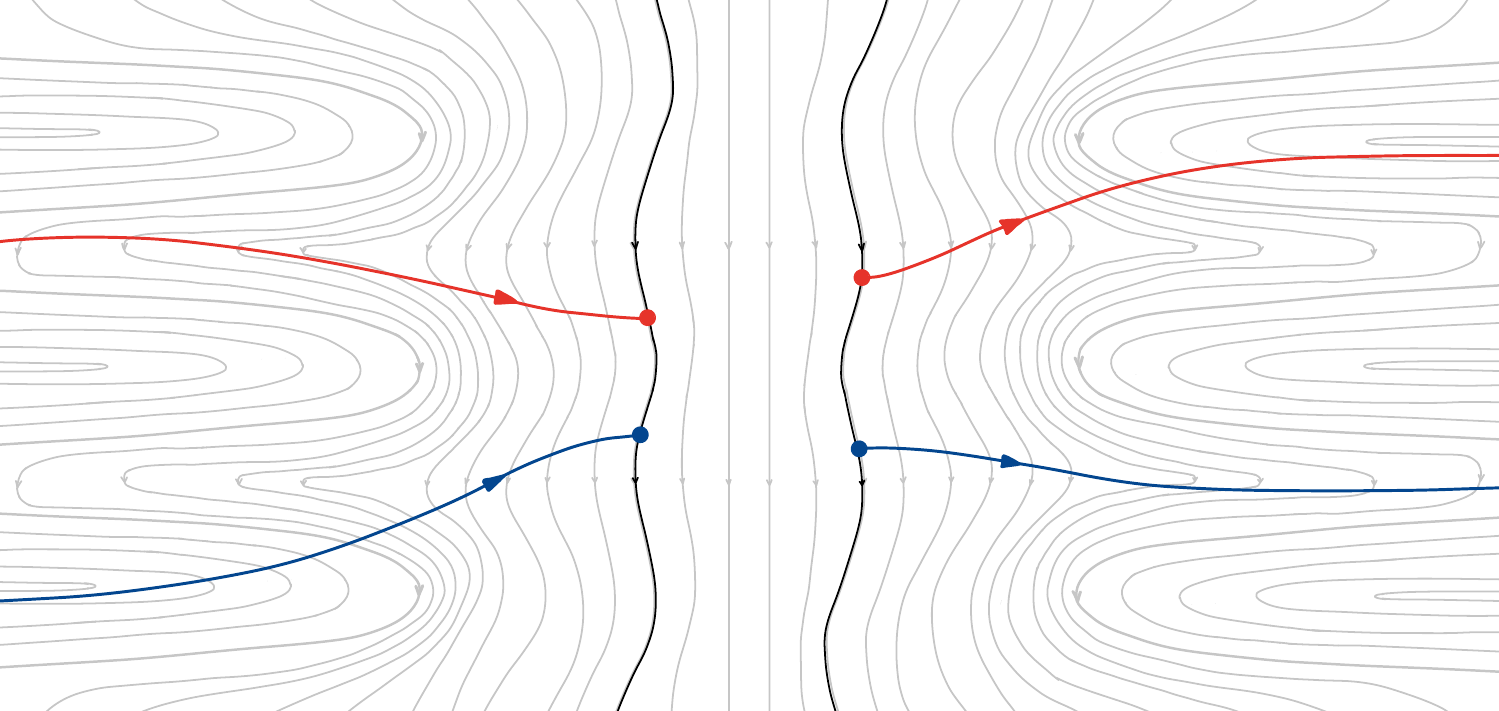}
        \put (-14,33) {{\large\color{myRED}$\displaystyle \Gamma_{\O^{\sspc\prime}} $}}
        \put (-13,6) {{\large\color{myBLUE}$\displaystyle \Gamma_{\O} $}}
        \put (102.5,37) {{\large\color{myRED}$\displaystyle \Gamma_{\O^{\sspc\prime}} $}}
        \put (102.5,13.5) {{\large\color{myBLUE}$\displaystyle \Gamma_{\O} $}}
        \put (-9,49) {{\color{black}$\displaystyle \text{(a)}\ $}}
\end{overpic}
\hspace*{2.3cm}\begin{overpic}[width=6cm,height=3cm, tics=10]{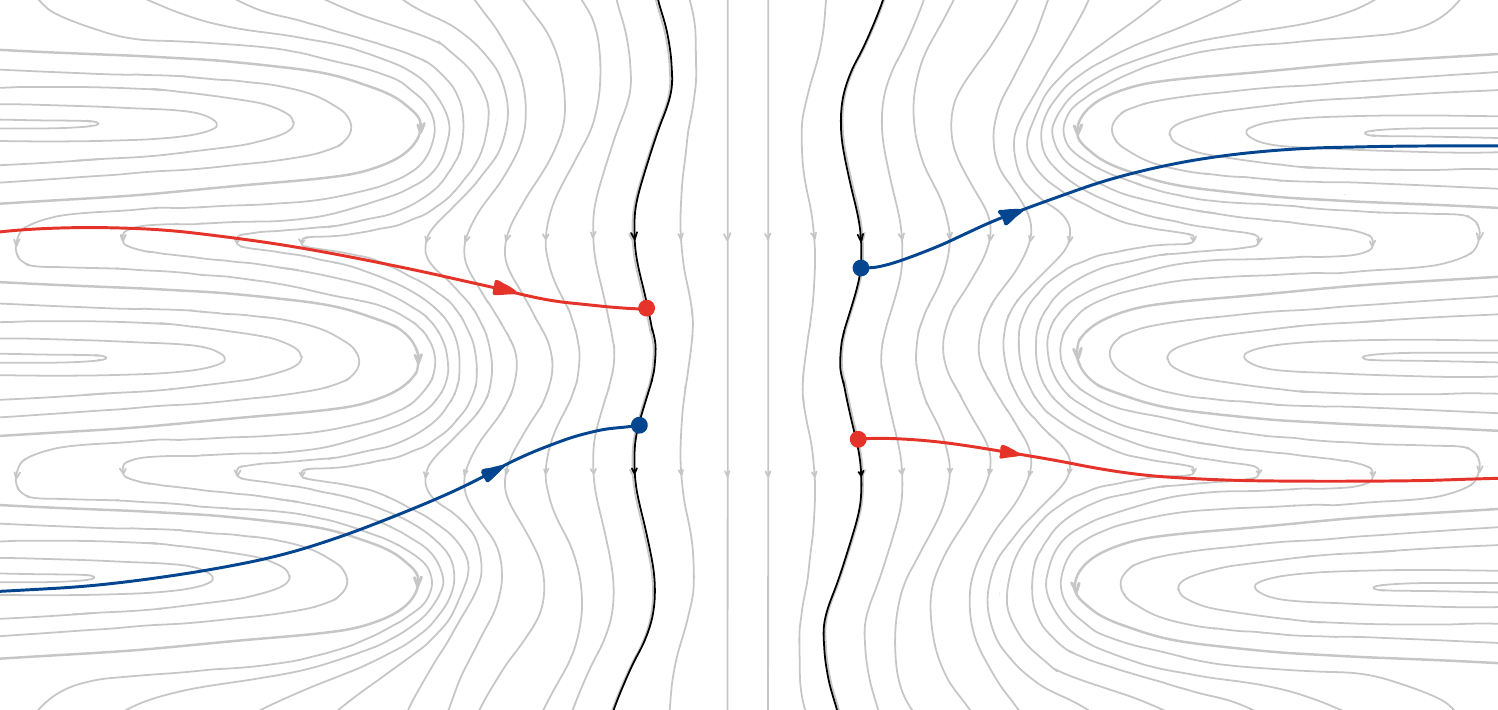}
    \put (-14,33) {{\large\color{myRED}$\displaystyle \Gamma_{\O^{\sspc\prime}} $}}
        \put (-13,6) {{\large\color{myBLUE}$\displaystyle \Gamma_{\O} $}}
        \put (102.5,37) {{\large\color{myBLUE}$\displaystyle \Gamma_{\O} $}}
        \put (102.5,13.5) {{\large\color{myRED}$\displaystyle \Gamma_{\O^{\sspc\prime}} $}}
    \put (-9,49) {{\color{black}$\displaystyle \text{(b)}\ $}}
\end{overpic}
\end{figure}

In Section \ref{sec:preorders}, we show that the configuration in which these orbits \(\O\) and \(\O^{\sspc\prime}\) fall into, depends solely on their asymptotic behavior relative to the foliation \(\F\). 
More precisely,  the ordering of the points \(p_L(\O)\) and \(p_L({\O^{\sspc\prime}})\) along the left leaf $\phi_L$, as well as the ordering of the points \(p_R(\O)\) and \(p_R(\O^\pp)\) along the right leaf $\phi_R$, do not depend on the particular choices of proper transverse trajectories \(\Gamma_\O\) and \(\Gamma_{\O^{\sspc\prime}}\).
This is not the case if $\O$ and $\O^{\sspc\prime}$ are forward or backward $\F$-asymptotic. 

Observe that, in Configuration (b), the trajectories $\Gamma$ and $\Gamma'$ are forced to intersect each other in between the leaves $\phi_R$ and $\phi_L$. Meanwhile, in Configuration (a), the proper transverse trajectories $\Gamma$ and $\Gamma'$ could be disjoint from each other, depending on the arrangement of the points of $\O$ and $\O^{\sspc\prime}$ lying in between the leaves $\phi_R$ and $\phi_L$. 



\subsection{Defining the orbit relations $\lesssim_L$ and $\lesssim_R$}\label{sec:preorders}

For each leaf $\phi \in \F$, we define the set
$$ \orbphi :=\{\O \in \orb \mid \phi \in C_\O\},$$
which corresponds to the set of orbits of $f$ crossing the leaf $\phi$. 

In this section, we define two relations $\lesssim_L$ and $\lesssim_R$ on the set $\orb$ that satisfy
\begin{itemize}[leftmargin=1.7cm]
    \item For each $\phi\in \F$, the relations $\lesssim_L$ and $\lesssim_R$ are total preorder on the set $\orbphi$.
    \item For any pair of orbits $\O, \O^{\sspc\prime} \in \orb$, the following equivalences hold:
    \begin{align*}
        \O\fasym \O^{\sspc\prime} \iff  \O \lesssim_L \O^{\sspc\prime} \ \text{ and }\  \O^{\sspc\prime} \lesssim_L \O, \\
        \O\basym \O^{\sspc\prime} \iff  \O \lesssim_R \O^{\sspc\prime} \ \text{ and }\  \O^{\sspc\prime} \lesssim_R \O.
    \end{align*}
\end{itemize}

\mycomment{0.1cm}
\noindent Before we define $\lesssim_L$ and $\lesssim_R$, observe that the intersection of leaf domains \(C_\O \cap C_{\O^{\sspc\prime}}\) of two orbits \(\O\) and \(\O^{\sspc\prime}\) is also a leaf domain (possibly empty) of the foliation \(\F\) (see Lemma \ref{lemma:intersection_of_domains}). Moreover, the sets of limit leaves \(\partial_L(C_\O \cap C_{\O^{\sspc\prime}})\) and \(\partial_R(C_\O \cap C_{\O^{\sspc\prime}})\) can be totally ordered by a relation \(\prec\), similar to the one constructed in Section \ref{sec:order_limit_leaves} for $\fb$ and $\bb$.

\noindent This sets the machinery used to compare the $\F$-asymptotic behaviors of the orbits \(\O\) and \(\O^{\sspc\prime}\).

\begin{figure}[h!]
    \center
    \vspace*{-0cm}\begin{overpic}[width=9cm, height=4.2cm, tics=10]{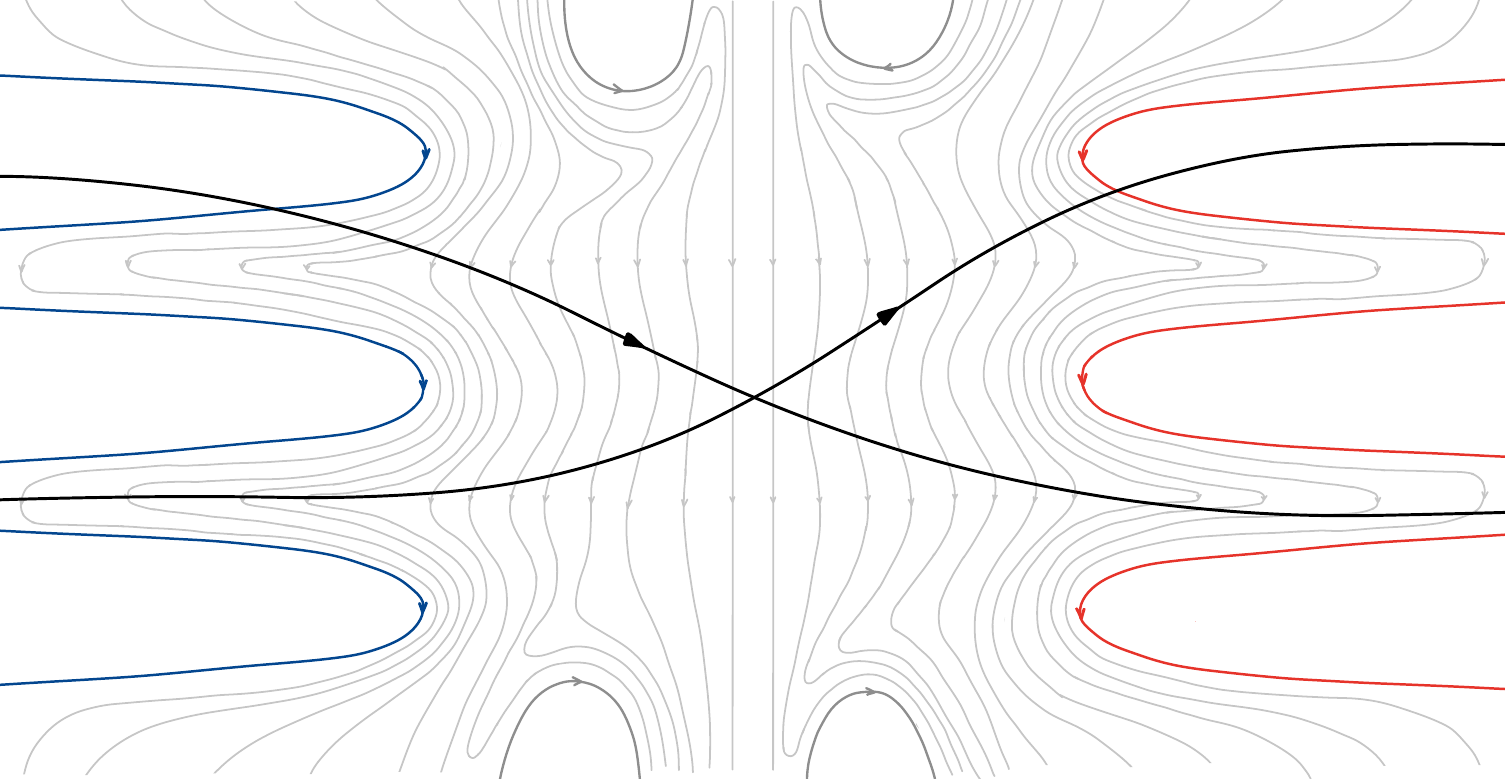}
        \put (39.5,28.7) {\colorbox{white}{\color{black}\normalsize$\rule{0cm}{0.27cm} \ \  $}}
        \put (40,29) {{\color{black}\normalsize$\displaystyle \Gamma_{\O}$}}
        \put (53,30.2) {\colorbox{white}{\color{black}\normalsize$\rule{0cm}{0.27cm} \ \ \ \spc \sspc $}}
        \put (53.5,30.5) {{\color{black}\normalsize$\displaystyle \Gamma_{\O^{\sspc\prime}}$}}
        \put (41.5,12.5) {\colorbox{white}{\color{myGRAY}\large$\rule{0cm}{0.27cm}\ \quad \quad \ \ \ \  $}}
        \put (41.5,12.8) {{\color{myGRAY}\large$\displaystyle C_\O \cap C_{\O^{\sspc\prime}} $}}
        \put (-9,22.5) {\color{myBLUE}$\displaystyle \partial_R(C_\O \cap C_{\O^{\sspc\prime}})$}
        \put (85,22.5) {\color{myRED}$\displaystyle \partial_R(C_\O \cap C_{\O^{\sspc\prime}})$}
\end{overpic}
\end{figure}

\newpage




\pagebreak

\begin{definition}
\label{sec:left-right_relation}
     For any pair of orbits \(\O, \O^{\sspc\prime} \in \orb\), we write 
\end{definition}

\begin{minipage}{\linewidth}
\hspace*{-1.25cm}
\begin{minipage}{0.5\textwidth}
   $$  \O \lesssim_{R} \O^{\sspc \prime}$$  
   
   \vspace*{0.2cm}\noindent if the orbits $\O$ and $\O^\pp$ satisfy $C_\O \cap C_{\O^{\sspc\prime}} \neq \varnothing$ and one of the following conditions:

   \vspace*{0.2cm}
\begin{itemize}[leftmargin=1.3cm]
    \item[\textbf{(R1)}\ ] There exist \( \phi, \phi' \in \partial_R (C_\O \cap C_{\O^{\sspc \prime}}) \), with $\phi \prec \phi'$, such that
    \(
    \phi \in C_\O\spc, \spc\sspc \phi' \in C_{\O^{\sspc \prime}}.
    \)
\end{itemize}

    \center
    \vspace*{0.4cm}\begin{overpic}[width=6cm, height=3cm, tics=10]{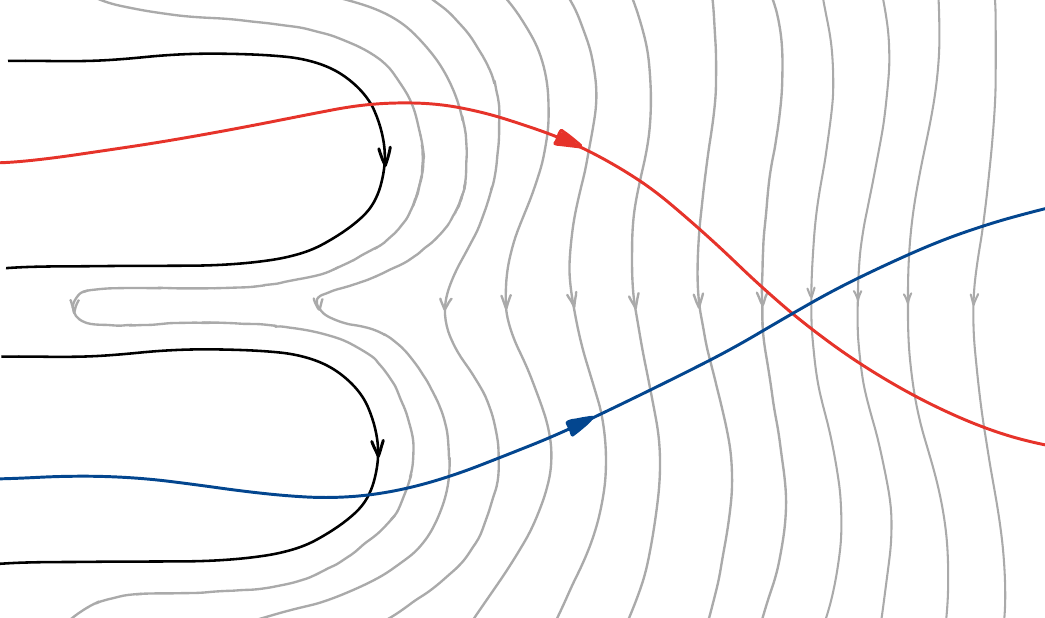}
        \put (60,40) {\colorbox{white}{\color{myRED}\large$\displaystyle \Gamma_{\O} $}}
        \put (60,9) {\colorbox{white}{\color{myBLUE}\large$\displaystyle \Gamma_{\O^\pp} $}}
\end{overpic}

\vspace*{0.1cm}
\begin{itemize}[leftmargin=1.3cm]
    \item[\textbf{(R2)}\ ] There exists a leaf $\phi \in \partial_R (C_\O \cap C_{\O^{\sspc \prime}})$ such that $\phi \in C_{\O}  \cap\sspc \Rtop {\O^{\sspc \prime}}\sspc$.
\end{itemize}

    \center
    \vspace*{0.4cm}\begin{overpic}[width=6cm, height=3cm, tics=10]{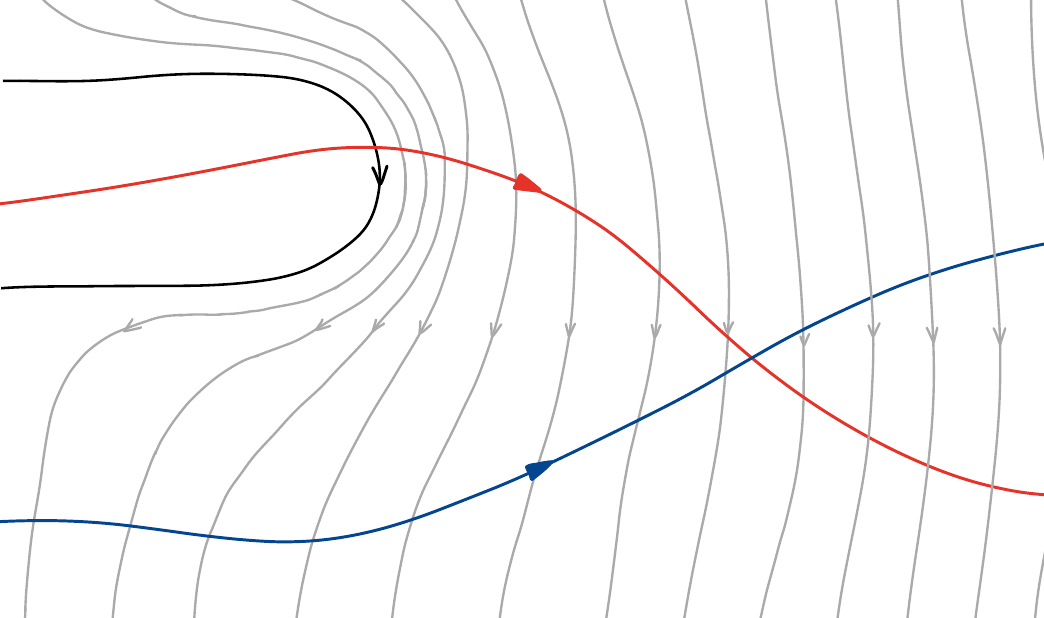}
        \put (57,36) {\colorbox{white}{\color{myRED}\large$\displaystyle \Gamma_{\O} $}}
        \put (57,6) {\colorbox{white}{\color{myBLUE}\large$\displaystyle \Gamma_{\O^\pp} $}}
\end{overpic}

\vspace*{0.1cm}
\begin{itemize}[leftmargin=1.3cm]
    \item[\textbf{(R3)}\ ] There exists a leaf $\phi' \in \partial_R (C_\O \cap C_{\O^{\sspc \prime}})$ such that $\phi' \in C_{\O^{\sspc \prime}} \cap\sspc \Rbot {\O}\sspc$.
\end{itemize}

    \center
    \vspace*{0.4cm}\begin{overpic}[width=6cm, height=3cm, tics=10]{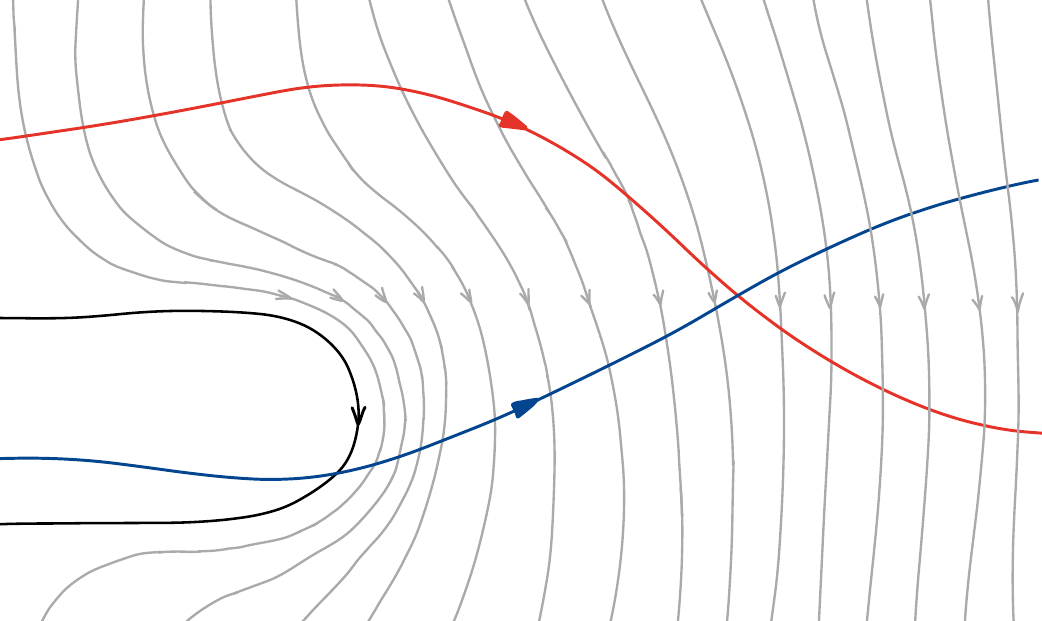}
        \put (56,40) {\colorbox{white}{\color{myRED}\large$\displaystyle \Gamma_{\O} $}}
        \put (56,11) {\colorbox{white}{\color{myBLUE}\large$\displaystyle \Gamma_{\O^\pp} $}}
\end{overpic}

\vspace*{0.1cm}
\begin{itemize}[leftmargin=1.3cm]
    \item[\textbf{(R4)}\ ] $\partial_R (C_\O \cap C_{\O^{\sspc \prime}}) =\partial_R C_\O = \partial_R C_{\O^{\sspc \prime}}$ and, moreover, $\sspc\Rtop {\O} \sspc\subseteq \sspc\Rtop {\O^{\sspc \prime}}\sspc$.
\end{itemize}

    \center
    \vspace*{0.4cm}\begin{overpic}[width=6cm, height=3cm, tics=10]{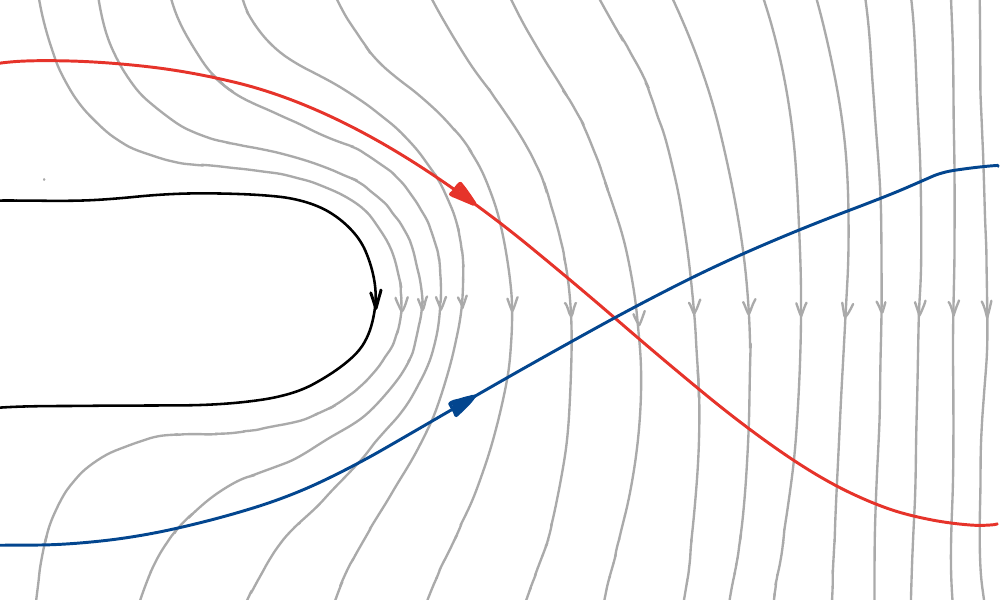}
        \put (48,38) {\colorbox{white}{\color{myRED}\large$\displaystyle \Gamma_{\O} $}}
        \put (48,9) {\colorbox{white}{\color{myBLUE}\large$\displaystyle \Gamma_{\O^\pp} $}}
\end{overpic}
\end{minipage}
\hspace{0.07\linewidth}
\begin{minipage}{0.5\textwidth}
$$  \O \lesssim_{L} \O^{\sspc \prime}$$  

\vspace*{0.2cm}\noindent if the orbits $\O$ and $\O^\pp$ satisfy $C_\O \cap C_{\O^{\sspc\prime}} \neq \varnothing$ and one of the following conditions:

\vspace*{0.2cm}
\begin{itemize}[leftmargin=1.3cm]
    \item[\textbf{(L1)}\ ] There exist \( \phi, \phi' \in \partial_L (C_\O \cap C_{\O^{\sspc \prime}}) \), with $\phi \prec \phi'$, such that
    \(
    \phi \in C_\O\spc, \spc\sspc \phi' \in C_{\O^{\sspc \prime}}.
    \)
\end{itemize}

    \center
    \vspace*{0.4cm}\begin{overpic}[width=6cm, height=3cm, tics=10]{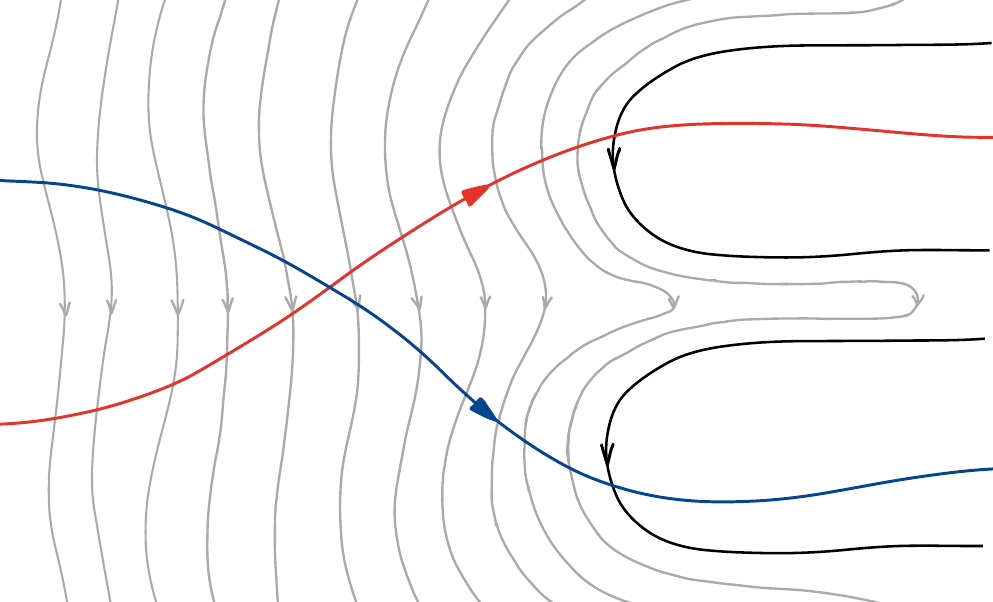}
        \put (35,40) {\colorbox{white}{\color{myRED}\large$\displaystyle \Gamma_{\O}$}}
        \put (34,9) {\colorbox{white}{\color{myBLUE}\large$\displaystyle \Gamma_{\O^\pp} $}}
        
\end{overpic}

\vspace*{0.1cm}
\begin{itemize}[leftmargin=1.3cm]
    \item[\textbf{(L2)}\ ] There exists a leaf $\phi \in \partial_L (C_\O \cap C_{\O^{\sspc \prime}})$ such that $\phi \in C_{\O}  \cap\sspc \Ltop {\O^{\sspc \prime}}\sspc$.
\end{itemize}

    \center
    \vspace*{0.4cm}\begin{overpic}[width=6cm, height=3cm, tics=10]{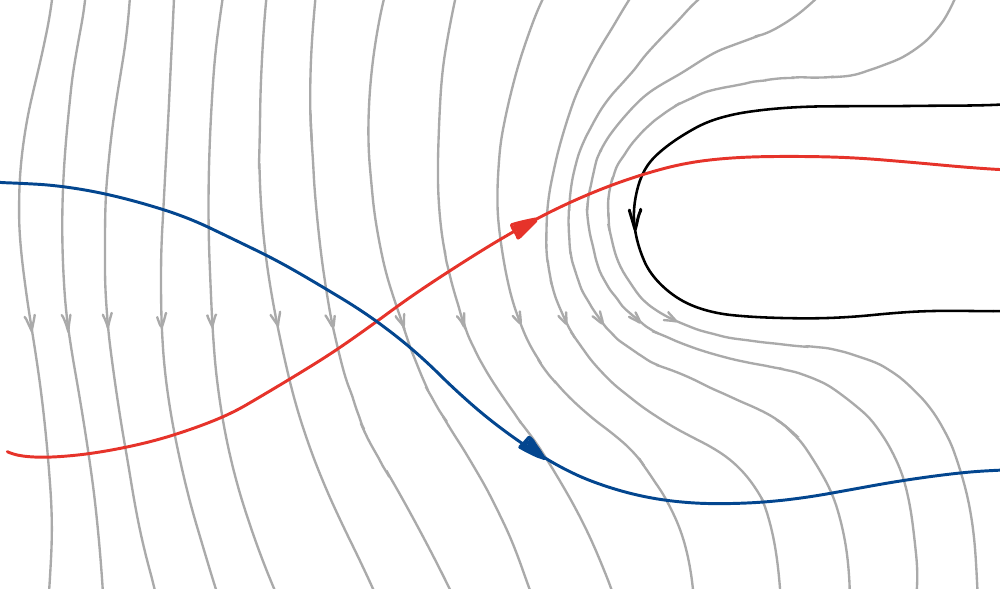}
        \put (39,32.5) {\colorbox{white}{\color{myBLUE}\large$\rule{0cm}{0.29cm} \ \ \  $}}
        \put (39,32.9) {{\color{myRED}\large$\displaystyle \Gamma_{\O} $}}
        \put (37,9) {\colorbox{white}{\color{myBLUE}\large$\rule{0cm}{0.25cm} \ \ \ \sspc $}}
        \put (38,9) {{\color{myBLUE}\large$\displaystyle \Gamma_{\O^\pp} $}}
\end{overpic}

\vspace*{0.1cm}
\begin{itemize}[leftmargin=1.3cm]
    \item[\textbf{(L3)}\ ] There exists a leaf $\phi' \in \partial_L (C_\O \cap C_{\O^{\sspc \prime}})$ such that $\phi' \in C_{\O^{\sspc \prime}} \cap\sspc \Lbot {\O}\sspc$.
\end{itemize}

    \center
    \vspace*{0.4cm}\begin{overpic}[width=6cm, height=3cm, tics=10]{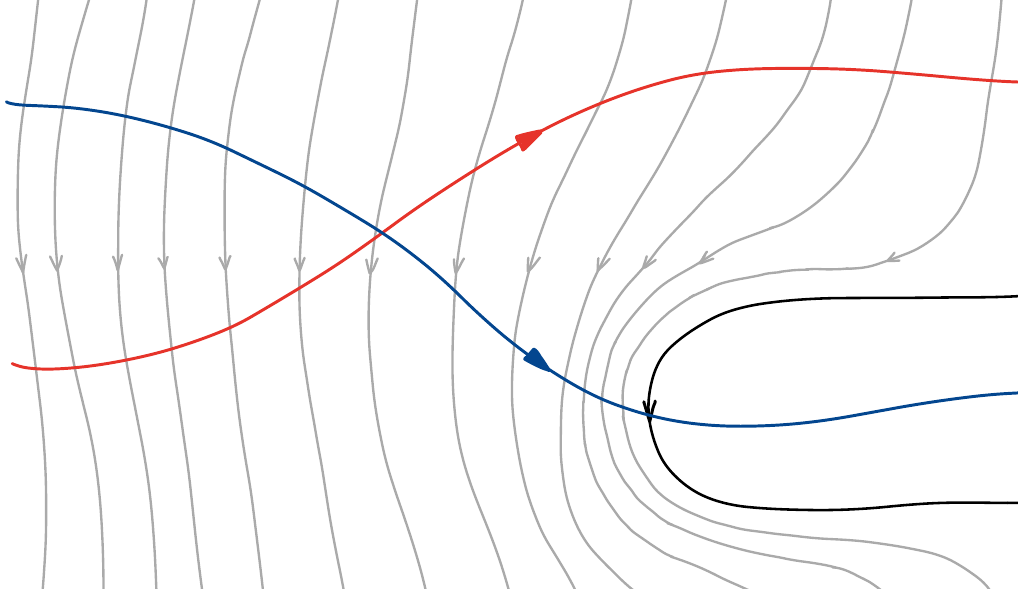}
        \put (38.7,40.5) {\colorbox{white}{\color{myRED}\large$\displaystyle \Gamma_{\O}$}}
        \put (40,11) {\colorbox{white}{\color{myBLUE}\large$\displaystyle \Gamma_{\O^\pp} $}}
\end{overpic}

\vspace*{0.1cm}
\begin{itemize}[leftmargin=1.3cm]
    \item[\textbf{(L4)}\ ] $\partial_L (C_\O \cap C_{\O^{\sspc \prime}}) =\partial_L C_\O = \partial_L C_{\O^{\sspc \prime}}$ and, moreover, $\sspc\Ltop {\O} \sspc\subseteq \sspc\Ltop {\O^{\sspc \prime}}\sspc$.
\end{itemize}

    \center
    \vspace*{0.4cm}\begin{overpic}[width=6cm, height=3cm, tics=10]{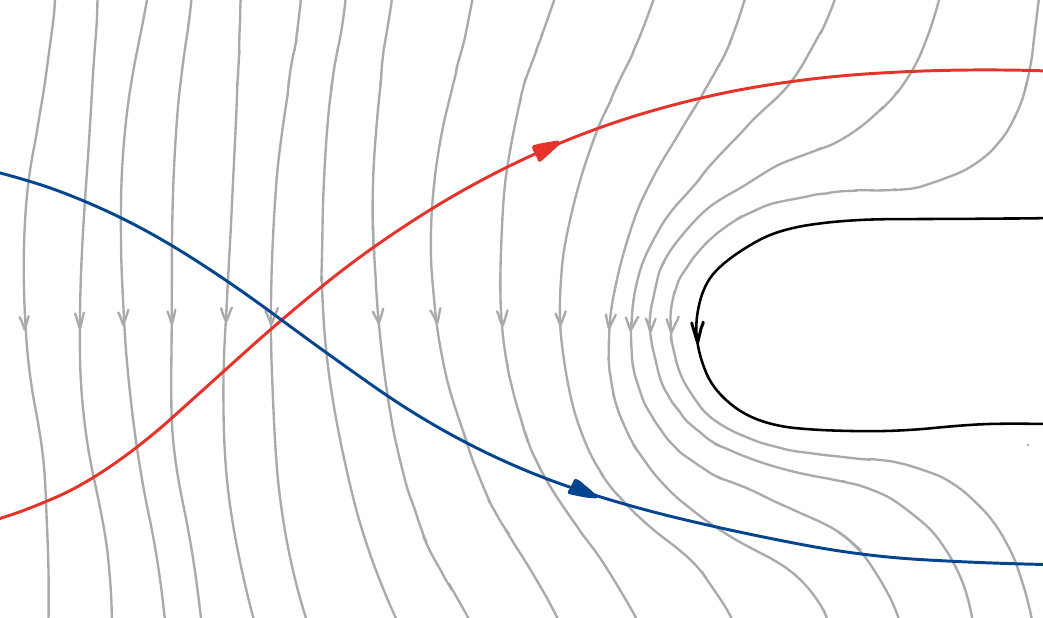}
        \put (37.5,41) {\colorbox{white}{\color{myRED}\large$\displaystyle \Gamma_{\O}$}}
        \put (39,4) {\colorbox{white}{\color{myBLUE}\large$\rule{0cm}{0.25cm} \ \ \ \sspc $}}
        \put (39,4) {{\color{myBLUE}\large$\displaystyle \Gamma_{\O^\pp} $}}
\end{overpic}

\end{minipage}
\end{minipage}
 

\vspace*{0.4cm}
\begin{proposition}\label{prop:preorders}
    The relations $\lesssim_L$ and $\lesssim_R$ satisfy the following properties:
    \begin{itemize}[leftmargin=1.5cm] 

        \item[\textbf{(i)}] Both relations $\lesssim_L$ and $\lesssim_R$ are total preorders on the set $\orbphi$, for every $\phi \in \F$.

        \item[\textbf{(ii)}] For any pair of orbits $\O, \O^{\sspc\prime} \in \orb$, the following equivalences hold
    \begin{align*}
        \O\fasym \O^{\sspc\prime} \iff  \O \lesssim_L \O^{\sspc\prime} \ \text{ and }\  \O^{\sspc\prime} \lesssim_L \O, \\
        \O\basym \O^{\sspc\prime} \iff  \O \lesssim_R \O^{\sspc\prime} \ \text{ and }\  \O^{\sspc\prime} \lesssim_R \O.
    \end{align*}
    \end{itemize}
\end{proposition}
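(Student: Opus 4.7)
My plan is to verify the three preorder axioms (reflexivity, transitivity, totality on $\orbphi$) for $\lesssim_L$, and then the equivalence with $\fasym$ asserted in part (ii); the treatment of $\lesssim_R$ and $\basym$ is symmetric upon swapping $L \leftrightarrow R$.

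Reflexivity follows at once from condition L4, since $C_\O \cap C_\O = C_\O$ and $\Ltop \O \subseteq \Ltop \O$. For totality on $\orbphi$, the common leaf $\phi \in C_\O \cap C_{\O'}$ makes $C_\O \cap C_{\O'}$ a non-empty leaf domain by Lemma~\ref{lemma:intersection_of_domains}. I would classify each $\psi \in \partial_L(C_\O \cap C_{\O'})$ as: type (i) if $\psi \in C_\O \cap \partial_L C_{\O'}$; type (ii) if $\psi \in \partial_L C_\O \cap C_{\O'}$; or type (iii) if $\psi \in \partial_L C_\O \cap \partial_L C_{\O'}$, and a direct inspection shows these three possibilities exhaust $\partial_L(C_\O \cap C_{\O'})$. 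When both types (i) and (ii) occur, comparing such a pair by $\prec$ yields L1 in one of the two directions. When only type (i) occurs (possibly with (iii)), every type-(i) leaf lies in the cut $\Ltop{\O'} \sqcup \Lbot{\O'}$, giving L2 for $\O \lesssim_L \O'$ or L3 for $\O' \lesssim_L \O$. The only-type-(ii) case is symmetric. When only type (iii) occurs, a short contradiction argument (any $C_\O$-leaf lying strictly above $C_\O \cap C_{\O'}$ in the $L$-direction would produce a type-(i) witness) forces $\partial_L(C_\O \cap C_{\O'}) = \partial_L C_\O = \partial_L C_{\O'}$, and then $\Ltop \O$ and $\Ltop{\O'}$ are comparable initial segments of this common totally ordered boundary, giving L4 in one direction.

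The main obstacle is transitivity, which a priori demands a $4 \times 4$ case analysis. My strategy is to pre-process by proving the unifying geometric reformulation: $\O \lesssim_L \O'$ holds if and only if there exist proper transverse trajectories $\Gamma_\O, \Gamma_{\O'}$ (supplied by Theorem~\ref{thmx:proper_trajectories_restate}) and a reference leaf $\phi_* \in C_\O \cap C_{\O'}$ such that, for every leaf $\phi \in C_\O \cap C_{\O'}$ with $\phi \geq \phi_*$ in the natural order, the crossing points $\Gamma_\O \cap \phi$ and $\Gamma_{\O'} \cap \phi$ appear in a fixed order along $\phi$. Each of L1--L4 produces such trajectories by running the construction of Theorem~\ref{thmx:proper_trajectories_restate} while steering past the witness leaf on the correct side; conversely, the crossing-order condition reads off the cut structure of $\partial_L(C_\O \cap C_{\O'})$ and forces one of L1--L4. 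Under this reformulation, transitivity reduces to picking a common leaf $\phi_* \in C_\O \cap C_{\O'} \cap C_{\O''}$ far enough in the $L$-direction that both trajectory orderings hold at $\phi_*$, and then invoking transitivity of the order along the single oriented leaf $\phi_*$.

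For part (ii), the implication $\O \fasym \O' \Rightarrow \O \lesssim_L \O'$ and $\O' \lesssim_L \O$ is direct from L4, since the definition of $\fasym$ provides exactly $\partial_L C_\O = \partial_L C_{\O'}$, $\Ltop \O = \Ltop{\O'}$, and a common leaf. Conversely, if both $\lesssim_L$-relations hold, the geometric reformulation forces the crossing orders at cofinal leaves of $C_\O \cap C_{\O'}$ to agree in both directions; this is only possible when $\Gamma_\O$ and $\Gamma_{\O'}$ are $L$-asymptotic in the sense of Lemma~\ref{lemma:equivalence}, which yields $\partial_L C_\O = \partial_L C_{\O'}$ and $\Ltop \O = \Ltop{\O'}$ (hence $\Lbot \O = \Lbot{\O'}$). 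Combined with the common leaf, this is exactly $\O \fasym \O'$.
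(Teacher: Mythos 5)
Your treatment of reflexivity and totality is essentially the paper's: the paper also classifies the leaves of $\partial_L (C_\O \cap C_{\O^\pp})$ according to which of the two orbits cross them and matches the four resulting configurations to (L1)--(L4). The divergence, and the problem, is in transitivity and in the converse of (ii), where you route everything through a trajectory-based reformulation instead of the paper's purely combinatorial case analysis (the paper introduces $B = C_\O \cap C_{\O^\pp} \cap C_{\O^\ppp} \cap L(\phi)$, shows $B$ equals $B_1$ or $B_2$, and checks each subcase directly against (L1)--(L4)).

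The gap in your transitivity argument is the existential quantifier in your reformulation. From $\O \lesssim_L \O^\pp$ you obtain \emph{some} pair $(\Gamma_\O, \Gamma_{\O^\pp})$ with a fixed crossing order beyond a reference leaf, and from $\O^\pp \lesssim_L \O^\ppp$ you obtain \emph{some} pair $(\widetilde\Gamma_{\O^\pp}, \Gamma_{\O^\ppp})$; the two trajectories $\Gamma_{\O^\pp}$ and $\widetilde\Gamma_{\O^\pp}$ of the middle orbit are in general different, and their crossing points with a common leaf $\phi_*$ bear no fixed relation to one another (indeed, when $\O \fasym \O^\pp$ the crossing order at any single prescribed leaf can be flipped by perturbing the trajectory). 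So the orders along $\phi_*$ do not chain. Repairing this would require a simultaneous construction of compatible trajectories for all three orbits, which is essentially the content of Theorem \ref{prop:pairwise_disj_traj} --- a result proved later, under a genericity hypothesis absent from the present proposition, and whose proof invokes Proposition \ref{prop:preorders}; so this route is also circular. The same objection applies to your converse of (ii): the two hypotheses $\O \lesssim_L \O^\pp$ and $\O^\pp \lesssim_L \O$ supply two unrelated pairs of trajectories with opposite crossing orders, from which nothing follows. The paper's argument here is short and direct: among (L1)--(L4) only (L4) can hold in both directions, so both inequalities are witnessed by (L4), which forces $\partial_L C_\O = \partial_L C_{\O^\pp}$ and $\Ltop \O = \Ltop \O^\pp$, hence $\O \fasym \O^\pp$. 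I would recommend abandoning the reformulation for this proposition and carrying out the case analysis at the level of the sets $C_\O$, $\partial_L(C_\O \cap C_{\O^\pp})$ and the cuts $(\Ltop\O, \Lbot\O)$ themselves.
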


\newpage

\begin{remark}
    The result of Proposition \ref{prop:preorders} is equivalent to say that $\lesssim_L$ and $\lesssim_R$ induce total orders on the quotient sets $\orbphi/{\fasym}$ and $\orbphi/{\basym}$, respectively. To be more precise, we have the following naturally induced relations on the quotient sets:
    \begin{itemize}
        \item The classes $[\O\sspc]^+ ,\sspc [\O^\pp\sspc]^+ \in \orbphi/{\fasym}$ are said to satisfy $[\O\sspc]^+ \lesssim_L [\O^\pp\sspc]^+$ if $\O \lesssim_L \O^\pp$.
        \item The classes $[\O\sspc]^- ,\sspc [\O^\pp\sspc]^-\in \orbphi/{\basym}$ are said to satisfy $[\O\sspc]^- \lesssim_R [\O^\pp\sspc]^-$ if $\O \lesssim_R \O^\pp$.
    \end{itemize}
\end{remark}

\begin{proof}[Proof of Proposition \ref{prop:preorders}]
    Before we prove item \textbf{(i)}, observe that any two orbits $\O$ and $\O^\pp$ that are comparable under $\lesssim_L$ (or $\lesssim_R$) must satisfy $ C_\O \cap C_{\O^\pp} \neq \varnothing$. The converse statement proves that $\lesssim_L$ and $\lesssim_R$ are total relations on the subsets $\orbphi \subset \orb$, for every $\phi \in \F$.

    \textit{\ \ Proof of (i):} 
    Let $\O, \O^{\sspc\prime} \in \orb$ be two orbits satisfying the condition $C_\O \cap C_{\O^{\sspc \prime}} \neq \varnothing$.
    Note that each of the orbits $\O$ and $\O^{\sspc \prime}$ is allowed to cross at most two leaves in $\partial (C_\O \cap C_{\O^{\sspc \prime}})$, one contained in $\partial_R (C_\O \cap C_{\O^{\sspc \prime}})$, and one in  $\partial_L (C_\O \cap C_{\O^{\sspc \prime}})$.
    Moreover, no leaf in $\partial (C_\O \cap C_{\O^{\sspc \prime}})$ can be crossed simultaneously by $\O$ and $\O^{\sspc \prime}$.
    Based on these observations, we conclude that the orbits $\O$ and $\O^{\sspc\prime}$ must satisfy one of the following:
    \begin{itemize}[leftmargin=1.4cm]
        \item[\textit{(1)}] \textit{The orbits ${\O}$ and ${\O^{\sspc \prime}}$ cross distinct leaves in ${\partial_L (C_\O \cap C_{{\O^{\sspc \prime}}})}\spc$:}\\ \hspace*{-1cm}  - There exists \( \phi, \phi' \in \partial_L (C_\O \cap C_{\O^{\sspc \prime}}) \), with $\phi \neq \phi'$, such that
        \(
        \phi \in C_\O \ \text{and} \ \phi' \in C_{\O^{\sspc \prime}}.
        \)
        \mycomment{0.2cm}
        \item[\textit{(2)}] \textit{The orbit ${\O^{\sspc \prime}}$ do not cross any leaf in ${\partial_L (C_\O \cap C_{\O^{\sspc \prime}})}$, but ${\O}$ does:}\\ \hspace*{-1cm}
        - The set $\partial_L (C_\O \cap C_{\O^{\sspc \prime}})$ coincides with $\partial_L C_{\O^{\sspc \prime}}$ and, moreover, $C_{\O} $ intersects $\partial_L C_{\O^{\sspc \prime}}$.
        \mycomment{0.2cm}
        \item[\textit{(3)}] \textit{The orbit ${\O}$ do not cross any leaf in ${\partial_L (C_\O \cap C_{\O^{\sspc \prime}})}$, but ${\O^{\sspc \prime}}$ does:}\\ \hspace*{-1cm}
        - The set $\partial_L (C_\O \cap C_{\O^{\sspc \prime}})$ coincides with $\partial_L C_{\O}$ and, moreover, $C_{\O^{\sspc \prime}} $ intersects $\partial_L C_{\O}$.
        \mycomment{0.2cm}
        \item[\textit{(4)}] \textit{The orbits ${\O}$ and ${\O^{\sspc \prime}}$ do not cross any leaf in ${\partial_L (C_\O \cap C_{{\O^{\sspc \prime}}})}$:}\\ \hspace*{-1cm}
        - The set $\partial_L (C_\O \cap C_{\O^{\sspc \prime}})$ coincides with both $\partial_L C_\O$ and $\partial_L C_{\O^{\sspc \prime}}$.
    \end{itemize}
    \mycomment{0.25cm}
    Similar observations apply to the behavior of the orbits $\O$ and $\O^{\sspc \prime}$ relative to $\partial_R (C_\O \cap C_{\O^{\sspc \prime}})$.

    By observing that, whenever $\O$ and $\O$ satisfy property \textit{(k)} for some $k \in \{1,2,3,4\}$, they also satisfy $\O\lesssim_L \O^{\sspc \prime}$ or $\O^{\sspc \prime} \lesssim_L \O$ according to the respective condition \textup{(Lk)}. This proves that the relation $\lesssim_L$ is total on every subset of the form $\orbphi$, for $\phi \in \F$. The same reasoning applies to the relation $\lesssim_R$ using the analogous versions of conditions \textit{(1)}, \textit{(2)}, \textit{(3)}, and \textit{(4)}.

    Note that the reflexivity of $\lesssim_L$ and $\lesssim_R$ is evident, as every orbit $\O \in \orb$ trivially satisfies conditions \textup{(L4)} and (R4) with itself.
    To conclude the proof of item \textup{(i)}, we show  that $\lesssim_L$ is transitive on the set $\orbphi$, for every $\phi \in \F$. The proof for $\lesssim_R$ is analogous. 

    \mycomment{0.2cm}
  \textup{\textbf{Proof of the transitivity of $\lesssim_L\sspc$:}}
   
     \mycomment{0.2cm}
    Let $\phi \in \F$, and let $\O, \O^{\sspc\prime}, \O^{\sspc\prime\prime}\in \orbphi$ be three orbits satisfying $\O\lesssim_L \O^{\sspc \prime}$ and $\O^{\sspc \prime} \lesssim_L \O^{\sspc \prime\prime}$. To prove the transitivity of the relation $\lesssim_L$, one must show that $\O \lesssim_L \O^{\sspc \prime \prime}$. To do so, we analyze the possible cases that can arise from the definitions of $\lesssim_L$, and prove that the inequality $\O \lesssim_L \O^{\sspc \prime \prime}$ holds in each of these cases.

For that, we consider the following sets
\begin{align*}
    &B := C_\O \cap C_{\O^{\sspc \prime}} \cap C_{\O^{\sspc \prime\prime}} \cap L(\phi),\\
    &B_1:= C_\O \cap C_{\O^{\sspc \prime}} \cap L(\phi),\\
    &B_2:= C_{\O^{\sspc \prime}} \cap C_{\O^{\sspc \prime\prime}} \cap L(\phi).
\end{align*}
These sets are non-empty and satisfy $B\subset B_1$ and $ B \subset B_2$. We claim that the set $B$ is equal to either $B_1$ or $B_2$, with the equality $B=B_1 = B_2$ being possibly true. Indeed, suppose by contradiction that the set $B$ is different from both $B_1$ and $B_2$. In this case, there exists a pair of leaves $\phi_1, \phi_2 \in \partial_L B$ such that $\phi_1 \in B_1$ and $\phi_2 \in B_2$.
    Note that, if these leaves $\phi_1$ and $\phi_2$ coincide, then the orbits $\O$, $\O^{\sspc \prime}$, and $\O^{\sspc \prime\prime}$ have to cross the leaf $\phi_1$. 
    This would imply that $\phi_1\in B$, which is a contradiction. Meanwhile, if $\phi_1$ and $\phi_2$ are distinct, this means that the orbit $\O^{\sspc \prime}$ intersects both sets $L(\phi_1)$ and $L(\phi_2)$. This also gives us a contradiction, proving our initial claim.

    \vspace*{-0.2cm}
    \begin{figure}[h!]
        \center 
        \mycomment{0.3cm}\begin{overpic}[width=4cm, tics=10]{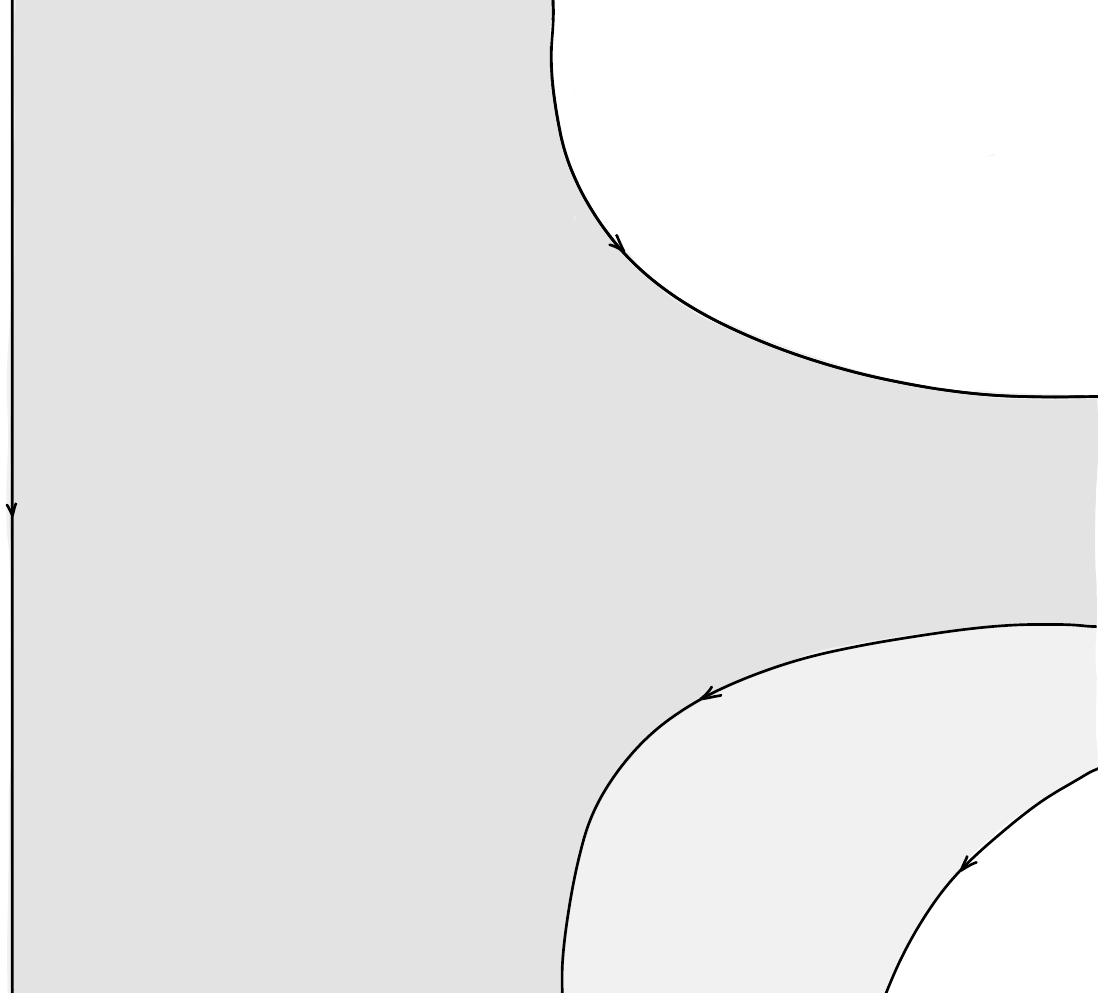}
            \put (26,41.5) {{\color{black}\large$\displaystyle B=B_1$}}
            \put (65,13) {{\color{black}\large$\displaystyle B_2$}}
            \put (-10,42) {{\color{black}\large$\displaystyle \phi$}}
    \end{overpic}
    \hspace*{1.2cm}\begin{overpic}[width=4cm, tics=10]{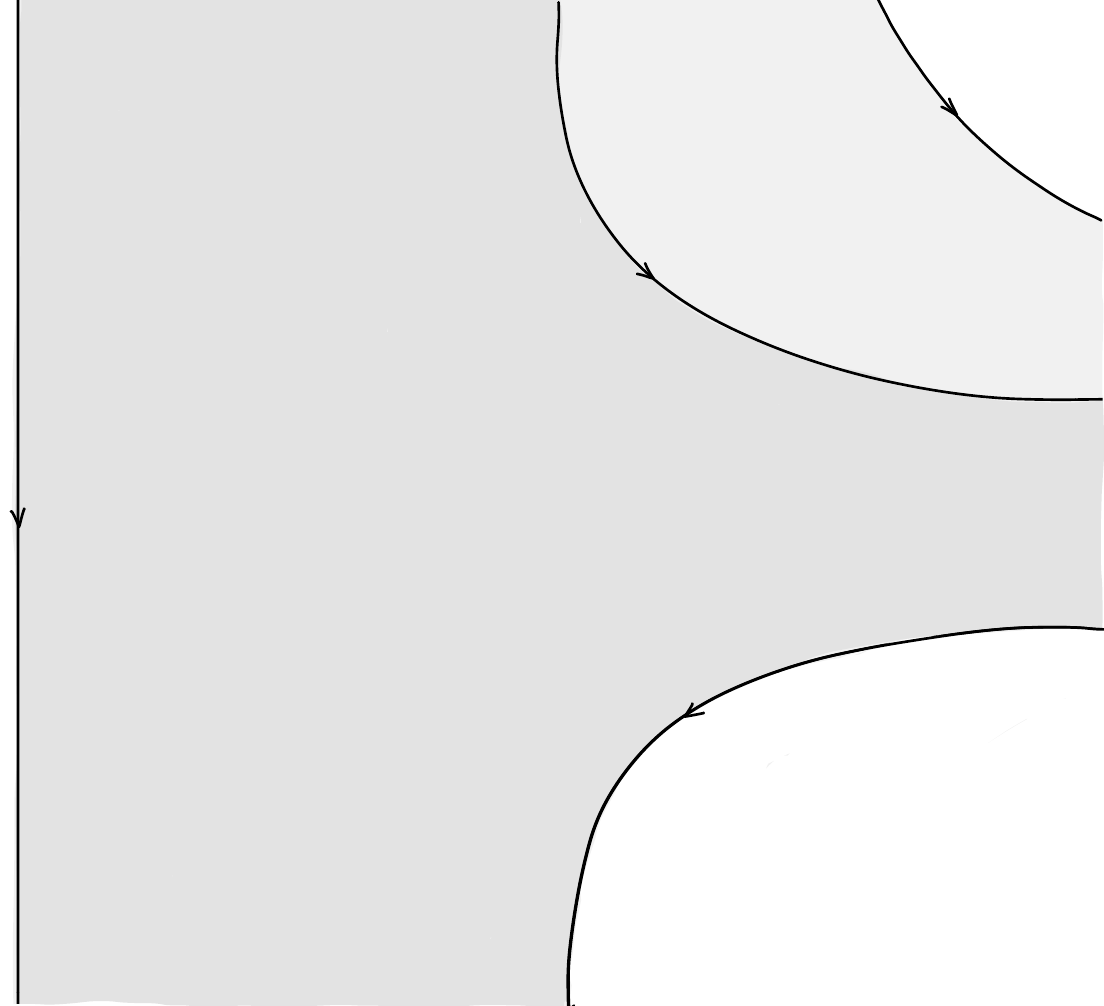}
        \put (26,41.5) {{\color{black}\large$\displaystyle B=B_2$}}
        \put (65,69) {{\color{black}\large$\displaystyle B_1$}}
        \put (-10,42) {{\color{black}\large$\displaystyle \phi$}}
        
    \end{overpic}
    \hspace*{1.2cm}\begin{overpic}[width=4cm, tics=10]{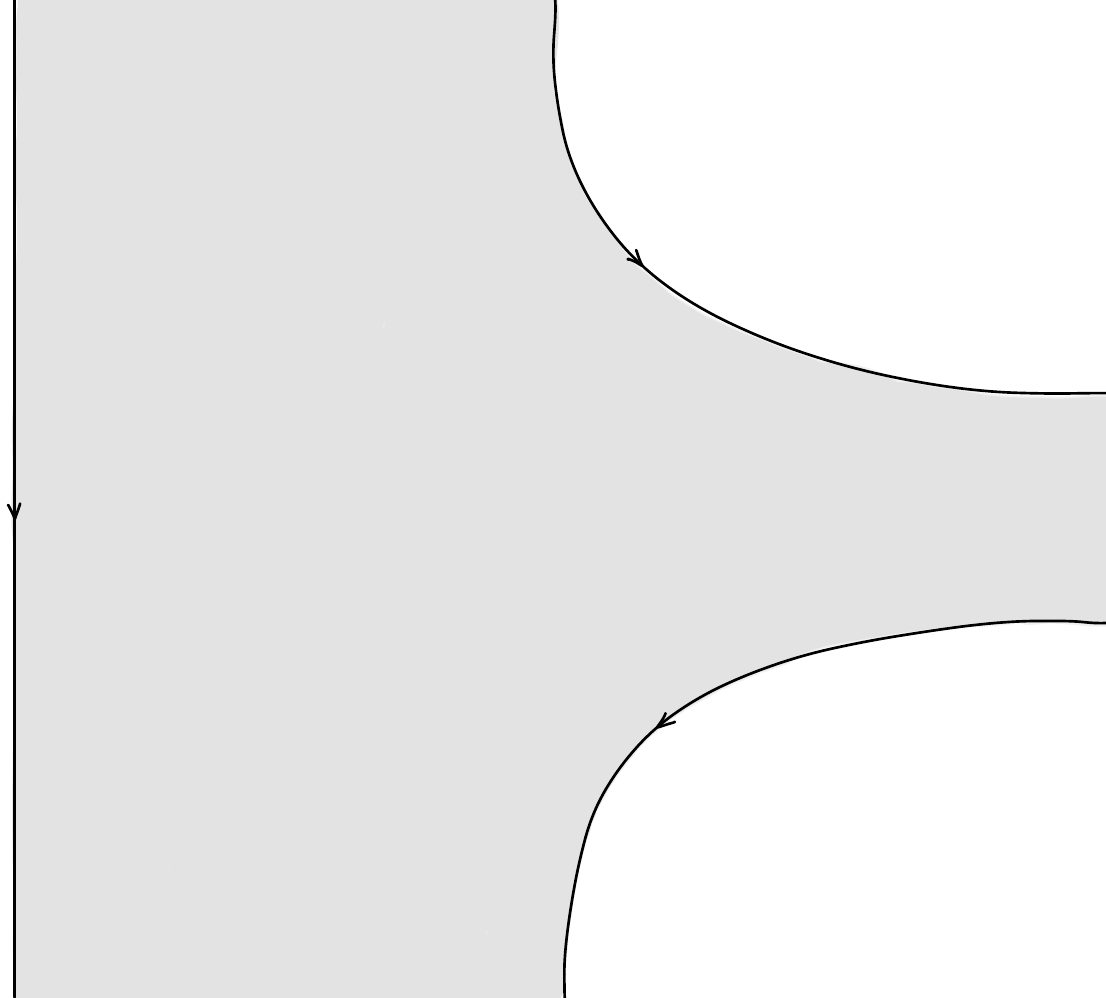}
        \put (16,41.5) {{\color{black}\large$\displaystyle B=B_1=B_2$}}
        \put (-10,42) {{\color{black}\large$\displaystyle \phi$}}
    \end{overpic}
    \end{figure}

    \noindent \textbf{Proof under the assumption that $B = B_1\neq B_2\sspc$:}
    
    \indent In this situation, we have the equality $\partial_L B = \partial_L (C_\O \cap C_{\O^{\sspc\prime}})$ and moreover, there exists some leaf $\phi_2 \in \partial_L (C_\O \cap C_{\O^{\sspc\prime}})$ that is crossed by both orbits $\O^{\sspc\prime}$ and $\O^{\sspc\prime\prime}$.

    \mycomment{0.1cm}
    \indent Since the orbits $\O$ and $\O^{\sspc\prime}$ satisfy $\O \lesssim_L \O^{\sspc \prime}$, we fall into two possible cases:
    \mycomment{0cm}
    \begin{itemize}[leftmargin=1.5cm]
        \item[\textup{(a)}] \textit{The orbit $\O$ do not cross any leaf in $\partial_L (C_\O \cap C_{\O^{\sspc\prime}})\sspc$:}\\ \hspace*{-0.8cm}  In this case, we have $\partial_L C_\O = \partial_L (C_\O \cap C_{\O^{\sspc\prime}})$ and $\phi_2$ must belong to $ \Lbot \O$.
        
        \mycomment{0.1cm}\item[\textup{(b)}] \textit{The orbit $\O$ cross some leaf in $\partial_L (C_\O \cap C_{\O^{\sspc\prime}})\sspc$:}\\ \hspace*{-0.8cm}
        In this case, there exists $\phi_1 \in \partial_L(C_\O \cap C_{\O^{\sspc \prime}})$, with $\phi_1 \prec \phi_2$, such that $\phi_1 \in C_\O$.
    \end{itemize}

    \vspace*{-0.3cm}
    \begin{figure}[h!]
        \center 
        \mycomment{0.5cm}\begin{overpic}[width=4.4cm, height=3.8cm, tics=10]{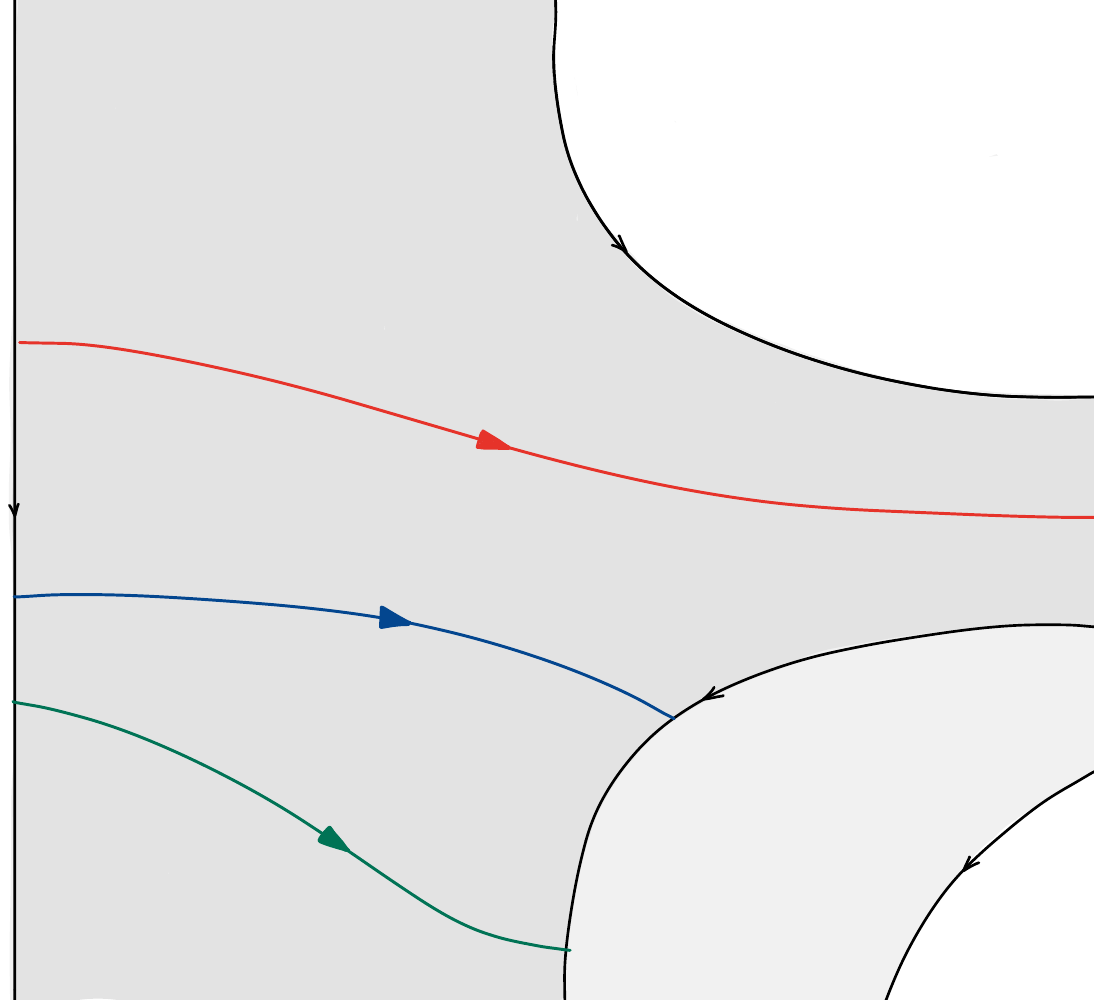}
            \put (25.5,56) {{\large\color{myRED}$\displaystyle \Gamma_{\O} $}}
            \put (9,9) {{\large\color{myGREEN}$\displaystyle \Gamma_{\O^{\sspc\prime\prime}} $}}
            \put (10,38) {{\large\color{myBLUE}$\displaystyle \Gamma_{\O^{\sspc\prime}} $}}
            \put (-15,78) {{\color{black}$\displaystyle \textup{(a)}\ $}}
            \put (66,16.5) {{\color{black}$\displaystyle \phi_2\ $}}
            \put (-10,42) {{\color{black}\large$\displaystyle \phi$}}
    \end{overpic}
    \hspace*{2.5cm}\begin{overpic}[width=4.4cm,height=3.8cm, tics=10]{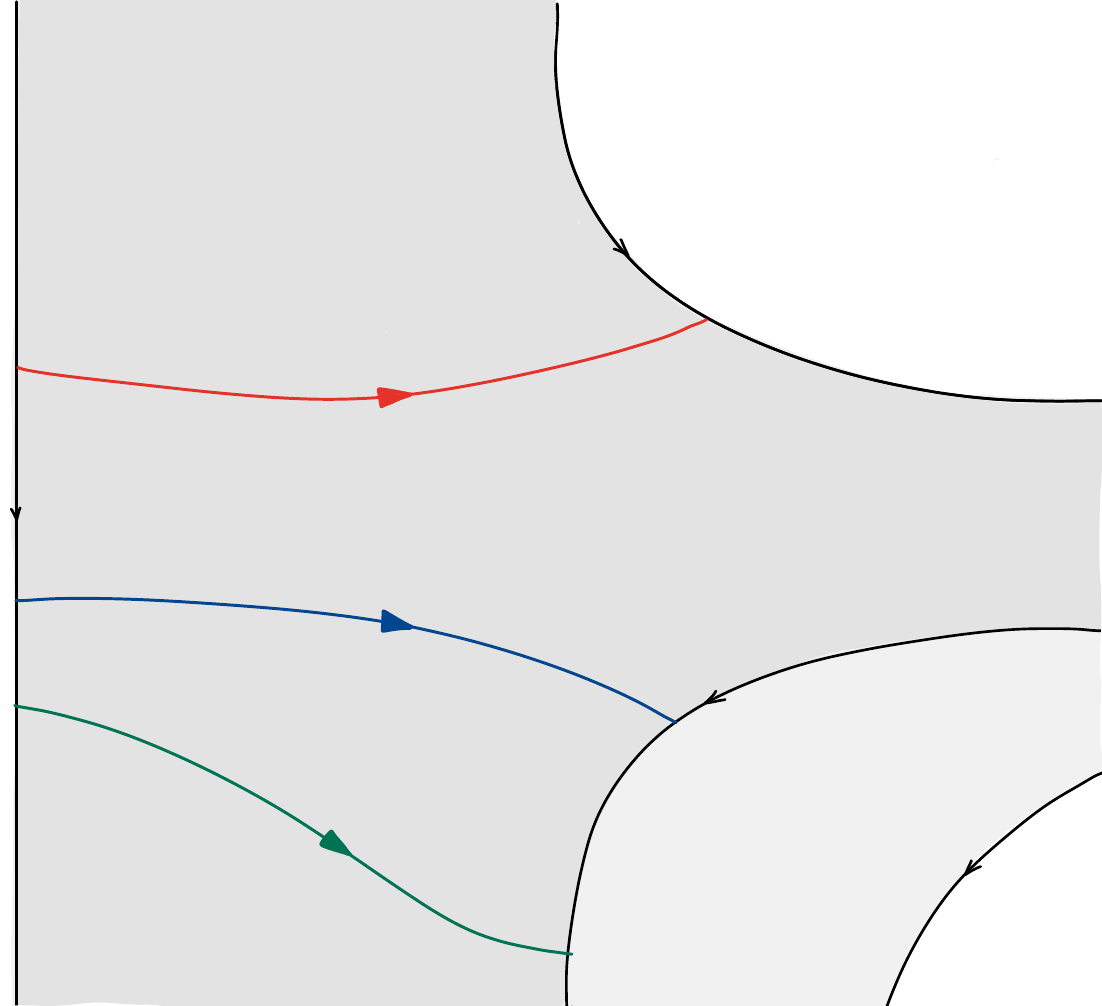}
            \put (23,58) {{\large\color{myRED}$\displaystyle \Gamma_{\O} $}}
            \put (10,38) {{\large\color{myBLUE}$\displaystyle \Gamma_{\O^{\sspc\prime}} $}}
            \put (9,9) {{\large\color{myGREEN}$\displaystyle \Gamma_{\O^{\sspc\prime\prime}} $}}
        \put (-15,78) {{\color{black}$\displaystyle \textup{(b)}\ $}}
        \put (66,16.5) {{\color{black}$\displaystyle \phi_2\ $}}
        \put (65,67) {{\color{black}$\displaystyle \phi_1\ $}}
        \put (-10,42) {{\color{black}\large$\displaystyle \phi$}}

    \end{overpic}
    \end{figure}

    \noindent In case (a), we have $\O\lesssim_L \O^{\sspc\prime\prime}$ through condition \textbf{(L3)}, and in (b), through condition \textbf{(L1)}.
 We remark that the proof under the assumption $B = B_2 \neq B_1$ is completely analogous.

\vspace*{0.3cm}
    \noindent \textbf{Proof under the assumption that $B = B_1= B_2\sspc$:}

In this situation, we have that $\partial_L B = \partial_L (C_\O \cap C_{\O^{\sspc\prime}}) = \partial_L (C_{\O^{\sspc\prime}} \cap C_{\O^{\sspc\prime\prime}}) = \partial_L (C_{\O} \cap C_{\O^{\sspc\prime\prime}}).$ We separate the proof into four different subcases, depending if $\O$, $\O^{\sspc\prime}$ or $\O^{\sspc\prime\prime}$ cross some leaf in $\partial_L B$.

\mycomment{0.2cm}
\underline{\textit{Subcase (1): None of the orbits $\O$, $\O^{\sspc\prime}$ and $\O^{\sspc\prime\prime}$ cross a leaf in $\partial_L B$: }} 
 In this case, we have that $\partial_L B = \partial_L C_\O = \partial_L C_{\O^{\sspc \prime}}= \partial_L C_{\O^{\sspc \prime\prime}}$.
\noindent This means that both  inequalities $\O \lesssim_L \O^{\sspc \prime}$ and $\O^{\sspc \prime} \lesssim_L \O^{\sspc \prime\prime}$ are guaranteed by condition (L4). 
Consequently, we have the inclusion
 $ \partial_L^{\spc\text{top}} \O \subset \partial_L^{\spc\text{top}} {\O^{\sspc \prime}} \subset \partial_L^{\spc\text{top}} {\O^{\sspc \prime\prime}}$, which implies $\O \lesssim_L \O^{\sspc \prime\prime}$ by condition (L4).

\vspace*{-0.2cm}
 \begin{figure}[h!]
    \center 
    \mycomment{0.25cm}\begin{overpic}[width=4cm, height=3.8cm, tics=10]{BB1B2Case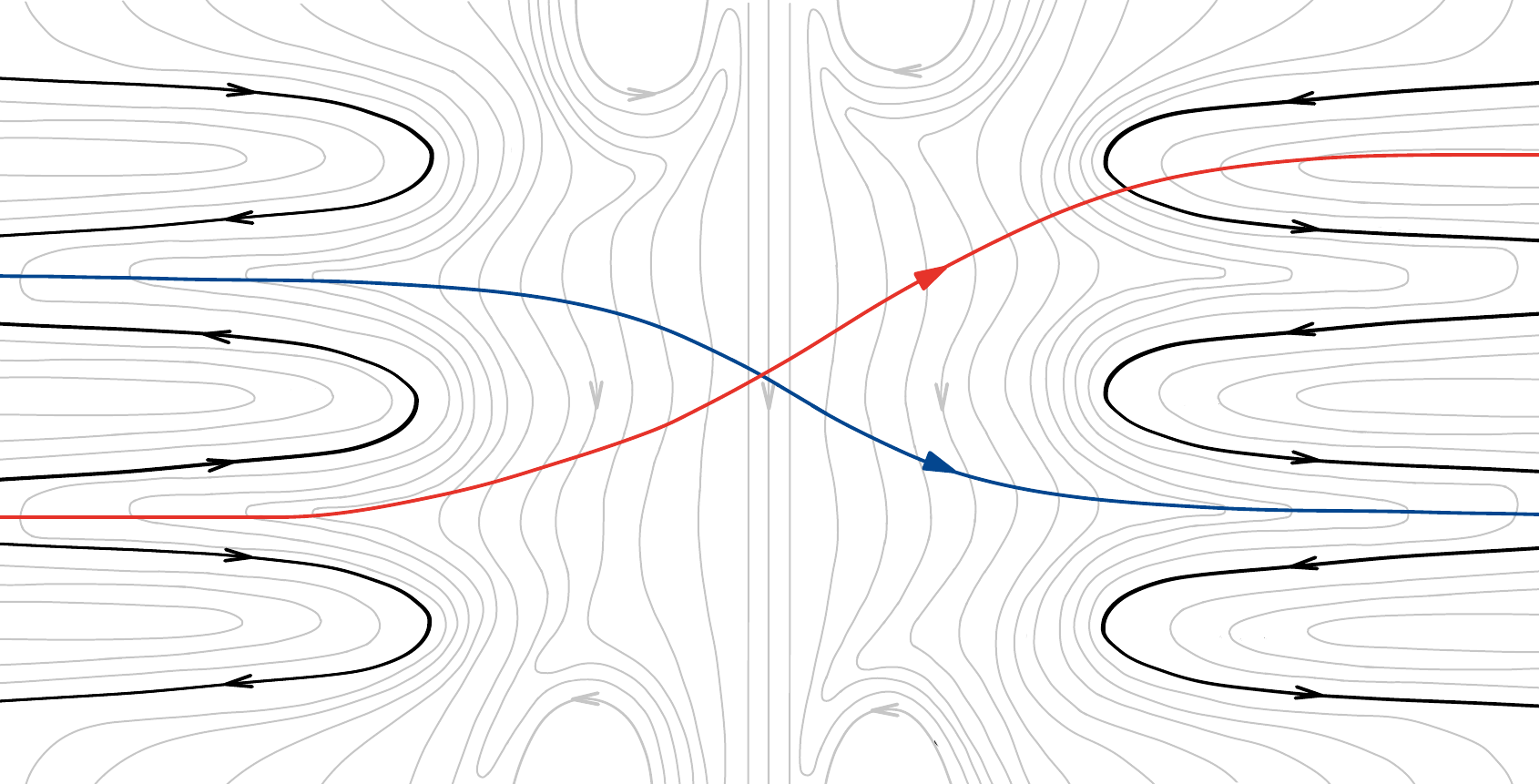}
        \put (33.4,81) {{\large\color{myRED}$\displaystyle \Gamma_{\O} $}}
        \put (25,23) {{\large\color{myGREEN}$\displaystyle \Gamma_{\O^{\sspc\prime\prime}} $}}
        \put (26,50) {{\large\color{myBLUE}$\displaystyle \Gamma_{\O^{\sspc\prime}} $}}
        \put (-10,42) {{\color{black}\large$\displaystyle \phi$}}
\end{overpic}
\end{figure}

\underline{\textit{Subcase (2): One orbit among $\O$, $\O^{\sspc\prime}$ and $\O^{\sspc\prime\prime}$ cross a leaf in $\partial_L B$: }} In this case, assume that the orbit $\O$ crosses a leaf in $\partial_L B$, while the other two orbits do not. The remaining possible combinations are somehow analogous, with the case where the orbit $\O^{\sspc\prime}$ crosses a leaf in $\partial_L B$ needing a slight modification in the arguments.
Here, we have that $\partial_L B = \partial_L C_{\O^{\sspc \prime}}= \partial_L C_{\O^{\sspc \prime\prime}}$, with some leaf $\phi_1 \in \partial_L B$ being crossed by the orbit $\O$, meaning, $\phi_1 \in C_\O$. In this context, $\O \lesssim_L \O^{\sspc \prime}$ is given by condition (L2),  implying that $ \phi_1 \in \Ltop {\O^{\sspc\prime}}.$
Meanwhile, $\O^{\sspc \prime} \lesssim_L \O^{\sspc \prime\prime}$ is given by condition (L4), implying that  $\partial_L^{\spc\text{top}} {\O^{\sspc \prime}} \subset \partial_L^{\spc\text{top}} {\O^{\sspc \prime\prime}}.$
This shows that $\phi_1 $ belongs to $ \partial_L^{\spc\text{top}} {\O^{\sspc \prime\prime}}\sspc$, thus proving that the inequality $\O \lesssim_L \O^{\sspc \prime\prime}$ holds via condition (L2).

\vspace*{-0.2cm}
\begin{figure}[h!]
    \center 
    \mycomment{0.25cm}\begin{overpic}[width=4cm, height=3.7cm, tics=10]{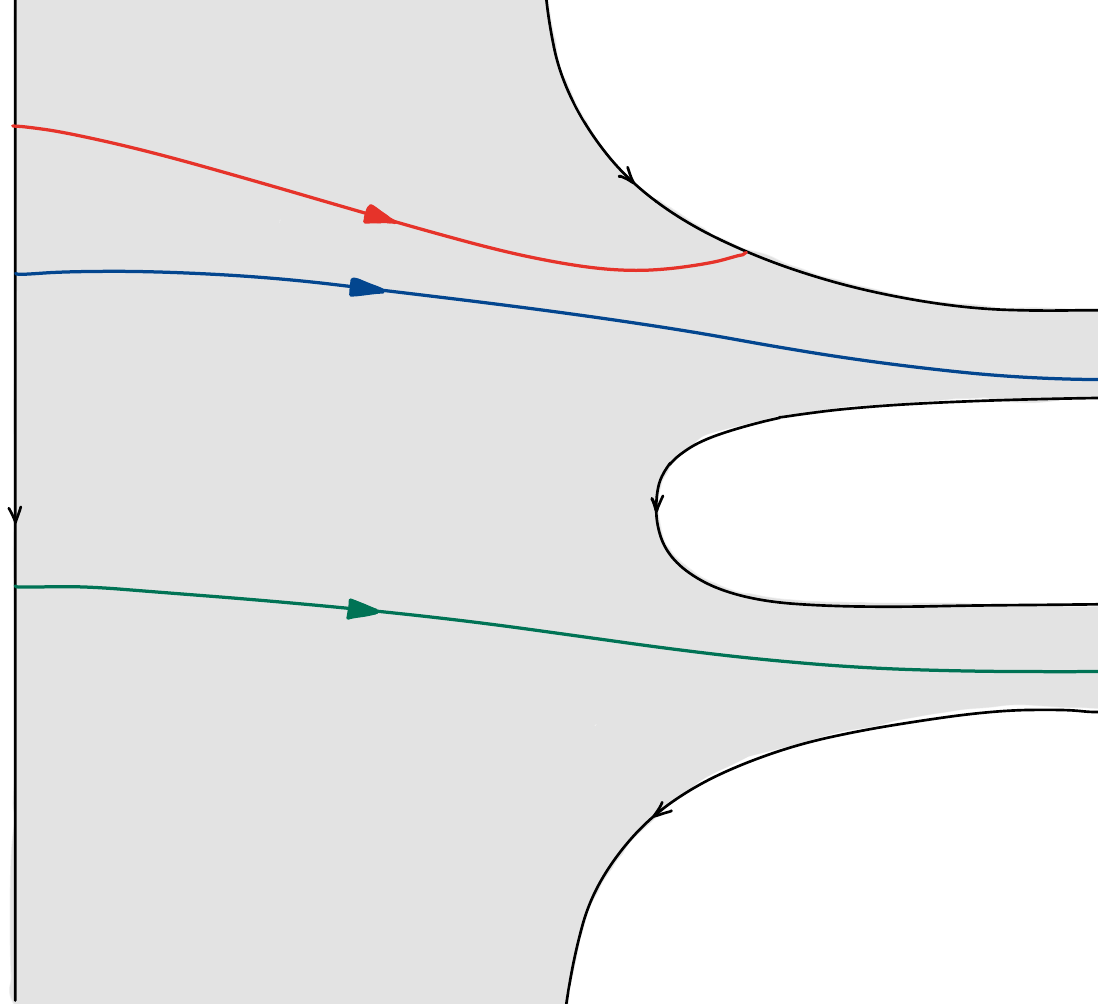}
        \put (30,80) {{\large\color{myRED}$\displaystyle \Gamma_{\O} $}}
        \put (25,23) {{\large\color{myGREEN}$\displaystyle \Gamma_{\O^{\sspc\prime\prime}} $}}
        \put (26,50) {{\large\color{myBLUE}$\displaystyle \Gamma_{\O^{\sspc\prime}} $}}
        \put (-10,42) {{\color{black}\large$\displaystyle \phi$}}
        \put (60.5,78.5) {{\color{black}\large$\displaystyle \phi_1$}}
\end{overpic}
\end{figure}

\vspace*{-0.2cm}
\underline{\textit{Subcase (3): Two orbits among $\O$, $\O^{\sspc\prime}$ and $\O^{\sspc\prime\prime}$ cross a leaf in $\partial_L B$: }} Now, assume that both $\O$ and $\O^{\sspc\prime}$ cross leaves in $\partial_L B$, while $\O^{\sspc\prime\prime}$ does not. The remaining possibilities are analogous, with some necessary modifications when $\O$ and $\O^{\sspc\prime\prime}$ are the ones crossing leaves in $\partial_L B$. 
Here, we have $ \partial_L B =\partial_L C_{\O^{\sspc \prime\prime}}$, with two leaves $\phi_1, \phi_2 \in \partial_L C_{\O^{\sspc \prime\prime}}$ satisfying $\phi_1 \in C_\O$ and $\phi_2 \in C_{\O^{\sspc \prime}}$. By the initial assumption, the leaves $\phi_1$ and $ \phi_2$ belong to $ \partial_L (C_\O \cap C_{\O^{\sspc\prime}})$ and are distinct. Thus, the inequality $\O \lesssim_L \O^{\sspc \prime}$ is given by condition (L1), implying that $ \phi_1 \prec \phi_2.$ Meanwhile, $\O^{\sspc \prime} \lesssim_L \O^{\sspc \prime\prime}$ is given by condition (L3), implying that $\phi_2 \in \Ltop {\O^{\sspc \prime\prime}}$. This shows that $\phi_1 $ belongs to $\partial_L^{\spc\text{top}} {\O^{\sspc \prime\prime}}$, thus implying $\O \lesssim_L \O^{\sspc \prime\prime}$ through (L2).

\vspace*{-0.2cm}
\begin{figure}[h!]
    \center 
    \mycomment{0.25cm}\begin{overpic}[width=4cm, height=3.7cm, tics=10]{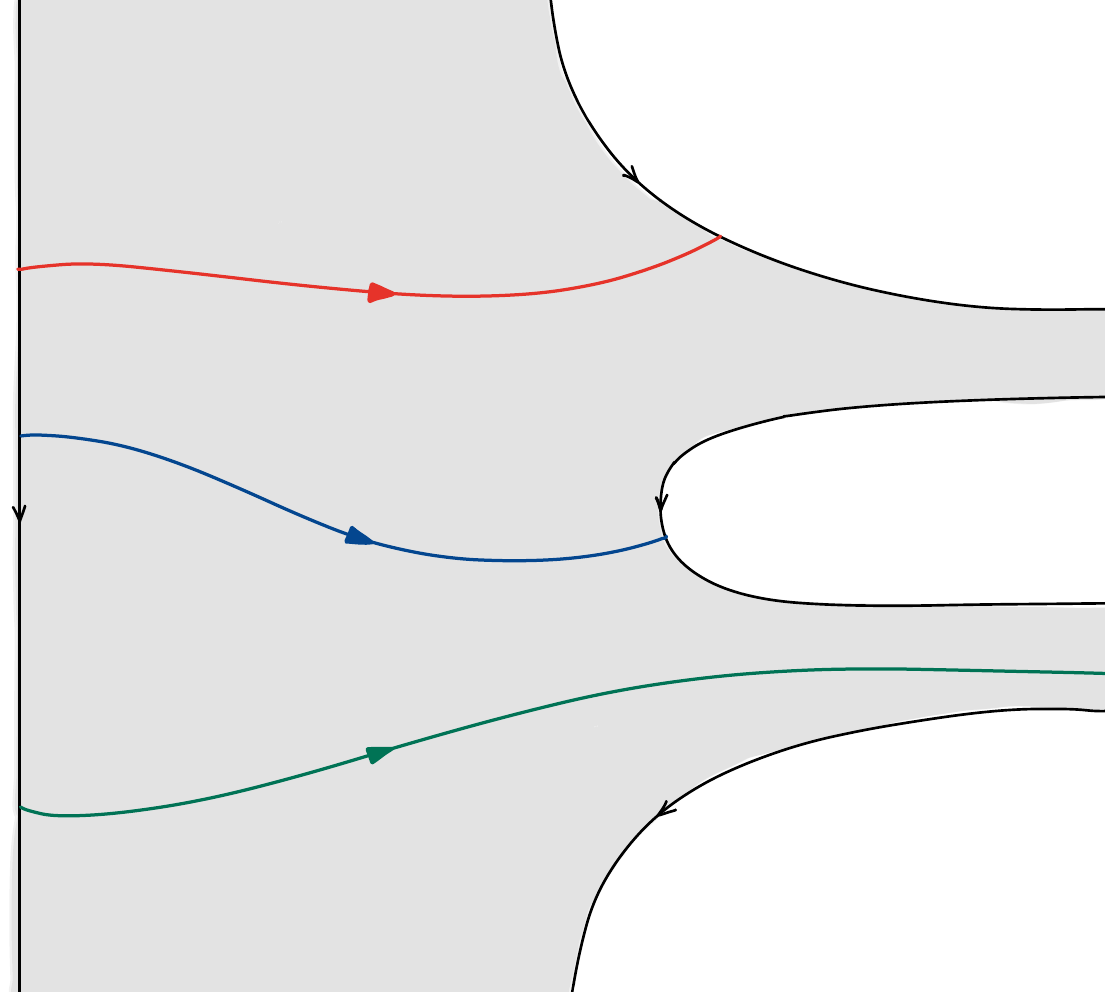}
        \put (30,71.5) {{\large\color{myRED}$\displaystyle \Gamma_{\O} $}}
        \put (22,26) {{\large\color{myGREEN}$\displaystyle \Gamma_{\O^{\sspc\prime\prime}} $}}
        \put (26,50) {{\large\color{myBLUE}$\displaystyle \Gamma_{\O^{\sspc\prime}} $}}
        \put (-10,42) {{\color{black}\large$\displaystyle \phi$}}
        \put (60.5,78.5) {{\color{black}\large$\displaystyle \phi_1$}}
        \put (67,43) {{\color{black}\large$\displaystyle \phi_2$}}
\end{overpic}
\end{figure}


\vspace*{-0.2cm}

\underline{\textit{Subcase (4): Each of the orbits $\O$, $\O^{\sspc\prime}$ and $\O^{\sspc\prime\prime}$ cross a leaf in $\partial_L B$.}}\\[0.4ex] In this case, there exist $\phi_1, \phi_2, \phi_3 \in \partial_L B$ satisfying $\phi_1 \in C_\O$, $\phi_2 \in C_{\O^{\sspc\prime}}$ and $\phi_3 \in C_{\O^{\sspc\prime\prime}}$. 
By the initial assumption, $\partial_L B = \partial_L (C_\O \cap C_{\O^{\sspc\prime}}) = \partial_L (C_{\O^{\sspc\prime}} \cap C_{\O^{\sspc\prime\prime}})=  \partial_L (C_{\O} \cap C_{\O^{\sspc\prime\prime}})$, with the three leaves  $\phi_1, \phi_2, \phi_3 \in \partial_L B$ being distinct. Thus, both $\O \lesssim_L \O^{\sspc \prime}$ and $\O^{\sspc \prime} \lesssim_L \O^{\sspc \prime\prime}$ are given by condition (L1), meaning that, $\phi_1 \prec \phi_2\prec \phi_3$. This implies $\O \lesssim_L \O^{\sspc \prime\prime}$ via condition (L1) and, thus, concludes the proof of transitivity of $\lesssim_L$ on $\orbphi$. This also ends the proof of (i).

\vspace*{-0.2cm}
\begin{figure}[h!]
    \center 
    \mycomment{0.2cm}\begin{overpic}[width=4cm, height=3.7cm, tics=10]{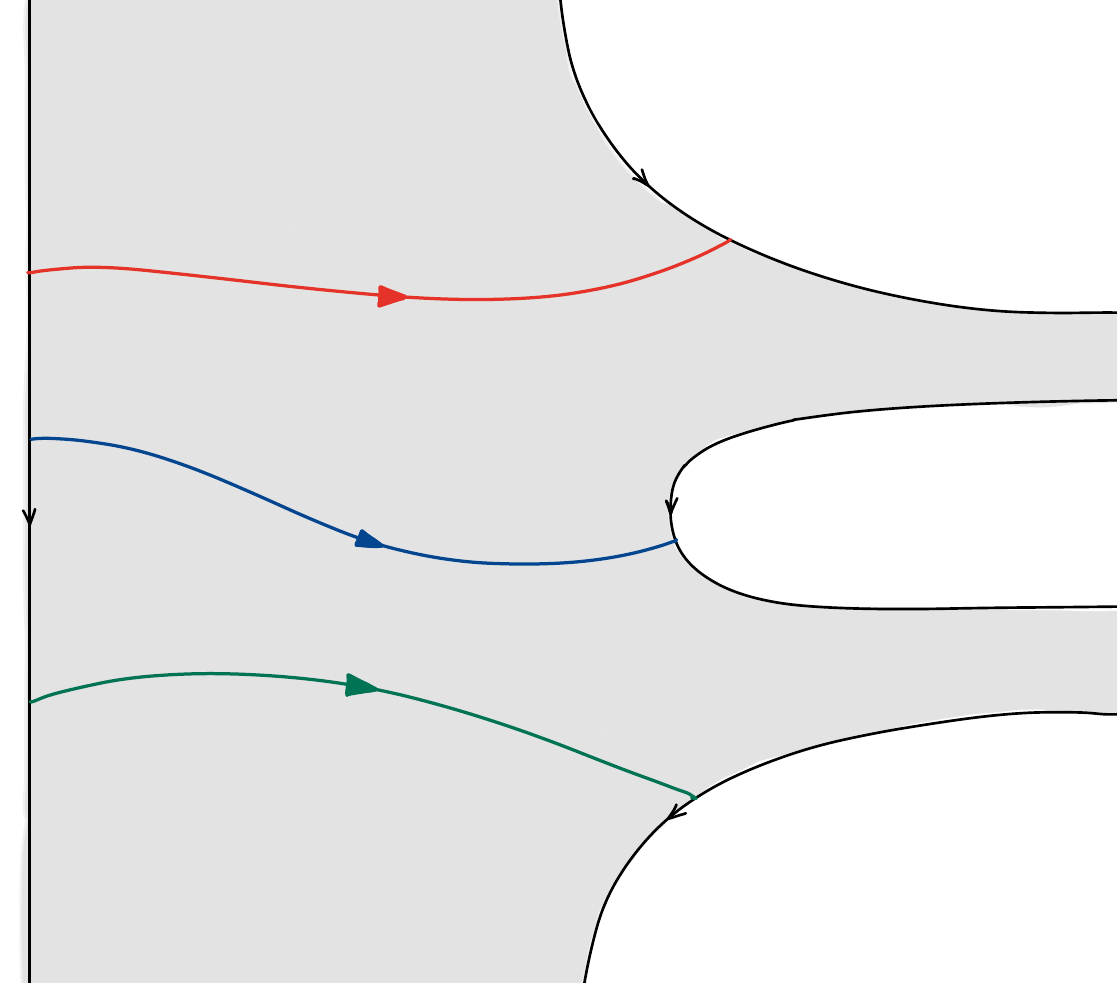}
        \put (30,70) {{\large\color{myRED}$\displaystyle \Gamma_{\O} $}}
        \put (25,16) {{\large\color{myGREEN}$\displaystyle \Gamma_{\O^{\sspc\prime\prime}} $}}
        \put (26,48) {{\large\color{myBLUE}$\displaystyle \Gamma_{\O^{\sspc\prime}} $}}
        \put (-10,42) {{\color{black}\large$\displaystyle \phi$}}
        \put (60,78) {{\color{black}\large$\displaystyle \phi_1$}}
        \put (67,41) {{\color{black}\large$\displaystyle \phi_2$}}
        \put (65,7) {{\color{black}\large$\displaystyle \phi_3$}}
\end{overpic}
\end{figure}


\mycomment{0cm}
\textit{Proof of (ii):} We prove the first equivalence, and the second one follows analogously.

$(\implies)$
Let $\O,\O^\pp \in \orb$ be two forward $\F$-asymptotic orbits. Then $\O$ and $\O^\pp$ satisfy condition \textup{(L4)} in both senses, which implies that $\O \lesssim_L \O^\pp$ and $\O^\pp \lesssim_L \O$. 

\mycomment{-0.1cm}
$(\impliedby)$
Conversely, let $\O,\O^\pp \in \orb$ be two orbits such that $\O \lesssim_L \O^\pp$ and $\O^\pp \lesssim_L \O$. Observe that the only condition in the definition of $\lesssim_L$ admitting symmetry is condition \textup{(L4)}.  Thus, the orbits $\O$ and $\O^\pp$ must satisfy condition (L4) in both senses, meaning that
\mycomment{-0.12cm}
$$ C_\O \cap C_{\O^\pp} \neq \varnothing\sspc, \quad \partial_L (C_\O \cap C_{\O^\pp}) = \partial_L C_\O = \partial_L C_{\O^\pp}\sspc, \quad  \Ltop  \O = \Ltop \O^\pp\sspc.$$

\mycomment{-0.23cm}
\noindent Note that the equalities $\partial_L C_\O = \partial_L C_{\O^\pp}$ and $\Ltop \O = \Ltop {\O^\pp}$ imply that $\Lbot \O =  \Lbot {\O^\pp}$.
 This concludes that the orbits $\O$ and $\O^{\sspc \prime}$ are forward $\F$-asymptotic.
\end{proof}

\mycomment{-0.2cm}
\begin{remark}
    The relations $\lesssim_L$ and $\lesssim_R$ may fail to be transitive on the entire set $\orb$. To see that, it suffices to consider the simple example where $\O, \O^{\sspc\prime}, \O^{\sspc\prime\prime} \in \orb$ satisfy
    $$ C_\O \cap C_{\O^{\sspc\prime}} \neq \varnothing\sspc, \quad C_{\O^{\sspc\prime}} \cap C_{\O^{\sspc\prime\prime}} \neq \varnothing\sspc, \quad C_\O \cap C_{\O^{\sspc\prime\prime}} = \varnothing.$$
    In this case, transitivity fails because $\O$ and $\O^{\sspc\prime\prime}$ are not comparable under $\lesssim_L$ and $\lesssim_R$.

 \end{remark}

\begin{remark}\label{sec:description_order_proper_trajectories}
The configuration of proper transverse trajectories can provide information about the ordering of the associated orbits with respect to $\lesssim_L$ and $\lesssim_R$. For instance:

\vspace*{0.2cm}

    Let $\phi \in \F$ be a leaf, and suppose that two orbits $\O, \O^\pp \in \orbphi$ admit respective proper transverse trajectories $\Gamma_\O$ and $\Gamma_{\O^\pp}$ satisfying the following conditions:
\begin{itemize}[leftmargin=1.3cm]
    \item The trajectories $\Gamma_\O$ and $\Gamma_{\O^\pp}$ are disjoint on the left side of $\phi$, that is,
    \mycomment{-0.15cm}
    $$\Gamma_\O \cap \Gamma_{\O^\pp} \cap \overline{\rule{0cm}{0.35cm}L(\phi)} = \varnothing.$$

    \mycomment{-0.15cm}
    \item The intersection points $p_\O \in \Gamma_\O \cap \phi$ and $p_{\O^\pp} \in \Gamma_{\O^\pp} \cap \phi$ are ordered as
    \mycomment{-0.15cm}
    $$p_\O < p_{\O^\pp} \quad \text{ according to the orientation of } \phi.$$
\end{itemize}
\mycomment{0.1cm}
In this scenario, the orbits $\O$ and $\O^\pp$ are forced to satisfy the inequality
$\O \lesssim_L \O^\pp$. However, this alone does not provide enough information to conclude whether \(\O^\pp \lesssim_L \O\) also holds, or not. 

\mycomment{0.3cm}
We highlight that the existence of the proper transverse trajectories \(\Gamma_\O\) and \(\Gamma_{\O^\pp}\), as described above, is assumed at this stage. Theorem \ref{prop:pairwise_disj_traj} later guarantees the existence of such proper transverse trajectories under a generic condition on the foliation \(\F\).
\end{remark}

\begin{remark}

If \(\Gamma_\O\) and \(\Gamma_{\O^\pp}\) are not proper, they fail entirely to convey information about how the orbits \(\O\) and \(\O^\pp\) are related under $\lesssim_L$ and $\lesssim_R$. 
For example, consider the configuration illustrated below, in which the orbits $\O$ and $\O^{\sspc\prime}$ do not satisfy $\O \lesssim_L \O^{\sspc\prime}$, yet they admit non-proper transverse trajectories \(\Gamma_\O\) and \(\Gamma_{\O^\pp}\) that satisfy
$$ \Gamma_\O \cap \Gamma_{\O^{\sspc\prime}} \cap \overline{\rule{0mm}{0.36cm}L(\phi)} = \varnothing\sspc \quad \text{ and } \quad p_\O < p_{\O^{\sspc\prime}} \text{ according to the orientation of } \phi,$$
where \(p_\O\) and \(p_{\O^{\sspc\prime}}\) are the intersection points of $\Gamma_\O$ and $\Gamma_{\O^\pp}$ with the leaf \(\phi\), respectively.

\mycomment{0.5cm}
\begin{figure}[h!]
    \center
    \hspace*{0.3cm}\begin{overpic}[width=0.65\textwidth,height=4cm, tics=10]{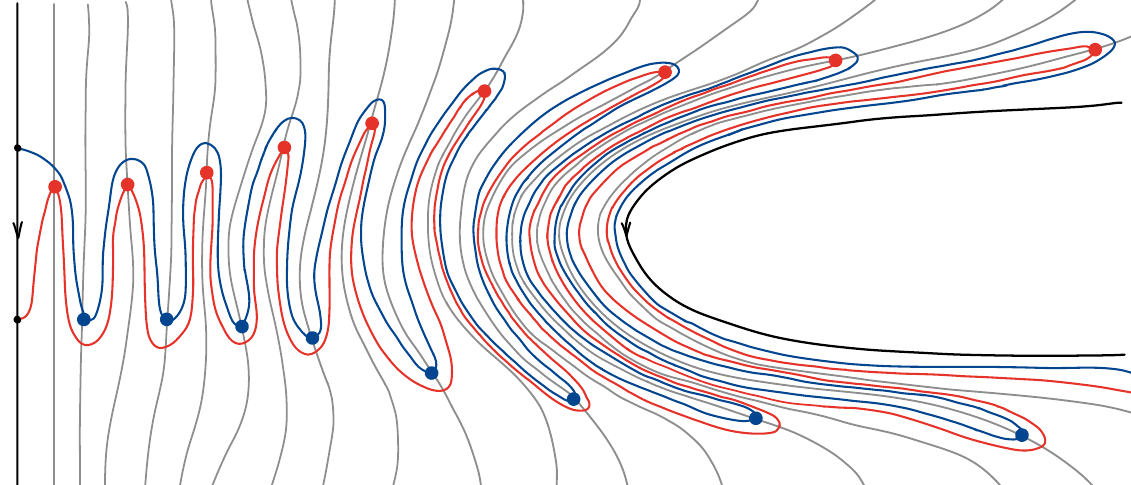}
        \put (67,18.5) {\color{myRED}\large$\displaystyle \O^{\sspc\prime}\spc$\color{black}\normalsize $\displaystyle\lesssim_L \spc$\color{myBLUE}\large$\displaystyle \O$ }
        \put (-6,25) {\color{myBLUE}\large$\displaystyle p_\O $}
        \put (-6,10) {\color{myRED}\large$\displaystyle p_{\O^{\sspc\prime}} $}
        \put (-3,18) {\color{black}$\displaystyle \phi$}
        \put (13.5,30) {\colorbox{white}{\color{myBLUE}\large$\displaystyle \Gamma_\O$}}
        \put (11,4) {\colorbox{white}{\color{myRED}\large$\displaystyle \Gamma_{\O^{\sspc\prime}}$}}
\end{overpic}
\end{figure}

\end{remark}

    \mycomment{-0.5cm}
\newpage

\section{Weak $\F$-transverse intersections}\label{sec:def_weak_transverse_intersections}

In this section, we introduce the notion of weak $\F$-transverse intersection between orbits, which evaluates orbits that are ordered in contradictory ways by the relations $\lesssim_L$ and $\lesssim_R$.

    To recall, from Definition \ref{def:weak_transverse_intersection}, two orbits $\O, \O^{\sspc \prime} \in \orb$ are said to have a \textit{weak $\F$-transverse intersection} if they satisfy the following conditions:
    \begin{itemize}[leftmargin=1.15cm]
\item[\textbf{(i)}] The orbits $\O$ and $\O^\pp$ are neither forward nor backward $\F$-asymptotic.
\item[\textbf{(ii)}] The orbits $\O$ and $\O^\pp$ are oppositely ordered by $\lesssim_L$ and $\lesssim_R\sspc$, that is, they satisfy 
\begin{align*}
    \text{Either } \quad &\O \lesssim_L \O^{\sspc \prime} \spc \text{ and } \spc \O^{\sspc \prime} \lesssim_R \O,& \  \\ 
    \text{or } \quad &\O^{\sspc \prime} \lesssim_L \O \spc \text{ and } \spc\O \lesssim_R \O^{\sspc \prime}.& \  
\end{align*}
\end{itemize}

\vspace*{0.1cm}

Before we proceed, we clarify the definition above through a brief discussion:

\vspace*{0.2cm}
\noindent Let $\O, \O^{\sspc \prime} \in \orb$ be two orbits having a weak $\F$-transverse intersection. By condition (ii),  the orbits $\O$ and $\O^{\sspc \prime}$ are comparable under $\lesssim_L$ and $\lesssim_R$, and this implies that $C_\O \cap C_{\O^{\sspc \prime}} \neq \varnothing$. 
Next, by condition (i) and Lemma \ref{lemma:dual_equivalence}, we know that for any pair of proper transverse trajectories $\Gamma_\O$ and $\Gamma_{\O^{\sspc \prime}}$ associated to $\O$ and $\O^{\sspc \prime}$, there exist leaves $\phi_R, \phi_L \in C_\O \sspc \cap \sspc C_{\O^{\sspc \prime}}$ satisfying
$$ \quad L(\phi_L) \subset L(\phi_R) \quad \text{ and } \quad \Gamma_\O \cap \Gamma_{\O^{\sspc \prime}} \cap \overline{\rule{0cm}{0.35cm}R(\phi_R) \cap L(\phi_L)} = \varnothing\sspc.$$
Finally, as noted in Remark \ref{sec:description_order_proper_trajectories}, the proper transverse trajectories $\Gamma_\O$ and $\Gamma_{\O^{\sspc \prime}}$ intersect the leaves $\phi_R$ and $\phi_L$ at opposite orders, reflecting the fact that \(\O\) and \(\O^\pp\) are oppositely ordered by the relations $\lesssim_L$ and $\lesssim_R$. Observe that this arrangement forces the proper transverse trajectories $\Gamma_\O$ and $\Gamma_{\O^{\sspc \prime}}$ to intersect at least once in between the leaves $\phi_L$ and $\phi_R$.

\vspace*{0.3cm}
\begin{figure}[h!]
    \center \begin{overpic}[width=7cm, height=3.5cm, tics=10]{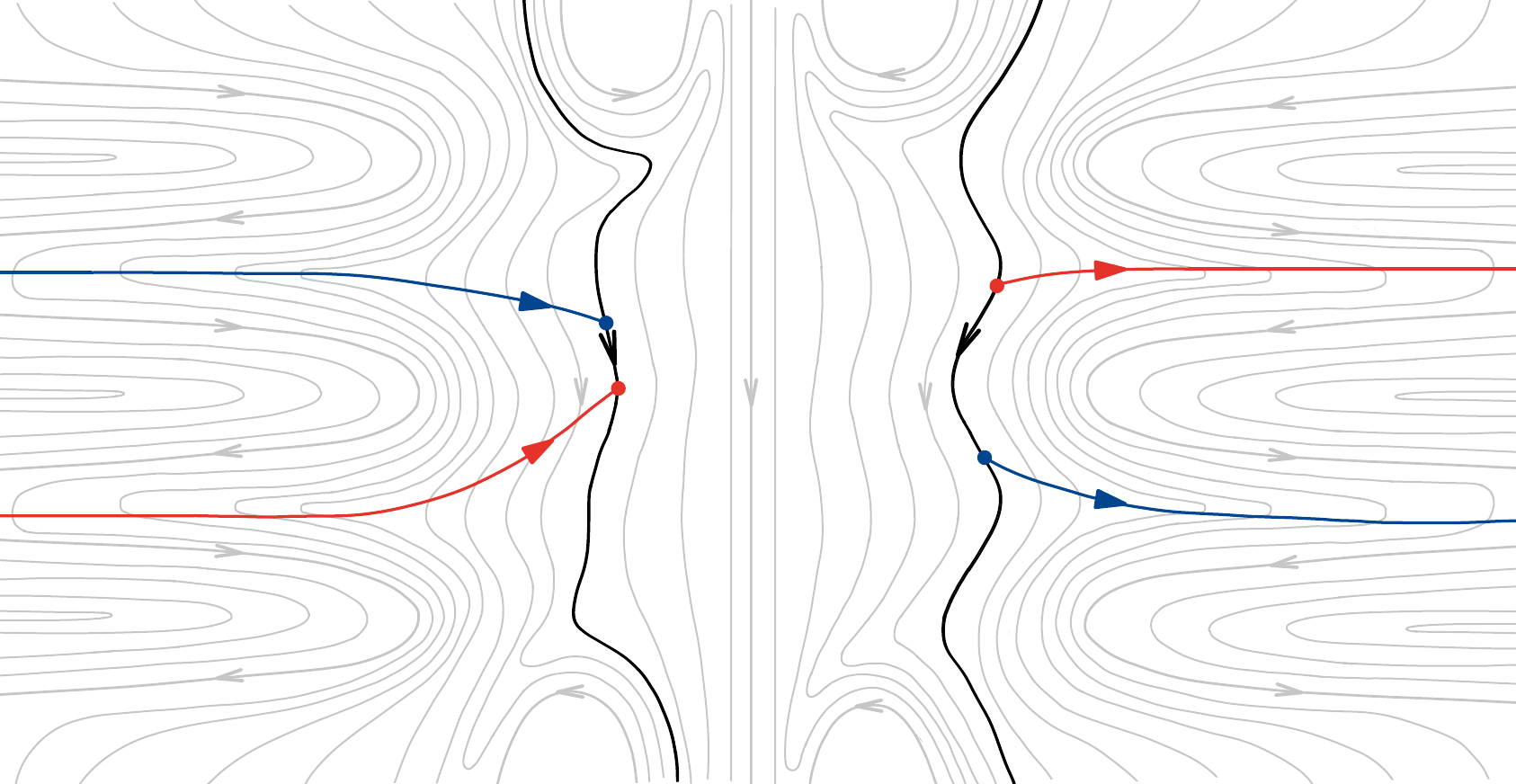}
        \put (33.5,53) {{\color{black}\large$\displaystyle \phi_R$}}
    \put (65,53) {{\color{black}\large$\displaystyle \phi_L$}}
        \put (-10,31) {{\large\color{myBLUE}$\displaystyle \Gamma_{\O^{\sspc\prime}} $}}
        \put (-9,14) {{\large\color{myRED}$\displaystyle \Gamma_{\O} $}}
        \put (102.5,31) {{\large\color{myRED}$\displaystyle \Gamma_{\O} $}}
        \put (102.5,14) {{\large\color{myBLUE}$\displaystyle \Gamma_{\O^{\sspc\prime}} $}}
\end{overpic}
\end{figure}


Below, we illustrate a feel examples of $\O$ and $\O^{\prime \prime}$ having a weak $\F$-transverse intersection:

\begin{figure}[h!]
    \center
    
    \begin{overpic}[width=6.5cm, height=3.3cm, tics=10]{1.pdf}
        \put (54.9,13.3) {\colorbox{white}{$\rule{0cm}{0.3cm}\  \ $}}
        \put (54.9,36) {\colorbox{white}{$\rule{0cm}{0.3cm}\ \  $}}
         \put (55,36.5) {{\color{myRED}$\displaystyle \Gamma_{\O}$}}
         \put (55,14) {{\color{myBLUE}$\displaystyle \Gamma_{\O'}$}}
\end{overpic}
\hspace{1cm}
\begin{overpic}[width=6.5cm, height=3.3cm, tics=10]{2.pdf}
        \put (54.9,9.8) {\colorbox{white}{$\rule{0cm}{0.3cm}\  \ $}}
        \put (54.9,36) {\colorbox{white}{$\rule{0cm}{0.3cm}\ \  $}}
         \put (55,36.5) {{\color{myBLUE}$\displaystyle \Gamma_{\O}$}}
         \put (55,10) {{\color{myRED}$\displaystyle \Gamma_{\O'}$}}
\end{overpic}
\end{figure}

\begin{figure}[h!]
    \center
    \vspace*{-0.12cm}
\begin{overpic}[width=6.5cm, height=3.3cm, tics=10]{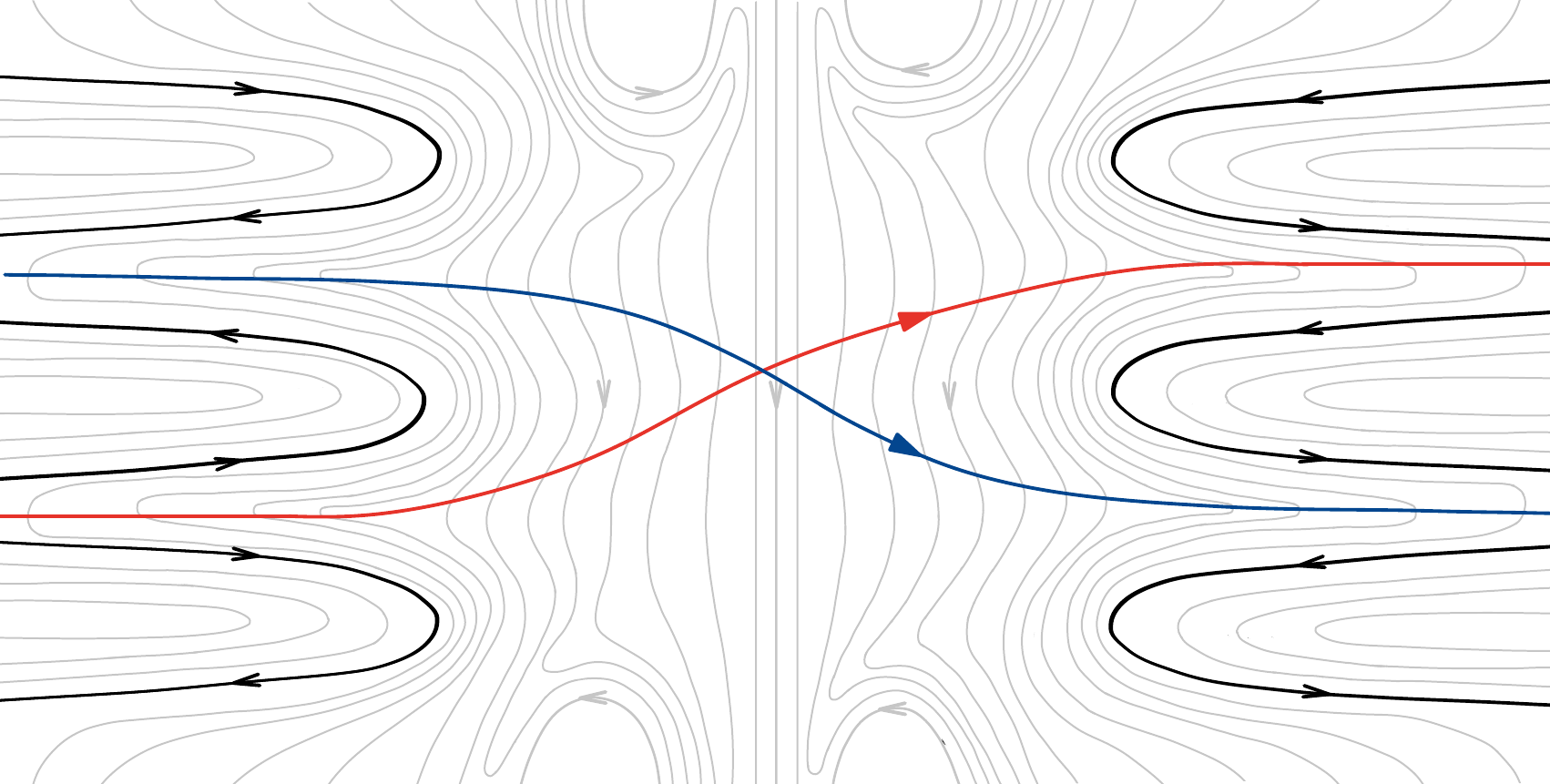}
        \put (54.9,13.3) {\colorbox{white}{$\rule{0cm}{0.3cm}\  \ $}}
        \put (54.9,33.8) {\colorbox{white}{$\rule{0cm}{0.3cm}\ \  $}}
         \put (55,34.3) {{\color{myRED}$\displaystyle \Gamma_{\O}$}}
         \put (55,14) {{\color{myBLUE}$\displaystyle \Gamma_{\O'}$}}
\end{overpic}
\hspace{1cm}
\begin{overpic}[width=6.5cm, height=3.3cm, tics=10]{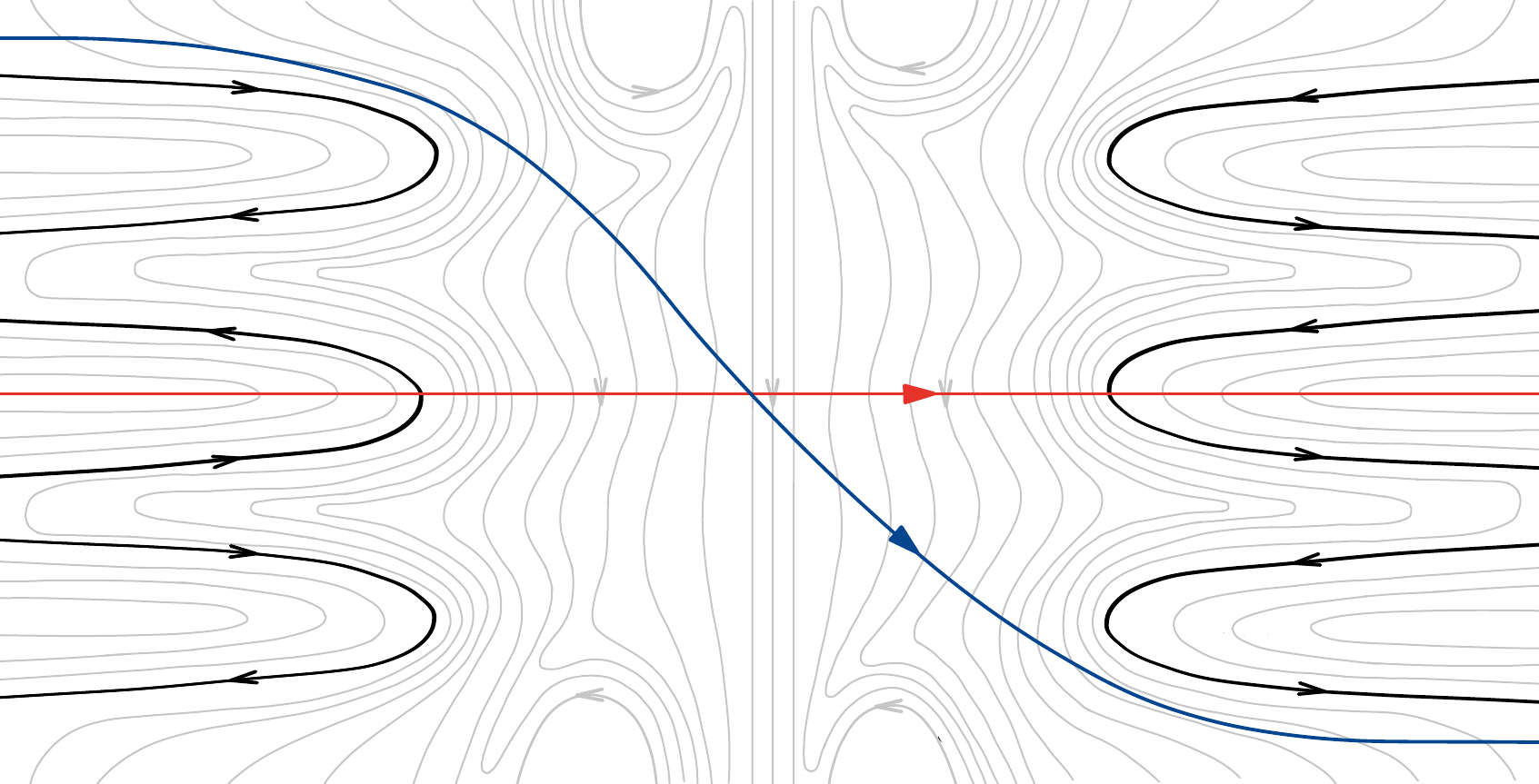}
        \put (52.4,9) {\colorbox{white}{$\rule{0cm}{0.2cm}\  \ \spc $}}
        \put (54.9,29) {\colorbox{white}{$\rule{0cm}{0.3cm}\ \  $}}
         \put (55,29.4) {{\color{myRED}$\displaystyle \Gamma_{\O}$}}
         \put (52.3,8.8) {{\color{myBLUE}$\displaystyle \Gamma_{\O'}$}}
\end{overpic}
\end{figure}

\newpage

\begin{remark}
    To any reader well-versed in the framework of Forcing Theory for surface homeomorphisms, the name \textit{weak $\F$-transverse intersection} may sound familiar. Indeed, this notion strictly generalizes the concept of \(\F\)-transverse intersections introduced in \cite{LCTal2018}, which plays a central role in Forcing Theory. This means that, any two orbits $\O$ and $\O^{\sspc\prime}$ that have exhibit an \(\F\)-transverse intersection, also have a weak $\F$-transverse intersection. 

    To confirm that, it suffices to observe that the notion of $\F$-transverse intersection in \cite{LCTal2018} can be equivalently defined in our context as follows:
     Two orbits $\O, \O^{\sspc \prime} \in \orb$ are said to have an \textit{\(\F\)-transverse intersection} if they satisfy each of the following conditions:
    \begin{itemize}[leftmargin=1.15cm]
        \item[\textbf{(i)}] The set $C_\O \cap C_{\O^{\sspc \prime}}$ is non-empty.
        \item[\textbf{(ii)}] There exist distinct leaves $\phi_L, \phi_L^\pp \in  \partial_L (C_\O \cap C_{\O^{\sspc \prime}})$ satisfying $\phi_L \in C_\O$ and $\phi_L^\pp \in C_{\O^{\sspc \prime}}$,  as well as two distinct leaves $\phi_R, \phi_R^\pp \in  \partial_R (C_\O \cap C_{\O^{\sspc \prime}})$ satisfying $\phi_R \in C_\O$ and $\phi_R^\pp \in C_{\O^{\sspc \prime}}$.
        \item[\textbf{(iii)}] The leaves $\phi_L$ and $\phi_L^\pp$ are ordered by $\prec$ oppositely to the leaves $\phi_R$ and $\phi_R^\pp$, that is, 
        \i
        \begin{align*}
            \quad \text{Either } \quad & \phi_L \prec \phi_L^\pp \spc \text{ and } \spc \phi_R^\pp \prec \phi_R,& \  \\ 
    \quad \text{or } \quad &\phi_L^\pp \prec \phi_L \spc \text{ and } \phi_R \prec \phi_R^\pp,& \  
        \end{align*}
    \end{itemize}

    \begin{figure}[h!]
    \center
    \mycomment{-0.1cm}\begin{overpic}[width=10cm, height=4.5cm, tics=10]{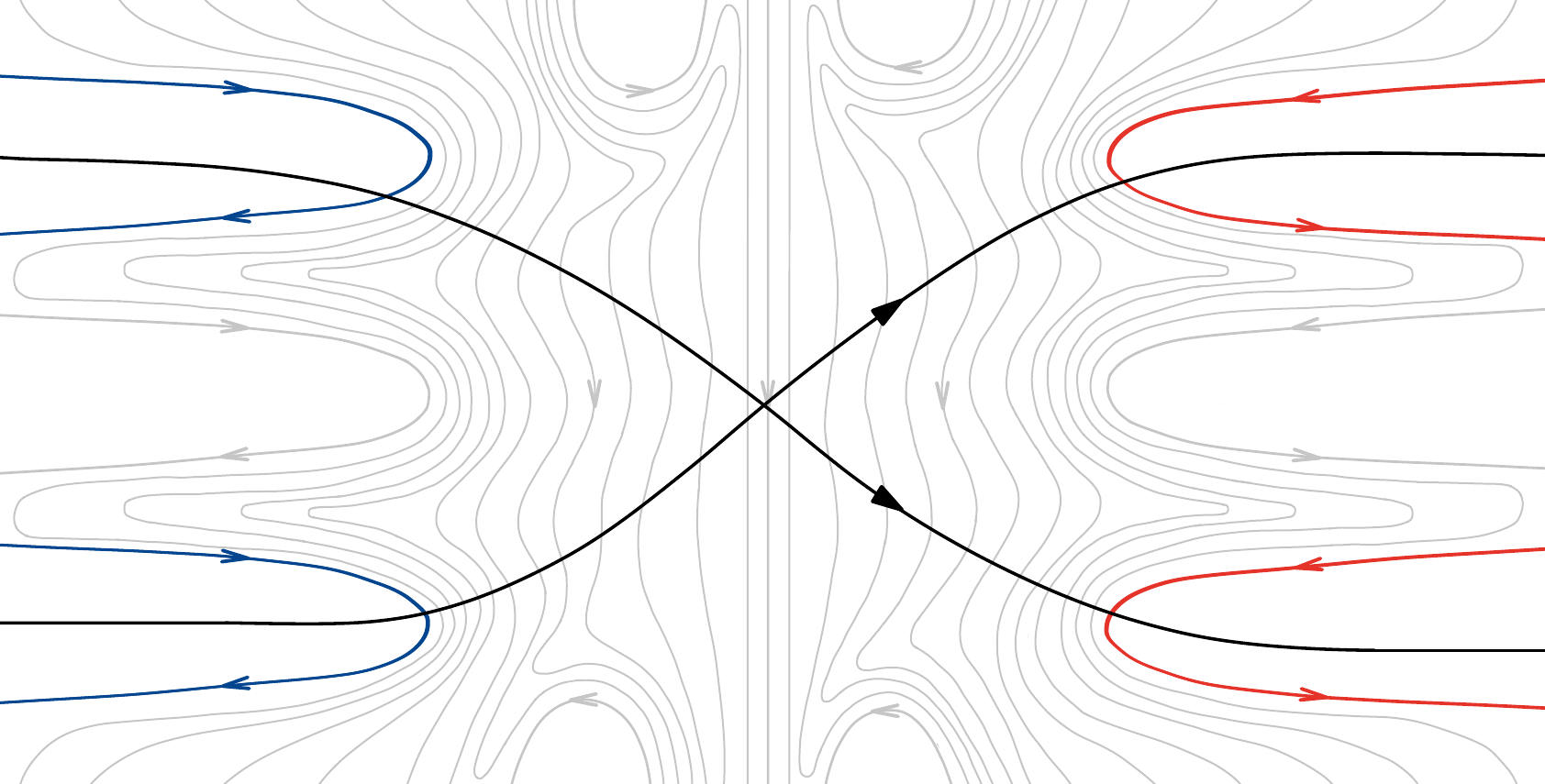}
        \put (53,30.4) {\colorbox{white}{\color{black}\normalsize$\rule{0cm}{0.27cm} \ \  $}}
        \put (53.5,31) {{\color{black}\normalsize$\displaystyle \Gamma_{\O^{\sspc\prime}}$}}
        \put (53,10.5) {\colorbox{white}{\color{black}\normalsize$\rule{0cm}{0.27cm} \ \  $}}
        \put (53.5,10.7) {{\color{black}\normalsize$\displaystyle \Gamma_{\O}$}}
        \put (102,39) {\color{myRED}$\displaystyle \phi_R^\pp$}
        \put (102,13) {\color{myRED}$\displaystyle \phi_R$}
        \put (-6,12.5) {\color{myBLUE}$\displaystyle \phi_L^\pp$}
        \put (-6,39) {\color{myBLUE}$\displaystyle \phi_L$}
\end{overpic}
\end{figure}
\end{remark}


\section{Maximal leaf domains and critical leaves}\label{sec:critical_leaves_and_core regions}

In this section, we show how a finite collection of orbits \(\OO \subset \orb\) induces a finite decomposition of the set \(\bigcup_{\O \in \OO} 
C_\O \subset \F\) into a collection of leaf domains and critical leaves.  

\vspace*{0.3cm}

Let \(\OO \subset \orb\) be a finite collection of orbits.
Recall that the boundary \(\partial C_\O\) is formed by a locally-finite collection of leaves. 
Thus, the union $\bigcup_{\O \in \OO} \partial C_\O$ is also a locally-finite collection of leaves, since \(\OO\) is a finite set.
In this context, we introduce the following definitions.

\begin{definition}[$\sspc $Maximal leaf domains | Critical leaves$\sspc$]\label{def:critical_leaves_and_core_regions}\ 

    \begin{itemize}
        \item A saturated set $D\subset \R^2$ is said to be a \textbf{maximal leaf domain} of \(\OO\) if it is a connected component of $\R^2\setminus \bigcup_{\O \in \OO} \partial C_\O$ that satisfies $D \subset \bigcup_{\O \in \OO} C_\O$.
        \item A leaf $\phi \in \F$ is said to be a \textbf{critical leaf} of \(\OO\) if it is a leaf contained in $\bigcup_{\O \in \OO} \partial C_\O$ that is also contained in the set $\bigcup_{\O \in \OO} C_\O$.
    \end{itemize}
    \mycomment{0.2cm}
The sets of all maximal leaf domains and critical leaves of \(\OO\) are denoted by $\D_\OO$ and $\W_\OO$.
\end{definition}

Roughly speaking, maximal leaf domains of $\OO$ constitute open connected sets of leaves that are crossed by exactly the same (at least one) orbits in $\OO$, while critical leaves are the leaves that are crossed by at least one orbit in $\OO$ and separate two maximal leaf domains. 
Moreover, since the collection of orbits $\OO$ is finite, both sets $\D_\OO$ and $\W_\OO$ should also be finite. 

These properties are listed and proved in Proposition \ref{prop:maximalleafdom} below.

\newpage

\begin{proposition}\label{prop:maximalleafdom}
    The following statements are true:
    \begin{itemize}[leftmargin=1cm]
        \item[\textup{\textbf{(i)}}] Every $D \in \D_
        \OO$ is a leaf domain of $\F$ (see Definition \ref{def:leaf_domain}).
        \item[\textup{\textbf{(ii)}}] For each $D \in \D_{\OO}$, there exists a non-empty subset $\OO(D) \subset \OO$ such that
        \begin{align*}
            D \subset \bigcap_{\O \in \OO(D)} C_\O \quad \text{ and }\quad
            \overline{\rule{0pt}{3.45mm}D}\sspc \cap  \bigcup_{\O \in \OO\setminus\OO(D)} C_\O = \varnothing.
        \end{align*}
        \item[\textup{\textbf{(iii)}}] The sets of maximal leaf domains $\D_{\OO}$ and critical leaves $\W_{\OO}$ are finite.
    \end{itemize}
\end{proposition}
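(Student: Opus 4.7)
The plan is to establish (ii) first, deduce (i) from it, and then handle the finiteness in (iii) via a counting argument carried out inside each leaf domain $C_\O$.

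For (ii), note that $\bigcup_{\O \in \OO} \partial C_\O$ is saturated, being a union of leaves, so any connected component $D$ of its complement is saturated (a leaf is connected and therefore lies entirely in a single component). For any fixed $\O \in \OO$, the set $D$ is disjoint from $\partial C_\O$, so lies in $\R^2\setminus\partial C_\O = C_\O \sqcup (\R^2\setminus\overline{C_\O})$, a disjoint union of two open sets; connectedness forces $D \subset C_\O$ or $D \subset \R^2\setminus\overline{C_\O}$. Define $\OO(D) := \{\O \in \OO \mid D \subset C_\O\}$; this is non-empty because $D \subset \bigcup_{\O \in \OO} C_\O$ by hypothesis. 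For $\O \notin \OO(D)$, the inclusion $D \subset \R^2\setminus\overline{C_\O}$ together with openness of $C_\O$ yields $\overline{D} \cap C_\O = \varnothing$: any point of $\overline{D} \cap C_\O$ would admit a neighborhood in $C_\O$ eventually met by a net in $D$, contradicting $D \cap C_\O = \varnothing$. This proves (ii). For (i), $D$ is open (as a component of an open set), connected and saturated by the above, and contained in $\bigcap_{\O \in \OO(D)} C_\O$, which is a leaf domain by Lemma~\ref{lemma:intersection_of_domains} and hence trivially foliated; the leaves of $D$ therefore form a subset of a totally ordered set and are themselves totally ordered, so $D$ is trivially foliated and thus a leaf domain of $\F$.

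The core step of (iii), and the main obstacle, is the bound $|\partial C_{\O'} \cap C_\O| \leq 2$ for any two distinct orbits $\O, \O' \in \OO$. If $C_{\O'} \cap C_\O = \varnothing$, then a leaf $\phi \in \partial C_{\O'} \cap C_\O$ would admit a neighborhood in the open set $C_\O$ containing leaves of $C_{\O'}$ (by the definition of $\partial C_{\O'}$), contradicting disjointness; hence the intersection is empty. If $C_{\O'} \cap C_\O \neq \varnothing$, then by Lemma~\ref{lemma:intersection_of_domains} the intersection is a leaf domain, and viewed inside the trivially foliated plane $C_\O$ (whose leaf space is homeomorphic to $\R$) it corresponds to a connected open subset of $\R$, i.e.\ an open interval. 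Any leaf $\phi \in \partial C_{\O'} \cap C_\O$ admits a neighborhood in $C_\O$ meeting both $C_{\O'}$ and its complement, so $\phi$ must be a boundary point of this interval in the leaf space of $C_\O$; since an open interval in $\R$ has at most two boundary points, the bound follows.

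With this bound in hand, fix $\O \in \OO$. The union $\bigcup_{\O' \neq \O} (\partial C_{\O'} \cap C_\O)$ consists of at most $2(|\OO|-1)$ leaves, which correspond to finitely many points in the leaf space of $C_\O \simeq \R$; removing them produces at most $2|\OO|-1$ connected components. Since $\partial C_\O$ is disjoint from $C_\O$, every element $D \in \D_\OO$ with $D \subset C_\O$ is exactly a connected component of $C_\O \setminus \bigcup_{\O' \neq \O}\partial C_{\O'}$, and every element of $\D_\OO$ is contained in some $C_\O$ by (ii). This bounds $|\D_\OO| \leq |\OO|(2|\OO|-1)$. For critical leaves, the decomposition $\W_\OO = \bigcup_{\O \neq \O'} (\partial C_\O \cap C_{\O'})$ combined with the key bound yields $|\W_\OO| \leq 2|\OO|(|\OO|-1)$, so both sets are finite.
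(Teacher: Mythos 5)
Your proof is correct and follows essentially the same route as the paper's: the dichotomy that a connected component of $\R^2\setminus\bigcup_{\O}\partial C_\O$ lies either inside $C_\O$ or outside $\overline{C_\O}$ for each $\O$ (yielding (i) and (ii)), and the bound $\#(\partial C_{\O'}\cap C_\O)\leq 2$ coming from the connectedness of $C_\O\cap C_{\O'}$ (yielding (iii)). The only differences are cosmetic: you prove (ii) before (i), phrase the dichotomy via disjoint open sets rather than via a separating boundary leaf, and make the counting in (iii) explicit.
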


\begin{proof}We prove each item separately.
    
    \vspace*{0.1cm}
    \textit{Proof of item (i):} Let $D \in \D_{\OO}$ be a leaf domain of $\OO$.
    Since $\bigcup_{\O \in \OO} \partial C_\O$ is a closed set, each connected component of $\R^2 \setminus \bigcup_{\O \in \OO} \partial C_\O$ is open. In particular, $D$ is an open and connected saturated subset of $\R^2$ and, consequently, it is formed by infinitely many leaves.
    Consider two distinct leaves $\phi,\phi'\in D$. Since $D\subset \bigcup_{\O \in \OO} C_\O$, there exists an orbit $\O \in \OO$  such that $\phi \in C_\O$. If $\phi'$ does not belong to $C_\O$ as well, then there exists $\phi''\in \partial C_\O$ that separates $\phi$ from $\phi'$ on the plane. This contradicts the definition of maximal leaf domains. Therefore, the leaf $\phi'$ also belongs to $C_\O$. This implies either $L(\phi) \subset L(\phi')$ or $L(\phi') \subset L(\phi)$. We then conclude the proof that $D$ is a leaf domain of $\F$ (see Definition \ref{def:leaf_domain}).

    \vspace*{0.2cm}
    \textit{Proof of item (ii):} According to the argument presented in the proof of item (i), for any pair of leaves $\phi,\phi'\in D$ and any orbit $\O \in \OO$, it holds that $\phi \in C_\O$ if, and only if, $\phi'\in C_\O$. Therefore, there exists a subset $\OO(D) \subset \OO$ such that, for all $\phi \in D$ and $\O\in \OO$, it holds
    \begin{align*}
        \phi \in C_\O \iff \O \in \OO(D).
    \end{align*}
    \noindent This proves that $D \subset \bigcap_{\O \in \OO(D)} C_\O$. Now, since $\bigcup_{\O \in \OO\setminus\OO(D)} C_\O$ is open, we have that
     $$\biggl(\overline{\rule{0pt}{3.5mm}D} \cap \bigcup_{\O \in \OO\setminus\OO(D)} C_\O\biggr)\  \subset\  \biggl(\sspc\overline{\rule{0pt}{4.2mm}D \cap \bigcup_{\O \in \OO\setminus\OO(D)} C_\O}\sspc\biggr) = \varnothing.$$
    \noindent The set $\OO(D)$ is non-empty because $D$ is contained in the set $\bigcup_{\O \in \OO} C_\O$. This proves (ii).

    \vspace*{0.2cm}
    \textit{Proof of item (iii):} According to Lemma \ref{lemma:intersection_of_domains}, for any pair of orbits $\O,\O^{\sspc\prime} \in \OO$, the set $C_\O \cap C_{\O^{\sspc\prime}}$ is a leaf domain of $\F$ (possibly empty). In particular, $C_\O \cap C_{\O^{\sspc\prime}}$ is  connected. Consequently, the set $C_\O \cap \partial C_{\O^{\sspc\prime}}$ contains at most two leaves. Moreover, note that
    $$ \W_{\OO} = \bigcup_{\O,\O^{\sspc\prime}\in \OO} C_\O \cap \partial C_{\O^{\sspc\prime}}.$$
    Since $\OO$ is a finite set, we conclude that the set $\W_{\OO}$ is also finite. This further implies that, for each orbit $\O \in \OO$, the set $C_\O \setminus \W_{\OO}$ has finitely many connected components. Since each maximal leaf domain in $\D_{\OO}$ is a connected component of the set $C_\O \setminus \W_{\OO}$ for some $\O \in \OO$, we conclude that $\D_{\OO}$ is also a finite set. This concludes the proof of item (iii).
\end{proof}

\vspace*{0.2cm}
To illustrate the concepts of $\D_{\OO}$ and $\W_{\OO}$ to the reader, we present an example that follows the color conventions listed below:
\begin{itemize}
    \item Transverse trajectories of orbits in $\OO$ are represented in red.
    \item Leaves in $\W_{\OO} \subset \bigcup_{\O \in \OO} \partial C_\O$, called critical leaves, are represented in black.
    \item Leaves in $\bigcup_{\O \in \OO} \partial C_\O\setminus \W_\OO$ are represented by a dashed line in black for distinction.
    \item Leaf domains in $\D_{\OO}$ are represented in light and dark shades of blue, in order to easily differentiate adjacent leaf domains.
    \item Regions in $\R^2 \setminus \bigcup_{\O \in \OO} C_\O$ are represented in white.
\end{itemize}

\newpage

    \begin{figure}[h!]
        \center
        \mycomment{0.5cm}\begin{overpic}[width=14cm,height=7.8cm, tics=10]{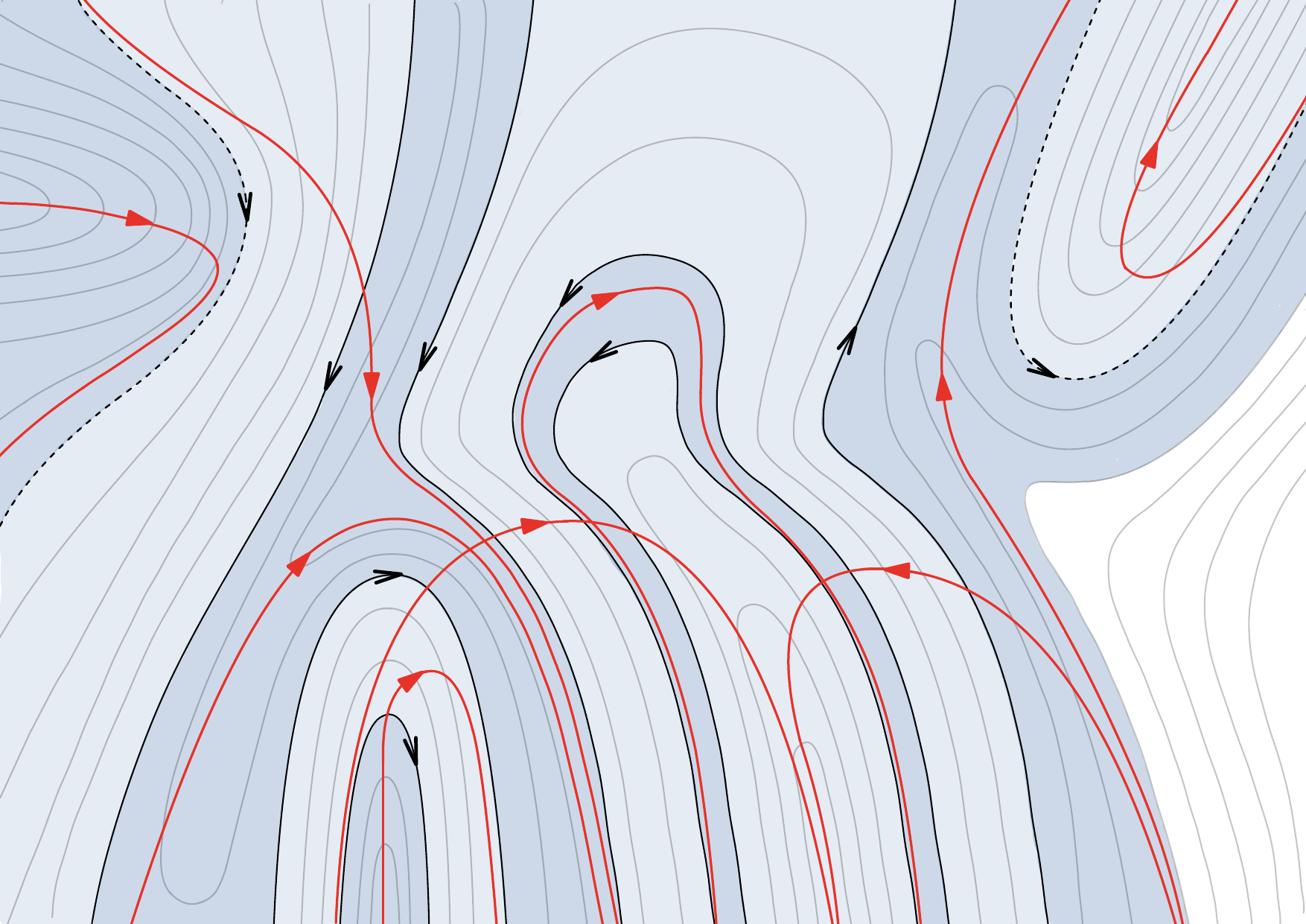}
    \end{overpic}
    \end{figure}

    \mycomment{0.2cm}
\begin{remark} Below, we list some key facts about maximal leaf domains and critical leaves:
    \begin{itemize}[leftmargin=0.75cm]
        \item[\textbf{(i)}] The set $\W_\OO \cup \bigcup_{D\in \D_{\OO}} D$ may not cover the entire plane, as it coincides with $\bigcup_{\O \in \OO} C_\O$.
    
        \item[\textbf{(ii)}] Each $D \in \D_{\OO}$ consists in a saturated set $D \subset \R^2$ homeomorphic to a trivially foliated plane, and its boundary $\partial D$ is a non-separating and locally-finite collection of leaves.
        
        \item[\textbf{(iii)}] The leaves in a maximal leaf domain $D \in \D_{\OO}$ are crossed by exactly the same orbits in $\OO$.  Specifically, they are crossed by every orbit in \( \OO(D) \) and by no orbit in \( \OO \setminus \OO(D) \).
        
        \item[\textbf{(iv)}] Each critical leaf $\phi \in \W_{\OO}$ is crossed by some orbit in $\OO$, but never by all of them. 
        
        \item[\textbf{(v)}] For each critical leaf $\phi \in \W_{\OO}$ there exist  $D,D^\pp\in \D_{\OO}$ that satisfy $\overline{\rule{0cm}{0.35cm}D}\cap \overline{\rule{0cm}{0.35cm}D^\pp} = \{\phi\}$.

         \item[\textbf{(vi)}] If a leaf $\phi \in \F$ satisfies  $\overline{\rule{0cm}{0.35cm}D}\cap \overline{\rule{0cm}{0.35cm}D'} = \{\phi\}$ and is not crossed by any orbit in $\OO$, then $\phi$ is a not critical leaf of $\OO$ (as represented by the dashed black lines in the example above).


         \item[\textbf{(vii)}] Two maximal leaf domains \( D, D' \in \D_{\OO} \) can be crossed by exactly the same orbits in $\OO$, meaning $\OO(D) = \OO(D')$, and still be distinct. This is illustrated in the example above.
    \end{itemize}
\end{remark}

\subsection{Orbits traveling across maximal leaf domains}\label{sec:visited_core regions}

To each orbit $\O \in \OO$, we associate an integer $m_\O>0$ and a sequence $(D_i)_{1\leq i \leq m_\O}$ of maximal leaf domains in $\D_{\OO}$ such that the following properties hold:
\begin{itemize}[leftmargin=1.15cm]
    \item For any $i\in \{1, ..., m_\O\}$, the maximal leaf domain $D_i$ is contained in the set $C_\O$.
    \item For any $i\in \{1, ..., m_\O -1\}$, there exists $\phi_i \in \W_{\OO}$ such that $\overline{\rule{0cm}{0.34cm}D_i}\cap \overline{\rule{0cm}{0.34cm}D_{i+1}} = \{\phi_i\}$.
    \item The set $C_\O$ is decomposed as $C_\O = \bigcup_{i=1}^{m_\O} \spc D_i \spc \cup \spc  \bigcup_{i=1}^{m_\O-1}\{\phi_i\}$.
\end{itemize}
\mycomment{0.3cm}
We denote the first and last terms of this sequence by
$ D_\alpha(\O) := D_1$ and $D_\omega(\O) := D_{m_\O}$.

\begin{figure}[h!]
    \center
    \mycomment{0.2cm}\begin{overpic}[width=0.85\textwidth,height=3.6cm, tics=10]{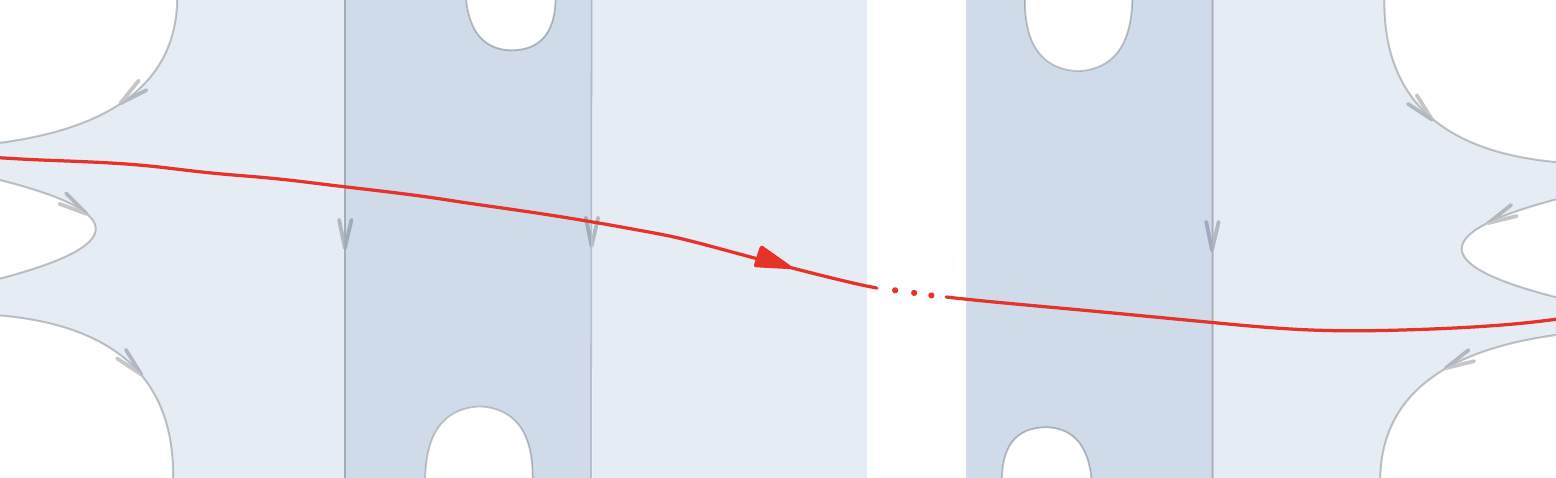}
        \put (47,8.5) {\color{myRED}\large$\displaystyle \Gamma_\O $}
        \put (14,19.3) {\color{myBLUE}\large$\displaystyle D_1 $}
        \put (27.9,19.2) {\color{myBLUE}\large$\displaystyle D_2 $}
        \put (45,19.2) {\color{myBLUE}\large$\displaystyle D_3 $}
        \put (65,19.2) {\color{myBLUE}\large$\displaystyle D_{m_\O-1} $}
        \put (81,19.2) {\color{myBLUE}\large$\displaystyle D_{m_\O} $}
\end{overpic}
\end{figure}

\mycomment{-0.2cm}

Conversely, for each maximal leaf domain $D \in \D_{\OO}$, we define the sets of orbits
\mycomment{-0.1cm}
\begin{align*}
    \OO_\alpha(D):= \{\O \in \OO \mid D_\alpha(\O) = D \} \quad \text{ and } \quad 
    \OO_\omega(D):= \{\O \in \OO \mid D_\omega(\O) = D \}.
\end{align*}

\mycomment{-0.2cm}
\noindent
Their complementary sets are denoted as
\mycomment{-0.1cm}
\begin{align*}
    \OOstart(D) := \OO(D) \setminus \OO_\alpha(D) \quad \text{ and } \quad 
    \OOend(D) := \OO(D) \setminus \OO_\omega(D).
\end{align*}

\mycomment{-0.2cm}
\noindent
Intuitively speaking, the sets \(\OO_\omega(D)\) and \(\OO_\alpha(D)\) comprise orbits in \(\OO\) that eventually stay within the maximal leaf domain $D$ under forward and backward iterations, respectively,  while \(\OOstart(D)\) and \(\OOend(D)\) comprise orbits in $\OO$ that eventually enter and eventually exit $D$.

\mycomment{0.2cm}
\begin{figure}[h!]
        \center
        \hspace*{-4cm}\begin{overpic}[width=9cm,height=4.2cm, tics=10]{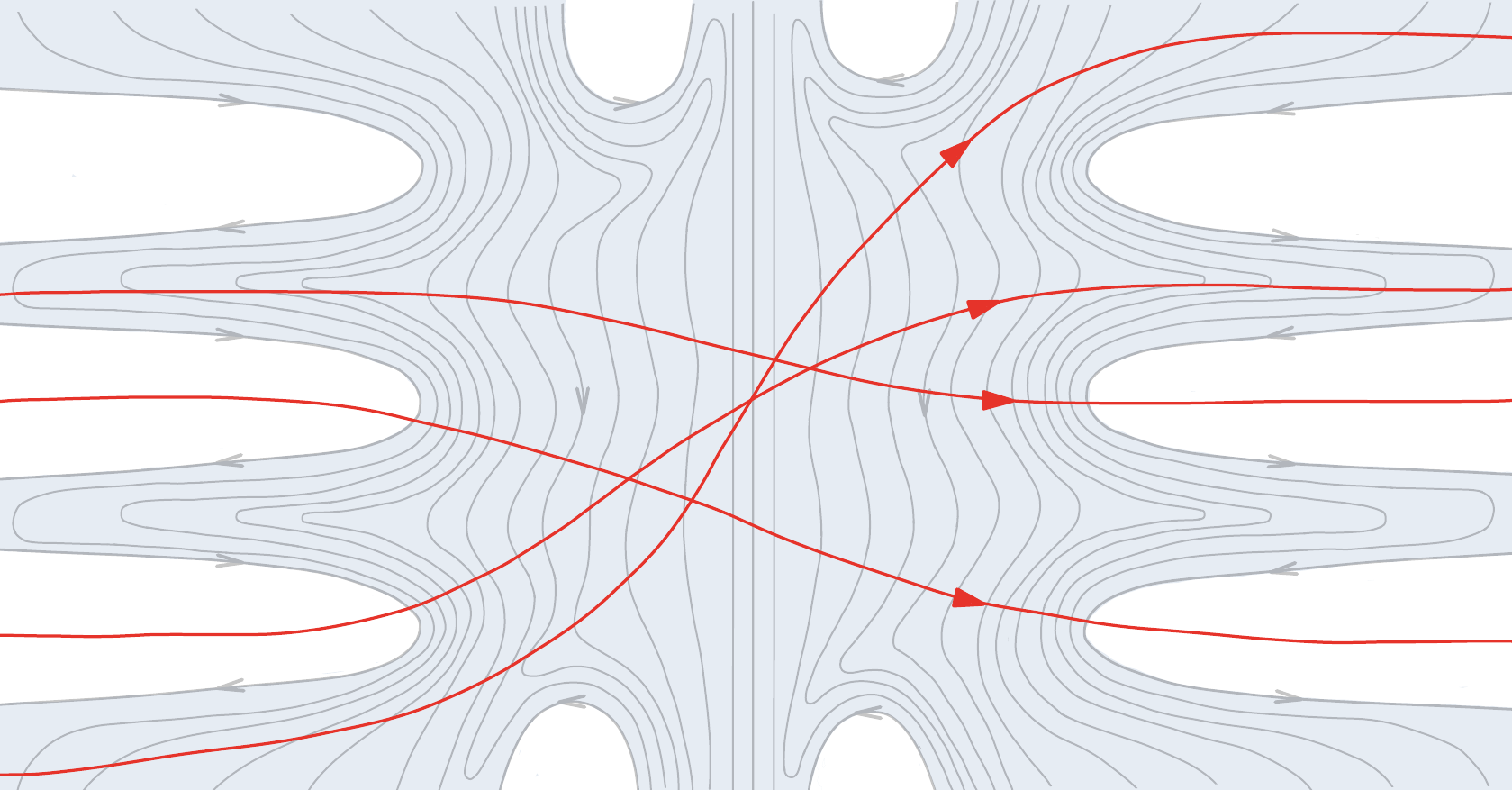}
            \put (-9.5,9) {\color{myRED}\large$\displaystyle \Gamma_{\O_{\sspc 3}} $}
            \put (-9.3,-1) {\color{myRED}\large$\displaystyle \Gamma_{\O_{\sspc 4}} $}
            \put (-9.5,20) {\color{myRED}\large$\displaystyle \Gamma_{\O_{\sspc 2}} $}
            \put (-9.5,29) {\color{myRED}\large$\displaystyle \Gamma_{\O_{\sspc 1}} $}
            \put (35.6,33.3) {\color{myBLUE}\large$\displaystyle D $}
            \put (109,39) {\color{black}\large$\displaystyle \OO_\alpha(D) =\{\O_{\sspc 1}, \sspc \O_{\sspc 4}\} $}
            \put (109,27.5) {\color{black}\large$\displaystyle \OO_\omega(D) =\{\O_{\sspc 3}, \sspc \O_{\sspc 4}\} $}
            \put (108,16) {\color{black}\large$\displaystyle \OOstart(D) =\{\O_{\sspc 2}, \sspc \O_{\sspc 3}\} $}
            \put (106,5) {\color{black}\large$\displaystyle \OOend(D) =\{\O_{\sspc 1}, \sspc \O_{\sspc 2}\} $}

    \end{overpic}
    \end{figure}

Since each \( D \in \D_{\OO} \) is a leaf domain of $\F$, we can define the following boundary subsets
\[
\partial_R D := \partial D \cap \biggl(\ \bigcap_{\phi \in D} R(\phi)\biggr) \quad \text{ and } \quad 
\partial_L D := \partial D \cap \biggl(\ \bigcap_{\phi \in D} L(\phi)\biggr).
\]

\mycomment{0.05cm}
\noindent Recall that both sets $\partial_R D$ and $\partial_L D$ are ordered by the relation $\prec$ defined in Section \ref{sec:order_limit_leaves}. 
These sets will help us to describe the behavior of the orbits $\OO$ inside each $D \in \D_{\OO}$.

Observe that, for any $D \in \D_{\OO}$, the following properties hold:
\mycomment{-0.12cm}
\begin{align*}
    \partial_L D = \partial_L C_\O, \quad \forall \O \in \OO_\omega(D),\\
    \partial_R D = \partial_R C_\O, \quad \forall \O \in \OO_\alpha(D).
\end{align*}

\mycomment{-0.23cm}
\noindent
Hence, each orbit $\O \in \OO_\omega(D)$ induces a cut $(\spc \Ltop \O,\spc \Lbot \O\spc )$  of the ordered set $(\sspc\partial_L D, \prec\sspc)$, and each $\O \in \OO_\alpha(D)$ induces a cut $(\spc \Rtop \O,\spc \Rbot \O\spc )$ of the ordered set $(\sspc\partial_R D, \prec\sspc)$.

\mycomment{0.1cm}
\begin{figure}[h!]
        \center\begin{overpic}[width=9cm,height=4.2cm, tics=10]{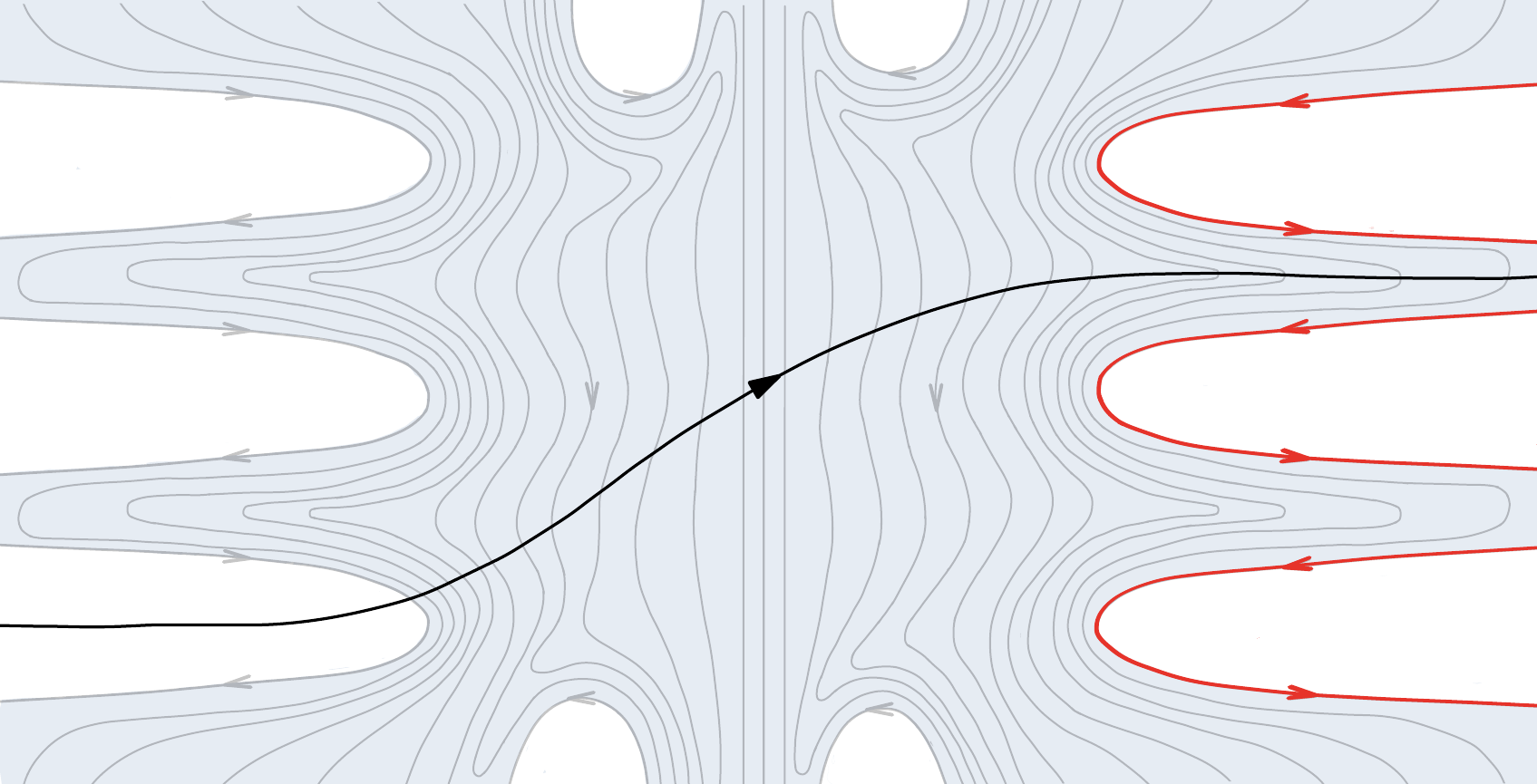}
            \put (-8.5,8.5) {\color{black}\large$\displaystyle \Gamma_{\O} $}
            \put (35.6,33.3) {\color{myBLUE}\large$\displaystyle D $}
            \put (85,21.8) {\color{myRED}\large$\displaystyle \Lbot \O $}
            \put (85,35) {\color{myRED}\large$\displaystyle \Ltop \O $}
            \put (85,8) {\color{myRED}\large$\displaystyle \Lbot \O $}

    \end{overpic}
    \end{figure}

\mycomment{-0.2cm}

For each leaf $\phi \in \F$, let $\OO(\phi)\subset \OO$ be the subset of orbits that cross $\phi$. Observe that,  for any $D \in \D_{\OO}$, the subsets of orbits $\OOstart(D)$ and $\OOend(D)$ can be decomposed as
\mycomment{-0.13cm}
\begin{align*}
        \OOstart(D)= \bigcup_{\phi \in \partial_R D} \OO(\phi) \quad \text{ and } \quad
        \OOend(D)= \bigcup_{\phi \in \partial_L D} \OO(\phi).
\end{align*}
In other words, the orbits in $\OOstart(D)$ must enter $D$ through a leaf in $\partial_R D$, while the orbits in $\OOend(D)$ must exit $D$ through a leaf in $\partial_L D$. If this was not the case, there would exist leaves in $D$ that are not crossed by exactly the same orbits in $\OO$.



\mycomment{-0.3cm}
\begin{remark}
    Note that, for any $D \in \D_{\OO}$, the sets $\OO_\omega(D)$ and $\OOend(D)$ are stable under the equivalence relation $\fasym$, while the sets $\OO_\alpha(D)$ and $\OOstart(D)$ are stable under the relation $\basym$. 
    
    More precisely, for any two orbits $\O,\O^{\sspc\prime} \in \OO(D)$ such that $\O \fasym \O^{\sspc\prime}$, it holds that
    \begin{align*}
        \O \in \OO_\omega(D) \iff \O^{\sspc\prime} \in \OO_\omega(D),\ \ \\
        \O \in \OOend(D) \iff \O^{\sspc\prime} \in \OOend(D).
    \end{align*}

    Similarly, for any two orbits $\O,\O^{\sspc\prime} \in \OO(D)$ such that $\O \basym \O^{\sspc\prime}$, it holds that
    \mycomment{-0.12cm}
    \begin{align*}
        \O \in \OO_\alpha(D) \iff \O^{\sspc\prime} \in \OO_\alpha(D), \ \\
        \O \in \OOstart(D) \iff \O^{\sspc\prime} \in \OOstart(D).
    \end{align*}
\end{remark}

\subsection{Directed planar forest of maximal leaf domains}\label{sec:oriented_trees}

Let $\D_{\OO}$ and $\W_{\OO}$ be the sets of maximal leaf domains and critical leaves of a given finite collection of orbits $\OO \subset \orb$. These sets naturally define an oriented graph $(\D_{\OO},\W_{\OO})$,  where the set of vertices is given by \( \D_{\OO} \), and the set of oriented edges consists of all ordered pairs of maximal leaf domains \( (D, D^{\sspc\prime}) \in \D_{\OO}\times \D_{\OO}\) for which there exists \( \phi \in \W_{\OO} \) satisfying $\overline{\rule{0cm}{0.35cm}D} \cap \overline{\rule{0cm}{0.35cm}D^{\sspc\prime}} = \{\phi\}$ and $D\subset R(\phi)$, or equivalently, $D^{\sspc\prime} \subset L(\phi)$.

Each oriented edge  \( (D, D^{\sspc\prime}) \) in the oriented graph $(\D_{\OO},\W_{\OO})$ is denoted by \( D \leadsto D^{\sspc\prime} \).

\mycomment{-0.1cm}
\begin{lemma}\label{lemma:planar_embedding}
    The graph $(\D_{\OO}, \W_{\OO}) $ is an oriented planar forest, with each tree component corresponding to a connected component of the set $\spc \bigcup_{\O \in \OO}C_\O$.
\end{lemma}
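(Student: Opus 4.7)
The plan is to first establish a bijection between critical leaves and edges of the graph, then rule out cycles using the fact that each critical leaf separates the plane, and finally identify the tree components with the connected components of $\bigcup_{\O \in \OO} C_\O$.

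First, for each $\phi \in \W_{\OO}$ I would apply Lemma \ref{lemma:trivial_neighborhoods} to obtain a trivial neighborhood $V_\phi$ disjoint from every other leaf of the locally-finite family $\bigcup_{\O \in \OO} \partial C_\O$. Every leaf of $\F$ meeting $V_\phi \cap R(\phi)$ then intersects no leaf of $\bigcup_{\O \in \OO} \partial C_\O$ other than $\phi$, hence lies in a single connected component of $\R^2 \setminus \bigcup_{\O \in \OO}\partial C_\O$, giving a well-defined right-adjacent domain $D^R_\phi \in \D_{\OO}$; likewise one obtains a left-adjacent $D^L_\phi \in \D_{\OO}$. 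Since $\phi$ is critical there exists $\O_* \in \OO$ with $\phi \in \partial C_{\O_*}$; according to whether $\phi \in \partial_L C_{\O_*}$ or $\phi \in \partial_R C_{\O_*}$, exactly one of $D^R_\phi$, $D^L_\phi$ is contained in $C_{\O_*}$, so $D^R_\phi \neq D^L_\phi$. This yields a well-defined directed edge $D^R_\phi \leadsto D^L_\phi$. Conversely, any edge is specified by a condition $\overline{D}\cap \overline{D^\pp} = \{\phi\}$, which determines $\phi$ uniquely, so the map $\phi \longmapsto (D^R_\phi, D^L_\phi)$ is a bijection between $\W_{\OO}$ and the edge set.

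Next I would show the underlying undirected graph has no simple cycle. Suppose for contradiction that $D_0, D_1, \dots, D_{k-1}, D_k = D_0$ is a simple cycle with associated critical leaves $\phi_0, \dots, \phi_{k-1}$, which are pairwise distinct by the bijection above. The topological line $\phi_0$ separates the plane into $L(\phi_0)$ and $R(\phi_0)$, and $D_0, D_1$ lie on opposite sides by definition of the edge. For each $j \geq 1$, the leaves $\phi_j$ and $\phi_0$ are disjoint (leaves of $\F$ are pairwise disjoint), so the connected set $D_j \cup \{\phi_j\} \cup D_{j+1}$ is contained in $\R^2 \setminus \phi_0$ and must lie entirely in one of the two sides; in particular $D_j$ and $D_{j+1}$ lie on the same side of $\phi_0$. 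Walking around the cycle therefore produces exactly one switch of sides (at step $i = 0$), so $D_k$ would lie on the opposite side of $\phi_0$ from $D_0$, contradicting $D_k = D_0$.

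For the planar embedding, I would take the $\{V_\phi\}_{\phi \in \W_{\OO}}$ to be pairwise disjoint (again by Lemma \ref{lemma:trivial_neighborhoods}), fix a base point $x_D \in D$ for each $D \in \D_{\OO}$, and realize each edge $D^R_\phi \leadsto D^L_\phi$ as the concatenation of two positively transverse arcs inside $V_\phi$ joining $x_{D^R_\phi}$ and $x_{D^L_\phi}$ to a common point of $\phi$; disjointness of the $V_\phi$'s and of the domains ensures that the resulting edges meet only at shared vertices, producing a planar embedding carrying the stated orientation. For the identification of tree components I would use the set-theoretic decomposition $\bigcup_{\O \in \OO} C_\O = \bigl(\bigsqcup_{D \in \D_{\OO}} D\bigr) \sqcup \W_{\OO}$: any path in $\bigcup_{\O \in \OO} C_\O$ joining points of two maximal leaf domains meets only finitely many of the finitely many critical leaves, hence induces a walk in the forest, while each forest edge lifts to a path in $\bigcup_{\O \in \OO} C_\O$ crossing the associated critical leaf, giving the desired bijection between tree components and connected components of $\bigcup_{\O \in \OO} C_\O$. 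The hard part will be the parity argument in the second paragraph, which relies crucially on the bijection between critical leaves and edges: without uniqueness, a single critical leaf could support several edges and the side-switching count around a cycle would no longer be well-defined.
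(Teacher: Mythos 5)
Your proposal is correct and follows essentially the same route as the paper: both arguments rest on the fact that each critical leaf $\phi$ separates the plane and corresponds to a unique edge of the graph, so that a cycle is impossible (you phrase this as a parity count of side-switches around a putative cycle, while the paper observes that deleting the unique edge crossing $\phi$ disconnects its component into the two sides of $\phi$ — two equivalent characterizations of a forest). The planar embedding by positively transverse arcs and the identification of tree components with the connected components of $\bigcup_{\O \in \OO} C_\O$ likewise match the paper's construction.
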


\mycomment{-0.3cm}
\begin{proof} For each maximal leaf domain $D \in \D_{\OO}$, let $p_D\in \R^2$ be a point in the region formed as union of all leaves in $D$. For any $D,D^\pp\in \D_{\OO}$ satisfying $D \leadsto D^\pp$, denote by  $\phi_{D,D^\pp} \in \W_{\OO}$ the critical leaf in between $D$ and $D^\pp$, meaning, $\overline{\rule{0cm}{0.35cm}D} \cap \overline{\rule{0cm}{0.35cm}D^{\sspc\prime}} = \{\phi_{D,D^\pp}\}$. Observe that, since
    \mycomment{-0.13cm}
    $$ \phi_{D,D^\pp} \in \partial_L D \quad \text{ and } \quad \phi_{D,D^\pp} \in \partial_R D^\pp,$$

    \mycomment{-0.23cm}
\noindent
we can connect the point $p_D$ to $p_{D^\pp}$ via a positively transverse path $\gamma_{D,D^\pp}:[0,1]\longrightarrow \R^2$. By associating each core region $D \in \D_{\OO}$ with the point $p_D$, and each oriented edge $D\leadsto D^\pp$ with the path $\gamma_{D,D^\pp}$, we obtain a planar embedding of the oriented graph \( (\D_{\OO}, \W_{\OO}) \).

    \begin{figure}[h!]
        \center
        \mycomment{0.2cm}\begin{overpic}[width=9.3cm,height=5cm, tics=10]{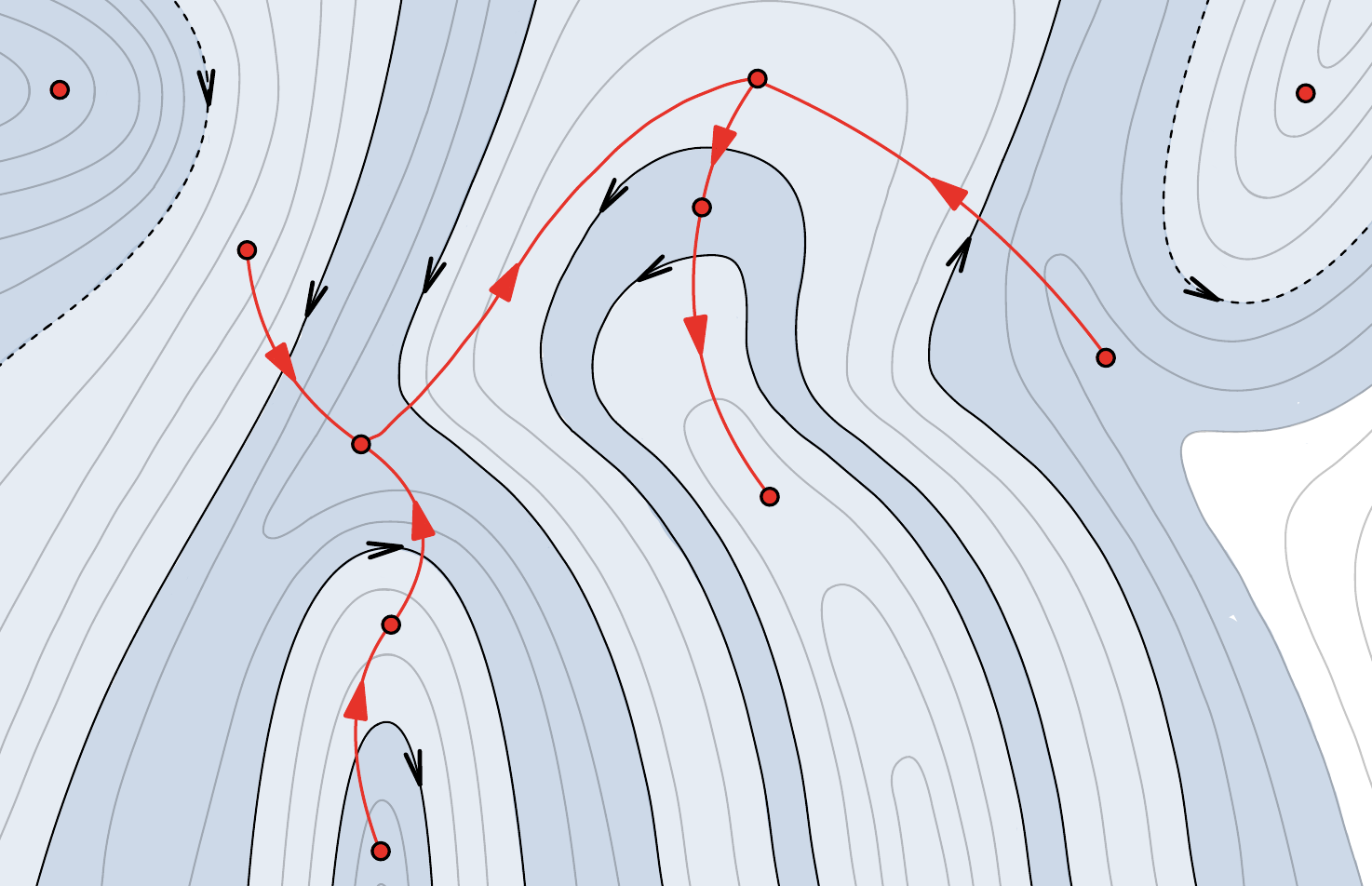}
    \end{overpic}
    \end{figure}

    This planar embedding shows us that the oriented graph $(\D_{\OO},\W_{\OO})$ is an oriented forest, that is, a disjoint union of oriented trees. For that, it suffices to prove that each connected component of the planar graph is no longer connected if one of its edges is deleted. Indeed, the path $\gamma_{D,D^\pp}$ is the unique edge of the planar graph intersecting the leaf $\phi_{D,D^\pp}$. Therefore, by deleting $\gamma_{D,D^\pp}$, we disconnect the connected component of the planar graph that contains the vertices $p_D$ and $p_{D^\pp}$ into two components, each lying on a different side of the leaf $\phi_{D,D^\pp}$. This allows us to conclude that each connected component of $(\D_{\OO},\W_{\OO})$ is an oriented tree corresponding to a connected component of $\spc \bigcup_{\O \in \OO}C_\O$. 
\end{proof}

\section{Proper transverse trajectories with minimal intersections}\label{chap:minimal_intersections}

\mycomment{-0.5cm}
In this section, we prove Theorem \ref{prop:pairwise_disj_traj}, which leverages the relations $\lesssim_L$ and $\lesssim_R$ to study the arrangement of proper transverse trajectories of a finite collection of orbits $\OO \subset \orbphi$ for some fixed leaf $\phi \in \F$. Recall that the transverse foliation \(\F\) is  said to be \emph{generic} for \(\OO\) if no leaf in $\F$ intersects more than one orbit in \(\OO\). This condition is indeed generic, as any transverse foliation can be perturbed to satisfy it (see Section \ref{sec:foliated_brouwer}).

\begin{thmx}\label{prop:pairwise_disj_traj}
    Let $\phi \in \F$, and let $\OO\subset \orbphi$ be a finite collection of orbits crossing $\phi$.  Assume that \(\F\) is generic with respect to \(\OO\). Then, there exists a family $\bigl\{\Gamma_{\O}\bigr\}_{\O \in \OO}$ of proper transverse trajectories associated with the orbits in $\OO$ that satisfy
    \mycomment{-0.12cm}
    $$\Gamma_{\O} \cap \Gamma_{\O^\pp} \cap \overline{\rule{0pt}{3.6mm}L(\phi)} = \varnothing, \quad \forall \sspc \O,\O^\pp \in \OO,\  \O \neq \O^\pp.$$

    \mycomment{-0.23cm}
    \noindent Moreover, the order at wich the intersection points $p_\O \in \Gamma_{\O} \cap \phi$ appear along the leaf $\phi$ is compatible with the relation $\lesssim_L$, in the sense that $p_\O < p_{\O^\pp}$ along the leaf $\phi$ only if $\O \lesssim_L \O^\pp$. 
\end{thmx}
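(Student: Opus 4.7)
The plan is to build the trajectories in two stages, exploiting the decomposition of $\bigcup_{\O\in\OO} C_\O \cap \overline{L(\phi)}$ into the finite collection of maximal leaf domains and critical leaves introduced in Section~\ref{sec:critical_leaves_and_core regions}, together with the directed planar forest structure on $(\D_\OO,\W_\OO)$ given by Lemma~\ref{lemma:planar_embedding}. Throughout, I order the orbits in $\OO$ by $\lesssim_L$, which by Proposition~\ref{prop:preorders} is a total preorder on $\orbphi$, and I group $\OO$ into its $\fasym$-equivalence classes $\mathcal{C}_1 <_L \cdots <_L \mathcal{C}_m$.

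In the first stage I treat one $\fasym$-class $\mathcal{C}_i$ at a time. Since the orbits in $\mathcal{C}_i$ are pairwise forward $\F$-asymptotic and $\F$ is generic for $\OO$, Lemma~\ref{lemma:common_half_traj} (cf.\ Remark~\ref{rmk:adapt}) yields proper transverse trajectories whose forward halves beyond some common leaf $\psi_i \in L(\phi) \cap \bigcap_{\O\in\mathcal{C}_i} C_\O$ coincide. I then fan out this common forward half into $|\mathcal{C}_i|$ pairwise disjoint parallel positively transverse copies: because each leaf met by the common half is crossed by a unique orbit of $\OO$, the Homma--Schoenflies theorem supplies a trivial-neighborhood of the common half in which it is the vertical axis and the leaves of $\F$ are horizontal arcs, and the perturbation amounts to drawing parallel copies of the axis inside this model. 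The assignment of labels to the copies within the class is free, and the position of the corresponding intersection point with $\phi$ inherits a free choice within the $\fasym$-class.

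In the second stage I patch the per-class constructions using the finite tree structure of $(\D_\OO,\W_\OO)$. For each maximal leaf domain $D \in \D_\OO$ contained in $\overline{L(\phi)}$, the orbits in $\OO(D)$ inherit a total preorder from $\lesssim_L$; straightening $D$ via Homma--Schoenflies into $\R^2$ with a horizontal parallel foliation, I draw the trajectory pieces of $\OO(D)$ inside $D$ as disjoint positively transverse arcs whose order at every vertical matches this preorder. Orbits in $\OO_\omega(D)$ converge inside $D$ to the prescribed cut of $\partial_L D$ given by the first stage, orbits in $\OO_\alpha(D)$ emanate from the prescribed cut of $\partial_R D$, and orbits in $\OOstart(D) \cap \OOend(D)$ enter through a single leaf of $\partial_R D$ and exit through a single leaf of $\partial_L D$, both determined by the edges incident to $D$ in the planar forest. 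At each critical leaf $\phi^\pp \in \W_\OO$ separating adjacent $D, D^\pp \in \D_\OO$, the orders of entering and leaving trajectories agree because both are induced by the restriction of $\lesssim_L$ to the orbits crossing $\phi^\pp$. Concatenating the local constructions produces a locally-finite family of positively transverse paths yielding the required proper transverse trajectories $\{\Gamma_\O\}_{\O \in \OO}$, and the compatibility $p_\O < p_{\O^\pp} \Rightarrow \O \lesssim_L \O^\pp$ at $\phi$ holds by construction, since the order along every leaf traversed by at least two trajectories is built to reflect $\lesssim_L$.

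The main obstacle is the within-class stage: mutually forward $\F$-asymptotic proper trajectories are forced by Lemma~\ref{lemma:equivalence} to intersect arbitrarily far in time if drawn as in the proof of Theorem~\ref{thmx:proper_trajectories_restate}, yet one must produce pairwise disjoint ones. The crucial hypothesis is the genericity of $\F$, which guarantees that along the common forward half provided by Lemma~\ref{lemma:common_half_traj} no leaf of $\F$ carries two orbits of $\OO$; the single-leaf/single-orbit model is therefore available everywhere along the common half, and Homma--Schoenflies makes the fanning out into parallel copies legitimate without breaking the positive transversality. Once the within-class case is settled, patching across different classes is essentially combinatorial and is forced by the planar forest of maximal leaf domains.
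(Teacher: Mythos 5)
Your outline follows the same route as the paper: a common transverse half-line for each $\fasym$-class (Lemma \ref{lemma:common_half_traj}), a fan-out of that half-line into per-orbit disjoint copies inside a foliated strip around it (this is precisely Lemma \ref{lemma:blow-up}), and a routing of the different classes organised by the decomposition into maximal leaf domains, with the order at $\phi$ dictated by $\lesssim_L$. The gap is in the step you call ``essentially combinatorial'', which is where the paper does its real work (Lemma \ref{lemma:shared_disjoint}). Inside the maximal leaf domain $D$ containing $\phi$, two distinct $\fasym$-classes belonging to $\OO_\omega(D)$ never leave $D$, so their trajectory pieces must be made disjoint on the whole non-compact set $\overline{\rule{0pt}{3.4mm}L(\phi)}\cap D$ while still passing through every prescribed orbit point and still realizing the correct cuts of $\partial_L D$ (otherwise they fail to be proper). ``Drawing disjoint positively transverse arcs whose order at every vertical matches the preorder'' in a straightened copy of $D$ does not establish this: the straightened open model forgets the boundary leaves, and whether a transverse curve constrained to contain the orbit points can avoid accumulating on $\partial D$ is exactly the nontrivial content behind Theorem \ref{thmx:proper_trajectories_restate}; asserting the matching order at every leaf for such constrained curves is, restricted to $D$, essentially the statement being proved. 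The missing ingredient is Lemma \ref{lemma:equivalence}/Lemma \ref{lemma:dual_equivalence}: since the classes are not forward $\F$-asymptotic, there is a leaf $\phi^*$ beyond which their class half-lines can be taken disjoint, and by the forced-order property of Remark \ref{sec:description_order_proper_trajectories} the order in which they then cross $\phi^*$ is automatically compatible with $\lesssim_L$; only after that does the untangling Lemma \ref{lemma:untangle}, applied to the compact region between $\phi$ and $\phi^*$ (with the endpoints on $\phi^*$ prescribed), yield disjointness on all of $\overline{\rule{0pt}{3.4mm}L(\phi)}$ with the required order at $\phi$. Your proposal never invokes this eventual disjointness where it is needed, and without it the per-domain ``drawing'' is unjustified.

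A secondary point: you misquote Lemma \ref{lemma:equivalence}. Forward $\F$-asymptotic orbits are not \emph{forced} to have all their proper transverse trajectories intersect arbitrarily far forward; the lemma asserts the \emph{existence} of such a pair of trajectories, and indeed the theorem you are proving produces trajectories for asymptotic orbits that are disjoint on $\overline{\rule{0pt}{3.4mm}L(\phi)}$. With that citation corrected, and with the eventual-disjointness step for non-asymptotic classes inserted as above, your construction coincides with the paper's proof.
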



\begin{figure}[h!]
    \center\begin{overpic}[width=8.58cm, height=4cm, tics=10]{thm1B.pdf}
        \put (44.5,-5) {\colorbox{white}{\color{black}\large$\displaystyle\  \phi \spc$}}
         \put (103.5,15) {{\color{myBLUE}\large$\displaystyle \Gamma_{\O^\pp} $}}
         \put (103.5,28.2) {{\color{myRED}\large$\displaystyle \Gamma_\O$}}
         \put (-25,20) {{\color{myRED}\large$\displaystyle \O \ $}{\color{black}\large$\displaystyle \lesssim_L$}{\color{myBLUE}\large $\spc\displaystyle \O^\pp$}}
\end{overpic}
\end{figure}

\mycomment{0.1cm}
A result similar to Theorem \ref{prop:pairwise_disj_traj} holds for the right side of the leaf $\phi$, with the same assumptions and conditions, but using the relation $\lesssim_R$ instead of $\lesssim_L$. 


\mycomment{0.05cm}
Later on this section, Theorem \ref{prop:pairwise_disj_traj} will be used to prove Theorem \ref{thm:main_Part1}.

\subsection{Technical lemmas for the proof of Theorem \ref{prop:pairwise_disj_traj}}\label{sec:proof_pairwise_disj_traj}

 Before we prove Theorem \ref{prop:pairwise_disj_traj}, we need four technical lemmas. In each of these lemmas, we assume that the transverse foliation $\F$ is generic for the finite collection of orbits $\OO \subset \orb$ is question. 

\subsubsection*{Technical Lemma 1: Flexibility to obtain pairwise disjoint transverse paths}

\begin{lemma}\label{lemma:untangle}
Let $\phi, \phi' \in \F$ be two distinct leaves that satisfy $L(\phi') \subset L(\phi)$. Moreover,  let $\OO \subset \orbphi \cap \textup{Orb}(\phi')$ be a finite collection of orbits crossing both leaves $\phi$ and $\phi'$. Then, for any indexing $\sspc\OO=\{\O_{\sspc 1},\sspc \ldots, \sspc \O_{\sspc r}\}$, there exists a family $\{\gamma_i\}_{1\leq i \leq r}$ of pairwise disjoint paths $\gamma_i : [0,1] \longrightarrow \R^2$ positively transverse to $\F$ such that, for every $1\leq i \leq r$, the following holds:
\begin{itemize}[leftmargin=0.7cm]
\item The endpoints satisfy $\spc\gamma_i(0) \in \phi\spc$ and $\spc\gamma_i(1) \in \phi'\spc$.
\item The fragment of orbit $\overline{\rule{0pt}{3.6mm} L(\phi) \cap R(\phi')} \cap \O_i$ is contained in $\gamma_i\spc$.
\item The endpoints are ordered $\gamma_i(0) < \gamma_{i+1}(0)$ according to the orientation of $\phi$.
\end{itemize}
\end{lemma}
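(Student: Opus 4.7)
\textbf{Proof plan for Lemma \ref{lemma:untangle}.} The plan is to reduce the problem to a purely combinatorial one on a standard foliated strip, via Homma--Schoenflies. I first claim that the open set $L(\phi) \cap R(\phi')$ is a leaf domain of $\F$. It is open, connected (as it corresponds, on $\mathbb S^2$, to a disk bounded by the simple closed curves obtained from $\phi$ and $\phi'$ by adding the point at infinity), and trivially foliated: if $\phi_1, \phi_2$ lie strictly between $\phi$ and $\phi'$, then $L(\phi_1) \cap L(\phi_2) \supset L(\phi') \neq \varnothing$ and $R(\phi_1) \cap R(\phi_2) \supset R(\phi) \neq \varnothing$, which rules out the non-separability dichotomy and forces $\phi_1$ and $\phi_2$ to be comparable. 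Invoking Lemma \ref{lemma:connected} together with the classification of trivially foliated planes and the Homma--Schoenflies-based extension to the boundary leaves (in the spirit of Section \ref{sec:prelim_foliations}), I obtain an orientation-preserving homeomorphism
\[
h : \overline{L(\phi) \cap R(\phi')} \longrightarrow [\sspc 0, 1\sspc ] \times \R
\]
sending $\phi$ to $\{0\} \times \R$ and $\phi'$ to $\{1\} \times \R$ (both oriented downward), and every leaf of $\F$ lying in the open strip to a vertical line $\{t\} \times \R$ oriented downward.

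Next I analyse the orbit data in this model. For each $\O_i \in \OO$, the set $P_i := \O_i \cap \overline{L(\phi) \cap R(\phi')}$ is a finite consecutive block of iterates of $\O_i$, since the Brouwer-line properties $f(L(\phi')) \subset L(\phi')$ and $f^{-1}(R(\phi)) \subset R(\phi)$ prevent the orbit from re-entering either exterior once it has left. No leaf of $\F$ meets $\O_i$ twice either, because any leaf $\phi''$ satisfies $f(\phi'') \subset L(\phi'')$ with $f(\phi'') \cap \phi'' = \varnothing$. Hence $h(P_i) = \{(\tau_i^a, y_i^a) : 0 \le a < k_i\}$ with strictly increasing $\tau_i^a$, and by the genericity hypothesis on $\F$, the sets $\{\tau_i^a\}_a$ are pairwise disjoint across distinct indices $i$. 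Note that at most one orbit in $\OO$ can have a point on $\phi$ (and similarly for $\phi'$), so at most one endpoint $\gamma_i(0)$ is pinned down in advance.

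Finally I construct the paths combinatorially. List the finite set $\{\tau_i^a\}_{i,a} \cup \{0, 1\}$ in increasing order as $0 = \tau_0 < \tau_1 < \cdots < \tau_{N+1} = 1$ and seek continuous functions $g_1, \ldots, g_r : [\sspc 0, 1\sspc ] \to \R$ whose graphs $\widetilde\gamma_i(s) := (s, g_i(s))$ satisfy: (i) at every node $\tau_k$, the tuple $(g_1(\tau_k), \ldots, g_r(\tau_k))$ is strictly increasing, and (ii) whenever $\tau_k = \tau_i^a$, one has $g_i(\tau_k) = y_i^a$. At each node there is at most one constrained coordinate among the $r$ values, and the remaining $r-1$ can be placed freely above and below it to respect the strict ordering; at $\tau_0 = 0$ this freedom realizes any prescribed indexing of the starting endpoints on $\phi$. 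Linear interpolation of each $g_i$ between consecutive nodes preserves strict componentwise ordering of the tuples, so the graphs $\widetilde\gamma_i$ are pairwise disjoint, monotone in the first coordinate (hence positively transverse to the vertical foliation), and contain all prescribed orbit points. Setting $\gamma_i := h^{-1} \circ \widetilde\gamma_i$ yields the required family. The principal technical difficulty is Step~1: the construction of the global straightening $h$ of the closed strip, which rests on the triviality of the foliation between $\phi$ and $\phi'$ established above; once in the standard model, the rest is essentially a bookkeeping argument.
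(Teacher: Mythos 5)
Your proposal is correct and follows essentially the same route as the paper: both reduce, via Homma--Schoenflies, to a standard vertically foliated strip between $\phi$ and $\phi'$ and then read off the $\gamma_i$ as monotone graphs transverse to the vertical leaves. The only difference is one of bookkeeping --- the paper pushes the genericity hypothesis into the normalization itself, conjugating so that each fragment $\O_i\cap\overline{L(\phi)\cap R(\phi')}$ already lies on the horizontal segment $[0,1]\times\{1/i\}$ and taking $\gamma_i$ to be that segment, whereas you normalize only the foliation and then build the graphs by explicit piecewise-linear interpolation through the prescribed orbit points, which is a slightly more self-contained rendering of the same argument.
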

\vspace*{-0.2cm}
\begin{figure}[h!]
    \center
    \mycomment{0.2cm}\begin{overpic}[width=8cm, height=3cm, tics=10]{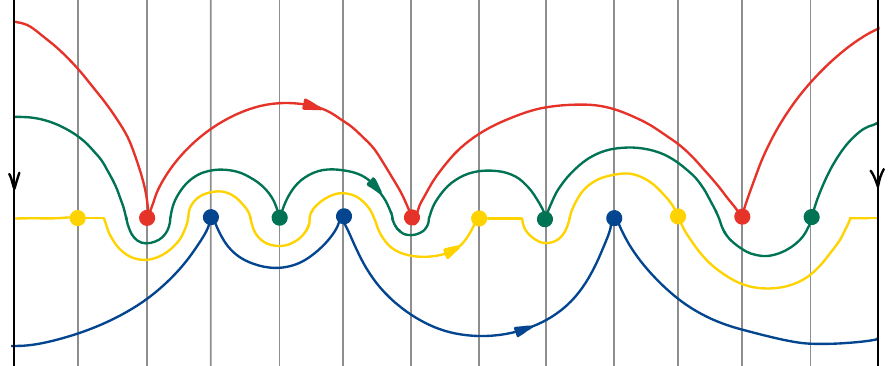}
        \put (-5,18) {\color{black}\large$\displaystyle \phi $}
        \put (20,-8) {\colorbox{white}{\color{black}\large$\displaystyle \OO=\{\sspc$\color{myRED}$\O_{\sspc 1}$\color{black}$\spc,\spc$\color{myGREEN}$\O_{\sspc 2}$\color{black}$\spc,\spc$\color{myYELLOW}$\O_{\sspc 3}$\color{black}$\spc,\spc$\color{myBLUE}$\O_{\sspc 4}$\color{black}$\sspc\}$}} 
        \put (101.5,18) {\color{black}\large$\displaystyle \phi' $}
\end{overpic}
\end{figure}

\mycomment{0.3cm}
\begin{proof}
    Consider an arbitrary indexing of the set of orbits $\OO=\{\O_{\sspc 1},\sspc \ldots, \sspc \O_{\sspc r}\}$. 
    Observe that, by means of the Homma-Schoenflies theorem, up to conjugacy, we can assume that: 
    \begin{itemize}
        \item $\phi=\{0\} \times \R$ and $\phi'= \{1\}\times \R$, both oriented downwards.
        \item The square $[0,1]^2$ is vertically foliated by $\F$.
        \item For each $i \in \{1,...,r\}$, we have that $\O_i \cap \overline{\rule{0cm}{0.35cm}L(\phi) \cap R(\phi)}\subset [0,1] \times \{1/i\}$.
    \end{itemize}
    \mycomment{0.2cm}
   The third point only holds because of the generic condition imposed on $\F$ with respect to $\OO$,  as it ensures that each leaf of $\F$ intersects at most one orbit in $\OO$.
    For each $i \in \{1,...,r\}$,  let $\gamma_{\O_i}:[0,1]\longrightarrow \R^2$ be the horizontal segment given by $\gamma_{\O_i}(t) = (t, 1/i)$. This concludes the proof, as the family $\{\gamma_{\O_i}\}_{1\leq i \leq r}$ satisfies the conditions of the lemma.
\end{proof}

\subsubsection*{Technical Lemma 2: Transverse half-lines for classes of $\F$-asymptotic orbits}

\begin{lemma}\label{lemma:common_half_traj}
    Let $\phi \in \F$, and let $\OO\subset \orbphi$ be a finite set of orbits crossing the leaf $\phi$. Then, for every class of forward $\F$-asymptotic orbits $[\O\spc]^+ \in \OO/{\fasym}\spc$, there exists a half-line $\spc\Gamma^{^+}:[\spc 0,\infty)\longrightarrow\R^2$ that is
    positively transverse to $\F$ and satisfies the following conditions:
    \begin{itemize}
        \item The endpoint $\sspc\Gamma^{^+}(0)$ lies on the leaf $ \phi$.
        \item The forward orbit $\sspc\overline{\rule{0pt}{3.6mm}L(\phi)} \cap \O\spc $ is contained in $\Gamma^{^+}$, for all $\O \in [\O\sspc]^+.$
    \end{itemize}
\end{lemma}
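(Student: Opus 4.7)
The plan is to adapt the construction from the proof of Theorem~\ref{thmx:proper_trajectories_restate} so as to handle every orbit in the class $[\O]^+$ simultaneously, as sketched in Remark~\ref{rmk:adapt}. The generic assumption on $\F$ plays a crucial role here: it forces every leaf of $\F$ to meet at most one orbit in $\OO$, so that the orbit points of $[\O]^+$ lie on pairwise distinct leaves, and can therefore be linearly indexed by an increasing sequence of leaves.

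First I would enumerate $[\O]^+ = \{\O_1, \ldots, \O_k\}$. Genericity implies that $\phi$ contains at most one orbit point from $[\O]^+$, so we set $\Gamma^+(0)$ equal to that point if it exists, and otherwise choose $\Gamma^+(0) \in \phi$ arbitrarily. Because the orbits in $[\O]^+$ are pairwise forward $\F$-asymptotic and finite in number, we can then select a common leaf $\psi^* \in L(\phi)$ contained in $\bigcap_i C_{\O_i}$ such that $L(\psi^*) \cap C_{\O_i} = L(\psi^*) \cap C_{\O_j}$ for all $i,j$, and chosen so that the bounded strip $\overline{L(\phi) \cap R(\psi^*)}$ contains only finitely many points of $\bigcup_i \O_i$.

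In this bounded strip, which is the closure of a leaf domain, the orbit points of $[\O]^+$ lie on pairwise distinct leaves that are totally ordered by the natural order of $\F$. Applying Lemma~\ref{lemma:untangle} successively between consecutive such leaves and concatenating the resulting transverse segments at their common orbit-point endpoints yields a positively transverse path $\alpha : [0,1] \to \overline{L(\phi) \cap R(\psi^*)}$ from $\Gamma^+(0)$ to a chosen point on $\psi^*$ that contains every orbit point of $[\O]^+$ in this strip.

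In the remaining region $\overline{L(\psi^*)}$, the definition of $\fasym$ together with the choice of $\psi^*$ forces $\O_1, \ldots, \O_k$ to cross exactly the same leaves and to share identical $\partial_L C_{\O_i}$ and identical cuts $(\Ltop \O_i, \Lbot \O_i)$. By genericity, the points of $\bigcup_i \O_i \cap L(\psi^*)$ lie on pairwise distinct leaves in a common leaf domain, so they can be enumerated $p_1, p_2, \ldots$ in the natural leaf order. The construction of Theorem~\ref{thmx:proper_trajectories_restate} now runs essentially unchanged on this ``super-orbit'': choose a pairwise disjoint family of trivial-neighborhoods $\{V_\psi\}_{\psi \in \partial_L C_{\O_1}}$ disjoint from $\bigcup_i \O_i$, truncate them as in the proof of Theorem~\ref{thmx:proper_trajectories_restate}, and between consecutive $p_m$ and $p_{m+1}$ construct a positively transverse path joining them in the strip between their respective leaves while avoiding every $V_\psi$. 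The same local-finiteness argument as in Theorem~\ref{thmx:proper_trajectories_restate} then yields a proper transverse half-line $\beta : [1,\infty) \to \overline{L(\psi^*)}$ through all $p_m$, and concatenating with $\alpha$ produces the required $\Gamma^+$. The main obstacle is precisely the justification of this last adaptation: the single-orbit recipe transfers because genericity forces the $p_m$ to lie on pairwise distinct, linearly ordered leaves, which is the structural feature the construction of Theorem~\ref{thmx:proper_trajectories_restate} actually relies on.
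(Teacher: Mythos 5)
Your proposal is correct and follows essentially the same route as the paper, whose proof of this lemma is precisely the adaptation indicated in Remark~\ref{rmk:adapt} of the argument of Lemma~\ref{lemma:equivalence} (itself a variant of the construction in Theorem~\ref{thmx:proper_trajectories_restate}): genericity linearly orders the merged orbit points of the class along a chain of leaves, and the equality of the cuts $(\Ltop \O_i, \Lbot \O_i)$ forced by $\fasym$ is exactly what allows the connecting transverse arcs to avoid the suitably truncated trivial-neighborhoods of the common boundary $\partial_L C_{\O_i}$, so that the concatenation is proper. One cosmetic quibble: the region $\overline{\rule{0pt}{3.6mm}L(\phi)\cap R(\psi^*)}$ is neither bounded nor in general the closure of a leaf domain, but this is harmless since the relevant orbit points all lie in the genuine leaf domain $\bigcap_i C_{\O_i}$, which is all your argument actually uses.
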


\begin{figure}[h!]
    \center
    \hspace*{-1cm}\begin{overpic}[width=7cm, height=3.7cm, tics=10]{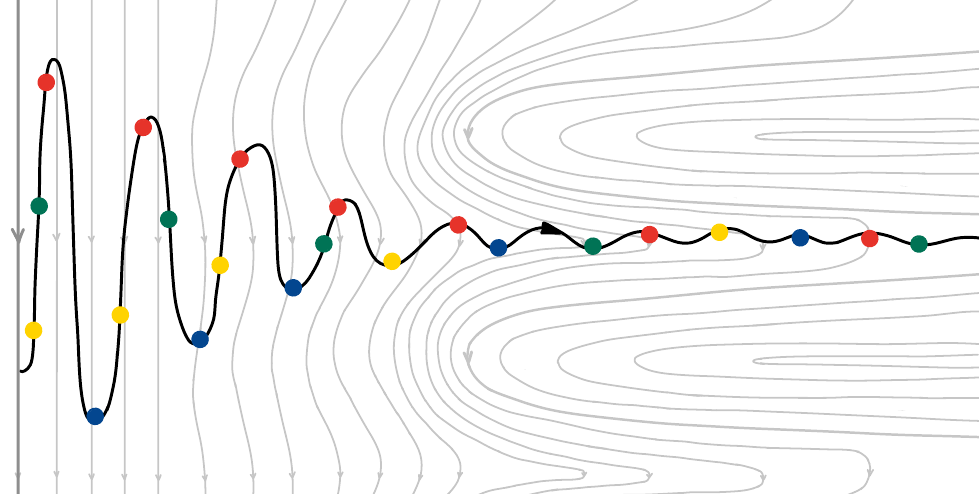}
        \put (-7,27) {\colorbox{white}{\color{myDARKGRAY}\large$\displaystyle \phi $}}
        \put (103,27) {{\color{black}\large$\displaystyle {\Gamma^{^+}}$}}
        \put (-5,-6) {\colorbox{white}{\rule{0cm}{0.5cm}\color{black}\large$\quad\quad\quad\quad\ \   \ $\color{black}$[\sspc\O\spc]^+ = \{\spc$\color{myRED}$\O_{\sspc 1}$\color{black}$\spc,\spc$\color{myGREEN}$\O_{\sspc 2}$\color{black}$\spc,\spc$\color{myYELLOW}$\O_{\sspc 3}$\color{black}$\spc,\spc$\color{myBLUE}$\O_{\sspc 4}$\color{black}$\sspc\}$}}
\end{overpic}
\end{figure}

\begin{proof}
    The proof follows from the proof of Proposition \ref{lemma:equivalence} (see Remark \ref{rmk:adapt}).
\end{proof}

\subsubsection*{Technical Lemma 3: Disjoint transverse half-lines for $\F$-asymptotic classes}

\begin{lemma}\label{lemma:shared_disjoint}
    Let $\phi \in \F$, and let $\OO\subset \orbphi$ be a finite set of orbits crossing the leaf $\phi$.  We enumerate $\OO/{\fasym} = \{[\O_{\sspc 1}]^+,\sspc \ldots, \sspc [\O_{\sspc r}]^+\}$ so that \([\O_{\sspc i}]^+\sspc \lesssim_L \sspc[\O_{\sspc i+1}]^+\) for all \(i \in \{1, \ldots, r-1\}\).
    Then, there exists a family $\spc\{\Gamma^{^+}_i\}_{1\leq i \leq r} \spc $ of pairwise disjoint half-lines $\spc\Gamma^{^+}_{i}:[\spc 0,\infty)\longrightarrow\R^2$ positively transverse to $\F$ such that, for every $1\leq i \leq r$, the following holds:
    \begin{itemize}
\item The endpoints $\spc\Gamma_i^{^+}(0)$ lies on the leaf $ \phi\sspc$.
\item The forward orbit $\sspc\overline{\rule{0pt}{3.6mm}L(\phi)} \cap \O\spc $ is contained in $\Gamma^{^+}_i$, for all orbits $\O \in [\O_{\sspc i}]^+.$
\item The endpoints are ordered $\Gamma_i^{^+}(0) < \Gamma_{i+1}^{^+}(0)$ according to the orientation of $\phi$.
\end{itemize}

\end{lemma}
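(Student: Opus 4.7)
The plan is to combine the three technical lemmas of this subsection: Lemma~\ref{lemma:common_half_traj} provides one positively transverse half-line per forward $\F$-asymptotic class, Lemma~\ref{lemma:dual_equivalence} forces these half-lines to be disjoint beyond a leaf chosen far enough to the left, and Lemma~\ref{lemma:untangle} is then used to untangle what remains inside a bounded strip. The hard part will be this last step, because Lemma~\ref{lemma:untangle} produces one path per orbit whereas a single class may contain several orbits that must all be captured by the same merged half-line.

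First, for each class $[\O_i]^+$ I would apply Lemma~\ref{lemma:common_half_traj} to obtain a positively transverse half-line $\widetilde\Gamma_i^+$ with $\widetilde\Gamma_i^+(0) \in \phi$ containing every forward fragment $\overline{L(\phi)} \cap \O$ for $\O \in [\O_i]^+$. For each pair $i \neq j$, pick $\O \in [\O_i]^+$ and $\O^\pp \in [\O_j]^+$: since $\O \not\fasym \O^\pp$, extending $\widetilde\Gamma_i^+$ and $\widetilde\Gamma_j^+$ backwards into proper transverse trajectories of $\O$ and $\O^\pp$ and applying Lemma~\ref{lemma:dual_equivalence} yields a leaf $\phi^{(i,j)} \in C_\O \cap C_{\O^\pp}$ on the left of which $\widetilde\Gamma_i^+$ and $\widetilde\Gamma_j^+$ are disjoint. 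Choosing $\phi^*$ with $L(\phi^*) \subset L(\phi^{(i,j)})$ for every pair and still crossed by every orbit in $\OO$, all the half-lines $\widetilde\Gamma_i^+$ become pairwise disjoint on $\overline{L(\phi^*)}$. Setting $p_i^* := \widetilde\Gamma_i^+ \cap \phi^*$ (after a harmless perturbation so that the $p_i^*$ are distinct), Remark~\ref{sec:description_order_proper_trajectories} forces $p_1^* < \cdots < p_r^*$ along $\phi^*$, since a reversed inequality $p_j^* < p_i^*$ for some $i<j$ would give $[\O_j]^+ \lesssim_L [\O_i]^+$ strictly, contradicting the indexing. Let $\Gamma_i^\infty$ denote the sub-half-line of $\widetilde\Gamma_i^+$ lying in $\overline{L(\phi^*)}$ and starting at $p_i^*$.

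It remains to redraw the half-lines inside the strip $S := \overline{L(\phi) \cap R(\phi^*)}$ so that the new pieces are pairwise disjoint, end at the prescribed points $p_i^*$, and still contain every orbit fragment of $\O \in [\O_i]^+$ lying in $S$. I would apply Lemma~\ref{lemma:untangle} to the orbits in $\OO$ with the indexing that respects $\lesssim_L$ and groups orbits of a common class consecutively, obtaining pairwise disjoint positively transverse paths $\{\gamma_\O\}_{\O \in \OO}$ between $\phi$ and $\phi^*$. Since disjoint positively transverse paths between two leaves preserve the order of their endpoints, the endpoints of the $\gamma_\O$ on $\phi^*$ appear in the same blocks as their starting points on $\phi$, and each block is adjacent to the matching $p_i^*$. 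Working inside the planar normalization from the proof of Lemma~\ref{lemma:untangle} (where $S$ is straightened to $[0,1] \times \R$ with vertical foliation and each orbit is placed on a horizontal segment), I can then merge the paths belonging to a common class $[\O_i]^+$ into a single monotone-in-$x$ curve $\gamma_i$ that visits every required orbit point and lands at $p_i^*$, routing the $r$ merged curves in disjoint horizontal bands arranged according to the $\lesssim_L$-order on $\phi^*$. Concatenating $\gamma_i$ with $\Gamma_i^\infty$ at $p_i^*$ then produces the desired $\Gamma_i^+$: pairwise disjointness follows from the disjointness on both $S$ and $\overline{L(\phi^*)}$, and the order $\Gamma_i^+(0) < \Gamma_{i+1}^+(0)$ on $\phi$ is inherited from the order of the $p_i^*$ on $\phi^*$ by the endpoint-preservation property. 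The main obstacle, as anticipated, is the merging step: one must check that the pattern of orbit points within a class is combinatorially compatible with disjointness from the neighbouring classes, and this compatibility is precisely what the $\lesssim_L$-ordering and the nesting of the $p_i^*$ guarantee.
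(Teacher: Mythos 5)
Your construction reproduces the paper's argument only in the special case where all orbits of $\OO$ remain forever inside a common leaf domain, and it has a genuine gap otherwise. The critical step is the choice of a leaf $\phi^*$ that lies to the left of every $\phi^{(i,j)}$ \emph{and} is still crossed by every orbit in $\OO$. Such a leaf need not exist: the intersection $\bigcap_{\O \in \OO} C_\O$ is a leaf domain that may be bounded on the left, with different orbits exiting through distinct (non-separable) leaves of its left boundary. Concretely, if $\O_3$ leaves the common domain through a boundary leaf $\psi$ while $\O_1$ and $\O_2$ continue together far into $L(\psi)$, then no leaf to the left of $\phi^{(1,2)}$ is crossed by $\O_3$, so there is no strip $\overline{L(\phi)\cap R(\phi^*)}$ to which Lemma~\ref{lemma:untangle} applies (that lemma requires all orbits to cross both bounding leaves). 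In the paper's terminology you have only treated the case $\OOend(D)=\varnothing$ for the maximal leaf domain $D$ containing $\phi$; the harder half of the proof is the case $\OOend(D)\neq\varnothing$, where one must choose ordered exit points on each boundary leaf $\phi_L \in \partial_L D$, route the exiting half-lines through them with transverse paths disjoint from the half-lines of the classes that stay, and then recurse over the directed forest of maximal leaf domains. Your proposal has no mechanism for controlling how trajectories that have diverged into different leaf domains avoid one another, which is precisely what that recursion provides.

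A secondary, repairable point: your merging step applies Lemma~\ref{lemma:untangle} per orbit and then fuses the paths of a class into one curve. The paper avoids this by keeping the class-level arcs $\Gamma_i^{+}([0,t^*])$ from Lemma~\ref{lemma:common_half_traj} and untangling those directly, using the genericity of $\F$ (each leaf meets at most one orbit of $\OO$) to place each class's orbit fragments on disjoint horizontal levels in the straightened strip. Your version can be made to work in the straightened picture, but note that the compatibility you invoke at the end is not automatic from the $\lesssim_L$-ordering alone; it also uses that within a single forward-asymptotic class the fragments in the strip can be threaded by one monotone transverse curve, which is again the content of Lemma~\ref{lemma:common_half_traj} restricted to the strip rather than a consequence of the untangling lemma.
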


\mycomment{0cm}
\begin{figure}[h!]
    \center
    \hspace*{-1.5cm}\begin{overpic}[width=6.5cm, height=3.8cm, tics=10]{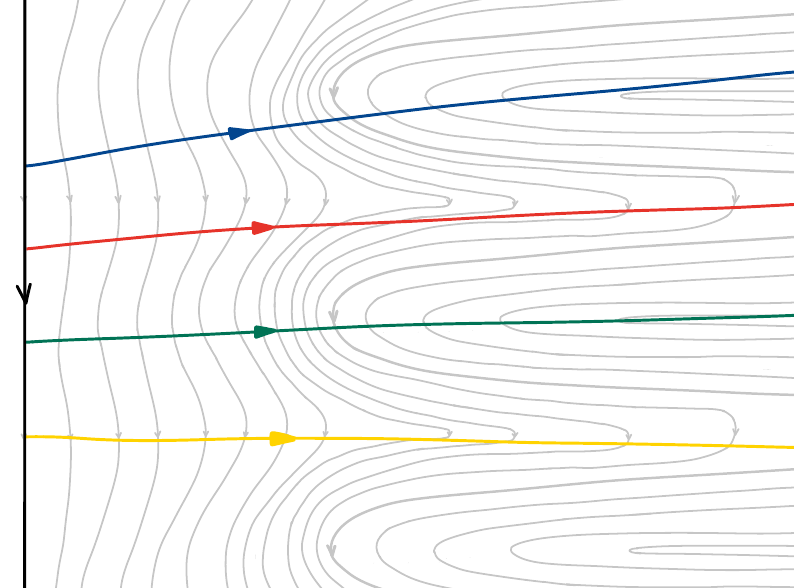}
        \put (-6,33) {\colorbox{white}{\color{black}\large$\displaystyle \phi $}}
        \put (102,42) {{\color{myRED}\large$\displaystyle {\Gamma^{^+}_{[\O\sspc_2]^+}}$}}
        \put (102,28) {{\color{myGREEN}\large$\displaystyle {\Gamma^{^+}_{[\O\sspc_3]^+}}$}}
        \put (102,14) {{\color{myYELLOW}\large$\displaystyle {\Gamma^{^+}_{[\O\sspc_4]^+}}$}}
        \put (102,56) {{\color{myBLUE}\large$\displaystyle {\Gamma^{^+}_{[\O\sspc_1]^+}}$}}
        \put (-19,-6) {\colorbox{white}{\rule{0cm}{0.5cm}\color{black}\large$\ \  \ \OO/{\fasym}\ =\{\spc$\color{myBLUE}$[\O\sspc_1]^+${\normalsize\color{black}$\spc\lesssim_L\spc$}{\color{myRED}$[\O\sspc_2]^+$}{\normalsize\color{black}$\spc\lesssim_L\spc$}\color{myGREEN}$[\O\sspc_3]^+${\normalsize\color{black}$\spc\lesssim_L\spc$}\color{myYELLOW}$[\O\sspc_4]^+$\color{black}$\spc\}\quad \quad \quad $ }} 
\end{overpic}
\end{figure}
\mycomment{0.1cm}
\newpage

\mycomment{-1.7cm}
\begin{proof}
    Note that, we can enumerate $\OO/{\fasym} = \{\sspc[\O_{\sspc 1}]^+,\sspc \ldots, \sspc [\O_{\sspc r}]^+\sspc\}$ in an increasing order according to $\lesssim_L$ because this relation induces a total order on $\OO/{\fasym}$ (see Proposition \ref{prop:preorders}).
    Let $\D_{\OO}$ be the decomposition maximal leaf domains induced by $\OO$, and let $\W_{\OO}$ be the set of critical leaves of $\OO$ (see Definition \ref{def:critical_leaves_and_core_regions}).
    Since the leaf $\phi$ is crossed by all orbits in $\OO$,  we know that $\phi$ is not a critical leaf of $\OO$. Hence, there exists a maximal leaf domain $D \in \D_{\OO}$ such that $\phi \in D$ and, consequently, $\OO(D) = \OO.$ We separate the proof into two cases:

    \mycomment{0.2cm}

     \noindent \underline{\textit{Case (1): Assuming that $\OOend(D) = \varnothing\spc$:}}
    
    \mycomment{0.1cm}
    According to Lemma \ref{lemma:common_half_traj}, each class $[\O_{\sspc i}]^+ \in \OO/{\fasym}$ admits a half-line $\Gamma^{^+}_i:[\spc 0,\infty)\longrightarrow\R^2$
    that is positively transverse to $\F$, has its endpoint $\sspc\Gamma^{^+}_i(0) \in \phi$, and satisfies

    \mycomment{-0.11cm}
    \begin{equation*}
    \sspc\overline{\rule{0pt}{3.6mm}L(\phi) \cap \O}\spc \subset \Gamma^{^+}_i
    , \quad \forall \O \in [\O_{\sspc i}]^+.
    \end{equation*}

    \mycomment{-0.22cm}
\noindent According to Proposition \ref{lemma:equivalence}, for any $i,j \in \{1, \ldots, r\sspc\}$ with $i \neq j$, there exists $t>0$ so that 
    \mycomment{-0.11cm}
    \[
        \Gamma^{^+}_i\bigl(\sspc[\spc t, \infty)\sspc\bigr) \cap \Gamma^{^+}_j\bigl(\sspc[\spc t, \infty)\sspc\bigr) = \varnothing.
    \]

    \mycomment{-0.22cm}
    \noindent
    Since $\OO$ is finite, this implies that for each $i \in \{1,...,r\}$, there exists  $\phi_i\in \F$ such that
    \mycomment{-0.11cm}
     $$\Gamma^{^+}_i \cap \Gamma^{^+}_j \cap \overline{\rule{0pt}{3.6mm}L(\phi_i)} = \varnothing, \quad \forall j \neq i.$$

     \mycomment{-0.22cm}
     \noindent
     Observe that, in the present setting, it holds that $\OO = \OO_\omega(D).$ Thus, we have $\phi_i \in D$ for every $i \in \{1,...,r\}$. Consider a leaf $\phi^* \in D$ that satisfies 
    $ L(\phi^*) \subset  L(\phi_i)$ for every $i \in \{1,...,r\}$. 
\begin{figure}[h!]
    \center
    \mycomment{-0.3cm}\begin{overpic}[width=9cm, height=4.2cm, tics=10]{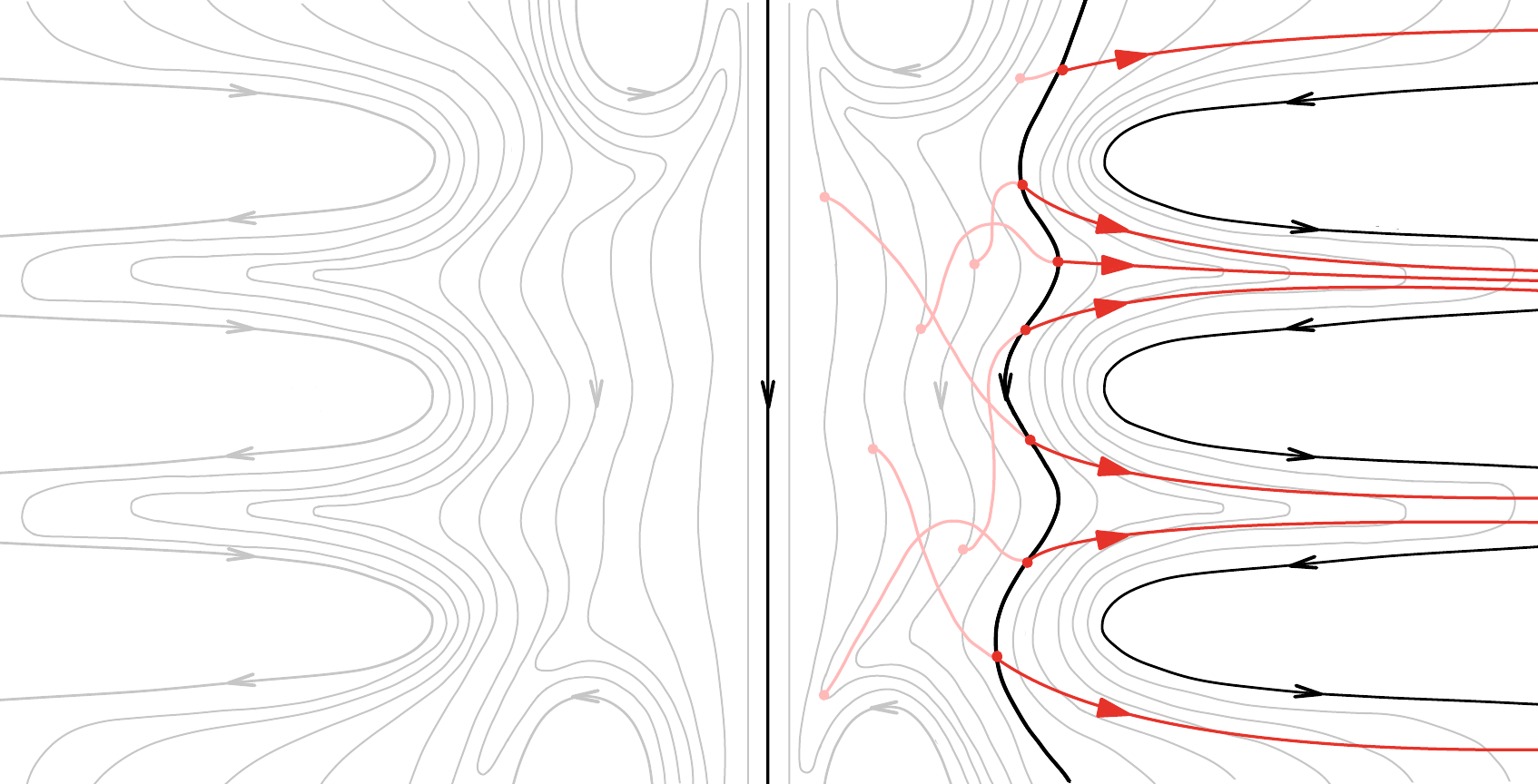}
        \put (69.5,49) {\color{black}\normalsize$\displaystyle \phi^*$}
        \put (48.5,49) {\color{black}\normalsize$\displaystyle \phi$}
         \put (103,22) {\color{myRED}\large$\displaystyle \left\{\Gamma^{^+}_{\sspc i}\right\}_{1\leq i \leq r}$}
        \put (40,22) {\colorbox{white}{\color{black}\large$\displaystyle \rule{0cm}{0.27cm}\ \ $}} 
        \put (40,22) {\color{black}\large$\displaystyle D $} 
\end{overpic}
\end{figure}

\mycomment{-0.3cm}

    By reparametrizing the half-lines in $\{\Gamma^{^+}_i\}_{1\leq i \leq r}\sspc$, we can obtain some $t^*>0$ such that the point $\spc\Gamma^{^+}_i(t^*)\sspc$ lies on the leaf $\phi^*$ for every $i \in \{1,...,r\}$. We remark that, since the half-lines in the family $\{\Gamma^{^+}_i\}_{1\leq i \leq r}$ are pairwise disjoint on the left side $\overline{\rule{0cm}{0.36cm}L(\phi^*)}$, it follows from property described in Section \ref{sec:description_order_proper_trajectories}  that the points in the family $\{\Gamma^{^+}_i(t^*)\}_{1\leq i \leq r}$ must be ordered as
    \mycomment{-0.11cm}
    $$\Gamma_i^{^+}(t^*) < \Gamma_{i+1}^{^+}(t^*) \quad \text{ along the leaf } \phi^*, \quad \forall i \in \{1, \ldots, r-1\}.$$

    \mycomment{-0.22cm}
    \noindent
    Using Lemma \ref{lemma:untangle}, we can replace the arcs $\{\Gamma^{^+}_i(\spc[\spc 0, t^* \spc]\spc)\}_{1 \leq i \leq r}$ so that they become pairwise disjoint, while still preserving the endpoint $\Gamma^{^+}_i(t^*)$ and satisfying 
    \mycomment{-0.11cm}
    $$ \overline{\rule{0pt}{3.6mm}L(\phi) \cap \O \cap R(\phi_i)} \subset \Gamma^{^+}_i(\sspc[\sspc 0, t^* \sspc]\sspc), \quad \forall \O \in [\O_i\sspc]^+.$$

    \mycomment{-0.22cm}
    \noindent
    This concludes the proof under the assumption that $\OOend(D) = \varnothing.$

    \noindent \underline{\textit{Case (2): Assuming that $\OOend(D) \neq \varnothing\spc$:}}
    
    First, consider a leaf $\phi_{\textup{out}} \in D$ such that every leaf contained in $D \cap L(\phi_{\textup{out}} )$ is disjoint from the set $\bigcup_{\O \in \OOend(D)} \O$. Moreover, we can assume that $L(\phi_{\textup{out}}) \subset L(\phi)$.

    Consider the partition of the set $\{1,...,r\}$ into the following two sets
    \mycomment{-0.11cm}
    \begin{align*}
        I_\omega =\{1\leq i \leq r \mid [\O_{\sspc i}]^+ \in \OO_\omega(D)/{\fasym}\spc\}, \ \ \\
        I_{\text{out}} = \{1\leq i \leq r \mid [\O_{\sspc i}]^+ \in \OOend(D)/{\fasym}\spc\}.
    \end{align*}
    
    \mycomment{-0.22cm}
    Note that we can apply the result proved in the previous case ($\OOend(D) = \varnothing$) to the classes of orbits $[\O_{\sspc i}]^+ \in \OO_\omega(D)/{\fasym}$ and the leaf $\phi_{\textup{out}}$. This yields a family $\{\Gamma^{^+}_i\}_{i \in I_\omega}$ of pairwise disjoint half-lines positively transverse to $\F$ such that, for all $i \in I_\omega$, the following conditions hold:
    \begin{itemize}
        \item The endpoint $\Gamma^{^+}_i(0) $ lies on the leaf $\phi_{\textup{out}}$.
        \item The half-orbit $\sspc\overline{\rule{0pt}{3.6mm}L(\phi_{\textup{out}}) \cap \O}$ is contained in $\sspc\Gamma^{^+}_i\sspc$, for all $\O \in [\O_{\sspc i}]^+$.
        \item The points $\left\{\sspc q_i:=\sspc\Gamma^{^+}_i(0)\right\}_{i \in I_\omega}$ are ordered increasingly along $\phi_{\textup{out}}$, that is,
        \mycomment{-0.11cm}
        $$q_i < q_{j} \quad \text{ along the leaf } \phi_{\textup{out}} \sspc, \quad \forall i < j \in I_\omega.$$
    \end{itemize}

    \mycomment{-0.15cm}
    Meanwhile, the set $I_{\text{out}}$ admits itself a partition into sets of the form
    \mycomment{-0.13cm}
    $$ I_{\text{out}} (\phi_L) = \{ i \in I_{\text{out}} \mid [\O_{\sspc i}]^+ \in \OO(\phi_L)/{\fasym}\spc\}, \quad \text{ where } \phi_L \in \partial_L D.$$

    \mycomment{-0.25cm}
    \noindent
    In other words, for each leaf $\phi_L \in \partial_L D$, we have a set $I_{\text{out}}(\phi_L)$ formed by the indices of the classes of orbits $[\O_{\sspc i}]^+ \in \OOend(D)/{\fasym}$ that are leaving the maximal leaf domain $D$ through the leaf $\phi_L \in \partial_L D$. Now, for each leaf $\phi_L \in \partial_L D$, we can consider a family of points $\{p_i\}_{i \in I_{\text{out}}(\phi_L)}$ lying on the leaf $\phi_L$ and ordered increasingly according to the orientation of $\phi_L$, that is,
    \mycomment{-0.15cm}
    $$p_i < p_j \quad \text{ along the leaf } \phi_L , \quad \forall i < j \in I_{\text{out}}(\phi_L).$$

    \mycomment{-0.27cm}
    \noindent
    Next, we consider a family of points $\{q_i\}_{i \in I_{\text{out}}}$ lying on the leaf $\phi_{\textup{out}}$ in such a way that the whole family $\{q_i\}_{1 \leq i \leq r}$ is ordered increasingly according to the orientation of $\phi_{\textup{out}}$, that is,
    \mycomment{-0.15cm}
    $$q_i < q_j \quad \text{ along the leaf } \phi_{\textup{out}}, \quad \forall 1\leq i< j \leq r.$$

    \mycomment{-0.27cm}
    \noindent
    This setting is illustrated in the figure below.

    \vspace*{-0.1cm}
    \begin{figure}[h!]
    \center
    \mycomment{0.2cm}\begin{overpic}[width=9.4cm, height=5cm, tics=10]{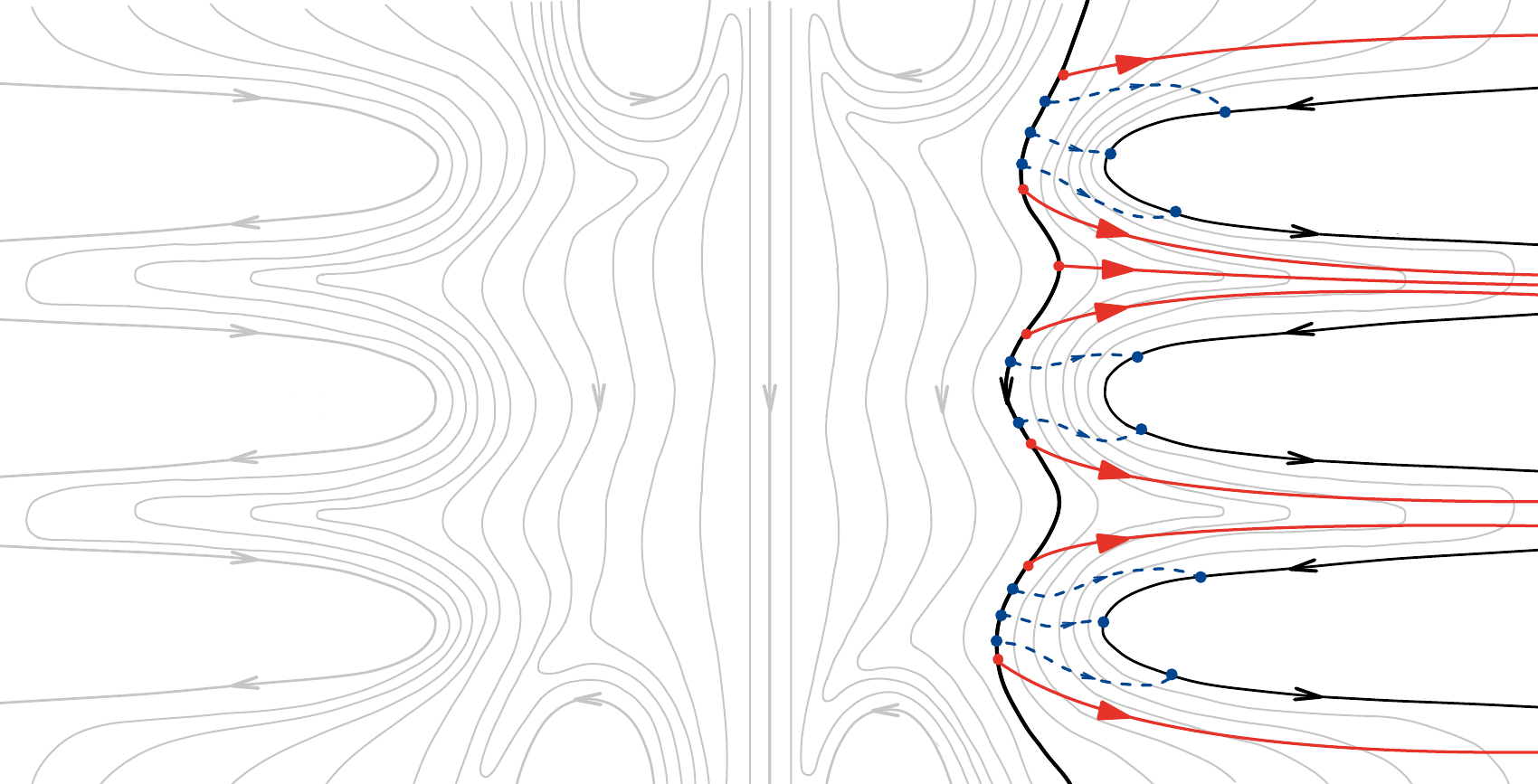}
        \put (69.5,55.8) {\color{black}$\displaystyle \phi_{\text{out}}$}
         \put (103,25) {\color{myRED}\large$\displaystyle \left\{\Gamma^{^+}_{\sspc i}\right\}_{i \in I_\omega}$}
        \put (42,25) {\colorbox{white}{\color{black}\large$\displaystyle \rule{0cm}{0.27cm}\ \ $}} 
        \put (43,25) {\color{black}\large$\displaystyle D $} 
\end{overpic}
\end{figure}

    Note that, for each $i \in I_{\text{out}}$,
    we can construct a path $\gamma_i:[0,1]\longrightarrow\R^2$ that is positively transverse to $\F$ and connects the point $q_i$ to the point $p_i$. By constructing these paths one at a time, we can ensure that the resulting family $\{\gamma_i\}_{i \in I_{\text{out}}}$ satisfies the following properties:
    \begin{itemize}[leftmargin=1.3cm]
        \item $\gamma_i \sspc \cap \sspc \gamma_j=\varnothing$, for any $i,j \in I_{\text{out}}$ with $i \neq j$.
        \item $\gamma_i \sspc \cap \sspc \Gamma^{^+}_j=\varnothing$, for any $i \in I_{\text{out}}$ and $j \in I_\omega$.
    \end{itemize}
    \mycomment{0.2cm}
    Using Lemma \ref{lemma:untangle}, we can extend all half-lines in the family $\{\Gamma^{^+}_i\}_{i \in I_\omega}$ and all paths in the family  $\{\gamma_i\}_{i \in I_\text{out}}$ so that their endpoints $\Gamma^{^+}_i(0) $ and $\gamma_i(0) $ lie on the leaf $\phi$, while preserving pairwise disjointness and ensuring that no orbit point lying between $\phi_{\textup{out}}$ and $\phi$ is missed.

    Finally, by repeating this same construction for each leaf $\phi_L \in \partial_L D$, we extend the paths in the family $\{\gamma_i\}_{i \in I_\text{out}}\sspc$ until we get a family $\{\Gamma^{^+}_i\}_{i \in I_\text{out}}$ of pairwise disjoint half-lines satisfying
    \mycomment{-0.3cm}
    $$\overline{\rule{0pt}{3.6mm}L(\phi) \cap \O}\spc \subset \Gamma^{^+}_i, \quad \forall \O \in [\O_{\sspc i}]^+.$$

    \mycomment{-0.23cm}
    \noindent
    This family, together with the family $\{\Gamma^{^+}_i\}_{i \in I_\omega}$ that we already had before, gives us a family of half-lines $\{\Gamma^{^+}_i\}_{1\leq i \leq r}$ that satisfies the conditions of the lemma. This concludes the proof.
\end{proof}

\subsubsection*{Technical Lemma 4: Blowing-up proper transverse trajectories}

\mycomment{-0.1cm}
\begin{lemma}\label{lemma:blow-up}
    Let $\phi \in \F$, and let $\OO\subset \orbphi$ be a finite set of orbits crossing the leaf $\phi$. 
    Consider a class $[\O\spc]^+ \in \OO/{\fasym}$ and a half-line $\sspc\Gamma^{^+}_{[\O\sspc]^+}: [0, \infty) \longrightarrow\R^2$ as in Lemma \ref{lemma:common_half_traj}. Moreover, consider any open neighborhood $\spc U_{[\O\sspc]^+}\subset \R^2$ of the half-line $\sspc\Gamma^{^+}_{[\O\sspc]^+}$. Therefore,  for any indexing $\sspc[\O\spc]^+ = \{\O_{\sspc 1},\sspc \ldots,\sspc \O_{\sspc r}\}$ there exists a family $\bigl\{\Gamma^{^+}_{\O}\bigr\}_{\O \in [\O\sspc]^+}$ of pairwise disjoint half-lines $\Gamma^{^+}_{\O} : [0, \infty) \longrightarrow\R^2$, each positively transverse to $\F$, such that:
\begin{itemize}[leftmargin=0.8cm]
    \item The half-line $\sspc\Gamma^{^+}_{\O}\sspc$ is contained in $\sspc U_{[\O\sspc]^+}$, for all $\O \in [\O\sspc]^+$.
    \item The endpoint $\sspc\Gamma^{^+}_{\O}(0)$ lies on the leaf $ \phi$, for all $\O \in [\O\sspc]^+$.
        \item The forward orbit $\sspc\overline{\rule{0pt}{3.6mm}L(\phi)} \cap \O\spc $ is contained in $\Gamma^{^+}_\O$, for all $\O \in [\O\sspc]^+.$
    \item The endpoints $\bigl\{\Gamma^{^+}_{\O}(0)\bigr\}_{\O \in [\O\sspc]^+}$ are ordered along the leaf $\phi$ according to $\leq$.
\end{itemize}  
\end{lemma}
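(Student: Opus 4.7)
The plan is to use the Homma--Schoenflies theorem to straighten $\Gamma^{^+}_{[\O\spc]^+}$ and the foliation in a tubular neighborhood inside $U_{[\O\spc]^+}$, and then to build the blown-up family as graphs in the resulting chart. First I would apply the half-line analog of example (2) at the end of Section~\ref{sec:prelim_foliations} to obtain a closed neighborhood $V$ of $\Gamma^{^+}_{[\O\spc]^+}$ contained in $U_{[\O\spc]^+}$, together with an orientation-preserving homeomorphism $\varphi:\R^2\to\R^2$ that sends $V$ to the half-strip $[\spc0,\infty)\times[\sspc-1,1\sspc]$, sends $\Gamma^{^+}_{[\O\spc]^+}$ to $[\spc0,\infty)\times\{0\}$, sends the compact arc $\phi\cap V$ to $\{0\}\times[\sspc-1,1\sspc]$ with the orientation of $\phi$ mapped to the downward direction, and sends every other leaf of $\F$ meeting $V$ to a vertical arc $\{t\}\times[\sspc-1,1\sspc]$ oriented downwards. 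Such a $V$ can be assembled as the union of sufficiently small cross-sections of $\F$ over the intersection points of $\Gamma^{^+}_{[\O\spc]^+}$ with each leaf it crosses, and then shrunk to fit inside $U_{[\O\spc]^+}$.

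Next, by Lemma~\ref{lemma:common_half_traj}, for each orbit $\O_{\sspc i}\in[\O\spc]^+$ the forward half-orbit $\overline{\rule{0pt}{3.6mm}L(\phi)}\cap\O_{\sspc i}$ lies on $\Gamma^{^+}_{[\O\spc]^+}$, so in the chart it becomes a discrete set of points $(t_n^i,0)\in[\spc0,\infty)\times\{0\}$. The genericity of $\F$ for $\OO$ ensures that the sets of horizontal coordinates $\{t_n^i\}_n$ and $\{t_n^j\}_n$ are disjoint whenever $i\neq j$, since each vertical line in the chart corresponds to a leaf of $\F$ that meets at most one orbit of $\OO$. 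I would then define, for each $i$, a continuous function $y_i:[\spc0,\infty)\to(\sspc-1,1)$ with $y_i(t_n^i)=0$ at the horizontal coordinates of its own orbit points and with prescribed starting values $y_1(0)>y_2(0)>\cdots>y_r(0)$ realizing the indexing $\O_{\sspc 1},\ldots,\O_{\sspc r}$ along the downward-oriented image of $\phi$. The graphs $\tilde\Gamma^+_{\sspc i}(t):=(t,y_i(t))$ can be kept pairwise disjoint throughout $[\spc0,\infty)$ by maintaining the vertical ordering $y_i(t)>y_{i+1}(t)$ for all $t$: between successive orbit points any coincidence would contradict the intermediate value theorem applied to $y_i-y_{i+1}$, and at each orbit coordinate $t_n^i$ one places $y_j(t_n^i)$ strictly positive for $j<i$ and strictly negative for $j>i$. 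This is feasible because the full set of orbit points in $V$ is locally-finite.

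Pulling back via $\varphi^{-1}$, the half-lines $\Gamma^+_{\O_{\sspc i}}:=\varphi^{-1}\circ\tilde\Gamma^+_{\sspc i}$ then lie in $V\subset U_{[\O\spc]^+}$, start at distinct points of $\phi$ ordered according to the indexing, pass through every point of $\overline{\rule{0pt}{3.6mm}L(\phi)}\cap\O_{\sspc i}$, are positively transverse to $\F$ (since strict monotonicity of $\tilde\Gamma^+_{\sspc i}$ in the first coordinate corresponds in the chart to crossing each vertical leaf exactly once, from right to left), and are pairwise disjoint.

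The main obstacle is the construction of the chart $\varphi$ via Homma--Schoenflies, as $\Gamma^{^+}_{[\O\spc]^+}$ may accumulate at leaves of $\Ltop {\O_{\sspc i}}\cup\Lbot {\O_{\sspc i}}$ at infinity. However, since $\Gamma^{^+}_{[\O\spc]^+}$ is a proper half-line by Lemma~\ref{lemma:common_half_traj}, a thin enough tubular neighborhood $V$ avoids these limit leaves entirely, and the straightening reduces to a direct generalization of example (2) in Section~\ref{sec:prelim_foliations} from topological lines to half-lines. Once this chart is in place, the construction of the functions $y_1,\ldots,y_r$ is a straightforward inductive exercise leveraging the local finiteness of the orbit points in $V$.
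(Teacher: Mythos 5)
Your proposal is correct and follows essentially the same route as the paper: straighten a closed, trivially foliated neighborhood of $\Gamma^{^+}_{[\O\spc]^+}$ inside $U_{[\O\spc]^+}$ via Homma--Schoenflies, use the genericity of $\F$ to place the points of distinct orbits on distinct verticals, and take $r$ pairwise disjoint monotone transverse curves through the respective orbit points, with properness coming from the fact that the curves stay in a closed set disjoint from the limit leaves in $\partial_L C_{[\O\sspc]^+}$. The only cosmetic difference is that the paper first pushes the points of $\O_{\sspc i}$ onto the horizontal line at height $1/i$ by a vertical homeomorphism of the strip and then uses straight horizontals, rather than your interleaving graphs $y_i$ through the points on the axis.
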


\vspace*{-0.3cm}
\begin{figure}[h!]
    \center
    \hspace*{-0cm}\begin{overpic}[width= 10cm, height=4.4cm, tics=10]{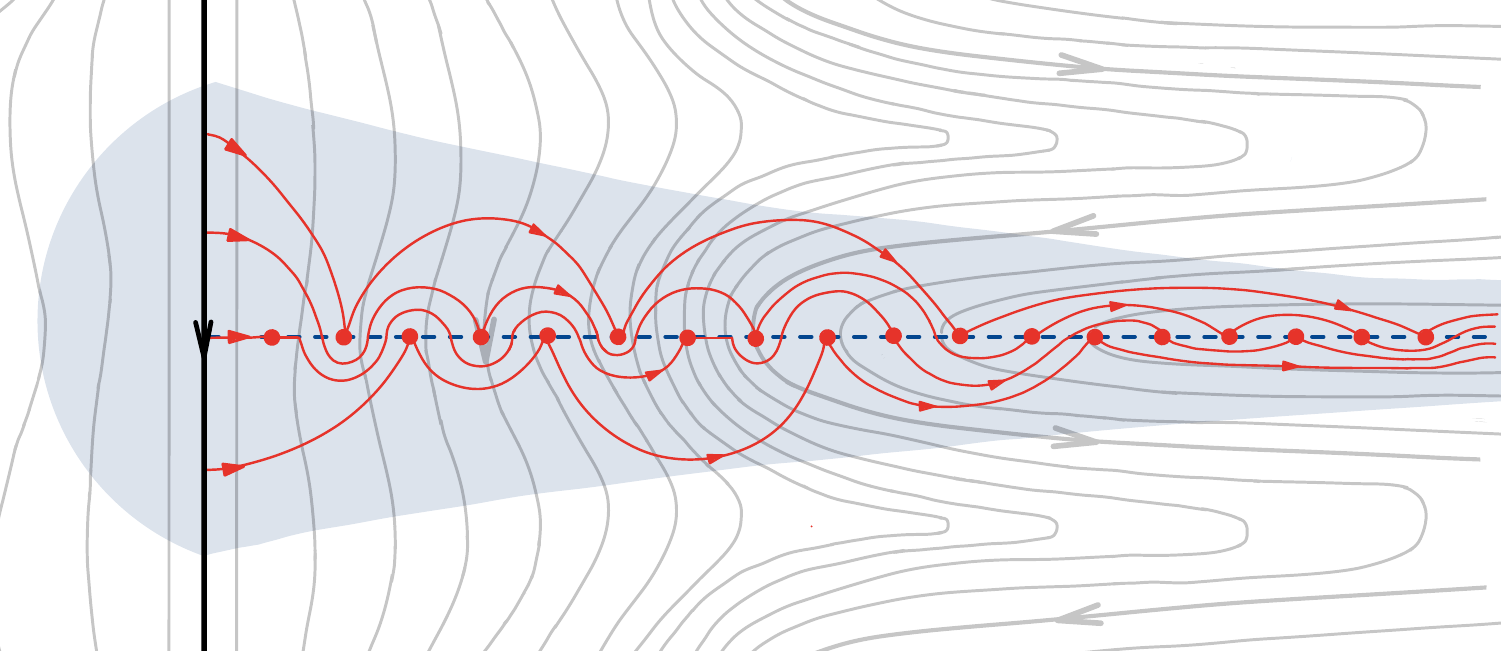}
        \put (-9,21.5) {{\color{myBLUE}\large$\displaystyle {U_{[\O\sspc]^+}} $}}
        \put (7,40) {\colorbox{white}{\color{black}\large$\displaystyle \phi$}}
        \put (103,19) {\color{myRED}\large$\displaystyle \left\{\Gamma^{^+}_{\sspc \O}\right\}_{\O \in [\O\sspc]^+}$}
\end{overpic}
\end{figure}

\vspace*{-0.2cm}

\begin{proof} Using the Homma-Schoenflies theorem, we can assume that:
    \begin{itemize}
        \item $\Gamma^{^+}_{[\O\spc]^+}(\sspc[\sspc0,\infty)\sspc) = [\sspc0,\infty) \times \{0\}$, oriented positively in the first coordinate.
        \item $\phi = \{0\} \times \R$, oriented downwards.
        \item $[\sspc0,\infty)\times [-1,1] \subset U_{[\O\spc]^+}$ is vertically foliated by $\F$.
    \end{itemize}

    \mycomment{0.3cm}
    Consider an arbitrary enumeration of the class $[\O\spc]^+ = \{\O_{\sspc 1},\ldots,\O_{\sspc r}\}$. Note that every two orbit $\O_{\sspc i}, \O_{\sspc j} \in [\O\spc]^+$, we have that $\partial_L C_{\O_{\sspc i}} = \partial_L C_{\O_{\sspc j}}$. Thus, we denoted this set by $\partial_L C_{[\O\spc]^+}.$
    Now, since the closed set $[\sspc0,\infty)\times [-1,1]$ is trivially foliated by $\F$ and contains the image of the transverse half-line $\Gamma^{^+}_{[\O\spc]^+}$, we conclude that $[\sspc0,\infty)\times [-1,1]$ is disjoint from $\partial_L C_{[\O\spc]^+}$. According to Lemma \ref{lemma:trivial_neighborhoods}, there exists a family $\{V_\phi\}_{\phi \in \partial_L C_{[\O\sspc]^+}}$ of sufficiently small trivial-neighborhoods for the leaves in $\partial_L C_{[\O\sspc]^+}$ such that each $V_\phi$ is disjoint from $ [\sspc0,\infty)\times [-1,1] $.

    Recall that each leaf of $\F$ is assumed to intersect at most one point in the set $\bigcup_{i=1}^r \O_i$. Thus, by moving the points of each orbit $\O_{\sspc i} \in[\O\spc]^+ $ vertically, we obtain a homeomorphism 
    \mycomment{-0.12cm}
    $$ \varphi: [\sspc0,\infty)\times [-1,1] \longrightarrow [\sspc0,\infty)\times [-1,1]$$

    \mycomment{-0.22cm}
    \noindent
    that fixes each vertical and satisfies $\varphi(\sspc\O_{\sspc i} \sspc\cap\sspc \overline{\rule{0cm}{0.34cm}L(\phi)}\sspc) \subset [\sspc0,\infty)\times \{1/i\}$ for all $ i\in \{1,\ldots,r\}.$
    For each $i \in \{1,...,n\}$, consider the path $\sspc\Gamma^+_{i}:[0,\infty\sspc) \longrightarrow \R^2$ defined by $\sspc\Gamma^+_{i}(t) = (\sspc t,\sspc 1/i\sspc ).$


    \mycomment{-0.12cm}
    Observe that each path $\varphi^{-1}\circ\Gamma^+_{i}$ is positively transverse to $\F$ and disjoint from all trivial-neighborhoods in $\{V_\phi\}_{\phi \in \partial_L C_{[\O\sspc]^+}}$. Therefore, as shown in the proof of Theorem \ref{thmx:proper_trajectories_restate}, this is property is enough to imply that each $\varphi^{-1}\circ\Gamma^+_{i}$ is a half-line (see Section 
    \ref{sec:proper_transverse_trajectories}).
    
    Thus, we ended up with a family $\{\varphi^{-1} \circ \Gamma^+_i\}_{i=1}^r$ of pairwise disjoint half-lines, each of them positively transverse to $\F$ and contained in the open neighborhood $U_{[\O\sspc]^+}$, with their endpoints $\{\varphi^{-1} \circ \Gamma^+_i(0)\}_{i=1}^r$ lying on the leaf $\phi$ and ordered as
    $$ \varphi^{-1} \circ \Gamma^+_i(0) < \varphi^{-1} \circ \Gamma^+_{i+1}(0) \quad \text{ according to the orientation of } \phi, \quad \forall i \in \{1, \ldots, r-1\}.$$
    This concludes the proof of the lemma.
\end{proof}

\newpage

\subsection{Proof of Theorem \ref{prop:pairwise_disj_traj}}\label{sec:proof_pairwise_disj_traj_indeed}
\mycomment{-0.3cm}

\begin{proof}
    \vspace*{-0.1cm}
    Before we start, we pay attention to the fact that the enumeration $\OO = \{\O_{\sspc 1},\ldots,\O_{\sspc r}\}$ fixed at the statement of the proposition satisfies $\O_{\sspc i} \lesssim_L \O_{\sspc i+1}$ for all $i \in \{1, \ldots, r-1\}$. 
    This implies that, each class $[\O\sspc]^+ \in \OO/{\fasym}$ is associated to an interval of $\{1,...,r\}$ given by
    $$I_{[\O\sspc]^+} = \{\sspc i \in \{1, \ldots, r\} \mid [\O_{\sspc i}]^+ = [\O\sspc]^+\sspc \}.$$ 
    Moreover, the intervals in $\{I_{[\O\sspc]^+}\}_{[\O\sspc]^+ \in \OO/{\fasym}}$ are pairwise disjoint and their arrangement inside the set $\{1, \ldots, r\}$ is determined by the total order $\lesssim_L$ on $\OO/{\fasym}$.

    By applying Lemma \ref{lemma:shared_disjoint} to the leaf $\phi$ and the set of orbits $\OO$, we obtain a family of pairwise disjoint half-lines $\{\sspc\Gamma^{^+}_{[\O\sspc]^+}\sspc\}_{[\O\sspc]^+ \in \OO/{\fasym}}\spc$, each positively transverse to $\F$ and satisfying 
    \mycomment{0.2cm}
    \begin{itemize}
        \item The endpoints $\sspc\Gamma^{^+}_{[\O\sspc]^+}(0)$ lie on the leaf $\phi$.
        \item The forward orbit $\sspc\overline{\rule{0pt}{3.6mm}L(\phi)} \cap \O_{\sspc i} $ is contained in $\Gamma^{^+}_{[\O\sspc]^+}\spc$, for all $i \in I_{[\O\sspc]^+}$.
        \item For any two distinct classes $[\O\sspc]^+, [\O\sspc'\sspc]^+ \in \OO/{\fasym}\spc$ satisfying $[\O\sspc]^+ \lesssim_L [\O\sspc']^+$, we have
        \mycomment{-0.12cm}
        $$\Gamma^{^+}_{[\O\sspc]^+}(0) < \Gamma^{^+}_{[\O\sspc']^+}(0) \quad \text{ along the leaf } \phi .$$
    \end{itemize}
    Next, consider a family $\{\sspc\spc U_{[\O\sspc]^+}\sspc\}_{[\O\sspc]^+ \in \OO/{\fasym}}$ of pairwise disjoint open sets satisfying 
    $$ \Gamma^{^+}_{[\O\sspc]^+} \subset U_{[\O\sspc]^+}, \quad \forall [\O\sspc]^+ \in \OO/{\fasym}.$$
    Finally, for each class $[\O\sspc]^+ \in \OO/{\fasym}$, we apply Lemma \ref{lemma:blow-up} to the half-line $\Gamma^{^+}_{[\O\sspc]^+}$ and the open neighborhood $U_{[\O\sspc]^+}$, with respect to the enumeration $[\O\sspc]^+ = \{\O_{\sspc i}\}_{i \in I_{[\O\sspc]^+}}\spc$. This yields a family of pairwise disjoint half-lines $\{\Gamma^{^+}_{\sspc i}\}_{i \in I_{[\O\sspc]^+}}$, each positively transverse to $\F$ and satisfying:
    \begin{itemize}[leftmargin=1cm]
        \item The half-line $\Gamma^{^+}_{\sspc i}$ is contained in the open neighborhood $U_{[\O\sspc]^+}$.
        \item The forward orbit $\sspc\overline{\rule{0pt}{3.6mm}L(\phi)} \cap \O_{\sspc i} $ is contained in $\Gamma^{^+}_{\sspc i}$.
        \item The endpoint $\Gamma^{^+}_{\sspc i}(0)$ lies on the leaf $\phi$.
        \item The points $\{\Gamma^{^+}_{\sspc i}(0)\}_{i \in I_{[\O\sspc]^+}}$ are ordered $\Gamma^{^+}_{\sspc i}(0) < \Gamma^{^+}_{\sspc i+1}(0)$ along the leaf $\phi$.
    \end{itemize}
    

\mycomment{-0.1cm}
    By repeating this process for each class $[\O\sspc]^+ \in \OO/{\fasym}$, we obtain a family of pairwise disjoint half-lines $\{\Gamma^{^+}_{\sspc i}\}_{1\leq i \leq r}$ that satisfies the conditions stated in the proposition.
\end{proof}

\vspace*{-0.1cm}
By combining Remark \ref{sec:description_order_proper_trajectories} with Theorem \ref{prop:pairwise_disj_traj}, we obtain directly the following corollary, which provides a (generic) definition of $\lesssim_L$ and $\lesssim_R$ in terms of proper transverse trajectories.

\vspace*{-0.1cm}
\begin{corollary}
\label{cor:alternative_def_preorders}
Let $\OO \subset \orb$ be a finite set of orbits, and assume that $\F$ is generic for $\OO$.
     Let $\phi \in \F$ be a leaf, and let $\O,\O^\pp \in \OO(\phi)$ be two distinct orbits. Then, we have that $\O\lesssim_L \O^\pp$ if, and only if, there exists proper transverse trajectories $\Gamma_\O$ and $\Gamma_{\O^\pp}$ of the orbits $\O$ and $\O^\pp$, respectively, such that the following conditions hold:
\begin{itemize}[leftmargin=1.3cm]
    \item[\textup{\textbf{(i)}}] The trajectories $\Gamma_\O$ and $\Gamma_{\O^\pp}$ are disjoint on the left side of $\phi$, that is,
    \mycomment{-0.1cm}
    $$\Gamma_\O \cap \Gamma_{\O^\pp} \cap \overline{\rule{0cm}{0.35cm}L(\phi)} = \varnothing.$$

    \mycomment{-0.15cm}
    \item[\textup{\textbf{(ii)}}] The intersection points $p_\O \in \Gamma_\O \cap \phi$ and $p_{\O^\pp} \in \Gamma_{\O^\pp} \cap \phi$ satisfy
    \mycomment{-0.05cm}
    $$p_\O < p_{\O^\pp} \quad \text{ along the leaf } \phi \text{ (according to its orientation)}.$$
\end{itemize}
\mycomment{0cm}
We obtain an analogous result for $\lesssim_R$ by replacing each $L$ with $R$ in the above statement.
\end{corollary}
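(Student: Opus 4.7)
The plan is to establish the equivalence by assembling the two preceding results. The direction $(\Leftarrow)$ is immediate: whenever proper transverse trajectories $\Gamma_\O$ and $\Gamma_{\O^\pp}$ satisfy conditions \textbf{(i)} and \textbf{(ii)}, Remark~\ref{sec:description_order_proper_trajectories} already forces $\O \lesssim_L \O^\pp$. No further argument is needed in this direction.

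For $(\Rightarrow)$, assuming $\O \lesssim_L \O^\pp$, I would apply Theorem~\ref{prop:pairwise_disj_traj} to the subcollection $\{\O, \O^\pp\} \subset \orbphi$, noting that genericity of $\F$ for $\OO$ is inherited by any subcollection. This supplies proper transverse trajectories $\Gamma_\O, \Gamma_{\O^\pp}$ satisfying $\Gamma_\O \cap \Gamma_{\O^\pp} \cap \overline{\rule{0pt}{3.6mm}L(\phi)} = \varnothing$, which is condition \textbf{(i)}. Since $\phi \subset \overline{\rule{0pt}{3.6mm}L(\phi)}$, the disjointness forces $p_\O \neq p_{\O^\pp}$, so it only remains to fix the correct ordering along $\phi$.

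Here I would split into two cases. If $\O \not\fasym \O^\pp$, then by Proposition~\ref{prop:preorders}(ii) we have $\O^\pp \not\lesssim_L \O$; the compatibility clause of Theorem~\ref{prop:pairwise_disj_traj} (stating that $p_{\O^\pp} < p_\O$ would imply $\O^\pp \lesssim_L \O$) then excludes the reverse ordering, yielding $p_\O < p_{\O^\pp}$. In the remaining case $\O \fasym \O^\pp$, both orderings are consistent with the one-sided compatibility statement, so Theorem~\ref{prop:pairwise_disj_traj}'s conclusion alone is insufficient; I would instead reach into its proof and invoke Lemma~\ref{lemma:blow-up}, which provides the flexibility to prescribe the order of endpoints along $\phi$ for orbits within a single $\fasym$-class. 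Selecting the enumeration $\{\O_{\sspc 1}, \O_{\sspc 2}\} = \{\O, \O^\pp\}$ in that construction produces trajectories with $p_\O < p_{\O^\pp}$.

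The only mild obstacle is the forward-asymptotic case, where the compatibility clause of Theorem~\ref{prop:pairwise_disj_traj} does not distinguish the two possible orderings of $p_\O$ and $p_{\O^\pp}$; the remedy is to appeal to the constructive flexibility embedded in the proof of that theorem (via Lemma~\ref{lemma:blow-up}) rather than to its statement alone. The analogous characterization for $\lesssim_R$ follows \emph{mutatis mutandis} by replacing each $L$ with $R$ and using the right-sided analogue of Theorem~\ref{prop:pairwise_disj_traj}.
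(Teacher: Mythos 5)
Your proposal is correct and follows essentially the same route as the paper, which obtains this corollary directly by combining Remark~\ref{sec:description_order_proper_trajectories} (for the ``if'' direction) with Theorem~\ref{prop:pairwise_disj_traj} applied to the pair $\{\O,\O^\pp\}$ (for the ``only if'' direction). Your extra care in the case $\O \fasym \O^\pp$ — where the theorem's one-sided compatibility clause alone does not pin down the order of $p_\O$ and $p_{\O^\pp}$, and one must use the freedom of enumeration within a $\fasym$-class coming from Lemma~\ref{lemma:blow-up} — is a legitimate and welcome refinement of a point the paper leaves implicit.
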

In Section \ref{sec:def_weak_transverse_intersections}, we have seen that orbits having a weak $\F$-transverse intersection do not admit a pair of disjoint proper transverse trajectories. Now, with Corollary \ref{cor:alternative_def_preorders}, we can show that the converse is also true in the same generic setting. Yielding the following corollary.

\newpage

\begin{corollary}
\label{cor:alternative_def_weak_F}
Distinct orbits $\O,\O^\pp \in \OO$ have a weak $\F$-transverse intersection if, and only if, every pair of proper transverse trajectories $\Gamma_\O$ and $\Gamma_{\O^\pp}$ associated to $\O$ and $\O^\pp$ intersect.
\end{corollary}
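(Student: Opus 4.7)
The proposition will follow from Corollary \ref{cor:alternative_def_preorders} combined with the constructive techniques of Section \ref{chap:minimal_intersections}. Our plan is to prove each implication by contrapositive.

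\textbf{Forward direction ($\implies$).} We would suppose there exist disjoint proper transverse trajectories $\Gamma_\O, \Gamma_{\O^\pp}$ and show that $\O$ and $\O^\pp$ do not have a weak $\F$-transverse intersection. If $C_\O \cap C_{\O^\pp} = \varnothing$, then $\O$ and $\O^\pp$ are not comparable under $\lesssim_L$ or $\lesssim_R$, so condition (ii) of Definition \ref{def:weak_transverse_intersection} already fails. Otherwise, fix any common leaf $\phi \in C_\O \cap C_{\O^\pp}$ and let $p_\O := \Gamma_\O \cap \phi$ and $p_{\O^\pp} := \Gamma_{\O^\pp} \cap \phi$; these points are distinct since $\Gamma_\O \cap \Gamma_{\O^\pp} = \varnothing$. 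Without loss of generality, $p_\O < p_{\O^\pp}$ along $\phi$. The disjointness of $\Gamma_\O$ and $\Gamma_{\O^\pp}$ on both $\overline{L(\phi)}$ and $\overline{R(\phi)}$, together with this ordering, would allow us to apply both versions of Corollary \ref{cor:alternative_def_preorders} and conclude $\O \lesssim_L \O^\pp$ and $\O \lesssim_R \O^\pp$. This is a same-direction ordering, which is incompatible with condition (ii) of weak $\F$-transverse intersection.

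\textbf{Converse ($\impliedby$).} We would suppose $\O, \O^\pp$ do not have a weak $\F$-transverse intersection and construct disjoint proper transverse trajectories. If $C_\O \cap C_{\O^\pp} = \varnothing$, then proper transverse trajectories given by Theorem \ref{thmx:proper_trajectories_restate} are contained in the disjoint sets $C_\O$ and $C_{\O^\pp}$ and are automatically disjoint. Otherwise, $\O$ and $\O^\pp$ are comparable under both relations, and the failure of weak $\F$-transverse intersection places us in one of three scenarios: \textbf{(a)} $\O \fasym \O^\pp$; \textbf{(b)} $\O \basym \O^\pp$; or \textbf{(c)} the orbits are ordered in the same direction by $\lesssim_L$ and $\lesssim_R$. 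In each scenario, we fix a common leaf $\phi$ and select points $p_\O < p_{\O^\pp}$ on $\phi$ whose ordering is compatible with both $\lesssim_L$ and $\lesssim_R$: in (a), the forward asymptotic relation makes any ordering compatible with $\lesssim_L$, so we match it to the strict ordering forced by $\lesssim_R$; (b) is symmetric; and in (c), both orderings already agree. Then invoking the constructive techniques of Section \ref{chap:minimal_intersections} (Lemmas \ref{lemma:shared_disjoint} and \ref{lemma:blow-up} together with their right analogues) would yield pairwise disjoint half-lines from $p_\O$ and $p_{\O^\pp}$ into $\overline{L(\phi)}$ and $\overline{R(\phi)}$ respectively, carrying the corresponding portions of the orbits $\O$ and $\O^\pp$. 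Concatenating at the prescribed endpoints on $\phi$ will produce proper transverse trajectories $\Gamma_\O, \Gamma_{\O^\pp}$ disjoint throughout $\R^2$.

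\textbf{The main obstacle} lies in the converse: one must verify in each subcase (a), (b), (c) that the endpoints of the half-lines on $\phi$ can simultaneously be prescribed as $p_\O < p_{\O^\pp}$ for both the left and right constructions. This requires inspecting the proofs of Lemmas \ref{lemma:shared_disjoint} and \ref{lemma:blow-up}, which allow free choice of endpoint positions on $\phi$ within a given $\F$-asymptotic class; across different classes, compatibility of the ordering with $\lesssim_L$ (respectively $\lesssim_R$) is needed, and this is precisely what our case analysis ensures.
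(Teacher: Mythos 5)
Your proposal is correct and follows essentially the same route as the paper: the forward direction via the preorder characterization of Corollary \ref{cor:alternative_def_preorders}, and the converse by prescribing compatible crossing orders of the two trajectories on a common leaf and gluing left and right half-trajectories there (your cases (a)--(c) are the explicit version of the paper's observation that, absent a weak $\F$-transverse intersection, the trajectories can be made to cross a pair of nearby leaves in the same order). The only detail worth adding is that the gluing leaf $\phi$ should be chosen disjoint from $\O \cup \O^\pp$ (the paper uses a small strip $\overline{\rule{0cm}{0.35cm}L(\phi_R)\cap R(\phi_L)}$ missing both orbits) so that the endpoints $p_\O$ and $p_{\O^\pp}$ can be prescribed freely in the required order.
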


\begin{proof}
    Having in mind the discussion of Section \ref{sec:def_weak_transverse_intersections}, to prove the corollary it suffices to show that any two orbits $\O,\O^\pp \in \OO$ that do not have a weak $\F$-transverse intersection admit a pair of disjoint proper transverse trajectories. Note that, in the case where $C_\O \cap C_{\O^\pp} = \varnothing$, this result is trivial. Thus, assume that $C_\O \cap C_{\O^\pp} \neq \varnothing$ and consider a leaf $\phi \in C_\O \cap C_{\O^\pp}$ that is disjoint from $\O$ and $\O^\pp$. Now, we can consider leaves $\phi_L, \phi_R \in C_\O \cap C_{\O^\pp}$ with $L(\phi_L) \subset L(\phi_R)$ that are sufficiently close to $\phi$ so that the set $ \overline{\rule{0cm}{0.35cm}L(\phi_R) \cap R(\phi_L)}$ is also disjoint from $\O$ and $\O^\pp$.
    By applying Corollary \ref{cor:alternative_def_preorders}, we can obtain proper transverse trajectories $\Gamma_\O$ and $\Gamma_{\O^\pp}$ for the orbits $\O$ and $\O^\pp$ that are disjoint on the left side of $\phi_L$ and on the right side of $\phi_R$. Since the orbits $\O$ and $\O^\pp$ do not have a weak $\F$-transverse intersection, these trajectories $\Gamma_\O$ and $\Gamma_{\O^\pp}$ can be taken to intersect the leaves $\phi_L$ and $\phi_R$ in the same order. Thus, we can easily modify $\Gamma_\O$ and $\Gamma_{\O^\pp}$ in the set $\overline{\rule{0cm}{0.35cm}L(\phi_R) \cap R(\phi_L)}$ so that they become disjoint, while preserving the properties of being proper transverse trajectories. This concludes the proof.
\end{proof}

\section{Adaptive orders across maximal leaf domains}\label{sec:refinements_definitions}

We begin this section with an illustrative scenario that motivates the main idea behind the proof of Theorem \ref{thm:main_Part1}. This scenario goes as follows:

Let $\OO\subset \orb$ be a finite collection of orbits, and let $\D_\OO$ be the decomposition by maximal leaf domains induced by $\OO$. Consider a maximal leaf domain $D\in \D_\O$ and a pair of sufficiently close leaves $\phi_L, \phi_R \in D$ satisfying \(L(\phi_L) \subset L(\phi_R)\). Now, assume that there exists a family of proper transverse trajectories $\{\Gamma_\O\}_{\O \in \OO(D)}$ for the orbits in $\OO(D)$ such that 
$$ \bigcap_{\O \in \OO(D)}\Gamma_\O \cap \overline{\rule{0cm}{0.39cm}L(\phi_L) \cap R(\phi_R) \cap D} = \varnothing.$$

\mycomment{0.1cm}
\begin{figure}[h!]
    \center
    \mycomment{0cm}\begin{overpic}[width=10cm, height=5cm, tics=10]{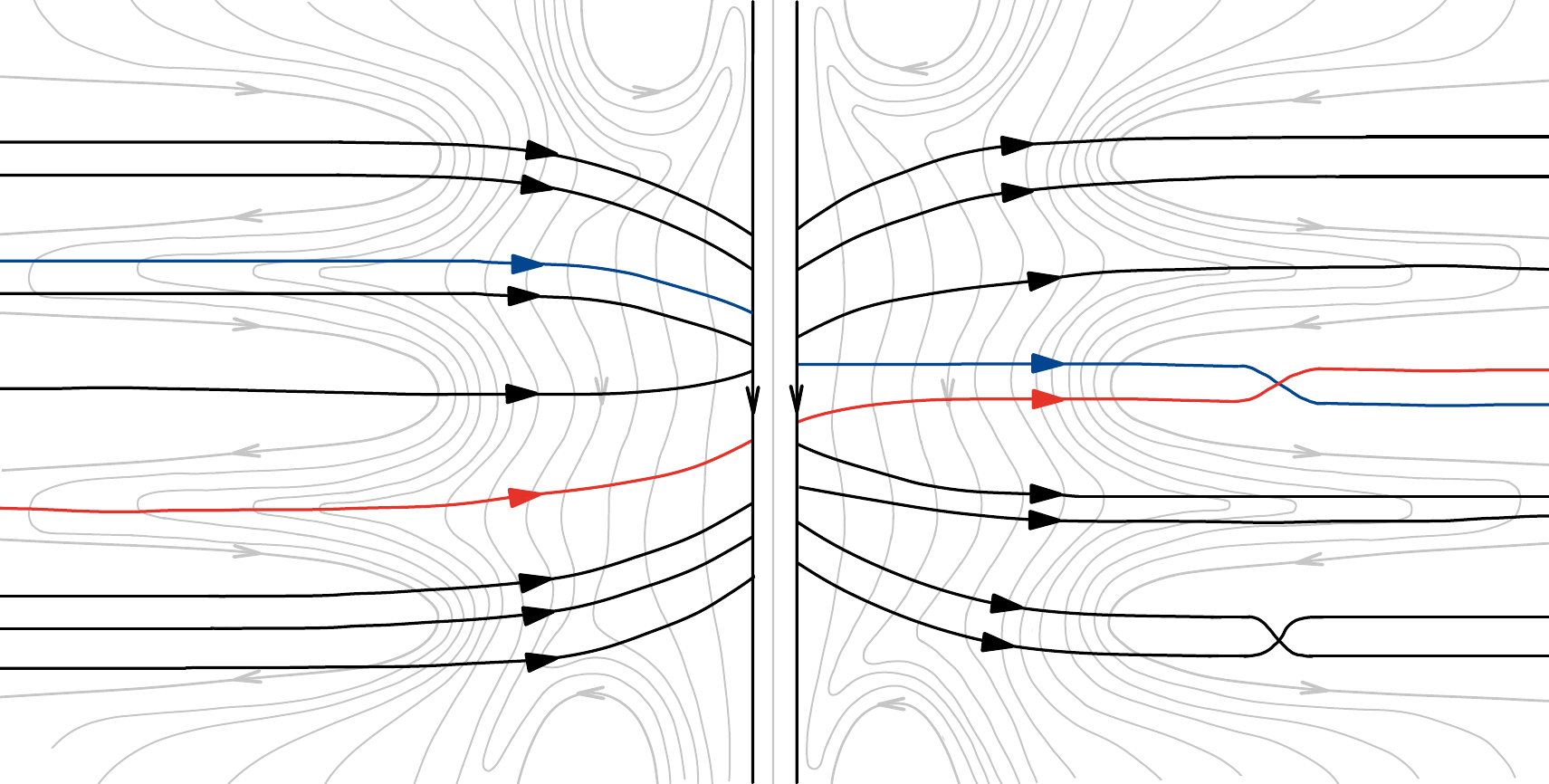}
        \put (44,52.5) {\color{black}\normalsize$\displaystyle \phi_R \ \  \phi_L$}
         \put (102,25.2) {\color{myRED}\large$\displaystyle \Gamma_{\O^\pp}$}
         \put (102,18.4) {\color{myBLUE}\large$\displaystyle \Gamma_\O $}
        \put (-8.5,30) {\color{myBLUE}\large$\displaystyle \Gamma_\O $}
        \put (-8.5,15) {\color{myRED}\large$\displaystyle \Gamma_{\O^\pp} $}
\end{overpic}
\end{figure}

Moreover, we want that the family $\{\Gamma_\O\}_{\O \in \OO(D)}$ to follow the following principles:
\begin{itemize}[leftmargin=1.3cm]
    \item If two orbits $\O,\O^\pp \in \OO(D)$ have a weak $\F$-transverse intersection, we ask ourselves
        $$\textup{\ \ \ \ \ ``Does there exist some $D^\pp \in \D_\OO$ satisfying $D \leadsto D^\pp$ for which $\O,\O^\pp \in \OO(D^\pp)$\sspc?''}$$
        \underline{If the answer is ``YES''\sspc:} then we want that the trajectories $\Gamma_\O$ and $\Gamma_{\O^\pp}$ to be disjoint inside $D$, leaving their obligatory intersection to happen in $D^
        \pp$, or later if possible.

        \noindent\underline{If the answer is ``NO''\sspc:} then we want the obligatory intersection between $\Gamma_\O$ and $\Gamma_{\O^\pp}$ to happen in a single point inside $D$, specifically, in between the leaves $\phi_L$ and $\phi_R$.
    
    \item If two orbits $\O,\O^\pp \in \OO(D)$ do not have a weak $\F$-transverse intersection, the we simply want the trajectories $\Gamma_\O$ and $\Gamma_{\O^\pp}$ to be disjoint.
\end{itemize}

\mycomment{0.2cm}
Observe that, any family $\{\Gamma_\O\}_{\O \in \OO(D)}$ as described above would prove Theorem \ref{thm:main_Part1} in the particular case where $\OO = \OO(D)$. Therefore, in order to prove Theorem \ref{thm:main_Part1} in the general case, we not only need to construct such a family $\{\Gamma_\O\}_{\O \in \OO(D)}$ for each leaf domain $D \in \D_\OO$, but also to do so in a coherent way, so that the resulting family $\{\Gamma_\O\}_{\O \in \OO}$ solves the theorem.

This problem will be solved by defining a ``standard'' total order $\leq_\phi$ on the set $\OO(\phi)$ of orbits crossing a leaf $\phi \in \F$, and an ``adaptive'' total order $\leq_{D}$ on the set $\OO(D)$ of orbits crossing a maximal leaf domain $D \in \D_\OO$. 

Now, we come back to the general setting. Let $\OO\subset \orb$ be a finite collection of orbits, and let $\D_\OO$ be the decomposition by maximal leaf domains induced by $\OO$. 
Before we proceed with the definitions of the standard and adaptive orders, we chose to denote the equivalence relation of backward and forward $\F$-asymptotic orbits in $\OO$ by $\sim$, that is,
\mycomment{-0.1cm}
$$ \O \sim \O^\pp \iff \O \fasym \O^\pp \text{ and } \O \basym \O^\pp.$$

\mycomment{-0.25cm}
\noindent Then, we fix an arbitrary total order $\leq_{[\O\sspc]}$ on each equivalence class $[\O\sspc] \in \OO/\sim$. This choice of total orders $\{\leq_{[\O\sspc]}\}_{[\O\sspc] \in \OO/\sim}$ determine the standard and adaptive orders defined below.

\mycomment{-0.3cm}
\subsubsection*{Standard ordering over leaves:}

\mycomment{-0.15cm}
For each leaf $\phi \in \F$, we consider the relation $\leq_\phi$ on the set $\OO(\phi)$, defined as follows:
\mycomment{0.15cm}
$$
\O\leq_\phi \O^{\sspc \prime} \iff \begin{cases} \ \O\lesssim_R \O^{\sspc \prime} \quad\ \  \text{ if } \O\not\basym\O^{\sspc \prime},\\
    \ \O\lesssim_L \O^{\sspc \prime} \quad \ \ \text{ if } \O\basym\O^{\sspc \prime} \text{ and } \O\not\fasym\O^{\sspc \prime},\\ 
    \ \O\leq_{[\O\sspc]} \O^{\sspc \prime} \quad\sspc \text{ if } \O\sim \O^{\sspc \prime}.
\end{cases}
$$

\mycomment{0.15cm}
\noindent Roughly speaking, the relation $\leq_\phi$ primarily compares orbits in $\OO(\phi)$ using the relation $\lesssim_R$.  If the compared orbits are backward $\F$-asymptotic, then it tries to compare them using the relation $\lesssim_L$. At last, if the orbits are both forward and backward $\F$-asymptotic, then it uses the total order $\leq_{[\O\sspc]}$ defined on the equivalence class $[\O\sspc] \in \OO/{\sim}$ of these orbits.

\begin{lemma}
\label{lemma:order_standard}
    For any leaf $\phi \in \F$, the relation $\leq_\phi$ is a total order on the set $\OO(\phi)$.
\end{lemma}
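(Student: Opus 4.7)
The plan is to view $\leq_\phi$ as a lexicographic refinement and reduce each property to the preorder properties of $\lesssim_L$ and $\lesssim_R$ already established in Proposition \ref{prop:preorders}. First I will observe that, since $\OO(\phi) \subset \orbphi$, any two orbits $\O,\O^\pp \in \OO(\phi)$ satisfy $C_\O \cap C_{\O^\pp} \neq \varnothing$ (both contain $\phi$) and are therefore comparable under both $\lesssim_L$ and $\lesssim_R$ by Proposition \ref{prop:preorders}(i); this immediately yields totality of $\leq_\phi$, because the trichotomy $(\basym \wedge \fasym)$ / $(\basym \wedge \not\fasym)$ / $(\not\basym)$ exhausts all cases, and in the $\sim$-case the chosen auxiliary total order $\leq_{[\O\sspc]}$ is itself total. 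Reflexivity is immediate from the third clause applied to $\O = \O^\pp$.

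For antisymmetry, I will do a short case analysis on the ordered pair $(\O,\O^\pp)$ satisfying both $\O \leq_\phi \O^\pp$ and $\O^\pp \leq_\phi \O$. If $\O \not\basym \O^\pp$, then the two inequalities read $\O \lesssim_R \O^\pp$ and $\O^\pp \lesssim_R \O$, which by Proposition \ref{prop:preorders}(ii) forces $\O \basym \O^\pp$, a contradiction. If $\O \basym \O^\pp$ but $\O \not\fasym \O^\pp$, the inequalities read $\O \lesssim_L \O^\pp$ and $\O^\pp \lesssim_L \O$, forcing $\O \fasym \O^\pp$ by the same proposition, again a contradiction. Hence $\O \sim \O^\pp$, the two inequalities reduce to the total order $\leq_{[\O\sspc]}$, and antisymmetry follows from antisymmetry of $\leq_{[\O\sspc]}$.

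The transitivity step is where the real work lies, and it will be organized lexicographically. Given $\O \leq_\phi \O^\pp \leq_\phi \O^{\pp\pp}$, I will distinguish cases according to whether the three orbits lie in the same $\basym$-class. If all three do, then $\leq_\phi$ on $\{\O,\O^\pp,\O^{\pp\pp}\}$ refines $\lesssim_L$ (with ties broken by $\leq_{[\O\sspc]}$), so transitivity follows from the transitivity of $\lesssim_L$ on $\orbphi$ (Proposition \ref{prop:preorders}(i)) together with transitivity of $\leq_{[\O\sspc]}$; whenever $\sim$-ties appear, the equality of $\fasym$- and $\basym$-classes across the triple makes the auxiliary order apply consistently. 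If the three orbits do not all lie in a single $\basym$-class, I will show that the comparisons automatically reduce to $\lesssim_R$: indeed whenever $\O \leq_\phi \O^\pp$ and $\O \basym \O^\pp$, Proposition \ref{prop:preorders}(ii) gives $\O \lesssim_R \O^\pp$ and $\O^\pp \lesssim_R \O$, so the inequality $\O \leq_\phi \O^\pp$ implies $\O \lesssim_R \O^\pp$ in every case. Then $\O \lesssim_R \O^\pp \lesssim_R \O^{\pp\pp}$ gives $\O \lesssim_R \O^{\pp\pp}$ by transitivity of $\lesssim_R$ on $\orbphi$, and since $\O \not\basym \O^{\pp\pp}$ (otherwise, by transitivity of $\basym$, all three would be $\basym$-equivalent), this is precisely the clause defining $\O \leq_\phi \O^{\pp\pp}$.

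The main obstacle to watch is the bookkeeping between the $\basym$- and $\fasym$-clauses: one must be sure that whenever one of the compared pairs falls under a different clause than another, the implication $\leq_\phi\ \Rightarrow\ \lesssim_R$ (resp. $\lesssim_L$ within a common $\basym$-class) still holds, which is exactly what Proposition \ref{prop:preorders}(ii) guarantees. Once this observation is in place, each case of the lexicographic transitivity check reduces to a single application of the transitivity of $\lesssim_R$, $\lesssim_L$, or $\leq_{[\O\sspc]}$, and the lemma follows.
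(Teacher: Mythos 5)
Your proposal is correct in substance but takes a genuinely different route from the paper. The paper does not verify the order axioms one by one: it builds an explicit triple-index enumeration of $\OO(\phi)$ — first listing the $\basym$-classes increasingly for $\lesssim_R$, then within each of these the $\fasym$-classes increasingly for $\lesssim_L$, then within each $\sim$-class the orbits increasingly for $\leq_{[\O\sspc]}$ — and observes that $\leq_\phi$ coincides with the lexicographic order on the resulting indices, which is a total order for free. Your approach instead reduces everything to Proposition \ref{prop:preorders}: totality and reflexivity are immediate, antisymmetry follows from the equivalences in item (ii), and transitivity is a case analysis. The paper's argument buys a one-shot treatment of all the bookkeeping (and implicitly re-proves the key compatibility facts through the well-definedness of the enumeration), while yours is more self-contained and makes explicit exactly which properties of $\lesssim_L$, $\lesssim_R$ are being used; both are legitimate.

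One point in your transitivity step needs repair. In the case where the three orbits do not all lie in one $\basym$-class, you assert $\O \not\basym \O^\ppp$ "otherwise, by transitivity of $\basym$, all three would be $\basym$-equivalent"; but transitivity of $\basym$ alone does not exclude the configuration $\O \basym \O^\ppp$ with $\O^\pp$ in a different class. That configuration is in fact impossible here, but for a different reason: from $\O \leq_\phi \O^\pp$ and $\O^\pp \leq_\phi \O^\ppp$ you get $\O \lesssim_R \O^\pp$ and $\O^\pp \lesssim_R \O^\ppp$, and $\O \basym \O^\ppp$ would give $\O^\ppp \lesssim_R \O$, hence $\O^\pp \lesssim_R \O$ by transitivity, hence $\O \basym \O^\pp$ by Proposition \ref{prop:preorders}(ii) — contradicting the case hypothesis. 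The same style of argument is needed, and is not spelled out, at the analogous spot inside a common $\basym$-class (to rule out $\O \fasym \O^\ppp$ when the triple is not entirely $\fasym$-equivalent, so that clause two of the definition genuinely applies). With those two short insertions the proof closes.
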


\begin{proof}
    Let $\phi \in \F$ be a leaf, and recall that $\lesssim_R$ induces a total order on the set $\OO(\phi)/{\basym}$.  Thus, we can enumerate the set $\OO(\phi)/{\basym}$ increasingly with respect to $\lesssim_R$, that is,
    $$\OO(\phi)/{\basym} = \{\OO_{\sspc 1},\OO_{\sspc 2},\ldots,\OO_{\sspc r}\}\sspc , \quad  \text{ with }\  \OO_{\sspc i} \lesssim_R \OO_{\sspc i+1} \  \text{ for all } \ 1\leq i < r.$$
    Then, for each $i \in \{1, \ldots, r\}$, since $\lesssim_L$ induces a total order on the set $\OO_{\sspc i}/{\fasym}$, we can enumerate it increasingly with respect to $\lesssim_L$, that is,
    $$\OO_{\sspc i}/{\fasym} = \{\OO_{\sspc i,1},\OO_{\sspc i,2},\ldots,\OO_{\sspc i,m(i)}\}\sspc , \quad  \text{ with }\  \OO_{\sspc i,j} \lesssim_L \OO_{\sspc i,j+1} \  \text{ for all } \ 1\leq j< m(i).$$
    At last, note that, for any $i \in \{1, \ldots, n\}$ and $j \in \{1, \ldots, m(i)\}$, it holds that $\OO_{\sspc i,j} \in \OO(\phi)/{\sim}$. Thus, there exists an equivalence class $[\O\sspc]_{\sspc i,j} \in \OO/{\sim}$ such that $\OO_{\sspc i,j} \subset [\O\sspc]_{\sspc i,j}$. Therefore, we can enumerate $\OO_{\sspc i,j}$ increasingly with respect to the initially fixed total order $\leq_{[\O\sspc]_{\sspc i,j}}$, that is,
    $$\OO_{\sspc i,j} = \{\O_{\sspc i,j,1},\ldots,\O_{\sspc i,j,n(i,j)}\}\sspc , \quad  \text{ with }\ \O_{\sspc i,j,k} \leq_{[\O\sspc]_{\sspc i,j}} \O_{\sspc i,j,k+1} \  \text{ for all } \ 1\leq k< n(i,j).$$
  At the end, we obtained a triple index enumeration of the set $\OO(\phi)=\{\O_{\sspc i,j,k}\}_{i,j,k}$
    where the indices run through $i \in \{1, \ldots, r\}$, $j \in \{1, \ldots, m(i)\}$ and $k \in \{1, \ldots, n(i,j)\}$.
    
    \noindent By construction, the relation $\leq_\phi$ on $\OO(\phi)=\{\O_{\sspc i,j,k}\}_{i,j,k}$ coincides with the lexicographic order
    \mycomment{0cm}
    $$\O_{\sspc i,j,k} \leq_\phi \O_{\sspc i',j',k'} \iff \begin{cases}
        \ i < i' & \text{ or,}\\
        \ i = i' \text{ and } j < j' & \text{ or,}\\
        \ i = i' \text{ and } j = j' \text{ and } k < k'.
    \end{cases}$$
    \mycomment{0.3cm}
    This proves that $\leq_\phi$ is a total order on the set $\OO(\phi)$, and concludes the proof.
\end{proof}

\newpage
\subsection*{Adaptive ordering over maximal leaf domains:}

\mycomment{-0.15cm}
For each  $D \in \D_{\OO}$, we consider an equivalence relation $\Dasym$ on the set $\OO(D)$ that is slightly weaker than the relation $\fasym$. For any two orbits $\O,\O^\pp \in \OO(D)$, we say that $\O \Dasym \O^\pp$ if:
\begin{itemize}[leftmargin=1.3cm]
    \item Either $\O,\O^\pp\in\OO_\omega(D)$ and they satisfy $\O \fasym \O^\pp$.
    \item Or $\O,\O^\pp \in \OOend(D)$ and there exists a leaf $\phi_L \in \partial_L D$ such that $\O, \O^\pp \in \OO(\phi_L)$. 
\end{itemize}
\mycomment{0.2cm}
We observe that, if the orbits $\O$ and $\O^\pp$ satisfy $\O\fasym \O^\pp$ then they must also satisfy $\O \Dasym \O^\pp$, but the converse does not hold in general, only if the orbits $\O$ and $\O^\pp$ are both in $\OO_\omega(D)$.
Moreover, note that the relation $\lesssim_L$ induces a total order on the set $\OO(D)/{\sspc\Dasym}$. This follows from the fact that if the orbits $\O,\O^\pp,\O^{\sspc \prime\prime} \in \OOend(D)$ satisfy $\O \lesssim_L \O^\pp \lesssim_L \O^{\sspc \prime\prime}$ and $\O \Dasym \O^{\sspc \prime\prime}$, then they must also satisfy $\O\Dasym \O^{\sspc \prime\prime}$.

With this equivalence relation, we can define the relation $\leq_D$ on the set $\OO(D)$ as follows:
\mycomment{0.2cm}
$$
\O\leq_D \O^{\sspc \prime} \iff \begin{cases} \ \O\lesssim_L \O^{\sspc \prime} \quad\ \  \text{ if } \O\not\Dasym\O^{\sspc \prime},\\
    \ \O\lesssim_R \O^{\sspc \prime} \quad \ \ \text{ if } \O\Dasym\O^{\sspc \prime} \text{ and } \O\not\basym\O^{\sspc \prime},\\ 
    \ \O\leq_{[\O\sspc]} \O^{\sspc \prime} \quad\sspc \text{ if } \O\sim \O^{\sspc \prime}.
\end{cases}
$$

\mycomment{0.2cm}
\noindent Roughly speaking, the relation $\leq_D$ primarily compares orbits in $\OO(\phi)$ using the relation $\lesssim_L$.  If the compared orbits exit the domain $D$ through the same leaf $ \phi_L \in \partial_L D$, then it tries to compare them using $\lesssim_R$. At last, if these orbits are both forward and backward $\F$-asymptotic, then it uses the total order $\leq_{[\O\sspc]}$ on their equivalence class $[\O\sspc] \in \OO/{\sim}$ to compare them.

Note that $\leq_D$ may be viewed as an analogue of $\leq_\phi$, with the roles of $\lesssim_L$ and $\lesssim_R$ essentially reversed, except in the case where the compared orbits exit $D$ through the same leaf in $\partial_L D$, in which case the relation $\lesssim_R$ continues to be applied.

\begin{lemma}
\label{lemma:order_adaptive}
    For any $D \in \D_\OO$, the relation $\leq_D$ is a total order on the set $\OO(D)$.
\end{lemma}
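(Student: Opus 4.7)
The proof will follow the template of Lemma \ref{lemma:order_standard}: I would exhibit $\leq_D$ as the lexicographic order arising from three nested total orderings on $\OO(D)$, indexed by the three clauses in its definition. Since $\lesssim_L$ descends to a total order on $\OO(D)/{\Dasym}$ (as observed in the paragraph preceding the definition of $\leq_D$, which follows from $\lesssim_L$ being a total preorder on $\orbphi$ for any $\phi \in D$ by Proposition \ref{prop:preorders}), I enumerate
\[
\OO(D)/{\Dasym} = \{\OO_{\sspc 1}, \ldots, \OO_{\sspc r}\}, \qquad \OO_{\sspc i} \lesssim_L \OO_{\sspc i+1}.
\]
Next, inside each $\Dasym$-block $\OO_{\sspc i}$, any two orbits lie in $\orbphi$ for every $\phi \in D$, so $\lesssim_R$ is total on $\OO_{\sspc i}$ and descends to a total order on $\OO_{\sspc i}/{\basym}$; I enumerate
\[
\OO_{\sspc i}/{\basym} = \{\OO_{\sspc i,1}, \ldots, \OO_{\sspc i,m(i)}\}, \qquad \OO_{\sspc i,j} \lesssim_R \OO_{\sspc i,j+1}.
\]
Finally, inside each sub-block $\OO_{\sspc i,j}$, I would enumerate the orbits using the preassigned total order $\leq_{[\O\sspc]}$ on the ambient $\sim$-equivalence class.

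The step I expect to require the most care is verifying that each sub-block $\OO_{\sspc i,j}$ is contained in a single $\sim$-equivalence class, so that the third layer is well-defined and the three clauses of the definition of $\leq_D$ are mutually exhaustive. When $\OO_{\sspc i} \subseteq \OO_\omega(D)$, this is immediate since $\Dasym$ restricted to $\OO_\omega(D)$ coincides with $\fasym$, giving ${\Dasym} \cap {\basym} = {\fasym} \cap {\basym} = {\sim}$. When $\OO_{\sspc i} \subseteq \OOend(D)$ and its orbits share a common exit leaf $\phi_L \in \partial_L D$, I must show that two backward $\F$-asymptotic orbits sharing an exit leaf of $D$ are also forward $\F$-asymptotic. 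For this, I would invoke Lemma \ref{lemma:equivalence} and its backward analog, together with Theorem \ref{thmx:proper_trajectories_restate}, to select coherent proper transverse trajectories $\Gamma_\O, \Gamma_{\O^\pp}$ that meet $\phi_L$ near the same point and coincide arbitrarily far in the past; since both orbits then enter the adjacent leaf domain $D^\pp \in \D_\OO$ with $D \leadsto D^\pp$ through $\phi_L$, the matching of $\partial_R C_\O$ with $\partial_R C_{\O^\pp}$ propagates through $\phi_L$ and forces $\Ltop \O = \Ltop{\O^\pp}$ and $\Lbot \O = \Lbot{\O^\pp}$.

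Once this exhaustiveness claim is established, the identification of $\leq_D$ with the lexicographic order on the triple index $(i,j,k)$ is routine: the first clause of the definition matches the $\lesssim_L$-comparison of $\Dasym$-classes, the second matches the $\lesssim_R$-comparison of $\basym$-sub-blocks within a common $\Dasym$-class, and the third matches the reference order $\leq_{[\O\sspc]}$ on $\sim$-classes. Totality, reflexivity, antisymmetry, and transitivity of $\leq_D$ then follow layer by layer from the corresponding properties of $\lesssim_L$, $\lesssim_R$, and $\leq_{[\O\sspc]}$, closing the proof.
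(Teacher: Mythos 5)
Your architecture is the intended one --- the same three-layer lexicographic construction as in the proof of Lemma \ref{lemma:order_standard} --- and you have correctly identified where the difficulty lies: the three clauses defining $\leq_D$ are mutually exhaustive only if $\O \Dasym \O^\pp$ and $\O \basym \O^\pp$ together imply $\O \sim \O^\pp$. But the claim you use to settle the $\OOend(D)$ case --- that two backward $\F$-asymptotic orbits exiting $D$ through the same leaf of $\partial_L D$ are forward $\F$-asymptotic --- is false, and the ``propagation'' argument cannot be made to work: sharing the exit leaf and agreeing arbitrarily far in the past imposes no constraint on which leaves the two orbits cross beyond that exit leaf. Concretely, suppose the leaf space of $\F$ branches to the left at a pair of non-separated leaves $\phi_1 \neq \phi_2$; let $\O$ and $\O^\pp$ cross exactly the same leaves on the right (say $\partial_R C_\O = \partial_R C_{\O^\pp} = \varnothing$, so that $\O \basym \O^\pp$ holds) until they separate at the branching, $\O$ crossing $\phi_1$ and $\O^\pp$ crossing $\phi_2$; and let a third orbit $\O^\ppp \in \OO$ be such that some leaf $\phi_* \in C_\O \cap C_{\O^\pp}$ lying before the branching belongs to $\partial C_{\O^\ppp}$. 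Then $\phi_* \in \W_\OO$, the orbits $\O, \O^\pp$ both lie in $\OOend(D)$ for the maximal leaf domain $D$ adjacent to $\phi_*$ on its right, and both exit $D$ through $\phi_*$, so $\O \Dasym \O^\pp$ and $\O \basym \O^\pp$; yet $\O \notfasym \O^\pp$, since no leaf $\psi$ satisfies $L(\psi)\cap C_\O = L(\psi)\cap C_{\O^\pp}$. The sub-block $\OO_{i,j}$ containing this pair is therefore not contained in a single $\sim$-class, and the third layer of your lexicographic order is undefined on it.

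The repair is not to prove the inclusion ${\Dasym}\cap{\basym}\subseteq{\fasym}$, which fails, but to supply a strict comparison for the residual pairs. Since such a pair satisfies $\O \notfasym \O^\pp$ while both orbits cross the common exit leaf $\phi_*$, Proposition \ref{prop:preorders} guarantees that $\lesssim_L$ holds between them in exactly one direction, so $\lesssim_L$ is the natural tiebreak; equivalently, $\leq_D$ restricted to the orbits exiting $D$ through a fixed leaf of $\partial_L D$ should coincide with the standard order of that leaf, which is also what the construction in the proof of Theorem \ref{thm:main_Part1} needs in order to make the connecting paths into the next maximal leaf domain pairwise disjoint. As written, your argument (like the literal three-clause definition of $\leq_D$) does not cover these pairs, so it does not yet establish that $\leq_D$ is total.
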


\begin{proof}
    The proof is completely analogous to the proof of Lemma \ref{lemma:order_standard}.
\end{proof}

\section{Proof of Theorem \ref{thm:main_Part1}}\label{sec:proof_main_Part1}
This section is dedicated to the proof of Theorem \ref{thm:main_Part1}. The version of the theorem stated here is slightly different from the one presented in the introduction (see Remark \ref{remark:proof_main_Part1}).

\begin{thmx}\label{thm:main_Part1}
     Let $\OO\subset \orb$ be a finite collection of orbits, and assume \(\F\) is generic for $\OO$. Then,
    there exists a family $\{\Gamma_\O\}_{\O \in \OO}$ of proper transverse trajectories of $\OO$ that satisfies
    \begin{itemize}[leftmargin=1.3cm]
        \item[\textbf{(i)}]  $\#\sspc(\sspc \Gamma_\O \sspc \cap\sspc \Gamma_{\O^\pp}\sspc ) \leq 1$, for any pair of distinct orbits $\O,\O^\pp \in \OO$.
        \item[\textbf{(ii)}]  $\#\sspc(\sspc \Gamma_\O \sspc \cap\sspc \Gamma_{\O^\pp}\sspc ) = 1$ if, and only if, $\O$ and $\O^\pp$ have a weak $\F$-transverse intersection.
    \end{itemize}
\end{thmx}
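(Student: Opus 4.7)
The plan is to build the family $\{\Gamma_\O\}_{\O\in\OO}$ by local construction on each maximal leaf domain $D\in\D_{\OO}$ and each critical leaf $\phi\in\W_{\OO}$, leveraging the oriented planar forest structure of $(\D_{\OO},\W_{\OO})$ from Lemma~\ref{lemma:planar_embedding}. The guiding principle is that the standard order $\leq_\phi$ and the adaptive order $\leq_D$ introduced in Section~\ref{sec:refinements_definitions} are designed so that the local pieces glue consistently across critical leaves, while the discrepancies between $\lesssim_L$ and $\lesssim_R$---which by definition detect weak $\F$-transverse intersections---produce exactly one crossing per such pair.

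First, for each critical leaf $\phi\in\W_{\OO}$, I would apply Theorem~\ref{prop:pairwise_disj_traj} on the right side of $\phi$ to the finite collection $\OO(\phi)$, obtaining pairwise disjoint half-trajectories whose endpoints on $\phi$ are ordered by $\lesssim_R$; applying it again on the left side and reconciling with Lemma~\ref{lemma:untangle} in a small saturated neighborhood of $\phi$, I would arrange the intersection points on $\phi$ to follow the full order $\leq_\phi$. Inside each maximal leaf domain $D\in\D_{\OO}$, I would then construct the trajectory pieces for the orbits in $\OO(D)$ using Theorem~\ref{prop:pairwise_disj_traj} once more, this time arranging them according to the adaptive order $\leq_D$. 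The pieces inside $D$ are thereby forced to permute from their prescribed order on $\partial_R D$ (given by $\leq_\phi$ on each $\phi\in\partial_R D$) to their prescribed order on $\partial_L D$, and each such permutation corresponds to a crossing inside $D$, which by the generic hypothesis and Lemma~\ref{lemma:untangle} can be taken to be transverse and single.

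The main obstacle will be verifying property (ii), namely that each weak $\F$-transverse intersection pair $(\O,\O^\pp)$ produces exactly one crossing. The existence of at least one crossing follows from Corollary~\ref{cor:alternative_def_weak_F}. For uniqueness, the idea is that when $\O$ and $\O^\pp$ are oppositely ordered by $\lesssim_L$ and $\lesssim_R$, the orders $\leq_D$ (which favors $\lesssim_L$) and $\leq_\phi$ (which favors $\lesssim_R$) disagree on the pair, so the swap is localized. By tracing the sequences of maximal leaf domains $D_\alpha(\O),\ldots,D_\omega(\O)$ and $D_\alpha(\O^\pp),\ldots,D_\omega(\O^\pp)$ and analyzing when the relation $\Dasym$ forces the secondary clause of $\leq_D$ (using $\lesssim_R$) to take over, one should show that across the common chain of domains the two trajectory pieces cross in exactly one of them---namely at the unique $D$ where the ordering flips---and remain disjoint elsewhere. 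Orbits that are not weakly $\F$-transverse keep matching $\leq_D$ and $\leq_\phi$ orderings across all shared boundaries, so the corresponding pieces stay disjoint throughout.
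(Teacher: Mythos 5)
Your proposal follows essentially the same route as the paper's proof: decompose via maximal leaf domains and critical leaves, impose the standard order $\leq_\phi$ on incoming boundaries and the adaptive order $\leq_D$ inside each domain, build the half-trajectories with Theorem~\ref{prop:pairwise_disj_traj} on each side, and localize the single crossing of a weakly $\F$-transverse pair to the unique domain where the two orders disagree (the deferral being governed by the $\Dasym$ clause of $\leq_D$). The ideas and the mechanism for uniqueness of the intersection point match the paper's argument.
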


\begin{remark}
\label{remark:proof_main_Part1}
    The version of Theorem \ref{thm:main_Part1} enunciated in the introduction is a direct consequence of the statement above combined with the following argument. Enumerate $\OO = \{\O_{\sspc 1}, \ldots, \O_{\sspc r}\}$, and consider the family of proper transverse trajectories $\{\Gamma_{i}\}_{1\leq i \leq r}$ given by the theorem above. Then, consider a collection of planar foliations $\{\F_j\}_{1\leq j \leq n}$ which are pairwise transverse to each other, and such that $\F_0 = \F$ and for each $1\leq i \leq r$ there exists $1\leq j \leq n$ such that $\Gamma_i \in \F_j$.\break The homeomorphism $\varphi:\R^2 \longrightarrow \mathbb D^2$ in the statement of Theorem \ref{thm:1C-alternative} in the introduction is given by the circle compactification of $\R^2$ via ideal boundaries of $\{\F_j\}_{1\leq j \leq n}$, as described in \cite{Bon24}.
\end{remark}

\begin{proof}
Let $\D_{\OO}$ be the decomposition into maximal leaf domain induced by $\OO$, and let $\W_{\OO}$ be the set of critical leaves for $\OO$ (see Section \ref{sec:critical_leaves_and_core regions}). Next, fix an arbitrary total order $\leq_{[\O\sspc]}$ on each equivalence class $[\O\sspc] \in \OO/{\sim}$, where $\sim$ consists in the equivalence relation of backward  and forward $\F$-asymptotic orbits in $\OO$. As explained in Section \ref{sec:refinements_definitions}, this choice of total  orders $\{\leq_{[\O\sspc]}\}_{[\O\sspc] \in \OO/\sim}$ determines the standard order $\leq_\phi$ and the adaptive order $\leq_D$, which are total orders on the sets $\OO(\phi)$ and $\OO(D)$, respectively, for each $\phi \in \F$ and each $D \in \D_{\OO}$.

For each leaf $\phi \in \W_{\OO}$, we consider a family of pairwise distinct points $\{p_{\phi}(\O)\}_{\O \in \OO(\phi)}$ lying on the leaf $\phi$, such that the points $p_{\phi}(\O)$ are ordered along the leaf $\phi$ according to $\leq_\phi$, that is
$$\O \leq_\phi \O^\pp \iff p_{\phi}(\O) \leq p_{\phi}(\O^\pp) \ \ \text{ according to the orientation of } \phi.$$
In the case where the leaf $\phi \in \W_{\OO}$ intersects some orbit $\O \in \OO(\phi)$, then we can choose the point $p_{\phi}(\O)$ to be exactly the intersection point $\phi \cap \O = \{p_{\phi}(\O)\}$.

Fix a leaf domain $D \in \D_{\OO}$, and let $\phi_L, \phi_R \in D$ be two leaves with $ L(\phi_L) \subset L(\phi_R)$ that are sufficiently close and sufficiently to the left so that they satisfy 
$$ \overline{\rule{0cm}{0.35cm}L(\phi_R) \cap R(\phi_L)} \cap \bigcup_{\O \in \OO(D)} \O  = \varnothing \quad \text{ and } \quad \overline{\rule{0cm}{0.35cm}L(\phi_L)} \cap D \cap \bigcup_{\O \in \OOend(D)} \O = \varnothing.$$
\noindent Now, consider a family of pairwise distinct points $\{q_{_{R,D}}(\O)\}_{\O \in \OO(D)}$ lying on the leaf $\phi_R$ and ordered according to the standard total order $\leq_{\phi_R}$ on the set $\OO(D) = \OO(\phi_R)$, meaning that
$$\O \leq_{\phi_R} \O^\pp \iff q_{_{R,D}}(\O) \leq q_{_{R,D}}(\O^\pp) \ \ \text{ according to the orientation of } \phi_R.$$
\noindent Observe that, the order $\leq_{\phi_R}$ allows us to apply the right-analogous version of Theorem \ref{prop:pairwise_disj_traj}  to the leaf $\phi_R$ and to the set orbits $\OO(D)$ indexed increasingly by $\leq_{\phi_R}$, thus obtaining a family of pairwise disjoint half-lines $\left\{\Gamma^{^-}_{D, \O}\right\}_{\O \in \OO(D)}$, each positively transverse to $\F$, that satisfies
\begin{itemize}
    \item The endpoint $\sspc\Gamma^{^-}_{D, \O}(0)$ lies on the leaf $ \phi_R$, for all $\O \in \OO(D)$.
    \item The backward orbit $\sspc\overline{\rule{0pt}{3.6mm}R(\phi_R)} \cap \O\spc $ is contained in $\Gamma^{^-}_{D, \O}$, for all $\O \in \OO(D).$
    \item The endpoints $\left\{\sspc\Gamma^{^-}_{D, \O}(0)\sspc\right\}_{\O \in \OO(D)}$ are ordered along the leaf $\phi_R$ according to $\leq_{\phi_R}$.
\end{itemize}

\vspace*{0.3cm}

We remark that the proof of Theorem \ref{prop:pairwise_disj_traj} gives us enough flexibility to choose the half-lines in the family $\left\{\Gamma^{^-}_{D, \O}\right\}_{\O \in \OO(D)}$ so that they satisfy the following additional properties:
\begin{itemize}
    \item $\Gamma^{^-}_{D, \O}(0) = q_{_{R,D}}(\O)$, for any $\O \in \OO(D)$.
    \item $p_{\phi^\pp}(\O) \in \Gamma^{^-}_{D, \O}$, for any $\O \in \OOstart(D)$, where $\phi^\pp$ denotes the unique leaf in $\partial_R D \cap C_\O$.
\end{itemize}

\vspace*{0.25cm}
\noindent At last, for each orbit $\O \in \OOstart(D)$, we cut  $\Gamma^{^-}_{D, \O}$ at its intersection point with $\partial_R D$
so that it stays inside $D$ and it joins $\partial_R D$ to $\phi_R$. In other words, we redefine $\Gamma^{^-}_{D, \O}:[0,1]\longrightarrow\R^2$  as the 
sub-arc of the originally defined half-line $\Gamma^{^-}_{D, \O}$  that ends at the point $q_{_{R,D}}(\O) \in \phi_R$, and starts at the point $p_{\phi^\pp}(\O) \in \phi^\pp$, where $\phi^\pp$ is the unique leaf in $\partial_R D \cap C_\O$.

 Now, consider a family of pairwise distinct points $\{q_{_{L,D}}(\O)\}_{\O \in \OO(D)}$ lying on the leaf $\phi_L$ and ordered according to the standard total order $\leq_{D}$ on the set $\OO(D)$, meaning that
$$\O \leq_{D} \O^\pp \iff q_{_{L,D}}(\O) \leq q_{_{L,D}}(\O^\pp) \ \ \text{ according to the orientation of } \phi_L.$$
Observe that, the order $\leq_{D}$ only allows us to apply Theorem \ref{prop:pairwise_disj_traj} to the leaf $\phi_L$ and the subset $\OO_\omega(D) \subset \OO(D)$ indexed increasingly by $\leq_{D}$, as the order $\leq_{D}$ is not necessarily compatible with the relation $\lesssim_L$ on orbits leaving $D$ through the same leaf in $\partial_L D$. However, this is enough to obtain a family of pairwise disjoint half-lines $\left\{\Gamma^{^+}_{D, \O}\right\}_{\O \in \OO_\omega(D)}$, each positively transverse to $\F$, that satisfies the following conditions:
\begin{itemize}
    \item The endpoint $\sspc\Gamma^{^+}_{D, \O}(0)$ coincides with the point $q_{_{L,D}}(\O)\in\phi_L$, for all $\O \in \OO_\omega(D)$.
    \item The forward orbit $\sspc\overline{\rule{0pt}{3.6mm}L(\phi_L)} \cap \O\spc $ is contained in $\Gamma^{^+}_{D, \O}$, for all $\O \in \OO_\omega(D).$
\end{itemize}
\mycomment{0.3cm}
The current setting is illustrated in the following figure for sake of clarity.

\vspace*{0.1cm}
\begin{figure}[h!]
    \center
    \mycomment{0.6cm}\begin{overpic}[width=8cm, height=4.3cm, tics=10]{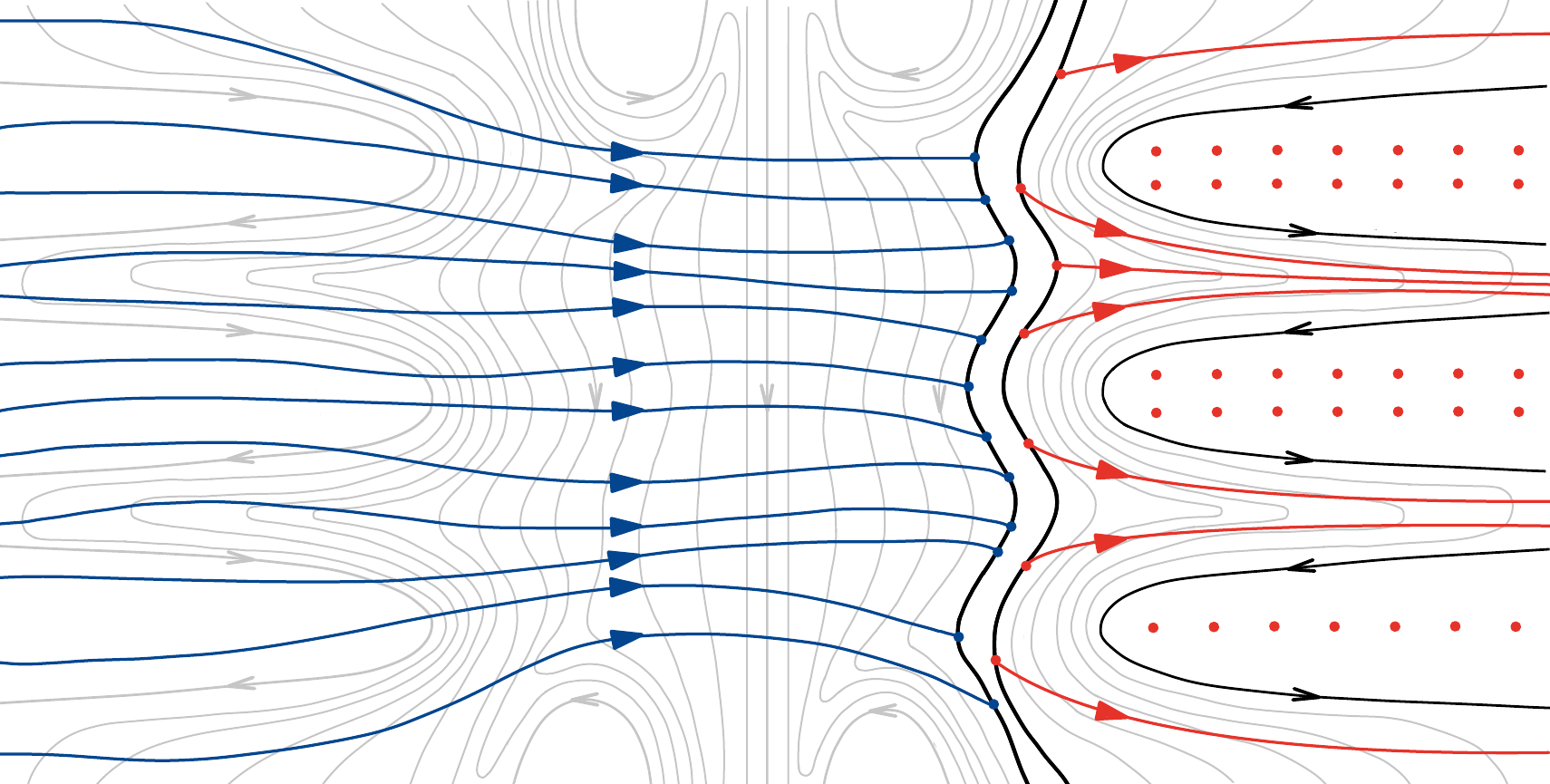}
        \put (62,56) {\color{black}\normalsize$\displaystyle \phi_R \  \phi_L$}
         \put (103,31) {\color{myRED}\large$\displaystyle \left\{\Gamma^{^+}_{D, \O}\right\}_{\O \in \OO_\omega(D)}$}
        \put (-36,25) {\color{myBLUE}\large$\displaystyle \left\{\Gamma^{^-}_{D, \O}\right\}_{\O \in \OO(D)} $}
        
\end{overpic}
\end{figure}

\mycomment{-0.1cm}
Observe that, for each orbit $\O \in \OOend(D)$, we have a leaf $\phi^\pp \in \partial_L D$ such that $\O \in \OO(\phi^\pp)$, and we can construct a path $\Gamma_{D,\O}^{^+}:[-1,0\sspc]\longrightarrow\R^2$ positively transverse to $\F$ that connects the point $q_{_{L,D}}(\O)\in \phi_L$ to the point $p_{\phi^\pp}(\O) \in \phi^\pp$. By constructing these paths one at a time,  we can construct them so that the family $\{\Gamma_{D,\O}^{^+}\}_{\O \in \OOend(D)}$ is pairwise disjoint and, moreover, 
 so that they are each disjoint from the already constructed half-lines $\{\Gamma^{^-}_{D, \O}\}_{\O \in \OO_\omega(D)}$.

To summarize, we have constructed two families $\{\Gamma^{^-}_{D, \O}\}_{\O \in \OO(D)}$ and $\{\Gamma^{^+}_{D, \O}\}_{\O \in \OO(D)}$ formed by pairwise disjoint half-lines and paths that are positively transverse to $\F$ and such that, for each orbit $\O \in \OO(D)$, the following properties hold:
\begin{align*}
    \Gamma^{^-}_{D, \O}(0) = q_{_{R,D}}(\O) \in \phi_R \quad &\text{ and } \quad \overline{\rule{0pt}{3.5mm}R(\phi_R)\cap D} \cap \O \subset \Gamma^{^-}_{D, \O}\spc, \\
    \Gamma^{^+}_{D, \O}(0) = q_{_{L,D}}(\O) \in \phi_L \quad &\text{ and } \quad \overline{\rule{0pt}{3.5mm}L(\phi_L)\cap D} \cap \O \subset \Gamma^{^+}_{D, \O}\spc.
\end{align*}

\vspace*{0.2cm}
Now, using the Homma-Schoenflies theorem, we can assume up to conjugacy that:  
\begin{itemize}[leftmargin=1.3cm]
    \item[$\sbullet$] The leaves \( \phi_R \) and \( \phi_L \) are vertical, with \( \phi_R = \{0\} \times \mathbb{R} \) and \( \phi_L = \{1\} \times \mathbb{R} \).
    \item[$\sbullet$] The square \( [0,1] \times [0,1] \) is foliated vertically by \( \mathcal{F} \).
    \item[$\sbullet$] The endpoints \( \{q_{_{R,D}}(\O)\}_{\O \in \OO(D)} \) lie on side of the square \( \{0\} \times [0,1] \).
    \item[$\sbullet$] The endpoints \( \{q_{_{L,D}}(\O)\}_{\O \in \OO(D)} \) lie on the side of the square \( \{1\} \times [0,1] \).
\end{itemize}  

\vspace*{-0.1cm}

\begin{figure}[h!]
    \center
    \mycomment{0.3cm}\begin{overpic}[width=4.5cm, height=3.2cm, tics=10]{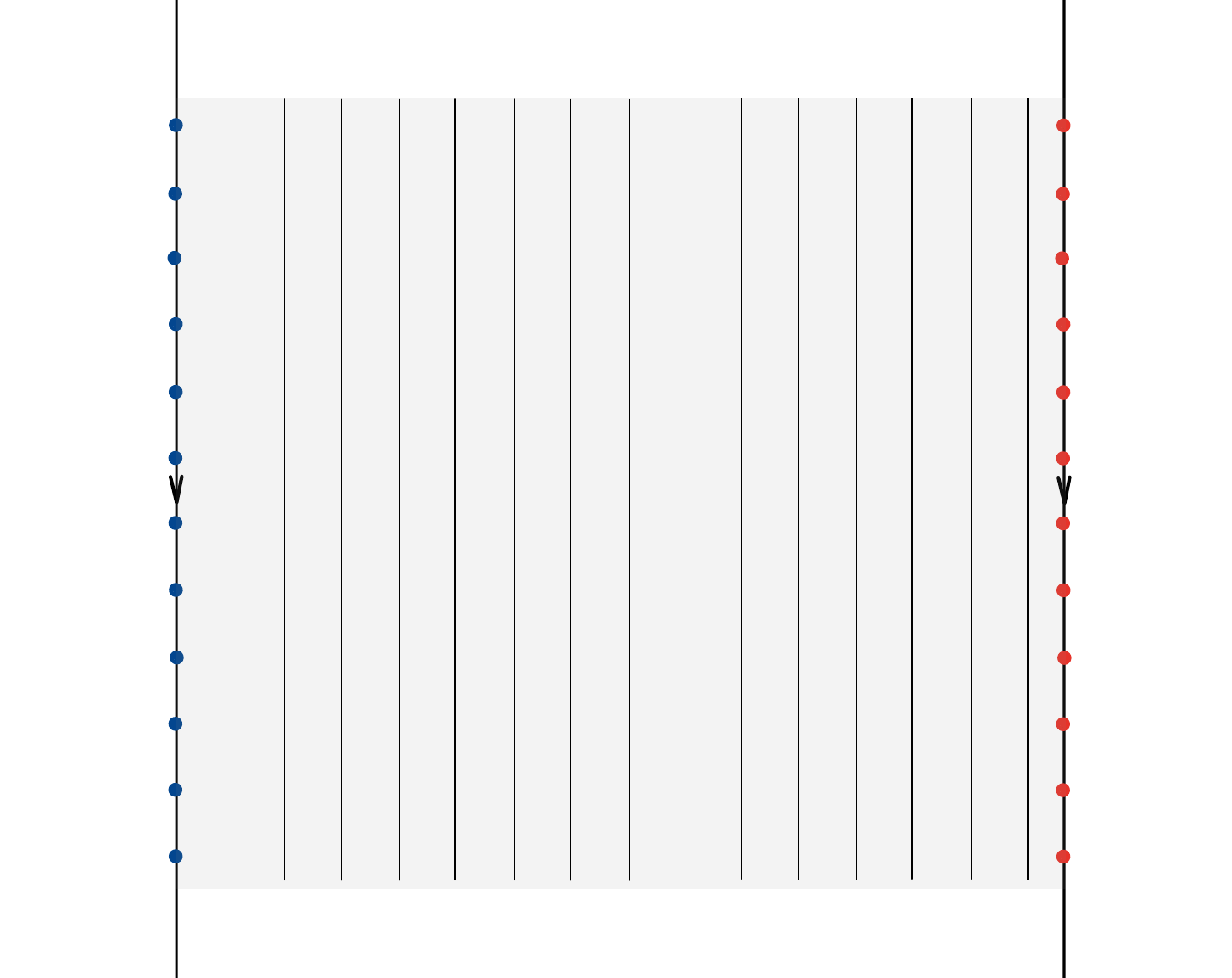}
        \put (17.8,71) {\color{black}\normalsize$\displaystyle \phi_R $}
        \put (76,71) {\color{black}\normalsize$\displaystyle \phi_L$}
         \put (91.5,34) {\color{myRED}\large$\displaystyle \{q_{_{L,D}}(\O)\}_{\O \in \OO(D)}$}
        \put (-54,34) {\color{myBLUE}\large$\displaystyle \{q_{_{R,D}}(\O)\}_{\O \in \OO(D)}$}
        
\end{overpic}
\end{figure}

\vspace*{-0.3cm}
\noindent For each \( \O \in \OO(D) \), let \( \gamma_{_{D,\O}}:[0,1] \longrightarrow \mathbb{R}^2 \) be the line segment joining \( q_{_{R,D}}(\O) \) to \( q_{_{L,D}}(\O) \). 
Observe that, from the definition of the standard order $\leq_{\phi_R}$ and the adaptive order $\leq_D$,  the segments in the family \( \{\gamma_{_{D,\O}}\}_{\O \in \OO(D)} \) satisfy the following properties:
\begin{itemize}[leftmargin=0.5cm]
    \item \( \#(\sspc\gamma_{_{D,\O}} \cap \sspc \gamma_{_{D,\O^\pp}}) = 1 \) if and only if the orbits \( \O \) and \( \O^{\sspc \prime} \) have a weak $\F$-transverse intersection and there exists no maximal leaf domain \( D' \in \D_{\OO} \) such that \( D \leadsto D' \) and \( \O, \O^{\sspc \prime} \in \OO(D') \).
    \item \( \#(\sspc\gamma_{_{D,\O}} \cap \gamma_{_{D,\O^\pp}}) = 0 \) otherwise.
\end{itemize}  

\vspace*{0.25cm}
\noindent At last, for each orbit \( \O \in \OO(D) \), we define the concatenated path 
\mycomment{-0.1cm}
$$\Gamma_{D,\O} := \Gamma^{^-}_{D, \O} \cup \gamma_{_{D,\O}} \cup \Gamma^{^+}_{D, \O}.$$

\mycomment{-0.1cm}
To conclude the proof, we repeat the construction for each maximal leaf domain \( D \in \D_{\OO} \). Then, we recall that for any orbit \( \O \in \OO(D) \), we have a sequence of maximal leaf domains
\mycomment{-0.1cm}
$$ D_0\leadsto D_1 \leadsto \ldots \leadsto D_{m_\O} $$ 

\mycomment{-0.2cm}
\noindent such that $\O$ belongs to the sets $\OO_\alpha(D_0)$ and $\OO_\omega(D_{m_\O})$. And this allows us to construct a proper transverse trajectory of the orbit \( \O \) as the concatenation of the paths 
$$ \Gamma_\O := \prod_{i=0}^{m_\O} \Gamma_{D_i,\O}. $$ 
 The resulting family of proper transverse trajectories \( \{\Gamma_\O\}_{\O \in \OO} \) proves the theorem.
\end{proof}

\setstretch{1.5}	

    \bibliography{biblio1.bib}

\end{document}